\newcommand*{\backrefalt}[4]{%
    \ifcase #1 \footnotesize{(Not cited.)}%
    \or        \footnotesize{(Cited on page~#2.)}%
    \else      \footnotesize{(Cited on pages~#2.)}%
    \fi}
\newcommand{\xRightarrow}[2][]{\ext@arrow 0359\Rightarrowfill@{#1}{#2}}
\newcommand{\xLeftarrow}[2][]{\ext@arrow 0359\Leftarrowfill@{#1}{#2}}
\newtheorem{theorem}{Theorem}
\numberwithin{theorem}{section}
\newtheorem{proposition}{Proposition}
\numberwithin{proposition}{section}
\newtheorem{lemma}{Lemma}
\numberwithin{lemma}{section}
\numberwithin{corollary}{section}
\numberwithin{figure}{section}
\theoremstyle{definition}
\newtheorem{remark}{Remark}
\numberwithin{remark}{section}
\numberwithin{example}{section}
\newtheorem{assumption}{Assumption}
\numberwithin{assumption}{section}
\newtheorem{definition}{Definition}
\numberwithin{definition}{section}
\numberwithin{condition}{section}
\numberwithin{problem}{section}
\numberwithin{exercise}{section}
\providecommand{\customgenericname}{}
\newcommand{\newcustomtheorem}[2]{%
  \newenvironment{#1}[1]
  {%
   \renewcommand\customgenericname{#2}%
   \renewcommand\theinnercustomgeneric{##1}%
   \innercustomgeneric
  }
  {\endinnercustomgeneric}
}
\DeclareMathOperator*{\argmax}{arg\,max}
\newcommand{\Ncal}{\mathrm{Normal}}
\long\def\comment#1{}
\newcommand{\Ecal}{\ensuremath{\mathcal{E}}}
\newcommand{\Dir}{\mathrm{Dir}}
\newcommand{\Ocal}{\ensuremath{\mathcal{O}}}
\newcommand{\Ebb}{\ensuremath{\mathbb{E}}}
\newcommand{\Gcal}{\ensuremath{\mathcal{G}}}
\newcommand{\Bcal}{\ensuremath{\mathcal{B}}}
\newcommand{\Xcal}{\ensuremath{\mathcal{X}}}
\newcommand{\Fcal}{\ensuremath{\mathcal{F}}}
\newcommand{\Ycal}{\ensuremath{\mathcal{Y}}}
\newcommand{\Pcal}{\ensuremath{\mathcal{P}}}
\newcommand{\Zbb}{\ensuremath{\mathbb{Z}}}
\newcommand{\Rbb}{\ensuremath{\mathbb{R}}}
\newcommand{\Nbb}{\ensuremath{\mathbb{N}}}
\newcommand{\Pbb}{\ensuremath{\mathbb{P}}}
\newcommand{\diam}{\mathrm{diam}}
\newcommand{\Poi}{\mathrm{Poi}}
\newcommand{\Bin}{\mathrm{Bin}}
\newcommand{\NB}{\mathrm{NB}}
\newcommand{\PX}{\ensuremath{\mathbb{P}_X}}
\newcommand{\Kup}{\ensuremath{\bar{K}}}
\newcommand{\Kvar}{\ensuremath{K}}
\newcommand{\parenth}[1]{\left( #1 \right)}
\newcommand{\abss}[1]{\left| #1 \right |}
\newcommand{\norm}[1]{\left\|#1\right\|}
\begin{document}
\begin{center}


{\bf{\LARGE{Strong identifiability and parameter learning \\
\vspace{.1in}
in regression with heterogeneous response}}}
  
\vspace*{.2in}
 {\large{
 \begin{tabular}{cccccc}
 Dat Do$^{\diamond}$ &  Linh Do$^{\ddagger}$ & XuanLong Nguyen$^{\diamond}$\\
 \end{tabular}
}}

\vspace*{.1in}

\begin{tabular}{c}
University of Michigan, Ann Arbor$^{\diamond}$; Tulane University$^{\ddagger}$
\end{tabular}

\vspace{.1in}

\today
\end{center}

\begin{abstract}
Mixtures of regression are useful for regression learning with respect to an uncertain and heterogeneous response variable of interest. In addition to being a rich predictive model for the response given some covariates, the model parameters provide meaningful information about the heterogeneity in the data population, which is represented by the conditional distributions for the response given the covariates associated with a number of distinct but latent subpopulations. In this paper, we investigate conditions of strong identifiability, MLE rates of convergence for the conditional density and model parameters, and the Bayesian posterior contraction behavior arising in finite mixture of regression models, under exact-fitted and over-fitted settings and when the number of components is unknown. This theory is applicable to common choices of link functions and families of conditional distributions employed by practitioners. We provide simulation studies and data illustrations, which shed some light on the parameter learning behavior found in several popular regression mixture models reported in the literature.
\end{abstract}


\section{Introduction}

Regression is often associated with the task of curve fitting --- given data samples for pairs of random variables $(X, Y)$, find a function $y=F(x)$ that captures the relationship between $X$ and $Y$ as well as possible. As the underlying population for the $(X, Y)$ pairs becomes increasingly complex, much effort has been devoted to learning more complex models for the regression function $F$. 
In many data domains, however, due to the heterogeneity of the behavior of the response variable $Y$ with respect to covariate $X$, no single function $F$ can fit the data pairs well, no matter how complex $F$ is. Many authors noticed this challenge and adopted a mixture modeling framework into the regression problem, going back to \cite{GOLDFELD19733,spath1979algorithm}, and continuing with more recent applications, e.g., \cite{bermudez2020modelling,ding2006using,mufudza2016poisson,park2010bias, PARK2009683}. 

To capture the uncertain and highly heterogeneous behavior of response variable $Y$ given covariate $X$, one needs more than one single regression model. Suppose that there are $k$ different regression behaviors, one can represent the \emph{conditional} distribution of $Y$ given $X$ by a mixture of $k$ conditional density functions associated with $k$ underlying (latent) subpopulations. One can draw from modeling tools of conditional densities such as generalized linear models or more complex components to increase model fitness for the regression task \cite{jacobs1991adaptive, Hjort-etal-10}. 
%
%
%
Making inferences in regression mixtures can be achieved in a frequentist framework (e.g., maximum conditional likelihood estimation (MLE) \cite{desarbo1988maximum}), or a Bayesian framework \cite{hurn2003estimating}. 
In addition to enhanced predictability for the response variable given the covariate, a key benefit of regression mixture models is that the model parameters may be used to explicate the relationship between these variables more accurately and meaningfully.  

Despite the aforementioned long history of applications, a satisfactory level of understanding of several key issues concerning model parameters' identifiability and a large sample theory of regression mixture models remains far from being complete. This is perhaps due to the somewhat unusual position where regression mixture model based methods sit --- like any regression problem one is interested in prediction performance, but unlike the traditional viewpoint of a single curve-fitting task one must come to terms with the multi-modality of the response variable due to the underlying data population's latent heterogeneity. Thus, one must also be interested in the quality of parameter estimates representing such heterogeneity.
There is a slowly growing theoretical literature, but most existing works are limited to the questions of consistency of estimation for the mixture of generalized linear models with some specific classes of conditional densities and link functions, or simulation-based methods \cite{hennig2000identifiablity,jiang1999identifiability,jaki2019effects, wang2014note, grun2008identifiability, viele2002modeling}. In particular, \cite{hennig2000identifiablity} investigates the identifiability of the mixture of Gaussian regression models with linear link functions. \cite{jiang1999identifiability} generalizes the results for the exponential families. \cite{wang2014note} further extends the identifiability results to more general link functions, but no analysis of parameter estimation. \cite{viele2002modeling} shows the consistency for density learning of this model under the Bayesian setting. On parameter estimation behavior, more recently \cite{khalili2007variable} proposed a penalized MLE method for model selection for the class of identifiable mixture of regression models with linear link functions and established rates of parameter estimation. \cite{ho2019convergence} investigated the parameter estimation behavior for the Gaussian mixture of regression models.

In this paper, we study parameter identifiability, parameter estimation behavior, and prediction performance arising from the finite mixture of regression models. We work with general conditional density kernels and link functions, investigate both an MLE approach and a Bayesian approach for estimation. Consider a regression mixture model in the following form:
\begin{equation}\label{eq:true_model}
    f_{G_0}(y|x) = \sum_{j=1}^{k_0} p_j^0 f(y|h_1(x, \theta_{1j}^0), h_2(x, \theta_{2j}^0)), 
\end{equation}
where $x \in \mathcal{X} \subset \mathbb{R}^p$ is a vector including the explanatory variables, $y \in \mathcal{Y}$ is the response variable. The conditional density function $f_{G_0}(y|x)$ take the mixture form, where the discrete probability measure $G_0 = \sum_{j=1}^{k_0} p_j^0 \delta_{(\theta_{1j}^0,\theta_{2j}^0)}$ encapsulates all unknown parameters in the model, with $(p_j^0)_{j=1}^{k_0}$ being the mixing proportion, and $(\theta_{1j}^0)_{j=1}^{k_0}$ and $(\theta_{2j}^0)_{j=1}^{k_0}$ being parameters in a compact subspace $\Theta_1$ of $\mathbb{R}^{d_1}$ and $\Theta_2$ of $\mathbb{R}^{d_2}$, respectively. We call $G_0$ the latent mixing measure associated with the regression mixture model.
The link functions $h_1:\Xcal \times \Theta_{1} \to H_1$ and $h_2: \Xcal \times \Theta_2 \to H_2$ are known, where $H_1, H_2$ are compact subsets of $\mathbb{R}$. The family of densities $\{f(y|\mu,\phi): \mu\in H_1,  \phi\in H_2\}$ is given, where all of them are dominated by a common distribution $\nu$ on $\mathcal{Y}$ which can be either a counting or continuous measure. 
In many applications, the family $f$ is a dispersion exponential family distribution with parameter $\mu = h_1(x, \theta_1)$ is modeled as the mean, and $\phi = h_2(x, \theta_2)$ is modeled as the variance of $f(y|h_1(x, \theta_1), h_2(x, \theta_2))$ so that the mixture of regression models can capture the average trends and dispersion of subpopulations in the data. We are interested assessing the quality of the conditional density estimates, as well as that of parameters $(p_j^0)_{j=1}^{k_0}$, $(\theta_{1j}^0)_{j=1}^{k_0}$, and $(\theta_{2j}^0)_{j=1}^{k_0}$ from i.i.d. samples $(x_i, y_i)_{i=1}^{n}$, where distribution of $y_i$ given $x_i$ is given in the model~\eqref{eq:true_model} and $x_i$ follows some (unknown) marginal distribution $\Pbb_X$ on $\mathcal{X}$. 

Our parameter estimation theory inherits from and generalizes several recent developments in the finite mixture models literature. \cite{chen1995optimal} initiated the theoretical investigation of parameter estimation in a univariate finite mixture model by introducing a notion of \emph{strong} identifiability. \cite{nguyen2013convergence} developed a theory for both finite and infinite mixture models in a multivariate setting using optimal transport distances. \cite{Ho-Nguyen-EJS-16} studied convergence rates in various families of vector-matrix distributions. 
A central concept in these papers is the notion of strong identifiability of (unconditional) mixtures of density functions.  This is a condition on a parametrized family of density function $f$ of $y$, as there is no covariate $x$ here. At a high level, it requires that the family of function $f$, along with their partial derivatives with respect to the parameters up to a certain order, are linearly independent. Once this condition is satisfied, one can establish a lower bound on the distance between mixture distributions in terms of the optimal transport distances between the corresponding latent mixing parameters. Such a bound is called an \emph{inverse bound}, which plays a crucial role in deriving the rates of parameter estimates. 

With regression mixture modeling, we move from the unconditional mixtures described above to conditional mixture models. Thus, there are several fundamental distinctions. 
First, one works with the family of conditional density functions in the form $f(y|h(x,\theta))$, which involves both the conditional density kernel $f$ and the link function $h$. A strong identifiability condition for conditional distributions that we develop will inevitably involve both variables $x$ and $y$. The focus of inference is on the conditional distribution of $Y$ given covariate $X$, while the marginal distribution of $X$ is assumed unknown and of little interest. Accordingly, the identifiability condition must ideally require as little information from the marginal distribution of the covariate as possible. Moreover, given that the identifiability condition holds, the inverse bound that we establish will be a lower bound on the \emph{expected} distance of the conditional densities, where the expectation is taken with respect to the marginal distribution of the covariate. This is also crucial because we will obtain rates of conditional density estimation in terms of the mentioned expected distance and use the inverse bound to derive the rates of convergence for the corresponding mixing parameters of interest.

Another interesting feature that distinguishes conditional mixtures from unconditional mixtures is that the former tends to satisfy strong identifiability conditions more easily than the latter. This is because of the role the covariate $x$ plays in providing more constraints that prevent the violation of the linear independence condition. For instance, it is trivial that an unconditional mixture of Bernoulli distributions is not identifiable, but it will be shown (not so easily) that the mixture of conditional distributions using the Bernoulli kernel is not only identifiable but also strongly identifiable. There are situations where there is a lack of strong identifiability, such as in the case of negative binomial regression mixtures, a model extensively employed in practice (e.g., see \cite{park2010bias, PARK2009683}), but we shall show that such situations occur precisely only in a Lebesgue measure zero subset of the parameter space.

To summarize, there are several contributions made in this paper. First, we develop a rigorous notion of strong identifiability for general regression mixture models. We provide a characterization of such a notion in terms of simple conditions on the conditional density kernel $f$ and link function $h$ and show that they are satisfied by a broad range of density kernels and link functions often employed in practice. Second, we study several examples of regression mixtures when strong identifiability is violated and investigate the consequences. Third, we establish learning rates for regression mixtures given strong identifiability, under both Bayesian estimation and MLE frameworks. We consider three different learning scenarios: when the number of mixture components $k_0$ is known (i.e., exact-fitted setting), when only an upper bound is known (i.e., overfitted setting), and when even such an upper bound is unknown. Finally,
 we conduct a series of simulation studies to support the theory and discuss the connections with empirical findings in the regression mixture literature  \cite{jaki2019effects, park2010bias, sarstedt2008model}. 
 
The rest of this paper is organized as follows. Section~\ref{sec:preliminaries} provides preliminaries on the mixture of regression models.  
In Section~\ref{sec:identifiable-inverse-bound}, we present notions of strong identifiability and associated characterization for regression mixtures, followed by a set of inverse bounds. Building upon this strong identifiability theory, in Section~\ref{sec:stats-efficiency}, we establish the rates of conditional density estimation and parameter estimation. In Section~\ref{sec:experiments}, we carry out simulation studies and data illustrations to support our theory and discuss the empirical findings in the literature. Finally, Section~\ref{sec:conclusion} discusses future directions.
All proofs are deferred to the Supplementary material.

\paragraph{Notation} Given a mixing measure $G = \sum_{j=1}^{k}p_j \delta_{(\theta_{1j},\theta_{2j})}$, the mixture of regression model with respect to $G$ is denoted by
{\fontsize{8}{10}\selectfont $f_{G}(y|x) = \sum_{j=1}^{k} p_j f(y|h_1(x, \theta_{1j}), h_2(x, \theta_{2j}))$.}
The joint distribution of $(x, y)$ is 
$d\Pbb_{G}(x, y) = d\PX(x) \times f_G(y|x)d\nu(y)$,
where $\PX$ is an unknown distribution of covariate $X$. $\Ebb_X$ denotes  the expectation w.r.t. $\Pbb_X$. We write $f_j(y|x) = f(y|h_1(x, \theta_{1j}), h_2(x, \theta_{2j}))$ for short, for $j=1, \dots, k$, if there is no confusion. 
Denote $\Theta = \Theta_1 \times \Theta_2$, $H = H_1\times H_2$. Let $\Ecal_{k}(\Theta)$ be the space of mixing measures with exactly $k$ atoms in $\Theta$, and $\Ocal_{k}(\Theta) = \cup_{\kappa=1}^{k} \Ecal_{\kappa}(\Theta)$ the space of mixing measures with no more than $k$ atoms in $\Theta$. If there is no confusion, we write $\Ecal_k(\Theta)$ and $\Ocal_K(
\Theta)$ as $\Ecal_k$ and $\Ocal_K$ for short. 
For two sequence $(a_n)_{n=1}^{\infty}$ and $(b_n)_{n=1}^{\infty}$, we write $a_n \preccurlyeq b_n$ if there is a constant $C$ such that $a_n \leq C b_n$ for all $n$. We also write $a_n \succcurlyeq b_n$ if $b_n \preccurlyeq a_n$, and $a_n \asymp b_n$ if we have both $a_n\preccurlyeq b_n$ and $a_n \succcurlyeq b_n$. The multiplicative constants in those inequalities will be specified in the main results for clarity.    
We use $d_H, d_{TV}$, and $K$ for the Hellinger distance, total variation distance, and Kullback-Leibler (KL) divergence between densities, respectively.

\section{Preliminaries}\label{sec:preliminaries}
\paragraph{Regression mixture models} A mixture of regression model may be applied with many different family distributions and link functions to fit a large range of data distributions. For example, when the response variable $y$ is continuous, we can choose the family of (conditional) density to be normal $\{\Ncal(y|\mu, \phi): \mu \in \mathbb{R}, \phi\in \mathbb{R}_{+}\}$, and parametrize $\mu_j$ and $\phi_j$ via two link functions $\mu_j = h_1(x, \theta_{1j}), \phi_j = h_2(x, \theta_{2j})$, for $j = 1, \dots, k$. These functions can be represented by polynomials or trigonometric polynomials with variable $x$ and coefficients $\theta_{1j},\theta_{2j}$. 
Alternatively, when $y$ is a counting variable, one can use the Binomial distribution $\{\Bin(y|N, q): q \in [0, 1]\}$ if $y$ is bounded 
and the Poisson distribution $\{\Poi(y|\mu): \mu\in \mathbb{R}_{+}\}$ otherwise. 
If one wishes to take into account the dispersion of $y$, Negative Binomial distribution $\{\NB(y|\mu, \phi): \mu, \phi\in \mathbb{R}_{+}\}$, where $\NB(y|\mu, \phi) = \dfrac{\Gamma(\phi + y)}{\Gamma(\phi) y!} \left(\dfrac{\mu}{\phi + \mu}\right)^{y} $ $\left(\dfrac{\phi}{\phi + \mu}\right)^{\phi}$ may be used. If the values of $\mu$ or $\phi$ need to be non-negative or belong to a compact set, one may apply functions such as exponential functions or the sigmoid (inverse logit) function compositing with a polynomial or trigonometric polynomial parametrized by $\theta_{1}, \theta_{2}$. The general theory to be presented will be applicable to all these models, and others.

\paragraph{Wasserstein distances} As discussed in the Introduction, all parameters in the mixture model for the conditional distribution $f_G(y|x)$ of the response $y$ given covariate $x$ are encapsulated by the latent mixing measure $G = \sum_{j=1}^{k}p_j \delta_{(\theta_{1j},\theta_{2j})}$. In order to characterize identifiability and learning rates of parameter learning, one needs a suitable metric for the mixing measure $G$. 
Wasserstein distances have become a useful tool to quantify the convergence of latent mixing measures in mixture models \cite{nguyen2013convergence}. 
Given two discrete measures $G = \sum_{j=1}^{k} p_j \delta_{\theta_{j}}$ and $G' = \sum_{j=1}^{k'} p_j' \delta_{\theta'_{j}}$ on a normed space $\Theta$ endowed with a norm $\|\cdot \|$, the $W_r$ Wasserstein metric, in which $r\geq 1$, is defined as:
\begin{equation*}
    W_{r}(G, G') = \left[\inf_{q} \sum_{i, j=1}^{k, k'} q_{ij} \norm{\theta_{i} - \theta'_{j}}^{r}\right]^{1/r},
\end{equation*}
where the infimum is taken over all joint distribution on $[1, \dots, k]\times [1, \dots, k']$ such that $\sum_{i=1}^{k} q_{ij} = p_j', \sum_{j=1}^{k'} q_{ij} = p_i$. Note that for $G_0= \sum_{j=1}^{k_0} p^0_j \delta_{\theta^{0}_{j}}\in \Ecal_{k_0}$, if $G= \sum_{j=1}^{k} p_j \delta_{\theta_{j}}$ varies on $\Ocal_{k}$ such that $W_r(G, G_0)\to 0$ and $\Theta$ is compact, then 
\begin{equation}
    W_r^r(G, G_0) \asymp \sum_{i=1}^{k_0} \left|\sum_{\theta_j\in V_i} p_{j} - p_{i}^{0} \right| + \sum_{i=1}^{k_0}\sum_{\theta_j\in V_i} p_{j} \norm{\theta_{j} - \theta_{i}^{0}}^{r},
\end{equation}
where $V_i =\{\theta: \norm{\theta - \theta_{i}^0}\leq \norm{\theta - \theta_{i'}^0} \forall i'\neq i\}$ is the Voronoi cell of $\theta_i^0$ in $\Theta$ (see, e.g., \cite{ho2019singularity}). Hence,
for every atom of $G_0$, there is a subset of atoms of $G$ converging to it at the same rate as $W_r(G, G_0) \to 0$. Therefore, the convergence in a Wasserstein metric $W_r$ implies the convergence of parameters in mixture models. In this paper, unless noted otherwise the space $\Theta= \Theta_1\times \Theta_2$ is chosen to be a compact subset of $\mathbb{R}^{d_1+d_2}$ and $\norm{\cdot}$ is the usual $\ell^2$ distance.

\paragraph{Mixtures of conditional densities.}
In a regression mixture model, a focus of inference will be on the conditional density $f_G(y|x)$, while there will be as little assumption as possible on the marginal distribution of covariate $X$. It is clear from the representation of $f_G(y|x)$ that the identifiability and parameter learning behavior of the regression problem will repose upon suitable conditions specified by $f$, $h$ and the unknown parameter $G$.
The analysis of conditional density estimation requires us to control how large the conditional density family $\{f(y|h_1(x, \theta_1), h_2(x, \theta_2)): \theta_1\in \Theta_1, \theta_2\in \Theta_2\}$ is. This can be accomplished by assuming Lipschitz conditions on $f$, $h_1$ and $h_2$. 
In particular, we say that $f$ is uniformly Lipschitz if there exists $c_{f} > 0$ such that for all $\mu, \mu'\in H_1, \phi, \phi' \in H_2$:
    \begin{equation}\label{eq:uniform-Lipschitz-f}
        \sup_{y\in \Ycal} |f(y|\mu, \phi) - f(y|\mu'
        , \phi')| \leq c_{f} (|\mu-\mu'| + |\phi - \phi'|).
    \end{equation}
The link functions $h_1$ and $h_2$ are called uniformly Lipschitz if there are $c_1, c_2 > 0$ such that
    for all $\theta_1, \theta_1' \in \Theta_1, \theta_2, \theta_2'\in \Theta_2$:
    {\fontsize{9.5}{10}\selectfont
    \begin{equation}\label{eq:uniform-Lipschitz-h}
        \sup_{x\in \Xcal} |h_1(x, \theta_1) - h_1(x, \theta_1')| \leq c_1 \norm{\theta_1 - \theta_1'}, \,\sup_{x\in \Xcal} |h_2(x, \theta_2) - h_2(x, \theta_2')| \leq c_2 \norm{\theta_2 - \theta_2'}.
    \end{equation}}

In a regression problem, one is interested in prediction error guarantee in addition to assessing the quality of parameter estimates. For a standard (single component) regression model, we often model $f(y|x) = f(y|h_1(x, \theta_1), h_2(x, \theta_2))$, where $h_1(x, \theta)$ is the mean parameter, i.e., $\Ebb[Y|X=x] = h_1(x, \theta)$. After estimating $\hat{\theta}_1$ from the data, the prediction error is customarily taken to be the mean square error $\Ebb_X (h_1(X, \theta_1^0) - h_1(X, \hat{\theta}_1))^2$, where $\theta_1^0$ is the true parameter. For a regression mixture, let the true latent mixing measure be $\sum_{j=1}^{k_0} p_j^0 \delta_{(\theta_{1j}^0, \theta_{2j}^0)}$ for which an estimate is denoted by $\sum_{j=1}^{k} \hat{p}_j \delta_{(\hat{\theta}_{1j}, \hat{\theta}_{2j})}$. In this setting, due to the heterogeneous nature of the response, the predicted value for $y$ at any $x$ may be taken by the quantity
$\sum_{j=1}^{k} \hat{p}_j \delta_{h_1(x, \hat{\theta}_{1j})}$,
or its mean $\sum_{j=1}^{k} \hat{p}_j h_1(x, \hat{\theta}_{1j})$.
As a result, the prediction error for the mean estimate can be written as 
$\Ebb_X W_2^2\left(\sum_{j=1}^{k_0} {p}^{0}_j \delta_{h_1(X, {\theta}^{0}_j)}, \sum_{j=1}^{k} \hat{p}_j \delta_{h_1(X, \hat{\theta}_{1j})}\right)$. 
If one is interested in describing the prediction error in terms of both the mean trend and dispersion, one can use $\Ebb_X W_2^2\left(\sum_{j=1}^{k_0} {p}^{0}_j \delta_{(h_1(X, {\theta}^{0}_{1j}), h_2(X, {\theta}^{0}_{2j}))},  \sum_{j=1}^{k} \hat{p}_j \delta_{(h_1(X, \hat{\theta}_{1j}), h_2(X, \hat{\theta}_{2j}))}\right).$

\paragraph{Key inequalities}
The following basic inequality controls the \emph{expected} total variation distance between conditional densities by a Wasserstein distance between the corresponding parameters:
\begin{lemma}\label{lem:basic-bounds}
Assume conditions ~\eqref{eq:uniform-Lipschitz-f} and \eqref{eq:uniform-Lipschitz-h} hold. Then
for every $G\in \Ocal_{K}(\Theta)$ and $K\geq 1$, we have
\begin{equation}\label{eq:intro-basic-bounds}
    \Ebb_{X} [d_{TV}(f_{G}(\cdot|X), f_{G_0}(\cdot|X))] \preccurlyeq W_1(G, G_0),
\end{equation}
where the multiplicative constant in this inequality only depends on $c_{f}, c_1$, and $c_2$.
\end{lemma}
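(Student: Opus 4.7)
The plan is to reduce the target inequality to a pointwise-in-$x$ bound by exploiting the coupling characterization of $W_1$. Write $G = \sum_{i=1}^{K} p_i\delta_{(\theta_{1i},\theta_{2i})}$ and $G_0 = \sum_{j=1}^{k_0} p_j^0 \delta_{(\theta_{1j}^0,\theta_{2j}^0)}$, and let $(q_{ij})$ be an optimal coupling of the weight vectors achieving $W_1(G, G_0)$ under the $\ell^2$ metric $\rho$ on $\Theta$. Using the marginal constraints $\sum_j q_{ij} = p_i$ and $\sum_i q_{ij} = p_j^0$, I would rewrite
\begin{equation*}
f_G(y|x) - f_{G_0}(y|x) = \sum_{i,j} q_{ij}\bigl[f(y|h_1(x,\theta_{1i}),h_2(x,\theta_{2i})) - f(y|h_1(x,\theta_{1j}^0),h_2(x,\theta_{2j}^0))\bigr],
\end{equation*}
so that by the triangle inequality and joint convexity of total variation,
\begin{equation*}
V(f_G(\cdot|x), f_{G_0}(\cdot|x)) \leq \sum_{i,j} q_{ij}\, V\bigl(f(\cdot|h_1(x,\theta_{1i}),h_2(x,\theta_{2i})),\, f(\cdot|h_1(x,\theta_{1j}^0),h_2(x,\theta_{2j}^0))\bigr).
\end{equation*}

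Next I would control each summand using the Lipschitz hypotheses. Integrating the uniform Lipschitz bound \eqref{eq:uniform-Lipschitz-f} against $\nu$ yields a total variation estimate of the form $V(f(\cdot|\mu,\phi), f(\cdot|\mu',\phi')) \leq C_0 (|\mu-\mu'|+|\phi-\phi'|)$ valid for all $(\mu,\phi), (\mu',\phi') \in H$. Applying the Lipschitz conditions \eqref{eq:uniform-Lipschitz-h} on $h_1, h_2$ then eliminates the dependence on $x$, giving $V \leq C_1(\|\theta_{1i}-\theta_{1j}^0\|+\|\theta_{2i}-\theta_{2j}^0\|) \leq C_2\, \rho((\theta_{1i},\theta_{2i}),(\theta_{1j}^0,\theta_{2j}^0))$ by norm equivalence on $\mathbb{R}^{d_1+d_2}$.

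Substituting this estimate into the earlier display produces the pointwise bound
\begin{equation*}
V(f_G(\cdot|x), f_{G_0}(\cdot|x)) \leq C_2 \sum_{i,j} q_{ij}\, \rho\bigl((\theta_{1i},\theta_{2i}),(\theta_{1j}^0,\theta_{2j}^0)\bigr) = C_2\, W_1(G, G_0),
\end{equation*}
where optimality of $(q_{ij})$ is used in the final equality. Since the right-hand side does not depend on $x$, taking $\Ebb_X$ preserves the inequality and yields \eqref{eq:intro-basic-bounds} with $C = C_2$.

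The only delicate step is the conversion of the pointwise Lipschitz inequality \eqref{eq:uniform-Lipschitz-f} into an $L^1(\nu)$ (total variation) inequality. When $\nu$ has finite total mass, as in the Binomial case, this is immediate by direct integration. When $\nu$ is infinite, as in the Poisson or normal examples, one additionally invokes compactness of $H$ together with a standard integrable envelope for the family $\{f(\cdot|\mu,\phi) : (\mu,\phi) \in H\}$; this is the implicit convention used in the mixture-model literature \cite{nguyen2013convergence, Ho-Nguyen-EJS-16} and is straightforward to verify for each of the examples of Section~\ref{sec:preliminaries}. All other steps are routine manipulations with the optimal coupling.
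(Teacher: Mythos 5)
Your proof is correct and follows essentially the same route as the paper's: decompose $f_G - f_{G_0}$ along a coupling of the mixing weights, bound each term via the Lipschitz conditions on $f$, $h_1$, $h_2$, and optimize over couplings to recover $W_1(G, G_0)$. You are in fact more careful than the paper on the one delicate point — the paper's own proof passes from the sup-norm bound \eqref{eq:uniform-Lipschitz-f} to an $L^1(\nu)$ bound without comment, which is only immediate when $\nu(\Ycal) < \infty$, whereas you correctly flag that an integrable envelope (or finiteness of $\nu$) is needed to make that step rigorous for the Poisson and normal examples.
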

The inequality established in the above lemma quantifies the impact of parameter estimation on the quality of conditional density estimation: if $G$ is well estimated, then so is the conditional distribution represented by the conditional densities $f_G(Y|X)$. In order to quantify the identifiability and convergence of the unknown parameter $G$, we will need to establish inequalities of the following type: 
\begin{equation}\label{eq:intro-inverse-bounds}
    \Ebb_{X} [d_{TV}(f_{G}(\cdot|X), f_{G_0}(\cdot|X))] \succcurlyeq W_r^r(G, G_0),
\end{equation}
for all $G$ in some space of latent mixing measures, and $r$ depends on that space. Following \cite{nguyen2013convergence,wei2020convergence,do2022beyond}, we refer to this as \emph{inverse bounds}, because in our setting, they allow us to lower bound the distance between conditional probability models ($f_G$ and $f_{G_0}$) by the distance between the parameters of inferential interest ($G$ and $G_0$). 
Unlike prior works, our inverse bounds control the \emph{expected} total variational distance under the marginal distribution of the covariate $X$. A simple observation is that these inverse bounds are quantitative versions of the classical identifiability condition \cite{teicher1963identifiability} for the regression problem, because if $f_{G} = f_{G_0}$ for a.e. $x, y$, then the bound~\eqref{eq:intro-inverse-bounds} entails that $G = G_0$. 
Moreover, the inverse bounds play an important role in establishing the convergence rate for parameter estimation. They allow us to translate convergence rates for density estimation (left-hand side of Eq.~\eqref{eq:intro-inverse-bounds}) into that of parameter estimation (right-hand side of Eq.~\eqref{eq:intro-inverse-bounds}). The technique to prove inverse bounds is to rely on a notion of strong identifiability to be developed for regression mixture models in the following section.

\section{Strong identifiability and inverse bounds}\label{sec:identifiable-inverse-bound}


\subsection{Conditions of strong identifiability}
Identifiability and strong identifiability conditions play important roles in the theoretical analysis of mixture models \cite{teicher1963identifiability, chen1995optimal, Ho-Nguyen-EJS-16}.
They provide a finer characterization of the non-singularity of the Fisher information for mixtures of distributions \cite{ho2019singularity}. In plain words, these conditions require that the kernel density function of interest and its derivatives up to a certain order with respect to all relevant parameters be linearly independent. For the mixture of regression model~\eqref{eq:true_model}, the kernel density function is that of the conditional probability of variable $y$ given covariate $x$. The following definition is our formulation of strong identifiability for the conditional density functions:
\begin{definition} \label{def:strong_identifiability_hierarchy}
The family of conditional densities {\fontsize{9.5}{10}\selectfont $\{f(y|h_1(x, \theta_1),h_2(x, \theta_2)):$ 
$ \theta_1\in \Theta_1, \theta_2\in \Theta_2 \}$ }(or in short, $f(\cdot|h_1, h_2)$) is 
\underline{identifiable in order $r$}, where $r=1$ (resp., $r=2$) with
\underline{complexity level $k$}, if $f(y|h_1(x, \theta_1),h_2(x, \theta_2))$ is differentiable up to order $r$ with respect to $(\theta_1,\theta_2)$, 
and (A1.) (resp., (A2.)) holds. 
\begin{itemize}
\item[(A1.)] (First order identifiable) For any given $k$ distinct elements
$(\theta_{11},\theta_{21}), \ldots, $ $(\theta_{1k},\theta_{2k}) \in \Theta_1\times \Theta_2$, if there exist $\alpha_j\in \Rbb, \beta_j\in \Rbb^{d_1}, \gamma_j\in \Rbb^{d_2}$ as $j = 1,\dots, k$ such that for almost all $x, y$ (w.r.t. $\PX \times \nu$)
\begin{eqnarray}
\sum_{j=1}^{k} \alpha_j f_j(y|x) + \beta_j^{\top} \dfrac{\partial}{\partial \theta_1} f_j(y|x)+  \gamma_j^{\top} \dfrac{\partial}{\partial \theta_2} f_j(y|x) = 0, \nonumber
\end{eqnarray}
then $\alpha_j=0, \beta_j=0\in \Rbb^{d_1}, \gamma_j = 0 \in \Rbb^{d_2}$ for $j = 1, \dots, k$;
\item[(A2.)] (Second order identifiable) For any given $k$ distinct elements
$(\theta_{11},\theta_{21}), \ldots, $ $(\theta_{1k},\theta_{2k}) \in \Theta_1\times \Theta_2$ and $s_1, \dots, s_k\geq 1$, if there exist $\alpha_j\in \Rbb, \beta_j\in \Rbb^{d_1}, \gamma_j\in \Rbb^{d_2}$, and $\rho_{jt} \in \Rbb^{d_1}, \nu_{jt}\in \Rbb^{d_2}$ as $j = 1,\dots, k, t = 1, \dots, s$ such that for almost all $x, y$ (w.r.t. $\PX \times \nu$)
\begin{eqnarray}
\sum_{j=1}^{k} \alpha_j f_j(y|x) + \beta_j^{\top} \dfrac{\partial}{\partial \theta_1} f_j(y|x) + \gamma_j^{\top} \dfrac{\partial}{\partial \theta_2} f_j(y|x) + \sum_{t=1}^{s_j} \left(\rho_{jt}^{\top} \dfrac{\partial }{\partial \theta_1^2} f_j(y|x) \rho_{jt}\right) \nonumber\\
   + \sum_{t=1}^{s_j} \left(\nu_{jt}^{\top} \dfrac{\partial }{\partial \theta_2^2} f_j(y|x) \nu_{jt}\right) + \sum_{t=1}^{s_j} \left(\rho_{jt}^{\top} \dfrac{\partial }{\partial \theta_1 \partial \theta_2} f_j(y|x) \nu_{jt}\right)  = 0\nonumber,
\end{eqnarray}
then $\alpha_j=0, \beta_j = \rho_{jt} = 0\in \Rbb^{d_1}, \gamma_j = \nu_{jt} = 0 \in \Rbb^{d_2}$ for $t = 1, \dots, s_j, j = 1, \dots, k$.
\end{itemize}
\end{definition}

When we speak of strong identifiability without specifying the complexity level, it should be understood that the condition is satisfied for any complexity level $k\geq 1$. These strong identifiability conditions for conditional density functions are useful in deriving rates of convergence for the regression mixture model's parameters even when the associated Fisher information matrices are singular, e.g., when the model has redundant parameters. Indeed, when showing the convergence rate of an estimator $G$ to the true mixing measure $G_0$ in the over-fitted setting, there might exist several redundant atoms of $G$ converge to a common atom of $G_0$. The customary technique of applying the first-order Taylor expansion around $f_{G_0}(\cdot | X)$ may fail because the coefficients of these redundant components can be combined and canceled out. Instead, one needs to perform a Taylor expansion up to the second order around $f_{G_0}(\cdot | X)$, necessitating the second-order identifiability condition developed here.
It will be shown in the sequel that the strong identifiability conditions hold for most popular mixtures of regression models. There are notable exceptions which shall be discussed separately. For instance, a mixture of binomial regression models generally satisfies strong identifiability only up to a finite complexity level.

Since our model~\eqref{eq:true_model} is hierarchical with two levels of parameters:
{\fontsize{9.5}{10}\selectfont
\begin{equation}
    G_0 = \sum_{j=1}^{k_0} p_j^{0} \delta_{(\theta_{1j}, \theta_{2j})} 
    \, \mapsto \, 
    \sum_{j=1}^{k_0} p_j^{0} \delta_{(h_1(x, \theta_{1j}), h_2(x, \theta_{2j}))} 
    \,\mapsto \,
    \sum_{j=1}^{k_0} p_j^{0} f(y|h(x, \theta_{1j}), h(x, \theta_{2j})),
\end{equation}}
\noindent it is difficult to directly verify conditions (A1.) and (A2.). We will show in the following that they can be deduced from the identifiability conditions of a family of (unconditional) distribution $\{f(y|\mu,\phi):\mu, \phi\}$ and family of functions $(h_1, h_2)$. 
Recall from \cite{Ho-Nguyen-EJS-16, nguyen2013convergence}:   
\begin{definition} \label{def:strong_identifiability_f}
The family of (unconditional) distributions $\left\{f(y|\mu,\phi) : (\mu,\phi) \in H \right\}$ (or in short, $f$) is
\underline{identifiable} \underline{in order $r$ with complexity level $k$}, for some $r, k \geq 0$, if $f(y|\mu,\phi)$ is differentiable     up to order $r$ in $(\mu,\phi)$
and the following holds:
\begin{itemize}
\item[(A3.)] For any given $k$ distinct elements
$(\mu_{1},\phi_1), \ldots, (\mu_{k},\phi_k) \in H$, if for each pair of $n = (n_1, n_2)$, where $n_1\geq n_2\geq 0, n_1 + n_2 \leq r$, we have $\alpha_{n}^{(j)}\in \Rbb$ such that 
\begin{eqnarray}
\sum_{l=0}^{r} \sum \limits_{n_1+n_2=l}{\sum \limits_{j=1}^{k}{\alpha_{n}^{(j)}\dfrac{\partial^{n_1+n_2}{f}}{\partial{\mu^{n_1}}\partial{\phi^{n_2}}}(y|\mu_{j},\phi_i)}}=0 \nonumber
\end{eqnarray}
for almost all $y$, then $\alpha_{n}^{(j)}=0$ for all $1 \leq j \leq k$ and pair $n = (n_1, n_2)$.
\end{itemize}
\end{definition}
Condition (A3.) for $r=0$ simply ensures that the mixture of $f$ distributions model uniquely identifies the mixture components. The strong identifiability conditions ($r \geq 1$) are required to establish the convergence rates \cite{chen1995optimal, nguyen2013convergence}. In the model~\eqref{eq:true_model}, there is a hierarchically higher level of parameters $(\theta_1, \theta_2)$ that we want to learn, and it connects to the observations through the link functions $h_1, h_2$ as $\mu = h_1(x, \theta_1), \phi = h_2(x, \theta_2)$. To ensure that $\theta_1$ and $\theta_2$ can be learned efficiently, we also need suitable conditions for $h_1$ and $h_2$. 

\begin{definition}
\label{def:identifiable-functions}
The family of functions $\{(h_1(x,\theta_1), h_2(x, \theta_2)) : \theta_1 \in \Theta_1, \theta_2 \in \Theta_2\}$ is called \underline{identifiable with complexity level $k$} respect to $\mathbb{P}_X$ if the following conditions hold:
\begin{itemize}
\item[(A4.)] For every set of $k+1$ distinct elements $(\theta_{11}, \theta_{21}),..., (\theta_{1(k+1)}, \theta_{2(k+1)})\in \Theta_1 \times \Theta_2$, there exists a subset $A \subset \mathcal{X}$, $\mathbb{P}_X(A)>0$ such that\\ $(h_1(x,\theta_{11}), h_2(x,\theta_{21})),..., $ $(h_1(x,\theta_{1(k+1)}), h_2(x,\theta_{2(k+1)}))$ are distinct for every $x \in A$;
\item[(A5.)] Moreover, if there are vector $\beta_1 \in \mathbb{R}^{d_1}, \beta_2 \in \mathbb{R}^{d_2}$ such that
{\fontsize{9.5}{10}\selectfont
\begin{equation}
    \beta_1^{\top} \dfrac{\partial}{\partial \theta_1} h_1(x, \theta_{1j}) = 0, \quad \beta_2^{\top} \dfrac{\partial}{\partial \theta_2} h_2(x, \theta_{2j}) = 0 \quad \forall x \in A\setminus N, \quad j = 1,\dots, k+1,
    \nonumber
\end{equation}}
where $N$ is a zero-measure set (i.e., $\Pbb_X(N) = 0$), then $\beta_1 = 0$ and $\beta_2 = 0$. 
\end{itemize}
\end{definition}

\begin{remark}\label{remark:definition-identifiable-functions}
    \begin{enumerate}
        \item Condition (A4.) is necessary for identifying regression mixture components. Indeed, for two distinct pairs $(\theta_1, \theta_2)$ and $(\theta_1', \theta_2')$ in $\Theta_1\times \Theta_2$, there may exists some point $x\in \Xcal$ so that $h_1(x, \theta_1) = h_1(x, \theta_1'), h_2(x, \theta_2) = h_2(x, \theta_2')$. If we only observe data $(x, y)$ at such $x$, it is not possible to distinguish between $(\theta_1, \theta_2)$ and $(\theta_1', \theta_2')$. 
        \item In linear models, condition (A5.) reads that there is no multicollinearity: If we model $h_1(x, \theta) = \theta_1 \psi_1(x) + \dots + \theta_{d_1} \psi_{d_1}(x)$, where $\psi_i$'s are pre-defined functions, then by substitute this into condition (A5.), we have $\psi_1,\dots, \psi_{d_1}$ must be linearly independent as functions of $x$. Otherwise, the model is not identifiable with respect to parameters $\theta_j$'s.
        
        \item 
        (A4.) and (A5.) can be viewed as generalization (to non-linear) and population versions of condition (1b) and (2) in \cite{grun2008finite} (or condition in Theorem 2.2 in \cite{hennig2000identifiablity}).
    \end{enumerate}
\end{remark}
Hence, the two conditions in Definition~\ref{def:identifiable-functions} are necessary for learning parameters of the mixture of regression models. The following result shows that 
Definition~\ref{def:strong_identifiability_f} and Definition~\ref{def:identifiable-functions} give sufficient conditions to deduce the strong identifiability given by Definition~\ref{def:strong_identifiability_hierarchy}, where the chain rule plays an essential role in its proof.

\begin{theorem}\label{thm:identifiable-equivalent}
For any complexity level $k$, if the family of distributions $f$ is strongly identifiable in order $r$ (via (A3.)) and the family of functions $h$ is identifiable (via (A4.) and (A5.)), then the family of conditional density $f(y|x)$ is strongly identifiable in order $r$, where $r=1,2$.
\end{theorem}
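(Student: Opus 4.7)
The plan is to reduce the strong identifiability of the conditional density family to the already-assumed identifiability of the base family $f$ via (A3.) and of the link functions via (A4.)--(A5.). The strategy has three steps, applied uniformly in both cases $r=1$ and $r=2$: a chain-rule expansion, pointwise-in-$x$ elimination of $f$-derivatives using (A3.), and elimination of the remaining $h$-derivatives using (A5.).

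First, writing $f_j(y|x) = f(y\mid h_1(x,\theta_{1j}), h_2(x,\theta_{2j}))$ and applying the chain rule turns every $\theta$-derivative of $f_j$ into a sum of products of $(\mu,\phi)$-derivatives of $f$, evaluated at the image point $(h_1(x,\theta_{1j}), h_2(x,\theta_{2j}))$, with gradients (and, for $r=2$, Hessians) of $h_1, h_2$. Substituting into the hypothesized identity, for each $j$ one obtains scalar coefficients, depending on $x$ and on the candidate vectors $(\alpha_j,\beta_j,\gamma_j,\rho_{jt},\nu_{jt})$, that multiply $f, \partial_\mu f, \partial_\phi f$ and, when $r=2$, also $\partial_\mu^2 f, \partial_\phi^2 f, \partial_\mu\partial_\phi f$ at the image point. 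By (A4.) there is a measurable set $A\subset\Xcal$ with $\Pbb_X(A)>0$ on which the $k$ image points are pairwise distinct; fixing any such $x\in A$ and viewing the identity in $y$ as a vanishing linear combination of $f$ and its $(\mu,\phi)$-derivatives at $k$ distinct points of $H$, condition (A3.) forces every scalar coefficient to vanish, and this happens for $\Pbb_X$-a.e.\ $x\in A$.

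For $r=1$ one reads off $\alpha_j=0$ together with $\beta_j^\top \nabla_{\theta_1} h_1(x,\theta_{1j})=0$ and $\gamma_j^\top \nabla_{\theta_2} h_2(x,\theta_{2j})=0$ a.e.\ on $A$, so (A5.) immediately yields $\beta_j=\gamma_j=0$. For $r=2$, the coefficients attached to the second-order $f$-derivatives are sums of squares such as $\sum_t (\rho_{jt}^\top \nabla_{\theta_1}h_1(x,\theta_{1j}))^2$ (from $\partial_\mu^2 f$), together with a bilinear cross term from $\partial_\mu\partial_\phi f$; their vanishing on $A$ forces $\rho_{jt}^\top \nabla_{\theta_1} h_1(x,\theta_{1j})=0$ and $\nu_{jt}^\top \nabla_{\theta_2} h_2(x,\theta_{2j})=0$ a.e., whence $\rho_{jt}=\nu_{jt}=0$ by (A5.). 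Back-substituting these zeros into the coefficient of $\partial_\mu f$ (which also receives contributions from the Hessians of $h_1$) purifies it into $\beta_j^\top \nabla_{\theta_1} h_1(x,\theta_{1j})=0$, and a second application of (A5.) gives $\beta_j=\gamma_j=0$, completing the argument.

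The main obstacle will be the second-order bookkeeping: because the chain rule feeds the Hessian of $h$ into the coefficient of $\partial_\mu f$ rather than $\partial_\mu^2 f$, one must dispose of the quadratic data $\rho_{jt}, \nu_{jt}$ \emph{before} the linear data $\beta_j, \gamma_j$, exploiting the positivity of the sum-of-squares to strip them out cleanly. A smaller point deserving care is that (A5.) constrains a single vector across all $k+1$ parameter choices; applying it one index $j$ at a time (extending the given $k$-tuple to a $(k+1)$-tuple as permitted by the definition) is what converts each individual relation $\beta_j^\top \nabla_{\theta_1} h_1(x,\theta_{1j})=0$ on $A$ into $\beta_j=0$.
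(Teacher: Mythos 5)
Your proposal is correct and follows essentially the same route as the paper's proof: chain-rule expansion, use of (A4.) to obtain a positive-measure set of $x$ where the image points are distinct, pointwise application of (A3.) to kill the scalar coefficients, and then (A5.) to recover the parameter vectors — including the key second-order step of eliminating $\rho_{jt},\nu_{jt}$ via the sum-of-squares coefficients of $\partial_\mu^2 f$ and $\partial_\phi^2 f$ before back-substituting to purify the coefficient of $\partial_\mu f$ into $\beta_j^\top\nabla_{\theta_1}h_1=0$. No further comment is needed.
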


\subsection{Characterization of strong identifiability} \label{subsec: strong identifiability}
Theorem \ref{thm:identifiable-equivalent} provides a simple recipe for establishing the strong identifiability of the conditional densities arising in regression mixture models~\eqref{eq:true_model} by checking the identifiability conditions of family $f$ and family $h$. 
In the following, we provide specific examples.

\begin{proposition}\label{prop:identifiable-f}
(a) The family of location normal distribution $\{\Ncal(y|\mu, \sigma^2) : \mu \in \mathbb{R}\}$ with fixed variance $\sigma^2$ is identifiable in the second order, for $\Ncal(y|\mu, \sigma^2) = \exp(-(y-\mu)^2/2\sigma^2)$. The location-scale family $\{\Ncal(y|\mu, \sigma^2) : \mu \in \mathbb{R}, \sigma^2 \in \Rbb_{+}\}$ is identifiable in the first order; \\
(b) The Poisson family $\{\Poi(y|\lambda) : \lambda \in \mathbb{R}^{+}\}$ is identifiable in the second order;\\ 
(c) The family of Binomial distributions $\{\Bin(y|N, q) : q \in [0, 1]\}$ with fixed number of trials $N$ is identifiable in the first order with complexity level $k$ if $2k \leq N+1$, and is identifiable in the second order with complexity level $k$ if $3k \leq N+1$;\\ 
(d) The family of negative binomial distributions $\{\NB(y|\mu,\phi): \mu\}$ with fixed $\phi \in \mathbb{R}_+$ is identifiable in the second order. 
\end{proposition}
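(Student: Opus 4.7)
The plan for all four parts is to pass to a transform in which parameter derivatives of $f$ become multiplicative operators: the Fourier transform in $y$ (characteristic function) for the continuous part (a), and the probability generating function $g(s\mid\cdot)=\sum_{y}f(y\mid\cdot)s^{y}$ for the discrete parts (b)--(d). Under such a transform, an identifiability relation $\sum_{j=1}^{k}\sum_{n_{1}+n_{2}\le r}\alpha_{n_{1},n_{2}}^{(j)}\partial_{\mu}^{n_{1}}\partial_{\phi}^{n_{2}}f(y\mid\mu_{j},\phi_{j})=0$ becomes a functional identity $\sum_{j}P_{j}(u)E_{j}(u)=0$, in which $E_{j}$ is the baseline transform at the $j$-th parameter and $P_{j}$ is a polynomial whose coefficients are exactly the unknowns $\alpha_{n_{1},n_{2}}^{(j)}$. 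The final step invokes the classical fact that when the $E_{j}$'s have pairwise distinct ``exponents'' (or pairwise distinct singular points), they are linearly independent over the polynomial ring in $u$, forcing every $P_{j}\equiv 0$ and hence every $\alpha_{n_{1},n_{2}}^{(j)}=0$.

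For part (a), $\widehat{\Ncal}(t\mid\mu,\sigma^{2})=e^{i\mu t-\sigma^{2}t^{2}/2}$ satisfies $\partial_{\mu}^{n_{1}}\partial_{\sigma^{2}}^{n_{2}}\widehat{\Ncal}=(it)^{n_{1}}(-t^{2}/2)^{n_{2}}\widehat{\Ncal}$. In the location case ($\sigma^{2}$ fixed), dividing out the nonvanishing factor $e^{-\sigma^{2}t^{2}/2}$ reduces the equation to $\sum_{j}P_{j}(t)e^{i\mu_{j}t}=0$ with $\deg P_{j}\le r$; distinctness of the $\mu_{j}$ forces each $P_{j}\equiv 0$, which closes both $r=1$ and $r=2$. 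In the location-scale case at $r=1$, the equation becomes $\sum_{j}\bigl[\alpha_{0,0}^{(j)}+\alpha_{1,0}^{(j)}(it)-\tfrac{1}{2}\alpha_{0,1}^{(j)}t^{2}\bigr]e^{i\mu_{j}t-\sigma_{j}^{2}t^{2}/2}=0$. To handle components that share $\mu$ but differ in $\sigma^{2}$, I plan to group terms by the value of $\sigma_{j}^{2}$ and compare Gaussian decay rates as $|t|\to\infty$: the group with the smallest $\sigma^{2}$ dominates asymptotically, so its coefficients must satisfy $\sum P_{j}(t)e^{i\mu_{j}t}=0$ within the group (with distinct $\mu_{j}$), forcing each polynomial prefactor to vanish; iterating on the remaining groups finishes the argument.

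For parts (b) and (d), the pgf's $g(s\mid\lambda)=e^{\lambda(s-1)}$ and $g(s\mid\mu,\phi)=(1+\mu(1-s)/\phi)^{-\phi}$ have derivatives $\partial_{\lambda}^{n}g=(s-1)^{n}e^{\lambda(s-1)}$ and $\partial_{\mu}^{n}g=c_{n}(1-s)^{n}(1+\mu(1-s)/\phi)^{-\phi-n}$. For Poisson, distinct $\lambda_{j}$ yield linearly independent exponentials $e^{\lambda_{j}(s-1)}$ over $\mathbb{C}[s]$ by the standard dominance argument, closing second-order identifiability at once. For negative binomial, the factors $(1+\mu_{j}(1-s)/\phi)^{-\phi-n}$ have pairwise distinct branch singularities at $s=1+\phi/\mu_{j}$; analyzing the leading singular expansion of the transformed identity at each such point in turn isolates the polynomial prefactor attached to $\mu_{j}$ and forces it to vanish identically. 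Because the prefactor has degree $\le 2$ and the order of the singularity is at least $\phi+2\ge 3$ (for integer $\phi$, with a parallel branch-power argument otherwise), enough Laurent coefficients are available to pin down all three $\alpha$'s per component.

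Part (c) is structurally different because the response is confined to $\{0,1,\dots,N\}$, so every admissible function lies in the $(N+1)$-dimensional space of polynomials in $s$ of degree $\le N$, and at most $N+1$ such functions can be linearly independent. With $g(s\mid q)=(1+q(s-1))^{N}$ one has $\partial_{q}^{n}g=N(N-1)\cdots(N-n+1)(s-1)^{n}(1+q(s-1))^{N-n}$, so the identifiability equation reduces to a polynomial identity in $s$ with $(r+1)k$ unknowns sitting in a space of dimension $N+1$; this pinpoints the thresholds $2k\le N+1$ for $r=1$ and $3k\le N+1$ for $r=2$. The dimension count is only a necessary condition, and upgrading it to a linear-independence statement is the main technical obstacle I expect. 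The plan is a Vandermonde-style argument: setting $t=s-1$, the identity reads $\sum_{j=1}^{k}(1+q_{j}t)^{N-r}P_{j}(t)=0$ with $\deg P_{j}\le r$; since the ideals $(1+q_{j}t)^{N-r}$ in $\mathbb{R}[t]$ are pairwise coprime, evaluating the polynomial and its derivatives up to order $N-r-1$ at each of the distinct points $t=-1/q_{j}$ produces a block upper-triangular system in the unknowns whose diagonal blocks are generalized Vandermonde matrices in the $q_{j}$'s, hence invertible under the stated bound on $k$.
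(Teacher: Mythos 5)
Your transform strategy is workable for parts (a), (b) and (d). For (a) and (b) the paper simply cites Chen (1995) and Ho--Nguyen (2016), so your characteristic-function and generating-function arguments are supplying proofs the paper omits; they are standard and correct in outline (for the location-scale case the clean version sends $t$ along the imaginary axis so that the component with the largest variance, and within it the largest mean, dominates). For (d) your route is genuinely different from the paper's: the paper works directly on the pmf, divides the identity by the dominant geometric factor $\bigl(\mu_1/(\phi+\mu_1)\bigr)^{y}$ and lets $y\to\infty$ to peel off $\gamma_1,\beta_1,\alpha_1$ in turn, whereas you read off the distinct blow-up rates $(s_j-s)^{-\phi}$, $(s_j-s)^{-\phi-1}$, $(s_j-s)^{-\phi-2}$ of the generating function at the smallest branch point $s_{j}=1+\phi/\mu_j$. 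Both work; note you only need $\phi>0$, not $\phi+2\ge 3$ or integrality of $\phi$.

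The gap is in part (c), which is exactly where the paper invests its effort. Your reduction to $\sum_{j=1}^{k}(1+q_jt)^{N-r}P_j(t)=0$ with $\deg P_j\le r$ is fine, and you correctly flag that the dimension count is only necessary; but the proposed upgrade fails. If you evaluate $F(t)=\sum_j(1+q_jt)^{N-r}P_j(t)$ and its derivatives of order $m\le N-r-1$ at $t=-1/q_i$, the $i$-th summand contributes \emph{zero} to every one of those equations, because $(1+q_it)^{N-r}P_i(t)$ vanishes there to order at least $N-r$. Hence the block pairing component $i$'s unknowns with the equations at $-1/q_i$ is the zero block rather than an invertible Vandermonde block, and for $k\ge 3$ no ordering of components makes the system triangular, since each block of equations involves every component except its own. (The scheme also degenerates when some $q_j=0$, which is permitted since $q\in[0,1]$.) The linear-independence fact you need is true, but it is precisely the nonsingularity of a confluent Vandermonde matrix: the paper writes the $(N+1)\times (r+1)k$ coefficient matrix of the system, multiplies by a unitriangular matrix of binomial coefficients to reduce to the square confluent Vandermonde matrix with $r+1$ derivative columns per node $q_j$, and shows its determinant is a nonzero product of powers of the differences $q_i-q_j$ (e.g.\ $\prod_{i<j}(q_i-q_j)^{4}$ in the first-order case) via a polynomial-divisibility argument. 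Substituting that determinant computation, or any standard proof of confluent Vandermonde nonsingularity, for your evaluation scheme would close the gap.
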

The identifiability conditions (A4.) and (A5.) usually hold for parametric models, as we see below. We first define a general class of functions:
\begin{definition}\label{def:finite-dim-class}
    We say a family of functions $\{h(x, \theta): \theta\in \Theta\}$ is completely identifiable if for any $\theta \neq \theta' \in \Theta$, we have $h(x, \theta) \neq h(x, \theta')$ almost surely in $\Pbb_X$. 
\end{definition}

\begin{proposition}\label{prop:complete-ident}
    If $h_1$ and $h_2$ are both completely identifiable, then the family of functions $\{(h_1, h_2)\}$ satisfies condition (A4.). 
\end{proposition}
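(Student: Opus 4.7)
The plan is a straightforward union-of-null-sets argument. Fix any $k+1$ distinct pairs $(\theta_{1i},\theta_{2i}) \in \Theta_1\times\Theta_2$ for $i=1,\dots,k+1$. For each ordered pair of indices $i\neq j$, I want to show that the "collision set"
\begin{equation*}
B_{ij} \;=\; \bigl\{x\in \Xcal : h_1(x,\theta_{1i})=h_1(x,\theta_{1j}) \text{ and } h_2(x,\theta_{2i})=h_2(x,\theta_{2j})\bigr\}
\end{equation*}
has $\Pbb_X$-measure zero. Since $(\theta_{1i},\theta_{2i})\neq (\theta_{1j},\theta_{2j})$, at least one of the coordinates must differ: either $\theta_{1i}\neq \theta_{1j}$, in which case $B_{ij}\subseteq \{x:h_1(x,\theta_{1i})=h_1(x,\theta_{1j})\}$ and complete identifiability of $h_1$ (Definition~\ref{def:finite-dim-class}) gives $\Pbb_X(B_{ij})=0$; or $\theta_{2i}\neq \theta_{2j}$, in which case $B_{ij}\subseteq \{x:h_2(x,\theta_{2i})=h_2(x,\theta_{2j})\}$ and complete identifiability of $h_2$ gives $\Pbb_X(B_{ij})=0$.

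The set $B = \bigcup_{1\le i<j\le k+1} B_{ij}$ is a finite union of $\Pbb_X$-null sets, hence $\Pbb_X(B)=0$. Taking $A = \Xcal \setminus B$ then yields $\Pbb_X(A) = 1 > 0$, and by construction, for every $x\in A$ the $k+1$ pairs $(h_1(x,\theta_{1i}),h_2(x,\theta_{2i}))$ are pairwise distinct, which is exactly what condition (A4.) requires.

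There is no real obstacle here; the only thing worth flagging is that one must unpack "distinct pairs in $\Theta_1\times\Theta_2$" carefully, i.e. that distinct pairs can agree in one coordinate but must differ in the other, so that the case split above is exhaustive. The proof is essentially a one-paragraph argument and can be written tersely.
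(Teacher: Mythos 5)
Your proof is correct and follows essentially the same route as the paper's: for each pair of distinct parameter tuples, at least one coordinate differs, so complete identifiability of the corresponding $h_u$ forces the collision set to be $\Pbb_X$-null, and the finite union of these null sets is null, leaving a full-measure set $A$ on which the $(h_1,h_2)$ values are pairwise distinct. No gaps.
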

Most functions used in parametric regression mixture models are completely identifiable.
\begin{proposition}\label{prop:family complete_ident}
    Suppose that $\Pbb_X$ has a density with respect to Lebesgue measure on $\Xcal$, then the following families of functions are completely identifiable and satisfy condition (A5.):\\
    (a) Polynomial of finite dimensions 
    {\fontsize{9}{10}\selectfont $h(x, \theta) = \sum_{d_1+\dots+ d_p\leq d, d_i\geq 0} \theta_{(d_1,\dots, d_p)} x_1^{d_1}\dots x_{p}^{d_p}$}, where $d \in \mathbb{N}_+$ and $\theta = (\theta_{(d_1,\dots, d_p)}: d_i\geq 0, \sum_{i=1}^{p} d_i \leq d)$; \\
    (b) Trigonometric polynomials in $\Rbb$: {\fontsize{9}{10}\selectfont $h(x, \theta) = a_0 + \sum_{n=1}^{d} b_{n} \sin(n x) + \sum_{n=1}^{d} c_{n} \cos(n x)$}, where $\theta = (a_0, b_1, \dots, b_d, c_1, \dots, c_d)$;\\
    (c) Mixtures of polynomials and trigonometric polynomials as in (a) and (b): $h(x, \theta) = \sum_{n=0}^{d} a_{n} x^{n} + \sum_{n=1}^{d} b_{n} \sin(n x) + \sum_{n=1}^{d} c_{n} \cos(n x)$, where $\theta = (a_0,\dots, a_d,$ $ b_1, \dots, b_d, c_1, \dots, c_d)$;\\
    (d) $h(x, \theta) = g(p(x, \theta))$, where $g$ is a diffeomophism and $p(x, \theta)$ is completely identifiable and satisfies condition (A5.). 
\end{proposition}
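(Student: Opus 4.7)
The plan is to prove complete identifiability and condition (A5.) for each of the four families separately, exploiting a common mechanism: in cases (a)--(c) the function $h(x,\theta)$ is linear in $\theta$, so that both the difference $h(x,\theta)-h(x,\theta')$ and the gradient $\partial_{\theta} h(x,\theta)$ are finite linear combinations of fixed basis functions (monomials, sines, cosines). Consequently, both properties reduce to the single claim that no non-trivial linear combination of these basis functions can vanish on a $\Pbb_X$-positive-measure subset of $\Xcal$. The assumption that $\Pbb_X$ is absolutely continuous with respect to Lebesgue measure is exactly what converts ``$\Pbb_X$-positive'' into ``Lebesgue-positive,'' so the first step is to isolate this observation and reuse it throughout the other cases.

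For case (a), I would invoke the classical fact that any nonzero polynomial on $\Rbb^p$ has zero set of Lebesgue measure zero. If $\theta \neq \theta'$, then $h(x,\theta) - h(x,\theta') = \sum_{\alpha}(\theta_\alpha - \theta'_\alpha)\, x^{\alpha}$ is a nonzero polynomial, hence nonzero off a Lebesgue-null (therefore $\Pbb_X$-null) set, yielding complete identifiability. For (A5.), since $\partial h/\partial \theta_\alpha = x^{\alpha}$ does not depend on $\theta$, the hypothesis becomes that the polynomial $\sum_{\alpha} \beta_\alpha\, x^{\alpha}$ vanishes on a set of positive Lebesgue measure, which forces it to be the zero polynomial and hence $\beta=0$. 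For cases (b) and (c), the relevant basis functions $1,\, x,\, x^2,\ldots,\, x^d,\, \sin(nx),\, \cos(nx)$ are real-analytic on $\Rbb$, and a real-analytic function that vanishes on a set with an accumulation point vanishes identically; any set of positive Lebesgue measure in $\Rbb$ admits accumulation points (its Lebesgue density points). Combined with the standard linear independence of this hybrid basis over $\Rbb$ (easily verified, for instance, by contrasting polynomial growth against the boundedness of the trigonometric part together with direct evaluation), both properties follow by the same template as case (a).

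For case (d), complete identifiability of $h = g\circ p$ is immediate from the bijectivity of the diffeomorphism $g$: $h(x,\theta)=h(x,\theta')$ is equivalent to $p(x,\theta)=p(x,\theta')$, so the $\Pbb_X$-almost sure uniqueness transfers from $p$ to $h$. For (A5.), the chain rule gives $\partial_{\theta} h(x,\theta) = g'(p(x,\theta))\, \partial_{\theta} p(x,\theta)$, and since $g$ is a diffeomorphism, $g'$ is everywhere non-vanishing, so the identity $\beta^{\top}\partial_{\theta} h(x,\theta_j)=0$ is pointwise equivalent to $\beta^{\top}\partial_{\theta} p(x,\theta_j)=0$. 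Thus (A5.) for $h$ reduces directly to (A5.) for $p$, which holds by assumption.

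The main obstacle, to the extent there is one, is making the measure-theoretic reduction precise: one must show that if a function vanishes on $A\setminus N$ with $\Pbb_X(A)>0$ and $\Pbb_X(N)=0$, then it vanishes on a set of positive Lebesgue measure. This follows immediately from absolute continuity, since $\Pbb_X$ being dominated by Lebesgue measure makes every $\Pbb_X$-null set Lebesgue-null, while $\Pbb_X(A)>0$ forces $A$ to have positive Lebesgue measure (otherwise the integral defining $\Pbb_X(A)$ would vanish), so $A\setminus N$ still has positive Lebesgue measure. With this measure-theoretic bridge in hand, each of the four cases is closed by a short invocation of the appropriate algebraic or analytic fact about zero sets of polynomials and real-analytic functions.
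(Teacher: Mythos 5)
Your proof is correct in substance and reaches the same conclusions, but it travels a partly different route from the paper's. For part (a), you cite the classical fact that the zero set of a nonzero polynomial on $\Rbb^p$ is Lebesgue-null, whereas the paper proves this from scratch by induction on $p$, splitting the zero set according to whether the leading coefficients (as polynomials in the first $p-1$ coordinates) vanish and applying Fubini's theorem; your citation is legitimate but the paper's argument is self-contained. For parts (b) and (c), the paper complexifies: it substitutes $y=e^{ix}$ to reduce a trigonometric polynomial to an algebraic polynomial in $\mathbb{C}$ with finitely many roots, and for the mixed case invokes the isolated-zeros property of holomorphic functions. You instead work directly on $\Rbb$ with the identity theorem for real-analytic functions (a non-trivial real-analytic function cannot vanish on a set with an accumulation point, and every set of positive Lebesgue measure in $\Rbb$ has one), paired with linear independence of the basis $1,x,\dots,x^d,\sin(nx),\cos(nx)$. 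This is a cleaner, uniform treatment of (b) and (c), at the cost of having to verify the linear independence of the hybrid basis separately, which you only sketch; the paper's complexification gets independence and the measure-zero statement in one stroke. Part (d) is identical in both arguments. Your observation that linearity in $\theta$ collapses complete identifiability and (A5.) into a single statement about the basis functions is a nice structural point the paper leaves implicit.

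One small correction to your measure-theoretic bridge: absolute continuity $\Pbb_X \ll \lambda$ says that Lebesgue-null sets are $\Pbb_X$-null, \emph{not} that $\Pbb_X$-null sets are Lebesgue-null, so your claim that $N$ being $\Pbb_X$-null makes it Lebesgue-null is backwards (take $\Pbb_X$ uniform on $[0,1]$ and $N=[2,3]$). The conclusion you need survives via the correct direction: since $\Pbb_X(N)=0$, one has $\Pbb_X(A\setminus N)=\Pbb_X(A)>0$, and then the contrapositive of absolute continuity applied to the set $A\setminus N$ itself gives $\lambda(A\setminus N)>0$. This is a one-line repair and does not affect the rest of the argument.
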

\begin{remark}
In a general linear model,  $h(x, \theta) = \exp(\theta^{\top} x) \in \Rbb_{+}$ or $h(x, \theta) = \sigma(\theta^{\top} x)\in [0,1]$, where $\sigma$ is the sigmoid (inverse logit) function. Both the exponential function and sigmoid function are one-to-one, and $\theta^{\top} x$ is a first-order polynomial, so the above results apply.
\end{remark}

\subsection{Inverse bounds for mixture of regression models}\label{subsec:inverse-bounds} 
At the heart of our convergence theory for parameter learning in regression mixture models lies a set of inverse bounds, which are given as follows. 


\begin{theorem}\label{thm:inverse-bounds}
\begin{enumerate}
    \item [(a)] (Exact-fitted) Given $G_0 \in \Ecal_{k_0}(\Theta)$ for $k_0 \in \mathbb{N}_+$. Suppose that the family of conditional densities $\{f(\cdot|h_1, h_2)\}$ is identifiable in the first order, and the family of functions $(h_1, h_2)$ is identifiable (with the complexity level $k_0$). Then for all $G\in \Ecal_{k_0}(\Theta)$, there holds
\begin{equation}\label{eq:inverse-bound-exact-fitted}
    \Ebb_{X} d_{TV}  (f_{G}(\cdot|X), f_{G_0}(\cdot|X)) \succcurlyeq W_1(G, G_0),
\end{equation}
where the constant in this inequality depends only on $G_0, h_1, h_2, f, \Pbb_X$, and $\nu$ (but not on $G$).
\item  [(b)] (Over-fitted) Given $G_0 \in \Ecal_{k_0}(\Theta)$ for $k_0 \in \mathbb{N}_+$ and $k_0 \leq \Kup$ for some natural number $\Kup$. Suppose that the family of conditional densities $\{f(\cdot|h_1, h_2)\}$ is identifiable in the second order, and the family of functions $(h_1, h_2)$ is identifiable (with the complexity level $\Kup$). Then for all $G\in \Ocal_{\Kup}(\Theta)$, there holds
\begin{equation}\label{eq:inverse-bound-over-fitted}
    \Ebb_{X} d_{TV}(f_{G}(\cdot|X), f_{G_0}(\cdot|X)) \succcurlyeq W_2^2(G, G_0).
\end{equation}
where the constant in this inequality depends only on $G_0, h_1, h_2, f, \Pbb_X$, and $\nu$ (but not on $G$).
\end{enumerate}

\end{theorem}
If the true number of components $k_0$ is known, then Theorem \ref{thm:inverse-bounds} entails that the convergence rate for parameter estimations can be as fast as the convergence rate for conditional densities under the total variation distance. However, in practice, we may not know $k_0$ and fit the system by a large number $\Kup$. In this over-fitted regime, provided that the identifiability conditions for distribution $f$ and function $h$ are satisfied in the second order, the convergence rate for parameter estimation may be twice as slow as that of the conditional densities. 


%

Based on the convergence behavior of the regression mixture model's parameters, we can establish guarantees on the prediction error for the response variable.
The following bounds will be useful for deducing the prediction error bounds from that of parameter estimates. 
\begin{proposition}
\label{lem:inverse-bounds-prediction-error}
Suppose that the density $f$ and link functions $h_1, h_2$ are uniformly Lipschitz, then for all $G\in \Ocal_{\Kup}(\Theta), G = \sum_{j=1}^{\Kup} p_j \delta_{(\theta_{1j}, \theta_{2j})}$ and $r\geq 1$, we have
\begin{equation*}
    W_r(G, G_0) \succcurlyeq \Ebb_X W_r\left(\sum_{j=1}^{\Kup} p_j \delta_{(h_1(X, {\theta}_{1j}), h_2(X, {\theta}_{2j}))}, \sum_{j=1}^{k_0} {p}^{0}_j \delta_{(h_1(X, {\theta}^{0}_{1j}), h_2(X, {\theta}^{0}_{2j}))}\right), 
\end{equation*}
and
\begin{equation*}
    W_r(G, G_0) \succcurlyeq \Ebb_{X} \left|\sum_{j=1}^{\Kup} p_j h_u(X, \theta_{uj}) - \sum_{i=1}^{k_0} p_i^0 h_u(X, \theta^0_{ui}) \right|  \quad \forall u = 1,2,
\end{equation*}
where the constants in those inequalities only depend on Lipschitz constants of $h_1$ and $h_2$.
\end{proposition}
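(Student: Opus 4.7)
The plan is to transport the optimal coupling between the mixing measures $G$ and $G_0$ through the Lipschitz link maps, thereby producing valid (but possibly sub-optimal) couplings for the pushforward measures on $H_1 \times H_2$ at every covariate value $x$. First I would fix an optimal coupling $q^{\ast} = (q_{ij}^{\ast})_{i,j}$ on $[1, \Kup] \times [1, k_0]$ witnessing $W_r(G, G_0)$, i.e.\ one with marginals $\sum_i q_{ij}^{\ast} = p_j$ and $\sum_j q_{ij}^{\ast} = p_i^0$ such that
\begin{equation*}
W_r^r(G, G_0) = \sum_{i,j} q_{ij}^{\ast}\, \rho^r\bigl((\theta_{1j}, \theta_{2j}), (\theta_{1i}^0, \theta_{2i}^0)\bigr).
\end{equation*}
The key observation is that for every $x \in \Xcal$ the same $q^{\ast}$, viewed as a joint distribution on the index sets, is a feasible transport plan between $\sum_j p_j \delta_{(h_1(x, \theta_{1j}), h_2(x, \theta_{2j}))}$ and $\sum_i p_i^0 \delta_{(h_1(x, \theta_{1i}^0), h_2(x, \theta_{2i}^0))}$, because the marginal constraints are inherited by any pointwise relabelling through $(h_1(x,\cdot), h_2(x,\cdot))$.

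For the first inequality, plugging $q^{\ast}$ into the definition of $W_r$ on the image side and invoking the Lipschitz bounds~\eqref{eq:uniform-Lipschitz-h} bounds the pushforward $W_r^r$ pointwise in $X$ by
\begin{equation*}
\sum_{i,j} q_{ij}^{\ast}\bigl((h_1(x, \theta_{1j}) - h_1(x, \theta_{1i}^0))^2 + (h_2(x, \theta_{2j}) - h_2(x, \theta_{2i}^0))^2\bigr)^{r/2} \le \max(c_1,c_2)^r\, W_r^r(G,G_0),
\end{equation*}
where I use the elementary inequality $(c_1^2 a^2 + c_2^2 b^2)^{r/2} \le \max(c_1,c_2)^r (a^2+b^2)^{r/2}$. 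Taking $r$-th roots and then expectation against $\Pbb_X$ yields the first inequality with $\overline{C}_1 = 1/\max(c_1, c_2)$.

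For the second inequality, I would instead use the marginal constraints of $q^{\ast}$ to rewrite the signed difference of conditional means as
\begin{equation*}
\sum_j p_j h_u(x, \theta_{uj}) - \sum_i p_i^0 h_u(x, \theta_{ui}^0) = \sum_{i,j} q_{ij}^{\ast}\bigl( h_u(x, \theta_{uj}) - h_u(x, \theta_{ui}^0) \bigr),
\end{equation*}
apply the triangle inequality, the Lipschitz bound on $h_u$, and the elementary estimate $\|\theta_{uj} - \theta_{ui}^0\| \le \rho((\theta_{1j}, \theta_{2j}), (\theta_{1i}^0, \theta_{2i}^0))$, and close with Jensen's inequality $\sum_{i,j} q_{ij}^{\ast}\, \rho(\cdot,\cdot) \le \bigl(\sum_{i,j} q_{ij}^{\ast}\, \rho^r(\cdot,\cdot)\bigr)^{1/r} = W_r(G, G_0)$, which is valid because $q^{\ast}$ is a probability measure and $r \ge 1$. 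Integrating against $\Pbb_X$ then gives the second inequality with $\overline{C}_2 = 1/c_u$.

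There is no serious technical obstacle in this argument: it is a chain of triangle, Lipschitz, and Jensen inequalities. The only conceptual step worth flagging is the transfer of an optimal coupling in the parameter space to a feasible coupling in the image space; this remains valid pointwise in $x$ even when two parameter atoms $(\theta_{1j}, \theta_{2j})$ collapse to the same image under $(h_1(x,\cdot), h_2(x,\cdot))$, since the coupling lives on index pairs rather than on the support of the pushforward measures, and a valid coupling always yields an upper bound on $W_r$.
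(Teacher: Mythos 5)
Your proposal is correct and follows essentially the same route as the paper's own proof: push the (optimal) coupling between $G$ and $G_0$ through the link maps to obtain a feasible coupling for the image measures at each $x$, apply the Lipschitz bounds, and for the second inequality use the marginal constraints plus Jensen's inequality (the paper phrases this last step as bounding by $c_u W_1(G,G_0)$ and invoking $W_1 \le W_r$, which is the same estimate). The only blemish is a harmless notational slip in your marginal constraints ($\sum_i q_{ij}^{\ast}$ should equal $p_j^0$ and $\sum_j q_{ij}^{\ast}$ should equal $p_i$, given your index ranges), which does not affect the argument.
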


\subsection{Consequences of lack of strong identifiability}\label{subsec:non-strong-ident}

Strong identifiability notions characterize the favorable conditions under which efficient regression learning is possible in the mixture setting. Next, we turn our attention to the consequence of the lack of strong identifiability. Firstly, we note that the normal distributions satisfy the well-known heat equation: $\dfrac{\partial^2 }{\partial \mu^2} \Ncal(y|\mu, \sigma^2) = \dfrac{\partial }{\partial (\sigma^2)} \Ncal(y|\mu, \sigma^2), \quad \forall \mu\in \Rbb, \sigma^2\in \Rbb_{+}$,
so the mixture of normal regression model no longer satisfies the strong identifiability condition in the second order. This (Fisher information matrix's) singularity structure is universal (i.e., holds for all $\mu, \sigma^2$). Therefore, the inverse bounds presented in Theorem~\ref{thm:inverse-bounds} may not hold and potentially lead to slow convergence rates for Gaussian mixtures (see also \cite{ho2019convergence}).

Another interesting example arises in the negative binomial regression mixture models, which have been utilized in the traffic analysis of heterogeneous environments \cite{park2010bias, PARK2009683}. These authors observed via many empirical experiments that the quality of parameter estimates and the prediction performance may be affected by the (overlapped) sample-mean values obtained from the data. However, there was a lack of precise theoretical understanding. Our theoretical framework can be applied to shed light on the behavior of this class of regression mixture model. It starts with the observation that the mixture of negative binomial distributions does not satisfy the first-order strongly identifiable condition. Moreover, we can identify precisely the instances where strong identifiability fails to hold and investigate the impact on the quality of parameter estimates and the prediction performance in such instances. 

First, we note that the mean-dispersion negative binomial conditional density $\{\NB(y|\mu, \phi):\mu\in \Rbb_{+},\phi\in \Rbb_{+}\}$ satisfies the following equation: 
\begin{equation}\label{eq:weak-ident-NB}
    \dfrac{\partial }{\partial \mu} \NB(y|\mu, \phi) = \dfrac{\phi}{\mu} \NB\left(y|\mu\frac{\phi+1}{\phi}, \phi+1\right) - \dfrac{\phi}{\mu} \NB(y|\mu, \phi),\,\forall y\in \Nbb.
\end{equation}
Thus, a 2-mixture of negative binomial distributions {\fontsize{9}{10}\selectfont $\NB(y|\mu_1,\phi_1)$} and {\fontsize{9}{10}\selectfont $\NB(y|\mu_2,\phi_2)$} such that
\begin{equation}\label{eq: Non-strong Nega}
        \dfrac{\mu_1}{\phi_1} = \dfrac{\mu_2}{\phi_2} \,\,\text{ and }\,\,
        \phi_1 = \phi_2 + 1
\end{equation}
does not satisfy the strong identifiability condition in the first order, resulting in slow convergence for parameter estimation. 
It can be seen via the following minimax lower bound: 
\begin{theorem}\label{thm:minimax-NB}
    Consider a mixture of negative binomial regression model with link functions $h(x, \theta_1) = \exp(\theta_{10} + (\bar{\theta}_1)^{\top}x), h(x, \theta_2) = \theta_2$ for $\theta_1 = [\theta_{10}, \bar{\theta}_1]\in \Rbb^{p+1}$ and $\theta_2 \in \Rbb_{+}$. Then for any measurable estimate $\hat{G}_n$ of the mixing measure $G$, the following holds for any $r\geq 1$:
    \begin{equation}
        \inf_{\hat{G}_n\in \Ecal_{k_0}} \sup_{G\in \Ecal_{k_0}} \Ebb_{\Pbb_{G}} W_r(\hat{G}_n, G)\succcurlyeq n^{-1/(4r)}.
    \end{equation}
\end{theorem}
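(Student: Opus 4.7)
The plan is to invoke Le Cam's two-point method. I will construct a pair $G_1, G_2 \in \Ecal_{k_0}(\Theta)$ satisfying (i) $W_r(G_1, G_2) \asymp n^{-1/(2r)}$ and (ii) $V(\Pbb_{G_1}^{\otimes n}, \Pbb_{G_2}^{\otimes n}) \leq 1/2$. Le Cam's lemma then delivers the minimax lower bound immediately.

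The construction is a weight perturbation based at the singular locus of equation \eqref{eq: Non-strong Nega}. Fix $k_0$ distinct atoms $\theta_j^* = (\theta_{1j}^*, \theta_{2j}^*) \in \Theta$ whose first two are chosen to satisfy $\bar{\theta}_{1,1}^* = \bar{\theta}_{1,2}^*$, $\phi_1^* = \phi_2^* + 1$, and $\theta_{10,1}^* - \theta_{10,2}^* = \log((\phi_2^* + 1)/\phi_2^*)$, so that $\mu_1^*(x)/\phi_1^* = \mu_2^*(x)/\phi_2^*$ holds identically in $x$. With positive weights $p_1, \ldots, p_{k_0}$ summing to $1$, define
\begin{equation*}
G_1 = \sum_{j=1}^{k_0} p_j \delta_{\theta_j^*}, \qquad G_2 = (p_1 + \epsilon_n)\delta_{\theta_1^*} + (p_2 - \epsilon_n)\delta_{\theta_2^*} + \sum_{j=3}^{k_0} p_j \delta_{\theta_j^*},
\end{equation*}
where $\epsilon_n = c/\sqrt{n}$ for a small constant $c > 0$ to be calibrated.

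Because $G_1$ and $G_2$ share support, the optimal $W_r$-coupling simply transports $\epsilon_n$ units of mass between $\theta_1^*$ and $\theta_2^*$, giving $W_r^r(G_1, G_2) = \epsilon_n \|\theta_1^* - \theta_2^*\|^r$ and hence $W_r(G_1, G_2) \asymp n^{-1/(2r)}$. The density difference is linear in $\epsilon_n$: $f_{G_2}(y|x) - f_{G_1}(y|x) = \epsilon_n\,(f_1 - f_2)(y|x)$ with $f_j(y|x) = \NB(y \mid \mu_j^*(x), \phi_j^*)$. A direct chi-squared bound then yields $\chi^2(\Pbb_{G_1} \| \Pbb_{G_2}) \leq C \epsilon_n^2$, where $C = \Ebb_X \int (f_1(y|X) - f_2(y|X))^2 / f_{G_2}(y|X)\, d\nu(y)$. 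Tensorizing and applying Pinsker's inequality gives $V(\Pbb_{G_1}^{\otimes n}, \Pbb_{G_2}^{\otimes n}) \leq c\sqrt{C/2}$, which is bounded by $1/2$ for sufficiently small $c$. Le Cam's lemma concludes $\inf_{\hat G_n}\sup_{G \in \Ecal_{k_0}} \Ebb_{\Pbb_G} W_r(\hat G_n, G) \gtrsim W_r(G_1, G_2)/4 \gtrsim n^{-1/(2r)}$.

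The main obstacle will be verifying that $C$ is finite with a constant independent of $n$. I plan to dominate $(f_1 - f_2)^2/f_{G_2} \leq 2(f_1 + f_2)/\min_j p_j$ using the crude bound $f_{G_2} \geq \min_j p_j f_j$, and then conclude integrability via the compactness of $\Theta$, the Lipschitz property of the link functions (which map $\Xcal$ into a compact subset of $(0,\infty)$), and standard moment/tail bounds for the negative binomial family. The singular configuration is not strictly required for the Le Cam argument itself, but is selected because it is precisely where first-order strong identifiability fails, which is what prevents any matching upper bound from being faster than $n^{-1/(2r)}$ and hence makes the lower bound tight.
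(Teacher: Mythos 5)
Your argument is correct, and it reaches the stated bound by a genuinely different route from the paper. The paper also runs a two-point argument (via Lemma~1 of \cite{yu1997assouad}), but its construction is a \emph{correlated} perturbation of a weight together with the intercept $\beta_{10}$, calibrated through the directional-derivative identity $\frac{\partial}{\partial\mu}\NB(y|\mu,\phi)=\frac{\phi}{\mu}\NB\bigl(y|\mu\frac{\phi+1}{\phi},\phi+1\bigr)-\frac{\phi}{\mu}\NB(y|\mu,\phi)$ at the singular configuration, so that the first-order Taylor term of $f_{G_n}-f_{G_0}$ cancels exactly, leaving $\sup_{x,y}|f_{G_n}-f_{G_0}|=O(n^{-2})$ while $W_r^r(G_n,G_0)\asymp n^{-1}$; indistinguishability of the $n$-fold products is then read off the Hellinger product formula. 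You instead perturb only the weights between two fixed distinct atoms, getting $W_r^r(G_1,G_2)\asymp\epsilon_n$ and $\chi^2(\Pbb_{G_1}\,\|\,\Pbb_{G_2})\lesssim\epsilon_n^2$ --- and note that your domination $(f_1-f_2)^2/f_{G_2}\leq 2(f_1+f_2)/\min_j p_j$ already makes the constant finite with no tail or moment estimates, since $f_1+f_2$ integrates to $2$ --- so both routes land on the same relation $\overline{d}_H\lesssim W_r^r$ and hence the same $n^{-1/(2r)}$ rate. What your approach buys is simplicity and generality; what it gives up is the connection to the singularity. Your construction never uses the negative binomial kernel or the relation \eqref{eq: Non-strong Nega}, and the identical weight-swap argument yields the same lower bound for any exact-fitted mixture with $k_0\geq 2$, including strongly identifiable ones, where the matching $O((\log n/n)^{1/(2r)})$ upper bound also holds because $W_r^r\lesssim W_1$ on $\Ecal_{k_0}$. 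For $r\geq2$ the stated inequality is thus a generic consequence of the $|\Delta p|^{1/r}$ scaling of $W_r$ under weight errors, so your closing remark that the singular configuration is what makes the bound tight is not quite right; the paper's proof is the one that actually exercises the degeneracy, even though quantitatively it arrives at the same exponent.
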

Fortunately, not all are bad news for negative binomial regression mixtures. We will show that non-identifiability occurs only in a Lebesgue measure zero subset of the parameter space.
\begin{proposition}
         Given $k$ distinct pairs $(\mu_1, \phi_1), \dots, (\mu_k, \phi_k) \in \mathbb{R}_{+}\times \mathbb{R}_{+}$ such that there does not exist two indices $i\neq j$ satisfying 
        $\dfrac{\mu_i}{\phi_i} = \dfrac{\mu_j}{\phi_j}$ and 
        $|\phi_i - \phi_j| = 1$,
then the mixture of negative binomials $(\NB(\mu_i, \phi_i))_{i=1}^{k}$ is strongly identifiable in the first order. If we further assume that there does not exist two indices $i\neq j$ satisfying 
        $\dfrac{\mu_i}{\phi_i} = \dfrac{\mu_j}{\phi_j}$ and 
        $|\phi_i - \phi_j| \in \{1,2\}$,
then the mixture of negative binomials $(\NB(\mu_i, \phi_i))_{i=1}^{k}$ is strongly identifiable in the second order.
\end{proposition}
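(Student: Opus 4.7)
The plan is to convert the identifiability statement into a singularity analysis of probability generating functions (PGFs) at their branch points. For each $i$, set $t_i := \mu_i/\phi_i$ and write the PGF of $\NB(y|\mu_i,\phi_i)$ as $G_i(s) = (1+t_i s)^{-\phi_i}$ in the variable $s := 1-z$. The proposed linear combination of PMFs and their first- (resp.\ second-) order parameter derivatives vanishing identically in $y$ is equivalent, by term-wise generating functions, to the same linear combination of $G_i,\partial_\mu G_i,\partial_\phi G_i$ (resp.\ including $\partial_\mu^2 G_i,\partial_\mu\partial_\phi G_i,\partial_\phi^2 G_i$) vanishing as an analytic function of $s$ on $(-1/\max_i t_i,\infty)$; it then extends by analytic continuation across each branch point $s = -1/t_i$. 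Direct computation gives $\partial_\mu G_i = -s(1+t_i s)^{-\phi_i-1}$, $\partial_\phi G_i = -G_i\log(1+t_i s) - t_i \partial_\mu G_i$, and at second order $\partial_\mu^2 G_i = \tfrac{\phi_i+1}{\phi_i}s^2(1+t_i s)^{-\phi_i-2}$, with analogous closed forms for $\partial_\mu\partial_\phi G_i$ and $\partial_\phi^2 G_i$; the resulting building blocks are monomials of the type $(1+t_i s)^{-\phi_i-k}(\log(1+t_i s))^l$.

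The key step is to probe this identity near each branch point. Group the indices into level sets $I_t := \{i : t_i = t\}$, introduce $\tilde\beta_i := \beta_i - \gamma_i t_i$, and substitute $\epsilon := 1 + ts$. As $\epsilon \to 0^+$, contributions from $I_{t'}$ with $t' \neq t$ stay analytic at $\epsilon = 0$, whereas in the first-order setting the $I_t$-contribution becomes
$$\sum_{i\in I_t}\bigl[(\alpha_i - \tilde\beta_i/t)\epsilon^{-\phi_i} + (\tilde\beta_i/t)\epsilon^{-\phi_i-1} - \gamma_i \epsilon^{-\phi_i}\log\epsilon\bigr],$$
and must therefore itself be analytic at $\epsilon = 0$, i.e.\ identically zero. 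The $\phi_i$ are already distinct within $I_t$; a cross-collision $-\phi_i = -\phi_j - 1$ with $i \neq j \in I_t$ would force $\mu_i/\phi_i = \mu_j/\phi_j$ together with $|\phi_i - \phi_j| = 1$, precisely the forbidden configuration. Hence the germs $\{\epsilon^{-\phi_i}, \epsilon^{-\phi_i-1}, \epsilon^{-\phi_i}\log\epsilon\}_{i\in I_t}$ are linearly independent modulo analytic germs at $0$, which forces $\tilde\beta_i/t = \gamma_i = \alpha_i - \tilde\beta_i/t = 0$ and hence $\alpha_i = \beta_i = \gamma_i = 0$. Running over all values of $t$ completes part (a).

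Part (b) follows the same template after expanding the second-order derivatives. Substituting $s = -1/t + \epsilon/t$ and collecting, the $I_t$-singular part becomes a linear combination of monomials $\epsilon^{-\phi_i - k}(\log\epsilon)^l$ with $(k,l) \in \{(2,0),(1,0),(1,1),(0,0),(0,1),(0,2)\}$, where the $(\log)^2$-type comes only from $\partial_\phi^2 G_i$. Cross-collisions among such exponents for $i \neq j \in I_t$ require $|\phi_i - \phi_j| \in \{1,2\}$, which the strengthened hypothesis excludes. The resulting linear independence then forces the coefficients to vanish in succession: the $\epsilon^{-\phi_i}(\log\epsilon)^2$ coefficient yields $\zeta_i = 0$, next $\epsilon^{-\phi_i-1}\log\epsilon$ yields $\eta_i = 0$, then $\epsilon^{-\phi_i-2}$ yields $\delta_i = 0$, reducing everything to the first-order situation that closes (a). The main technical obstacle is this bookkeeping at second order: one must compute $\partial_\mu\partial_\phi G_i$ and $\partial_\phi^2 G_i$ in closed form, track the sub-leading pure-power corrections produced by expanding $s$ around $-1/t$ (which feed into lower-order coefficients), and invoke the standard linear independence of the germs $\epsilon^{-a}(\log\epsilon)^l$ over distinct $(a,l)$ modulo analytic germs at $0$. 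The collision-avoidance condition $|\phi_i-\phi_j| \notin \{1,2\}$ is precisely what makes the ensuing linear system triangular rather than degenerate.
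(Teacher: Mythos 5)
Your argument is correct in substance but follows a genuinely different route from the paper's. The paper stays in the ``time domain'': it reparametrizes by $q_i = \mu_i/(\mu_i+\phi_i)$, divides the hypothesized identity by the dominant geometric factor $q_{\max}^y$, and extracts coefficients one at a time by comparing the growth rates of $P_y(\phi)=\Gamma(\phi+y)/\Gamma(\phi)$, $y\,P_y(\phi)$ and $f_y(\phi)=\sum_{i<y}1/(\phi+i)\asymp\log y$ as $y\to\infty$; the excluded configurations $|\phi_i-\phi_j|=1$ (resp.\ $\in\{1,2\}$) are precisely those where two of these growth rates coincide and the successive limit extraction would stall. You instead pass to the generating function $(1+t_is)^{-\phi_i}$ with $t_i=\mu_i/\phi_i$ and read off the same obstruction as a collision of singular exponents $\phi_i+k$ at a shared branch point $s=-1/t$: the condition $\mu_i/\phi_i=\mu_j/\phi_j$ puts two components at the same branch point, and $|\phi_i-\phi_j|\in\{1,2\}$ is exactly what would let a pure-power term from one component's derivative cancel against another's. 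Your route buys a cleaner conceptual picture (the degeneracy is a resonance of branch-point exponents) and reduces the final step to the standard linear independence of the germs $\epsilon^{-a}(\log\epsilon)^l$ modulo analytic germs; the paper's route is more elementary and needs no analytic-continuation bookkeeping. One point you must make precise: your claim that the contributions from $I_{t'}$ with $t'\neq t$ are analytic at $s=-1/t$ holds only for $t'<t$ (when $t'>t$ the point $-1/t$ lies on that component's branch cut $(-\infty,-1/t']$), so the peeling has to proceed from the largest $t$ downward, discarding the already-annihilated terms before moving to the next branch point; with that ordering the argument closes.
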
 

Finally, we note that the theory established earlier (Theorem~\ref{thm:identifiable-equivalent} and Theorem~\ref{thm:inverse-bounds}) represent sufficient conditions. There still may exist non-strongly identifiable families $f$ and $(h_1, h_2)$ that lead to strong identifiable $f(\cdot|h_1, h_2)$. For example, the mixture of two Binomial distributions $p_G(y) = p_1 \textrm{Bin}(y|1, q_1) + p_2 \textrm{Bin}(y|1, q_2)$
is not identifiable, because for instance, $p_{G_1} = p_{G_2}$ for $G_1 = 0.5 \delta_{0.3} +  0.5 \delta_{0.7}$ and $G_1 = 0.5 \delta_{0.2} +  0.5 \delta_{0.8}$. However, the mixture of two logistic regression models  $f_G(y|x) = p_1 \textrm{Bin}(y|1, \sigma(\theta_1^{\top}x)) + p_2 \textrm{Bin}(y|1, \sigma(\theta_2^{\top}x)))$
is strongly identifiable (see Proposition B.1 in Appendix B) and enjoys the inverse bound as well as standard convergence rates. Unfortunately, such a result is difficult to generalize. This again highlights our general theory developed in this section, which is applicable to a vast range of kernels $f$ and link functions $(h_1, h_2)$.
The pathological phenomena described will be revisited in Section \ref{sec:experiments}. 

\section{Statistical efficiency in learning regression mixtures }\label{sec:stats-efficiency}

Building on the previous section, we are ready to present convergence rates of the maximum likelihood estimator, and a Bayesian posterior contraction theory for the quantities of interest.

\subsection{Maximum (conditional) likelihood estimation}\label{sec:conv-rate-MLE}

Given $n$ i.i.d. observations $(x_1, y_1), \dots, (x_n, y_n)$, where $x_j \overset{i.i.d.}{\sim} \Pbb_X$ and $y_j | x_j \sim f_{G_0}(y|x), j = 1,\dots, n$, for $G_0 = \sum_{i=1}^{k_0} p_{j}^0\delta_{(\theta_{1j}^0, \theta_{2j}^0)}$. Denote the maximum likelihood estimate by
\begin{equation*}
    \widehat{G}_n := \argmax_{G \in \Ecal_{k_0}(\Theta)} \sum_{j=1}^{n}\log f_G(y_j|x_j),
\end{equation*}
in the exact-fitted setting, and we change the $\Ecal_{k_0}(\Theta)$ in the above formula to $\Ocal_K(\Theta)$, where $K \geq k_0$ in the over-fitted setting. It is implicitly assumed in this section that $\widehat{G}_n$ is measurable, otherwise a standard treatment using an outer measure of $\Pbb_{G_0}$ instead of $\Pbb_{G_0}$ can be invoked.
To obtain the rate of convergence of $\widehat{G}_n$ to $G_0$, we combine the inverse bounds above with the convergence of density estimates based on the standard theory of M-estimation for regression problems~\cite{Vandegeer}.  For conditional density estimation, the convergence behavior of $f_{\widehat{G}_n}$ to $f_{G_0}$ is evaluated in the sense of the \emph{expected} Hellinger distance:
\begin{align*}
    \overline{d}^2_{H}(f_{G}, f_{G'}) & := \Ebb_X d_{H}^2(f_{G}(\cdot|X), f_{G'}(\cdot|X)) \\
    & = \dfrac{1}{2}\int_{\Xcal} \int_{\Ycal}  (\sqrt{f_{G}(y|x)}-\sqrt{f_{G'}(y|x)})^2 d\nu(y) d\Pbb_X(x),
\end{align*}
for all $G, G' \in \cup_{k=1}^{\infty} \Ocal_{k}(\Theta)$. 
To this end, recall several basic notions related to the entropy numbers of a class of functions. For any $k \in \mathbb{N}$, set
\begin{equation*}
    \Fcal_k(\Theta) = \biggr \{f_G(y|x) : G \in \Ocal_k(\Theta) \biggr \},\quad  \overline{\Fcal}_k^{1/2}(\Theta) = \biggr \{f_{(G+G_0)/2}^{1/2}(y|x) : G \in \Ocal_k(\Theta) \biggr \},
\end{equation*}
and the Hellinger ball centered around $f_{G_0}$:
\begin{equation*}
    \overline{\Fcal}_k^{1/2}(\delta) = \overline{\Fcal}_k^{1/2}(\Theta, \delta) = \biggr \{f^{1/2}\in \overline{\Fcal}_k^{1/2}(\Theta) : \overline{d}_{H}(f, f_{G_0})\leq \delta \biggr \}.  
\end{equation*}
The complexity (richness) of this set is characterized in the following entropy integral:
\begin{equation}\label{eq:int_bracketing}
    \mathcal{J}(\delta) := \mathcal{J}(\delta, \overline{\Pcal}_k^{1/2}(\Theta, \delta)) = \int_{\delta^2/2^{13}}^{\delta} H_B^{1/2}(u, \overline{\Fcal}_k^{1/2}(\delta), L_2(\Pbb_X\times \nu)) du \vee \delta,
\end{equation}
where $H_B$ is the bracketing entropy number. An useful tool for establishing the rate of convergence under expected conditional density estimation by the MLE is given by the following theorem, which is an adaptation of Theorem 7.4. in \cite{Vandegeer} or Theorem 7.2.1. in \cite{gine2021mathematical}).
\begin{theorem}\label{thm:density_estimation_rate}
    Take $\Psi(\delta) \geq \mathcal{J}(\delta, \overline{\Fcal}_{k}^{1/2}(\delta))$ in such a way that $\Psi(\delta)/\delta^2$ is a non-increasing function of $\delta$. Then, for a universal constant $c$ and for 
    \begin{equation}\label{eq:entropy-condition-MLE}
        \sqrt{n} \delta_n^2 \geq c \Psi(\delta_n),
    \end{equation}
    we have for all $\delta \geq \delta_n$ that
        $\Pbb_{G_0}(\overline{d}_{H}(f_{\widehat{G}_n}, f_{G_0}) > \delta) \leq c \exp\left(-\dfrac{n\delta^2}{c^2}\right)$. 
\end{theorem}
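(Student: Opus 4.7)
The plan is to adapt the classical Hellinger-distance MLE analysis (Van de Geer, Thm.~7.4; Gin\'e--Nickl, Thm.~7.2.1) to the regression setting by replacing the unconditional reference measure with $P_0 := \Pbb_X\times f_{G_0}\nu$ and the Hellinger distance with the expected Hellinger distance $\overline d_H$. The structure is three-step: derive a basic inequality that reduces the problem to bounding a centered empirical process, control that process locally via the bracketing-entropy integral $\mathcal{J}$, and finish with a peeling argument producing the stated sub-Gaussian deviation bound.

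\emph{Step 1 (basic inequality).} Because $\widehat G_n$ is the MLE, $\mathbb{P}_n\log(f_{\widehat G_n}/f_{G_0})\ge 0$, where $\mathbb{P}_n$ is the empirical measure on $(x_i,y_i)_{i=1}^n$. Setting $\bar f_n := (f_{\widehat G_n}+f_{G_0})/2$ and using the elementary inequality $\log\bigl(\tfrac{1+t}{2}\bigr)\ge\tfrac12\log t$ (an instance of AM--GM) with $t=f_{\widehat G_n}/f_{G_0}$, I obtain $\mathbb{P}_n\log(\bar f_n/f_{G_0})\ge 0$. On the population side, combining $\log(\bar f_n/f_{G_0})\le 2(\sqrt{\bar f_n/f_{G_0}}-1)$ with the identity $\Ebb_{P_0}[\sqrt{\bar f_n/f_{G_0}}-1]=-\overline d_H^{\,2}(\bar f_n,f_{G_0})$ yields the familiar basic inequality
\begin{equation*}
2\,\overline d_H^{\,2}(\bar f_n,f_{G_0})\;\le\;(\mathbb{P}_n-P_0)\!\left[\log(\bar f_n/f_{G_0})\right].
\end{equation*}

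\emph{Step 2 (local maximal inequality).} After the standard linearization that bounds $\log(\bar f_n/f_{G_0})$ by $\sqrt{\bar f_n/f_{G_0}}-1$ up to multiplicative constants (Van de Geer, Lemma~4.1), it suffices to control $\sup_{f^{1/2}\in\overline{\Fcal}_k^{1/2}(\delta)}|(\mathbb{P}_n-P_0)(\sqrt{f/f_{G_0}}-1)|$. A chaining-with-bracketing maximal inequality in $L_2(\Pbb_X\times\nu)$ (Van der Vaart--Wellner Thm.~3.4.1, or Van de Geer Cor.~7.5), applied to the entropy integral $\mathcal{J}(\delta,\overline{\Fcal}_k^{1/2}(\delta))$, produces a sub-Gaussian tail $c\exp(-n\delta^2/c^2)$ whenever $\sqrt n\,\delta^2\ge c\,\mathcal{J}(\delta)$. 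The conditional reference measure does not spoil the chaining --- the bracketing numbers $H_B(u,\overline{\Fcal}_k^{1/2}(\delta),L_2(\Pbb_X\times\nu))$ enter exactly as in the unconditional case --- and the lower limit $\delta^2/2^{13}$ in the definition of $\mathcal{J}$ is the standard cutoff needed to absorb the contribution of the smallest brackets and maintain the sub-Gaussian rate.

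\emph{Step 3 (peeling and midpoint inequality).} I would then peel over the shells $\{2^s\delta<\overline d_H(\bar f_n,f_{G_0})\le 2^{s+1}\delta\}_{s\ge 0}$: the monotonicity of $\Psi(\delta)/\delta^2$ gives $\Psi(2^s\delta)\le 2^{2s}\Psi(\delta)$, so the hypothesis $\sqrt n\,\delta_n^{\,2}\ge c\Psi(\delta_n)$ propagates through each shell and the sum of the resulting exponentials collapses into $c\exp(-n\delta^2/c^2)$ for any $\delta\ge\delta_n$. Finally, a midpoint/convexity inequality of the form $\overline d_H^{\,2}(\bar f_n,f_{G_0})\ge c'\,\overline d_H^{\,2}(f_{\widehat G_n},f_{G_0})$ (with $c'>0$ a universal constant, following from $\sqrt{(a+b)/2}\ge(\sqrt a+\sqrt b)/2$ applied pointwise and squaring) transfers the bound to $\widehat G_n$ itself after absorbing $c'$ into $c$. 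The main obstacle is Step~2: the core difficulty is verifying that the sub-Gaussian tail survives intact when the reference measure is the product $\Pbb_X\times\nu$ rather than a single probability measure, which forces one to work with $L_2(\Pbb_X\times\nu)$-brackets throughout the chaining and peeling, while carefully tracking the measurability of the supremum via a countable-dense reduction of $\overline{\Fcal}_k^{1/2}(\delta)$.
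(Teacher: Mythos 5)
Your proposal is correct and follows essentially the same route as the paper: the same MLE basic inequality for $g_f=\tfrac12\log\frac{f+f_0}{2f_0}$ reducing $\overline d_H^2$ to a centered empirical process, the same peeling over Hellinger shells using the monotonicity of $\Psi(\delta)/\delta^2$, and the same midpoint comparison $\overline d_H^2(\bar f_n,f_0)\gtrsim \overline d_H^2(f_{\widehat G_n},f_0)$. The only (cosmetic) divergence is in the local concentration step: where you linearize to $\sqrt{\bar f/f_0}-1$ and invoke a plain $L_2$-bracketing chaining bound, the paper keeps the class $\{g_f\}$ and applies a Bernstein-norm concentration theorem after checking $\rho^2(g_f)\le 2\,\overline d_H^2(\bar f,f_0)$ via $g_f\ge -\tfrac{\log 2}{2}$ --- which is precisely the mechanism that justifies the sub-Gaussian tail you assert for an unbounded class, so you should make that exponential-moment control explicit rather than attributing it to $L_2$ chaining alone.
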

Combining Theorem~\ref{thm:density_estimation_rate} with the inverse bounds established in Section \ref{sec:identifiable-inverse-bound}, we readily arrive at the following concentration inequalities for the MLE's parameter estimates based on the bracketing entropy numbers and its entropy integral given by Eq.~\eqref{eq:int_bracketing}.

\begin{theorem}\label{thm:convergence_parameter_estimation}
\begin{enumerate}
    \item [(a)] (Exact-fitted) Suppose that $k_0$ is known, the entropy condition~\eqref{eq:entropy-condition-MLE} holds, the family of conditional densities $f(\cdot|h_1, h_2)$ is identifiable in the first order, and $(h_1, h_2)$ is identifiable. Then, for any $G_0 \in \Ecal_{k_0}(\Theta)$, there exist a constant $C$ depending on $G_0$ and universal constant $c$ such that
    \begin{equation*}
        \Pbb_{G_0} \left(W_1(\widehat{G}_n, G_0) > C\delta\right) \leq c\exp(-n\delta^2/c^2).
    \end{equation*}
    \item [(b)] (Over-fitted) Suppose that $k_0$ is unknown but $k_0 < \Kup$ known, the entropy condition~\eqref{eq:entropy-condition-MLE} holds, the family of conditional densities $f(\cdot|h_1, h_2)$ is identifiable in the second order, and $(h_1, h_2)$ is identifiable. Then, for any $G_0 \in \Ecal_{k_0}(\Theta)$, there exist a constant $C$ depending on $G_0$ and $K$ and universal constant $c$ such that
    \begin{equation*}
        \Pbb_{G_0} \left(W_2^2(\widehat{G}_n, G_0) > C\delta\right) \leq c \exp(-n\delta^2/c^2).
    \end{equation*}
\end{enumerate} 
\end{theorem}


For concrete rates of convergence, we need to estimate the entropy integral \eqref{eq:int_bracketing}. 
For many parametric models, we will find that the convergence rate for the mixing measure $G$ is $(\log n/ n)^{1/2}$ under $W_1$ for the exact-fitted setting and $(\log n/n)^{1/4}$ under $W_2$ for the over-fitted setting. In the following, we shall present a set of mild assumptions and show that under these assumptions, such parametric rates can be established. 

\begin{enumerate}
    \item[(B1.)] (Assumptions on kernel densities) For the family of distribution $\{f(y|\mu, \phi) | \mu\in H_1, \phi \in H_2\}$,  $\norm{f(\cdot|\mu, \phi)}_{\infty}$ is uniformly bounded and uniformly light tail probability, i.e., there exist constant $\underline{c}, \overline{c}$ and constant $d_1, d_2, d_3 > 0$ such that $f(y| \mu, \phi) \leq d_1\exp(-d_2 |y|^{d_3}),$
    for all $y \geq \overline{c}$ or $y \leq \overline{c}$ and $\mu \in H_1, \phi \in H_2$. 
    \item[(B2.)] (Lipschitz assumptions) Both the kernel densities $\{f(y|\mu, \phi) | \mu\in H_1, \phi \in H_2\}$ and link functions $h_1$ and $h_2$ are uniformly Lipschitz in the sense of \eqref{eq:uniform-Lipschitz-f} and \eqref{eq:uniform-Lipschitz-h}.
\end{enumerate}
We note that:
\begin{proposition}\label{prop:satisfied_densities}
    The families of Normal, Poisson, Binomial, and negative binomial distribution satisfy condition (B1.).
\end{proposition}
The predictive performance of MLE for the regression mixture model is given as follows.

\begin{theorem}\label{thm:simple_density_estimation_rate}
    Given assumptions (B1.) and (B2.), and the dominating measure $\nu$ is Lebesgue over $\Rbb$ or counting measure on $\Zbb$. Then, there exists a constant $C$ depending on $\Theta, f, h_1, h_2$, and a universal constant $c$ such that
    \begin{equation*}
        \Pbb_{G_0} \left(\overline{d}_{H}(f_{\widehat{G}_n}, f_{G_0} ) > C\sqrt{\dfrac{\log(n)}{n}}\right) \leq c\exp(-\log n/c^2).
    \end{equation*}
\end{theorem}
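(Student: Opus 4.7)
The plan is to instantiate Theorem~\ref{thm:density_estimation_rate} with the explicit choice $\delta_n \asymp \sqrt{\log n / n}$. By that result, it suffices to show that the bracketing entropy integral $\mathcal{J}(\delta, \overline{\Fcal}_{k_0}^{1/2}(\delta))$ admits an envelope $\Psi(\delta) = C_0 \,\delta \sqrt{\log(1/\delta)}$ with $\Psi(\delta)/\delta^2$ non-increasing, since then $\sqrt{n}\delta_n^2 \gtrsim \Psi(\delta_n)$ is solved (up to constants) by the claimed rate and the exponential deviation follows immediately.

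First I would reduce the problem to a covering number bound on the parameter space. Because $\Theta = \Theta_1\times \Theta_2$ is compact and the simplex of mixing weights in $\Ocal_{k_0}$ is compact as well, the standard volumetric estimate gives a covering of size $(1/\varepsilon)^{D}$ with $D = k_0(d_1+d_2+1)$ under the Euclidean metric. Next, using the uniform Lipschitz conditions (B2.) on $f$, $h_1$, $h_2$, any two mixing measures $G, G'$ whose parameters are $\varepsilon$-close produce conditional densities satisfying $\sup_{x,y} |f_G(y|x) - f_{G'}(y|x)| \lesssim \varepsilon$. Converting such an $L^\infty$ bracket into an $L_2(\Pbb_X \times \nu)$ bracket is where (B1.) enters: I would truncate $y$ to the window $\{|y|\leq M\}$, use the uniform light tail bound $f(y|\mu,\phi)\leq d_1\exp(-d_2|y|^{d_3})$ to control the contribution outside, and choose $M \asymp (\log(1/\varepsilon))^{1/d_3}$ so that the tail is of order $\varepsilon^2$. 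On the truncated window, the Lipschitz bound multiplied by $\nu(\{|y|\leq M\}) \lesssim M$ (Lebesgue) or $\lesssim M$ (counting on $\Zbb$) yields an $L_2$ bracket of size $\lesssim \varepsilon \sqrt{M} \lesssim \varepsilon (\log(1/\varepsilon))^{1/(2d_3)}$. Passing to square-roots via $|\sqrt{a}-\sqrt{b}|\leq \sqrt{|a-b|}$, one obtains $H_B(u, \overline{\Fcal}_{k_0}^{1/2}, L_2(\Pbb_X\times \nu)) \lesssim D \log(1/u)$ up to lower-order polylog factors absorbed into constants.

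Plugging this into the entropy integral~\eqref{eq:int_bracketing},
\begin{equation*}
\mathcal{J}(\delta) \lesssim \int_{\delta^2/2^{13}}^{\delta} \sqrt{D \log(1/u)}\, du \vee \delta \lesssim \delta \sqrt{\log(1/\delta)},
\end{equation*}
so I take $\Psi(\delta) := C_0\,\delta \sqrt{\log(1/\delta)}$, which is increasing in $\delta$ while $\Psi(\delta)/\delta^2 = C_0\sqrt{\log(1/\delta)}/\delta$ is decreasing on $(0,1/e)$, as required. Solving $\sqrt{n}\,\delta_n^2 \geq c\,\Psi(\delta_n)$ gives $\delta_n = C'\sqrt{\log n / n}$, and Theorem~\ref{thm:density_estimation_rate} applied at $\delta = C\sqrt{\log n/n}$ (for $C$ large enough) yields
\begin{equation*}
\Pbb_{G_0}\bigl(\overline{d}_H(f_{\widehat G_n}, f_{G_0}) > C\sqrt{\log n/n}\bigr) \leq c \exp\bigl(-n \cdot C^2 \log n /(c^2 n)\bigr) = c \exp(-c'\log n),
\end{equation*}
which is the claimed bound.

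The main obstacle I anticipate is the tail control in the bracketing construction: the Lipschitz argument alone only gives a pointwise bound on $|f_G - f_{G'}|$, and without a uniformly integrable envelope one would obtain a useless $L_2$ bracket. The role of (B1.) is precisely to give a tail envelope that is integrable (or summable) against $\nu$, and the truncation level $M$ must be carefully tuned to $\varepsilon$ so that the truncation error and on-support error balance and contribute only a $\log$-factor; this is the only delicate piece. The remaining steps, including the reduction of $\Ocal_K$ to $\Ecal_{k_0}$ coverings and passing from densities to their square roots, are routine.
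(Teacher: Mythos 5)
Your proposal is correct and follows essentially the same route as the paper: a parameter-space net combined with the uniform Lipschitz conditions gives a sup-norm net of conditional densities, the light-tail envelope from (B1.) with a truncation level $M\asymp(\log(1/\varepsilon))^{1/d_3}$ converts this into brackets whose entropy is $\lesssim\log(1/u)$, and plugging $\Psi(\delta)\asymp\delta\sqrt{\log(1/\delta)}$ into Theorem~\ref{thm:density_estimation_rate} yields the rate. The only cosmetic difference is that the paper routes the bracket-size computation through the $L_1$ norm and the inequality $\|\cdot\|_1\le d_H^2$ rather than your $L_2$-on-the-window bookkeeping, but both land on the same logarithmic entropy bound.
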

Combining Theorem~\ref{thm:inverse-bounds} and Theorem~\ref{thm:simple_density_estimation_rate}, we arrive at the convergence rates for the maximum (conditional) likelihood estimates for the model parameters:
\begin{theorem}\label{thm:simple_convergence_parameter_estimation}
\begin{enumerate}
\item [(a)] (Exact-fitted) Suppose that $k_0$ is known, the family of conditional densities $f(\cdot|h_1, h_2)$ is identifiable in the first order, and the family of functions $(h_1,h_2)$ is identifiable. Furthermore, assume (B1.) and (B2.) hold, then for any $G_0 \in \Ecal_{k_0}(\Theta)$, there  exist constant $C$ depending on $G_0, \Theta, f, h_1, h_2$ and a universal constant $c$ such that
    \begin{equation*}
        \Pbb_{G_0} \left(W_1(\widehat{G}_n, G_0) > C(\log n / n)^{1/2}\right) \leq c\exp(-\log(n)/c^2).
    \end{equation*}

\item [(b)] (Over-fitted) Suppose that $k_0$ is unknown and is upper bounded by a known number $\Kup < +\infty$, the family of conditional densities $f(\cdot|h_1, h_2)$ is identifiable in the second order, and the family of functions $(h_1,h_2)$ is identifiable. Furthermore, assume (B1.) and (B2.) hold, there  exist constant $C$ depending on $G_0, \Kup, \Theta, f, h_1, h_2$ and a universal constant $c$ such that
    \begin{equation*}
        \Pbb_{G_0} \left(W_2(\widehat{G}_n, G_0) > C(\log n / n)^{1/4}\right) \leq c\exp(-\log(n)/c^2).
    \end{equation*}
\end{enumerate}
\end{theorem}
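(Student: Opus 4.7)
The plan is to obtain both rates by composing three results: the conditional density estimation rate (Theorem~\ref{thm:simple_density_estimation_rate}), a comparison between the Hellinger and total variation distances, and the inverse bounds from Theorem~\ref{thm:inverse-bounds}. Concretely, under (B1.) and (B2.), Theorem~\ref{thm:simple_density_estimation_rate} gives
\begin{equation*}
    \Pbb_{G_0}\!\left(\overline{d}_H(f_{\widehat G_n}, f_{G_0}) > C_0 \sqrt{\log n / n}\right) \lesssim \exp(-c\log n),
\end{equation*}
for some $C_0,c>0$. Inside the complementary event, I would convert the Hellinger bound to an expected total variation bound: pointwise $V(f_{G_1}(\cdot|x),f_{G_2}(\cdot|x)) \leq \sqrt{2}\, d_H(f_{G_1}(\cdot|x),f_{G_2}(\cdot|x))$, so taking expectations in $x$ and applying Jensen's inequality yields
\begin{equation*}
    \Ebb_X V(f_{\widehat G_n}(\cdot|X), f_{G_0}(\cdot|X)) \leq \sqrt{2}\,\Ebb_X d_H(f_{\widehat G_n}(\cdot|X), f_{G_0}(\cdot|X)) \leq \sqrt{2}\,\overline{d}_H(f_{\widehat G_n}, f_{G_0}).
\end{equation*}

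Next I would invoke Theorem~\ref{thm:inverse-bounds}. In the exact-fitted case, $\widehat G_n \in \Ecal_{k_0}(\Theta)$ and strong identifiability of $f(\cdot|h_1,h_2)$ in the first order is in force (either directly assumed or deduced via Theorem~\ref{thm:identifiable-equivalent} from strong identifiability of $f$ plus identifiability of $(h_1,h_2)$), so Theorem~\ref{thm:inverse-bounds}(a) gives a constant $C_1=C_1(G_0,h_1,h_2,\Pbb_X,\nu)>0$ with
\begin{equation*}
    W_1(\widehat G_n, G_0) \leq C_1^{-1} \Ebb_X V(f_{\widehat G_n}(\cdot|X), f_{G_0}(\cdot|X)) \leq \frac{\sqrt{2}}{C_1}\,\overline{d}_H(f_{\widehat G_n}, f_{G_0}).
\end{equation*}
Chaining with the density rate yields $W_1(\widehat G_n, G_0) \leq C (\log n / n)^{1/2}$ on the same high-probability event, which is the claim in (a). For (b), the MLE lies in $\Ocal_{\Kup}(\Theta)$ and the second-order strong identifiability combined with identifiability of $(h_1,h_2)$ delivers the quadratic inverse bound $\Ebb_X V \geq C_2 W_2^2(\widehat G_n, G_0)$, whence
\begin{equation*}
    W_2^2(\widehat G_n, G_0) \leq \frac{\sqrt{2}}{C_2}\,\overline{d}_H(f_{\widehat G_n}, f_{G_0}) \leq \frac{\sqrt{2} C_0}{C_2}\sqrt{\log n / n},
\end{equation*}
and taking square roots gives the rate $(\log n/n)^{1/4}$ for $W_2$, with the same $\exp(-c\log n)$ deviation bound.

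I do not expect a serious obstacle: the argument is essentially a composition, and each ingredient is already fully established earlier in the paper. The points that require a little care are verifying that the assumptions of Theorem~\ref{thm:inverse-bounds} are exactly those imposed here (they are, once one notes that identifiability of $(h_1,h_2)$ is shared with Theorem~\ref{thm:identifiable-equivalent}), ensuring that $\widehat G_n$ falls in the class over which the inverse bound is stated (trivially so, by construction of the MLE in each regime), and making the bookkeeping of constants transparent so that the final $C$ depends only on $G_0, h_1, h_2, \Pbb_X, \nu$ and the Lipschitz/tail constants from (B1.)--(B2.). Absorbing the measurability caveat via the usual outer-measure convention, already mentioned in Section~\ref{sec:conv-rate-MLE}, completes the argument.
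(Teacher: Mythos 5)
Your proposal is correct and follows essentially the same route as the paper: the authors prove this theorem by repeating the argument of Theorem~\ref{thm:convergence_parameter_estimation} with $\delta=\sqrt{\log n/n}$, i.e., chaining the density rate from Theorem~\ref{thm:simple_density_estimation_rate} with the comparison $\Ebb_X V \leq \sqrt{2}\,\overline{d}_H$ and the inverse bounds of Theorem~\ref{thm:inverse-bounds}. Your bookkeeping of constants and the verification that $\widehat{G}_n$ lies in the relevant class match the paper's (terser) treatment.
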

%

\subsection{Bayesian posterior contraction theorems for parameter inference}\label{sec:posterior-contraction-rate}



Given i.i.d. pairs $(x_1, y_1), \dots, (x_n, y_n)$ such that $y_i|x_i \sim f_{G_0}(y|x)$ for some true latent mixing measure $G_0 = \sum_{j=1}^{k_0} p_i^0 \delta_{(\theta_{1j}^0, \theta_{2j}^0)}$, and $x_i\overset{\text{i.i.d.}}{\sim} \Pbb_X $. In the Bayesian regime, we model the data as $ y_i | x_i \sim f_{G}(y|x),$
where $G \sim \Pi$ with $\Pi$ being some prior distribution on the space mixing measures. Let $\Gcal$ denote the support of the prior $\Pi$ on the mixing measure $G$. The nature of the prior distribution $\Pi$ depends on the several different settings that we will consider. In the exact-fitted setting, we assume $\Kup \equiv k_0  <+\infty$ is known, whereas the over-fitted setting means that the upper bound $k_0 \leq \Kup < +\infty$ is given, but $k_0$ unknown. In both cases, $\Gcal = \Ocal_{\Kup}$ so $\Pi$ is in effect a prior distribution on $(\Ocal_{\Kup}, \Bcal(\Ocal_{\Kup}))$. Later, we shall assume that neither $k_0$ nor an upper bound $\Kup$ is given; instead, a random variable $\Kvar$ is used to represent the number of mixture components and endowed with a prior distribution.

 By the Bayes' rule, the posterior distribution of the parameter $G$ is given by
\begin{equation*}
    \Pi(G\in B | x^{[n]}, y^{[n]}) = \dfrac{\int_B \prod_{i=1}^{n} f_G(y_i|x_i)  d\Pi(G)}{\int_{\Ocal_{\Kup}} \prod_{i=1}^{n} f_G(y_i|x_i)  d\Pi(G)}
\end{equation*}
for any measurable set $B \subset \Gcal$. Now, we want to study the posterior contraction rate of $\Pi( \cdot | x^{[n]}, y^{[n]})$ to the true latent mixing measure $G_0$ as $n\to \infty$. We proceed to describe several standard assumptions on the prior often employed in practice.

\subsection*{The case of known upper bound $\Kup$}

\begin{enumerate}
    \item[(B3.)] (Prior assumption) Prior $\Pi$ on space $\Ocal_{\Kup}$ is induced by prior $\Pi_p \times \Pi_{\theta_1}^{\Kup} \times \Pi_{\theta_2}^{\Kup}$ on $\{(p_1, \dots, p_{\Kup},$ $ \theta_{11}, \dots, \theta_{1\Kup}$, $\theta_{21}, \dots, \theta_{2\Kup})| p_i\geq 0, \sum_{i=1}^{\Kup}p_i=1, \theta_{1j} \in \Theta_1, \theta_{2j} \in \Theta_2\}$, where $\Pi_p$ is a prior distribution of $(p_{j})_{i=1}^{\Kup}$ on $\Delta^{\Kup-1}$, $\Pi_{\theta_1}$ is a prior distribution of $\theta_{1j}$ on $\Theta_1$, and $\Pi_{\theta_2}$ is a prior distribution of $\theta_{2j}$ on $\Theta_2$, independently for $i=1,\dots, \Kup$. We further assume that $\Pi_p, \Pi_{\theta_1}, \Pi_{\theta_2}$ have a density with respect to Lebesgue measure on $\Delta^{\Kup-1}, \Theta_1, \Theta_2$, respectively, which are bounded away from zero and infinity.  

    \item[(B4.)] There exists $\epsilon_0 >0$ such that for all $G \in \Ocal_{\Kup}(\Theta)$ satisfying $W_1(G, G_0) \leq \epsilon_0$, we have $\Ebb_{\Pbb_{G_0}} (f_{G_0} / f_{G})\leq M_0$ for $M_0$ only depends on $\epsilon_0, G_0, \Kup, \Theta$.
    
\end{enumerate}

\comment{
\begin{remark}
Assumptions (B3.) on $\Pi_p, \Pi_{\theta_1}, \Pi_{\theta_2}$ are standard when proving posterior contraction of Bayesian methods. We require the density functions of those prior distributions to be bounded from zero so that there is enough mass around the true parameters and bounded away from infinity so that it is possible to control the complexity of the family of induced densities.
\end{remark}
}
The posterior contraction behavior for conditional densities is given as follows.
\begin{theorem}\label{thm:posterior-contraction-rate-regression}
    Assume that (B2.)-(B4.) hold. For any $G_0 \in \Gcal$, there exists constant $C$ depending on $\Theta, f, h_1, h_2$ such that as $n \rightarrow \infty$,
    {\fontsize{9}{10}\selectfont
    \begin{equation}
        \Pi\left(G: \overline{d}_{H}(f_{G}(y|X), f_{G_0}(y|X)) \geq C\sqrt{\dfrac{\log(n)}{n}} \bigg| x^{[n]}, y^{[n]}\right)
        \rightarrow 0\; 
         \textrm{in}\; \otimes_{i=1}^{n} \Pbb_{G_0}\; \textrm{-probability}.
    \end{equation}}
\end{theorem}
Combining the above result with the inverse bounds developed in Section~\ref{sec:identifiable-inverse-bound} leads to the contraction rates of the posterior distribution in the exact-fitted and over-fitted settings. 
\begin{theorem}\label{thm:posterior-contraction-rate-parameter}
    Suppose that assumptions (A1.)-(A3.) and (B2.)-(B4.) hold. 
    Fix any $G_0 \in \Gcal$.
    \begin{enumerate}
        \item [(a)] (Exact-fitted) If $\Kup = k_0$, there exists some constant $C_1$ depending on $G_0, \Theta, f, h_1, h_2$ such that
        {\fontsize{9}{10}\selectfont
        \begin{equation*}
        \Pi\left(G: W_1(G, G_0) \geq C_1\left(\dfrac{\log(n)}{n}\right)^{1/2} \bigg| x^{[n]}, y^{[n]}\right) \stackrel{n\rightarrow \infty}{\longrightarrow} 0\; 
         \textrm{in}\; \otimes_{i=1}^{n} \Pbb_{G_0}\; \textrm{-probability}.
    \end{equation*}}
   
    \item [(b)] (Over-fitted) If $\Kup \geq k_0$, there exists some constant $C_2$ depending on $G_0, \Kup, \Theta, f, h_1, h_2$ such that
    {\fontsize{9}{10}\selectfont
        \begin{equation*}
        \Pi\left(G: W_2(G, G_0) \geq C_2 \left(\dfrac{\log(n)}{n}\right)^{1/4} \bigg| x^{[n]}, y^{[n]}\right) 
        \stackrel{n\rightarrow \infty}{\longrightarrow} 0\; 
         \textrm{in}\; \otimes_{i=1}^{n} \Pbb_{G_0}\; \textrm{-probability}.
    \end{equation*}}

    \end{enumerate}
\end{theorem}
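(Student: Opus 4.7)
The plan is to combine the Hellinger posterior contraction rate from Theorem~\ref{thm:posterior-contraction-rate-regression} with the inverse bounds of Theorem~\ref{thm:inverse-bounds} to transfer the contraction statement from conditional densities to the mixing measure $G$ itself. Since both target statements have the form ``posterior probability of a Wasserstein ball's complement vanishes'', the proof reduces to a deterministic containment of events combined with the already established Hellinger contraction.

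First I would relate the expected Hellinger distance to the expected total variation distance. Using the pointwise inequality $V(p,q) \le \sqrt{2}\, d_H(p,q)$ and Cauchy--Schwarz in the $X$-expectation, one obtains
\begin{equation*}
    \Ebb_X V(f_G(\cdot|X), f_{G_0}(\cdot|X)) \;\le\; \sqrt{2}\, \Ebb_X d_H(f_G(\cdot|X), f_{G_0}(\cdot|X)) \;\le\; \sqrt{2}\, \overline{d}_{H}(f_G, f_{G_0}),
\end{equation*}
where the last step uses Jensen's inequality applied to the square root. Consequently, on the event $\{\overline{d}_{H}(f_G, f_{G_0}) < C(G_0)\sqrt{\log n / n}\}$ guaranteed by Theorem~\ref{thm:posterior-contraction-rate-regression}, we automatically have $\Ebb_X V(f_G(\cdot|X), f_{G_0}(\cdot|X)) \le \sqrt{2}\, C(G_0)\sqrt{\log n / n}$.

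Next, I would invoke Theorem~\ref{thm:inverse-bounds} on the support of the prior $\Pi$. For part (a) the posterior is supported on $\Ecal_{k_0}(\Theta)$, and first-order identifiability of $f(\cdot|h_1,h_2)$ together with identifiability of $(h_1, h_2)$ gives $C_1 W_1(G, G_0) \le \Ebb_X V(f_G, f_{G_0})$, hence $W_1(G, G_0) \le (\sqrt{2}/C_1)\,C(G_0)\sqrt{\log n / n}$ on the Hellinger-good event. Choosing the constant in the statement to absorb the numerical factor, the set $\{G : W_1(G, G_0) \ge C_1 \sqrt{\log n / n}\}$ is contained in the Hellinger exceptional set, whose posterior mass vanishes. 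For part (b) the posterior is supported on $\Ocal_{\Kup}(\Theta)$, and second-order identifiability yields $C_2 W_2^2(G, G_0) \le \Ebb_X V(f_G, f_{G_0})$, so $W_2^2(G, G_0) \le (\sqrt{2}/C_2)\,C(G_0)\sqrt{\log n/n}$. Taking square roots converts the $\sqrt{\log n / n}$ rate into the $(\log n / n)^{1/4}$ rate for $W_2$, and the same event-containment argument concludes the proof.

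The main obstacle is not the analysis itself, which is a short composition of previously established results, but rather lining up the hypotheses: Theorem~\ref{thm:inverse-bounds} requires identifiability of the link pair $(h_1, h_2)$ in addition to strong identifiability of $f(\cdot|h_1,h_2)$, so the assumption list (A1.)--(A3.) in the statement must be read together with the identifiability of $(h_1, h_2)$ (conditions (A4.)--(A5.)) supplied by Theorem~\ref{thm:identifiable-equivalent}. Once these conditions are in force, the inverse bounds are global on $\Ecal_{k_0}$ and $\Ocal_{\Kup}$ with constants depending only on $G_0$ (and on $\Kup$ in the over-fitted case), so no uniformity argument over a shrinking posterior neighborhood is required.
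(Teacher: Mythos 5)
Your proposal is correct and follows essentially the same route as the paper: it deduces the result by combining the Hellinger posterior contraction rate of Theorem~\ref{thm:posterior-contraction-rate-regression} with the inverse bounds of Theorem~\ref{thm:inverse-bounds}, using the pointwise bound $V\le\sqrt{2}\,d_H$ together with Jensen's inequality to pass from $\overline{d}_H$ to $\Ebb_X V$, and then an event-containment argument. Your remark that the hypotheses must implicitly include the identifiability of $(h_1,h_2)$ (conditions (A4.)--(A5.)) for the inverse bounds to apply is a fair observation about the statement's assumption list, which the paper's own one-line proof glosses over.
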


\subsection*{The case of unknown $\Kup$} 
Finally, when the number of components $K$ and its upper bound are unknown, there are various approaches for prior specification. Here, we adopt the widely utilized "mixture-of-finite-mixtures" prior \cite{Miller-2016}. In particular, the prior distribution $\Pi$ on the space $\Gcal$ of mixing measures is induced by the following specification.

\begin{enumerate}
    \item[(B5.)]  A prior distribution $\Pi_\Kvar$ on $\Kvar$ with support in $\Nbb$, i.e., $\Pi_K(\Kvar = k) > 0$ for all $k\in \Nbb$. 
    \item[(B6.)] For each $k\in \Nbb$, given the event $K=k$, the conditional prior distribution of the mixing measure $G = \sum_{j=1}^{k}p_j \delta_{(\theta_{1j},\theta_{2j})}  \in \Ecal_{k}$ is induced by the following specification: $\Pi_p \times \Pi_{\theta_1}^{k} \times \Pi_{\theta_2}^{k}$ on $\{(p_1, \dots, p_{k}, \theta_{11}, \dots, \theta_{1k}$, $\theta_{21}, \dots, \theta_{2k})| p_i\geq 0, \sum_{i=1}^{k}p_i=1, \theta_{1j} \in \Theta_1, \theta_{2j} \in \Theta_2\}$, where $\Pi_p$ is a prior distribution of $(p_{j})_{i=1}^{k}$ on $\Delta^{k-1}$, $\Pi_{\theta_1}$ is a prior distribution of $\theta_{1j}$ on $\Theta_1$, and $\Pi_{\theta_2}$ is a prior distribution of $\theta_{2j}$ on $\Theta_2$, independently for $i=1,\dots, k$. Assume that $\Pi_p, \Pi_{\theta_1}, \Pi_{\theta_2}$ have a density with respect to Lebesgue measure on $\Delta^{k-1}, \Theta_1, \Theta_2$, respectively, which are bounded away from zero and infinity.  

    \item[(B7.)] For each $k\in \Nbb$, there exists $\epsilon_0 >0$ such that for all $G \in \Ecal_{k}(\Theta)$ satisfying $W_1(G, G_0) \leq \epsilon_0$, we have $\Ebb_{\Pbb_{G_0}} (f_{G_0} / f_{G})\leq M_0$ for $M_0$ only depends on $\epsilon_0, G_0, k, \Theta$.
\end{enumerate}  

\begin{theorem}\label{thm:MFM}
Assume that (A1.)-(A3.), (B2.), and (B5.)-(B7.) hold. There exists a subset $\Gcal_0 \subset \Gcal$, where $\Pi(\Gcal_0) = 1$ such that for all
$k_0 \in \Nbb$ and $G_0 \in \Gcal_0 \cap \Ecal_{k_0}$, there hold as $n\rightarrow \infty$
\begin{enumerate}
   \item [(a)] $\Pi(K = k_0 | x^{[n]}, y^{[n]}) \to 1$ a.s. under  $\otimes_{i=1}^{n}\Pbb_{G_0}$;
   \item [(b)] there is a constant $C$ depending on $G_0, \Theta, f, h_1, h_2$ such that
   {\fontsize{9}{10}\selectfont
     \begin{equation*}
        \Pi\left(G: W_1(G, G_0) \geq C\left(\dfrac{\log(n)}{n}\right)^{1/2} \bigg| x^{[n]}, y^{[n]}\right) \to 0\;
         \textrm{in}\; \otimes_{i=1}^{n} \Pbb_{G_0}\; \textrm{probability}.
    \end{equation*}
    }
    \end{enumerate}
\end{theorem}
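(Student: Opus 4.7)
The claim decomposes into (a) posterior consistency of the number of components, and (b) a $W_1$-contraction rate. Part (b) is a quick consequence of (a) combined with the exact-fitted Theorem~\ref{thm:posterior-contraction-rate-parameter}(a), so the main work lies in (a). Writing the posterior on $K$ as
\begin{equation*}
\Pi(K = k \mid x^{[n]}, y^{[n]}) = \frac{\Pi_K(k)\, M_n(k)}{\sum_{k' \in \Nbb} \Pi_K(k')\, M_n(k')}, \qquad M_n(k) := \int_{\Ecal_k} \prod_{i=1}^n f_G(y_i \mid x_i)\, d\Pi_k(G),
\end{equation*}
it suffices to show that $M_n(k)/M_n(k_0) \to 0$ in $\otimes \Pbb_{G_0}$-probability for every $k \neq k_0$, uniformly enough to be summed against $\Pi_K$. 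The ``regular'' set $\Gcal_0$ will be the complement of the parameter configurations on which either the prior-mass lower bound or condition (B7.) degenerates; by (A1.)-(A3.) and (B6.) this exceptional set has Lebesgue and hence prior measure zero.

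The under-fitted regime $k < k_0$ is handled by a KL-divergence argument. First-order identifiability of $f(\cdot \mid h_1, h_2)$ (inherited via Theorem~\ref{thm:identifiable-equivalent}) and compactness of $\Theta$ give $\inf_{G \in \Ecal_k} K(f_{G_0}, f_G) \geq c(G_0, k) > 0$; otherwise a weak-limit argument would produce a mixture of fewer than $k_0$ components equal to $f_{G_0}$. A uniform law of large numbers, justified by the Lipschitz hypothesis (B2.), then yields $\sup_{G \in \Ecal_k} \prod_i f_G(y_i \mid x_i) / f_{G_0}(y_i \mid x_i) \leq \exp(-cn/2)$ almost surely for $n$ large. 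Meanwhile, a Barron--Ghosal--van der Vaart type lower bound, using the prior-density-bounded-below part of (B6.) together with (B7.), gives $M_n(k_0) \geq \exp(-C_0 \log n)\,\prod_i f_{G_0}(y_i \mid x_i)$. Thus the under-fitted ratio decays exponentially.

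The over-fitted regime $k > k_0$ is the main obstacle. My plan is to split $\Ecal_k$ according to the Hellinger neighborhood $B_n = \{G : \overline{d}_H(f_G, f_{G_0}) \leq M \sqrt{\log n / n}\}$. Outside $B_n$, the density-level contraction analysis underlying Theorem~\ref{thm:posterior-contraction-rate-regression}, applied with $\Kup = k$, makes the contribution to $M_n(k)$ exponentially smaller than the denominator. Inside $B_n$, the second-order inverse bound of Theorem~\ref{thm:inverse-bounds}(b) forces $W_2(G, G_0) \lesssim (\log n / n)^{1/4}$, so the $k - k_0$ ``excess'' atoms of $G$ must either collapse onto atoms of $G_0$ or carry weights of total order at most $(\log n / n)^{1/2}$. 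A direct volume computation, using the prior-density-bounded-above assumption in (B6.), then gives $\Pi_k(B_n) \lesssim (\log n / n)^{(k - k_0)/2}$, a polynomial-in-$1/n$ factor that, combined with the earlier bound $M_n(k_0) \gtrsim \exp(-C_0 \log n)\prod_i f_{G_0}$, yields $M_n(k)/M_n(k_0) \to 0$. To accommodate all $k > k_0$ simultaneously I will establish uniformity of this bound over $k$ in a slowly growing range and use summability of $\Pi_K$ on the tail.

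For part (b), given (a), decompose
\begin{equation*}
\Pi\bigl(W_1(G, G_0) \geq C_1 \sqrt{\log n / n} \bigm| x^{[n]}, y^{[n]}\bigr) \leq \Pi(K \neq k_0 \mid x^{[n]}, y^{[n]}) + \Pi\bigl(W_1(G, G_0) \geq C_1 \sqrt{\log n / n} \bigm| K = k_0, x^{[n]}, y^{[n]}\bigr).
\end{equation*}
The first term vanishes by (a), and the second is exactly Theorem~\ref{thm:posterior-contraction-rate-parameter}(a), whose hypotheses (A1.)-(A3.) and (B2.)-(B4.) are inherited by the conditional MFM prior restricted to the slice $K = k_0$ (with (B3.) corresponding to (B6.) at $\Kup = k_0$ and (B4.) to (B7.)). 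I expect the combinatorial volume estimate on $B_n$ in the over-fitted regime to be the most technical step, as it requires tracking all ways the $k - k_0$ excess atoms can partition weight near the atoms of $G_0$ subject to the $W_2$ constraint.
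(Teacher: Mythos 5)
Your part (b) is exactly the paper's argument: decompose over the event $\{K \neq k_0\}$ and its complement, kill the first term by (a), and apply Theorem~\ref{thm:posterior-contraction-rate-parameter}(a) on the slice $K = k_0$. The divergence is entirely in part (a), where you take a genuinely different — and much harder — route than the paper, and where your sketch has a real gap.

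The paper proves (a) by Doob's consistency theorem: since $G \mapsto k(G)$ is identifiable, Doob gives a set $\Gcal_0$ with $\Pi(\Gcal_0)=1$ on which $\Pi(k(G)=k_0 \mid x^{[n]},y^{[n]}) \to 1$ a.s., and the prior construction (B6.) forces $K = k(G)$ $\Pi$-a.s., so the posterior on $K$ inherits the same consistency. This is why the theorem is stated only for $G_0 \in \Gcal_0$ with $\Pi(\Gcal_0)=1$: that caveat is the price of Doob's theorem, and it is all the statement asks for. Your marginal-likelihood comparison, if completed, would prove the stronger all-$G_0$ claim, but at the cost of the over-fitted analysis, which is precisely where your argument is not yet a proof. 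Two specific problems. First, the bound $\Pi_k(B_n) \lesssim (\log n/n)^{(k-k_0)/2}$ is asserted, not derived; the correct exponent in such volume estimates depends on the parameter dimension $d_1+d_2$ and on how the $k-k_0$ excess atoms distribute mass among the true atoms, and the second-order inverse bound only controls $W_2$, which permits excess atoms at distance $(\log n/n)^{1/4}$ from true atoms — a region whose prior volume does not obviously shrink at the rate you claim. Second, even granting a polynomial bound $\Pi_k(B_n) \lesssim n^{-a}$, the comparison with $M_n(k_0) \gtrsim n^{-C_0}\prod_i f_{G_0}(y_i\mid x_i)$ only forces $M_n(k)/M_n(k_0) \to 0$ if $a > C_0$, and the constant $C_0$ coming from the Barron-type prior-mass lower bound (it scales with $n\bar\epsilon_n^2/\log n$, hence with the local prior dimension) is not shown to be smaller than $a$. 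Closing this requires the kind of delicate bookkeeping found in the marginal-likelihood literature on over-fitted mixtures and is not a routine step. The under-fitted regime and the definition of the exceptional set are fine in spirit, but as written the proof of (a) is incomplete; the paper's Doob argument sidesteps all of this at the cost of the $\Pi$-a.e. qualifier already built into the statement.
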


\section{Simulations and data illustrations}\label{sec:experiments}

\paragraph{Regression mixtures vs unconditional mixtures}

The characterization results (Theorem~\ref{thm:identifiable-equivalent} and propositions in Section~\ref{subsec: strong identifiability}) provide easy-to-check sufficient conditions for strong identifiability (in the sense of Def.~\ref{def:strong_identifiability_hierarchy}). 
Part of the sufficient conditions requires that the kernel density $f$ be strongly identifiable up to a certain order, a standard condition considered in the asymptotic theory for finite (and unconditional) mixture models~\cite{nguyen2013convergence,Ho-Nguyen-Ann-16,ho2019singularity}.
It is noteworthy that the strong identifiability condition of a mixture of regression model given in Def.~\ref{def:strong_identifiability_hierarchy} is typically a weaker condition than that of a standard unconditional mixture model, because the presence of the covariate $x$ makes the conditional mixture model more constrained. Hence, it is possible that for an unconditional mixture of kernel densities $f$ strong identifiability and hence the inverse bound $V\succcurlyeq W_1$ may not hold, but when $f$ is utilized in a regression mixture model instead, the strong identifiability and hence the inverse bound still holds. 

We demonstrate this observation by a theoretical result given in Proposition B.1 for the mixture of binomial regression models. 
To illustrate this result by a simulation study, let
    $p_{G}(y) := \sum_{i=1}^{k} p_i \textrm{Bin}(y| N, q_i)$,
for $G = \sum_{i=1}^{k} p_i \delta_{q_i} \in \Ocal_{K}([0,1])$, where $N$ is a fixed natural number. From the discussion in Section~\ref{sec:identifiable-inverse-bound}, we know that this model is only strongly identifiable in the first order if $2k \leq N+1$. It means the inverse bound may not hold when $2k > N+1$. Let $k = 2$, $N=1$ so that $2k> N+1$, and $G_0 = 0.5 \delta_{0.3} + 0.5 \delta_{0.7}$, and then uniformly generate 2000 random samples of $G$ around $G_0$. We compare $W_1(G, G_0)$ against $d_{TV}  (  p_G, p_{G_0})$ to see if the inverse bound $d_{TV}  (  p_{G}, p_{G_0})\succcurlyeq W_1(G, G_0)$ holds or not. It can be seen in Fig.~\ref{fig:inverse-bounds}(a) that such an inverse bound does not hold. In contrast, for the mixture of binomial regression model under the same setting $k=2, N=1$ (a.k.a. mixture of two logistic regression):
    $f_{G}(y|x) = p_1 \textrm{Bin}(y|1, \sigma(\theta_1 x)) + p_2 \textrm{Bin}(y|1, \sigma(\theta_2 x))$
for $G = p_1\delta_{\theta_1} + p_2 \delta_{\theta_2}$ and $\sigma$ being the sigmoid function, the inverse bound as established by Theorem~\ref{thm:inverse-bounds} still holds. We uniformly sample 2000 measure $G$ around $G_0 = 0.5 \delta_{0.5} + 0.5 \delta_{5}$ and plot $W_1(G, G_0)$ against $\Ebb_{X} d_{TV}  (  f_G(y|X), f_{G_0}(y|X))$, for $X \sim \textrm{Uniform}([-6, 6])$. The relationship $\Ebb_{X} d_{TV}  (  f_G(y|X), f_{G_0}(y|X)) \succcurlyeq W_1(G, G_0)$ holds in this scenario (Fig.~\ref{fig:inverse-bounds}(b)). As a consequence, the mixture of logistic regression model still enjoys the convergence rate of $n^{-1/2}$ for parameter estimation in the exact-fitted setting. 

\begin{figure}[ht]
      \centering
      \subcaptionbox*{\scriptsize \centering(a) Mixture of binomial distributions \par}{\includegraphics[width = 0.49\textwidth]{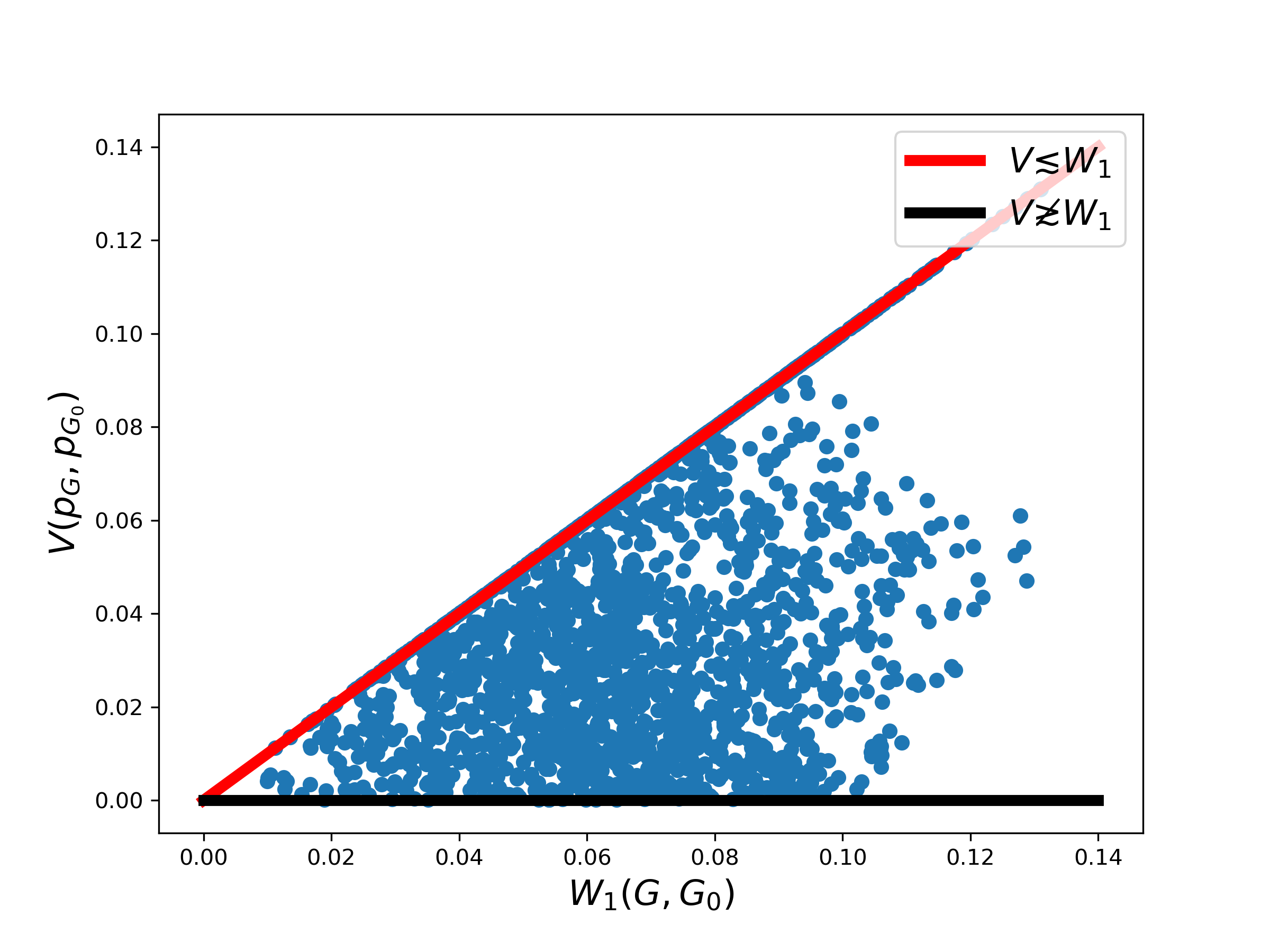}}
      \subcaptionbox*{\scriptsize \centering(b) Mixture of binomial regression models \par}{\includegraphics[width = 0.49
\textwidth]{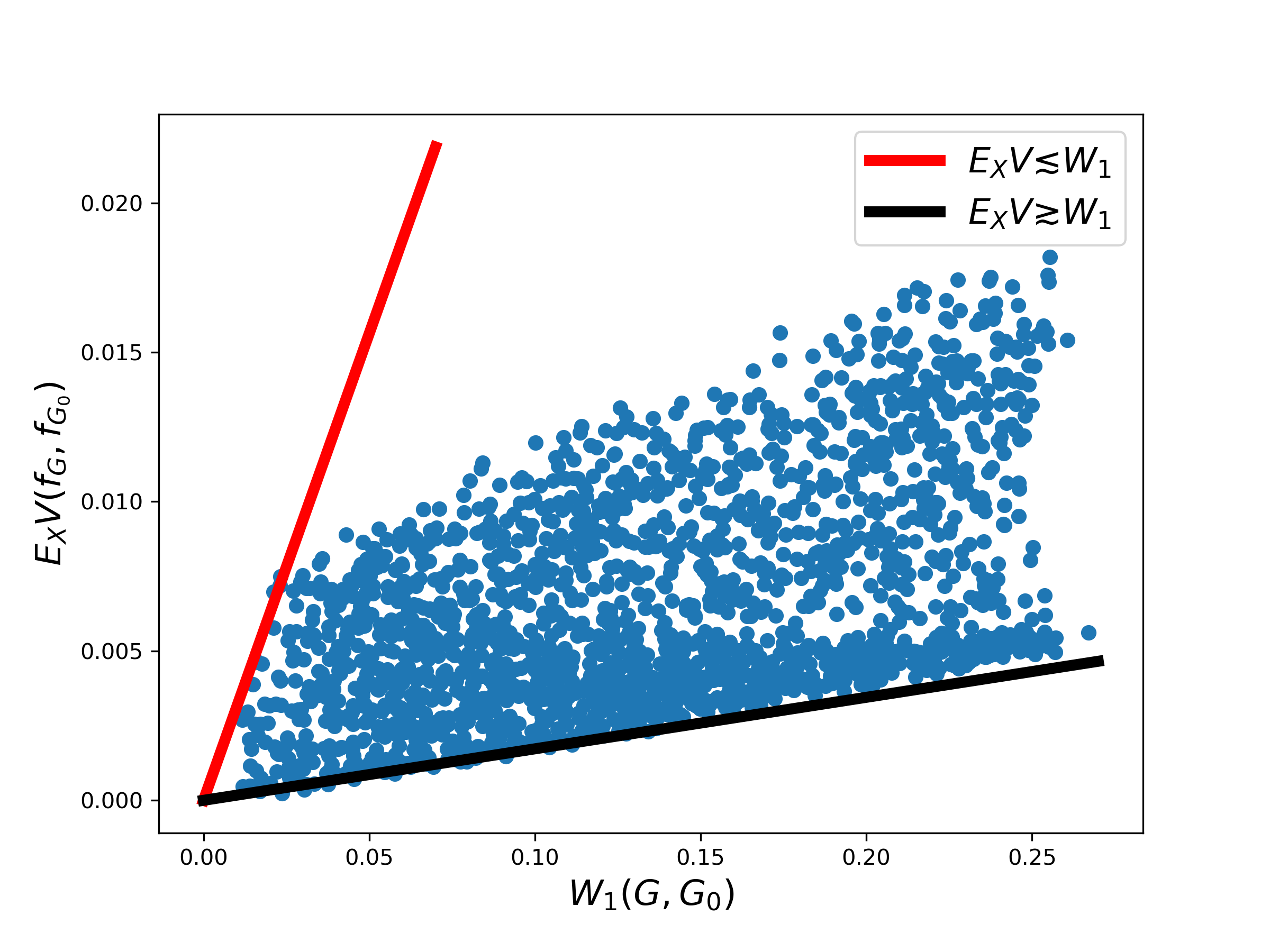}}
      \caption{Illustrations of the inverse bounds.} \label{fig:inverse-bounds}
    \end{figure}

\paragraph{Simulation studies for exact-fitted and over-fitted settings}

Next, we illustrate the parameter learning rates
under both exact and over-fitted settings. Consider a mixture of normal regression model with a polynomial link function and  fixed variance:
    $f_{G}(y|x) = \sum_{i=1}^{k} p_i \Ncal(y | h(x, \theta_i), \sigma^2)$, 
where $x \in \Rbb, G = \sum_{i=1}^{k} p_i \delta_{\theta_i}, \theta_i \in \Rbb^{3}, h(x, \theta_i) = \theta_{i1} + \theta_{i2} x + \theta_{i3} x^2$. We simulate data $(x_i)_{i=1}^n$ from an uniform distribution on $[-3, 3]$ and then generated $y_i$ given $x_i$ from this model using $G_0 = p_1^0 \delta_{\theta_1^0} + p_2^0 \delta_{\theta_2^0}$, where $\sigma=1, \theta^0_1=(1,-5,1),\theta^0_2=(2,5,2),p^0_1=p^0_2=0.5$.
Because the variances of both components are known, and the link function is a polynomial, this model is strongly identifiable in the second order. 
The maximum (conditional) likelihood estimate $\hat{G}_n$ of $G_0$ is obtained by the Expectation-Maximization (EM) algorithm 
. We considered two cases where the number of components of $G_0$ is known to be $2$ for the exact-fitted setting and set $K=3$ for the over-fitted setting. For each setting and each $n$, we run the experiment $16$ times to obtain the estimation error as measured by the Wasserstein distances. 
In Fig.~\ref{fig:exact over}, the orange line presents the interquartile range of the 16 replicates obtained at each sample size $n$. The $W_1$ error in the exact-fitted setting and the $W_2$ error in the over-fitted case are of order $(\log(n)/n)^{1/2}$ and $(\log(n)/n)^{1/4}$, respectively. These results are compatible with the theoretical results established in Theorem \ref{thm:convergence_parameter_estimation}. 
\begin{figure}[h!]
      \centering
      \subcaptionbox*{\scriptsize \centering(a) Convergence rate in exact-fitted case \par}{\includegraphics[width = 0.48\textwidth]{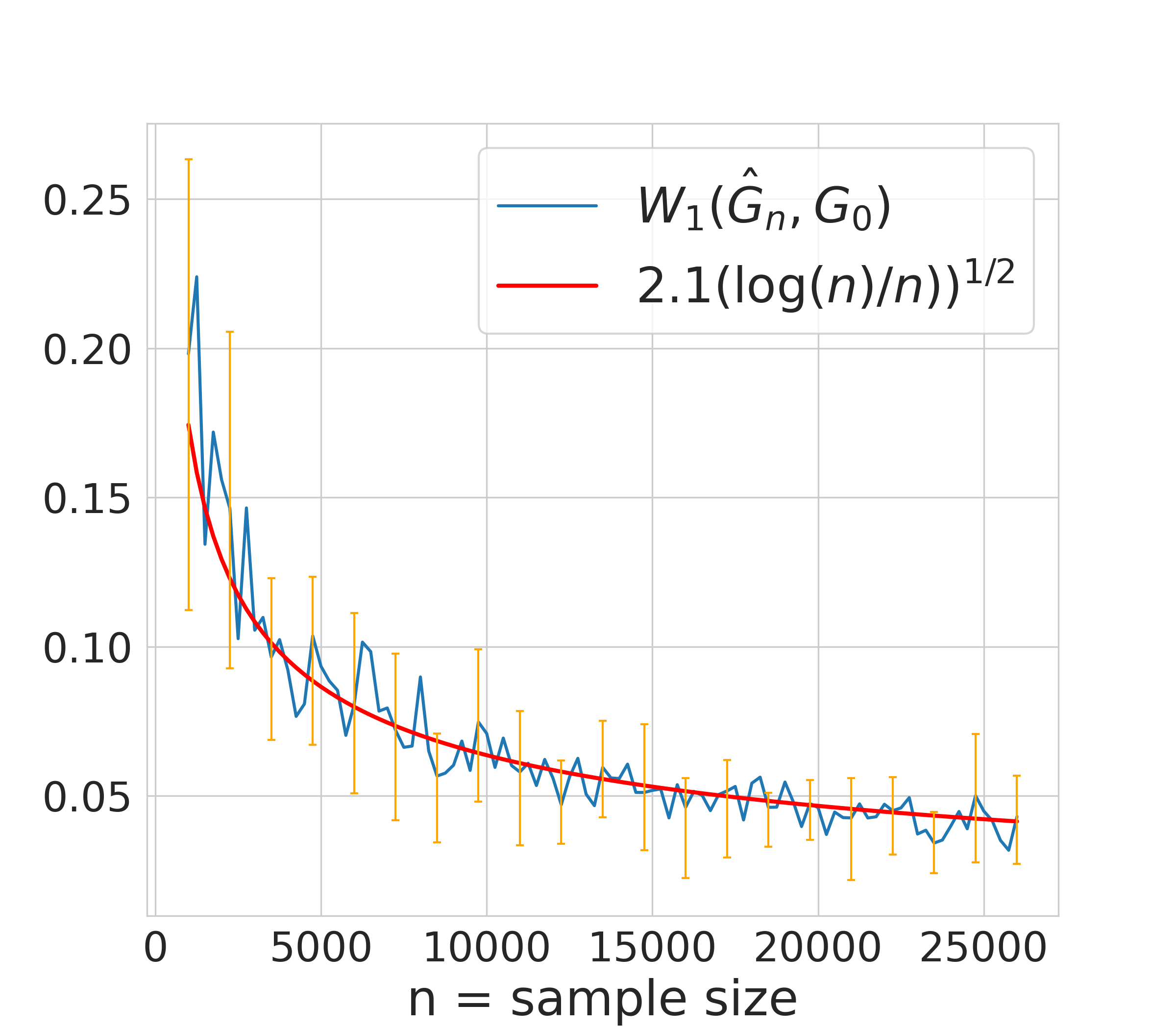}}
      \subcaptionbox*{\scriptsize \centering(b) Convergence rate in over-fitted case \par}{\includegraphics[width = 0.48\textwidth]{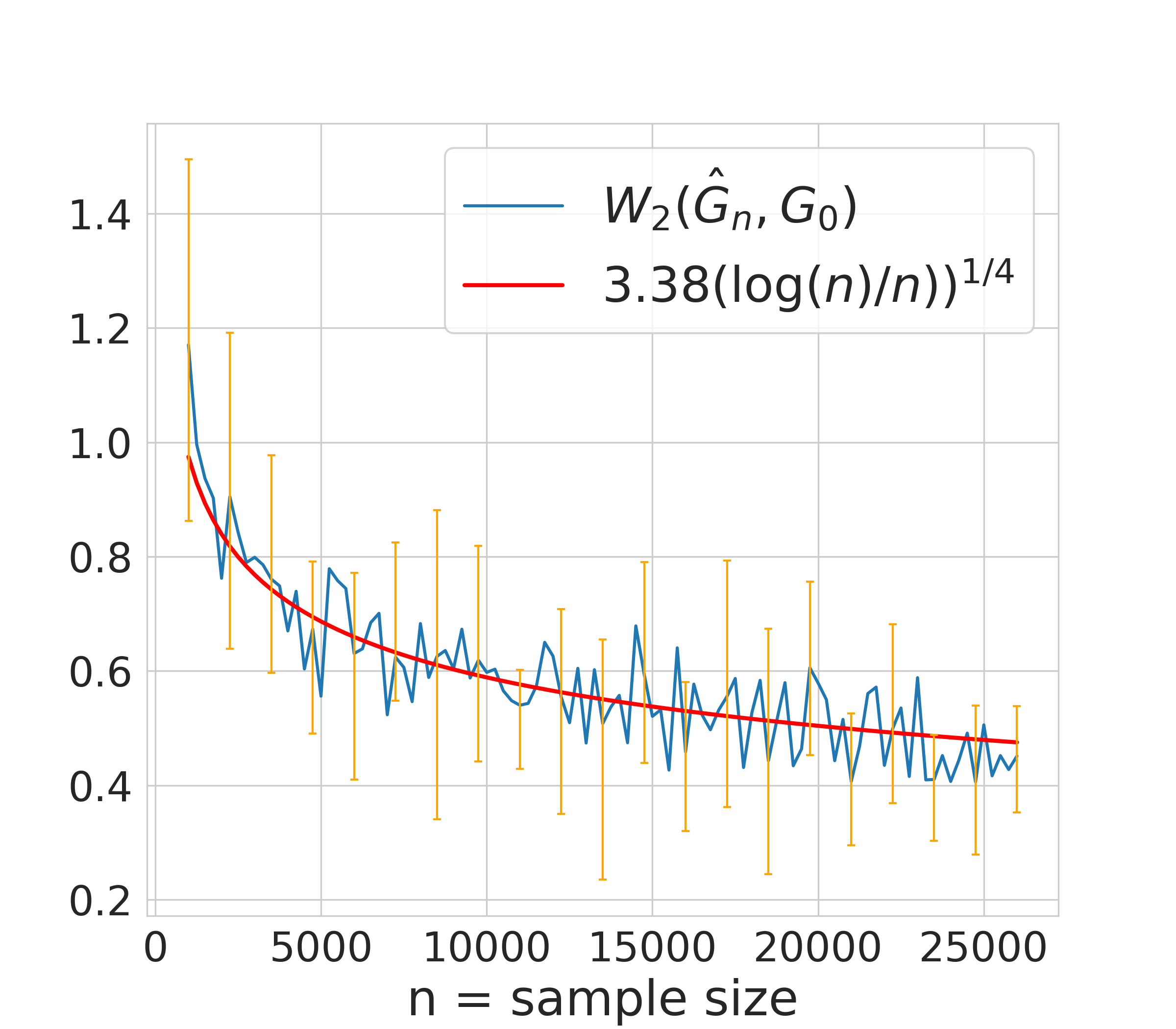}}
      \subcaptionbox*{\scriptsize(c) Comparison of the two cases \par}{\includegraphics[width = 1\textwidth]{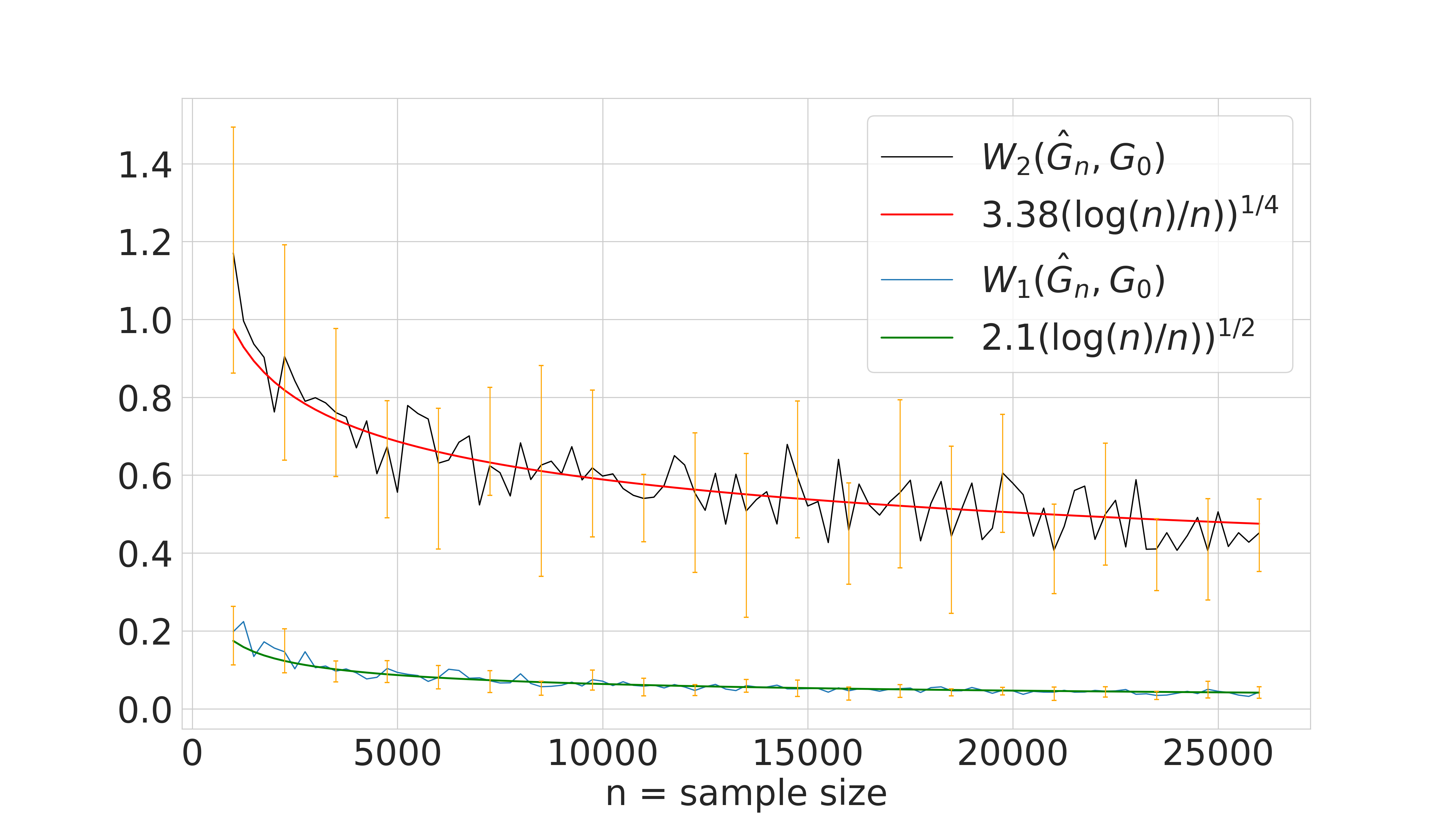}}
      \caption{\centering Convergence rates of parameters estimation in the exact-fitted and over-fitted setting.} \label{fig:exact over}
    \end{figure}

\paragraph{Investigating the lack of strong identifiability}

In Section \ref{subsec:non-strong-ident}, we discussed the lack of strong identifiability in the mixture of negative binomial regression models. Here, we conduct an experiment to show the posterior contraction rate of parameter estimation where the true model is not strongly identifiable (cf. Eq.~\eqref{eq: Non-strong Nega} holds). A dataset is drawn from a negative binomial regression mixture model:
$f_G(y|x)= \sum_{j=1}^{2}p_j\mathrm{NB}(y|h(x,\theta_j),\phi_j)$, 
where the covariate, $x_i$, is randomly generated from a uniform distribution over the interval $[0,5]$; the component means are $h(x_i, \theta_1)= \exp(X_i\theta_1)$, $h(x_i, \theta_2)= \exp(X_i\theta_2)$, where $X_i = (1,x_i), i=1,...,n $. The true latent mixing measure is $G_0 = p_1^0\delta_{\{\theta_1^0, \phi_1^0\}}+ p_2^0\delta_{\{\theta_2^0,\phi_2^0\}}$, where
the regression coefficients are $\theta_1^0 = (0,1)'$, $\theta_2^0 = (\log3,1)'$, the dispersion parameters are $\phi_1^0 = 0.5$, $\phi_2^0 = 1.5$ and the mixing proportions are $p_1^0 = 0.4$, $p_2^0 = 0.6$. Under this simulation, Eq. \eqref{eq: Non-strong Nega} is satisfied. The simulated dataset is illustrated in Figure \ref{fig:neg}(a). This model is not strongly identifiable. 

For model fitting, we adopt the Bayesian approach and consider the exact-fitted case. Let $Z_1,...Z_n$ be the indicator variables such that
$P(Z_i=j)=p_j, \mbox{ for } j=1,2$, and $p_1+p_2=1$. Then, we have $y_i|(x_i, Z_i=j) \sim \NB(h(x_i, \theta_j),\phi_j)$, for $i=1,...,n$ and $j=1,2$.
To proceed with estimating the
parameter set $\boldsymbol{\Psi}=(p_1,p_2,\theta_1, \theta_2, \phi_1, \phi_2)$ given the simulated data, $\{x_i, y_i\}_{i=1}^n$, we investigate the posterior distributions of $p= (p_1, p_2)$, $\theta_j$ and $\phi_j$ (for $j =1 ,2)$. Similar to \cite{PARK2009683}, we choose the prior $p$ to be $\textrm{Beta}(1, 1)$, $\theta_j \sim \Ncal(0, I_2)$ (normal distribution with identity covariance matrix), and $\phi_j^{-1} \sim \mathrm{Gamma}(0.01,0.01)$ (a non-informative gamma distribution) for $j=1,2$. 
%
%
The full posterior distribution is approximated using an MCMC algorithm with details given in Appendix F. 
%
For each different sample size $n$, we run the experiment 8 times. For each time running, we produce 2500 MCMC samples, discard the first 500, and use the remaining samples to estimate the expected Wasserstein distances to $G_0$.  
The estimation error averaged over the 8 runs is reported in Fig \ref{fig:neg}(b). The orange line presents an interquartile range of 8 results at each considered sample size. The $W_1$ error is plotted against the order $\dfrac{1}{\log(n)}$. It can be seen that the error tends to reduce at an extremely slow speed, as validated by the slow minimax bound. 

\begin{figure}[h!]
      \centering
      \subcaptionbox*{\scriptsize \centering(a) Simulated data from a NB mixture \par}{\includegraphics[width = 0.49\textwidth]{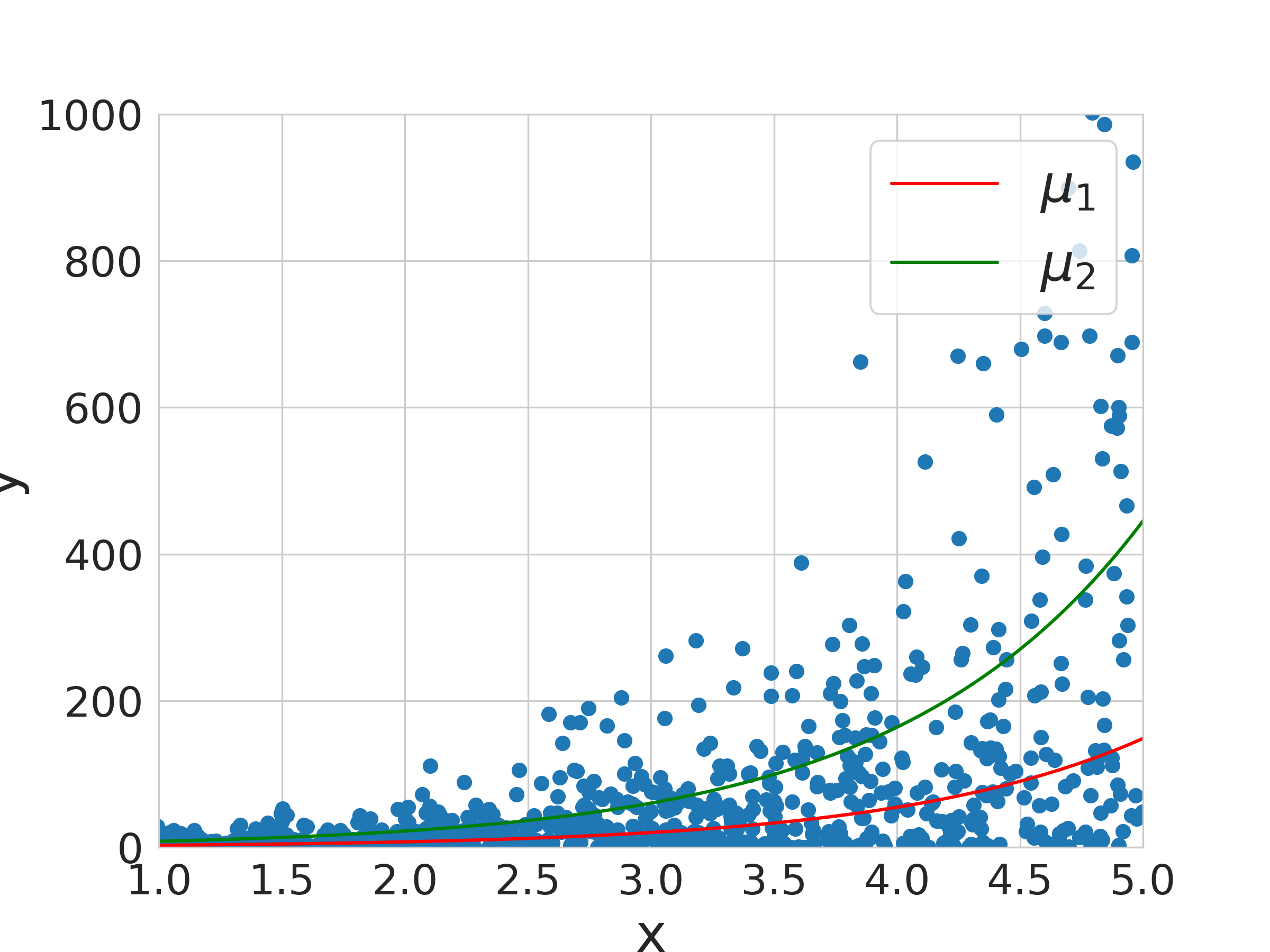}}
      \subcaptionbox*{\scriptsize \centering(b) Posterior contraction rate \par}{\includegraphics[width = 0.49\textwidth]{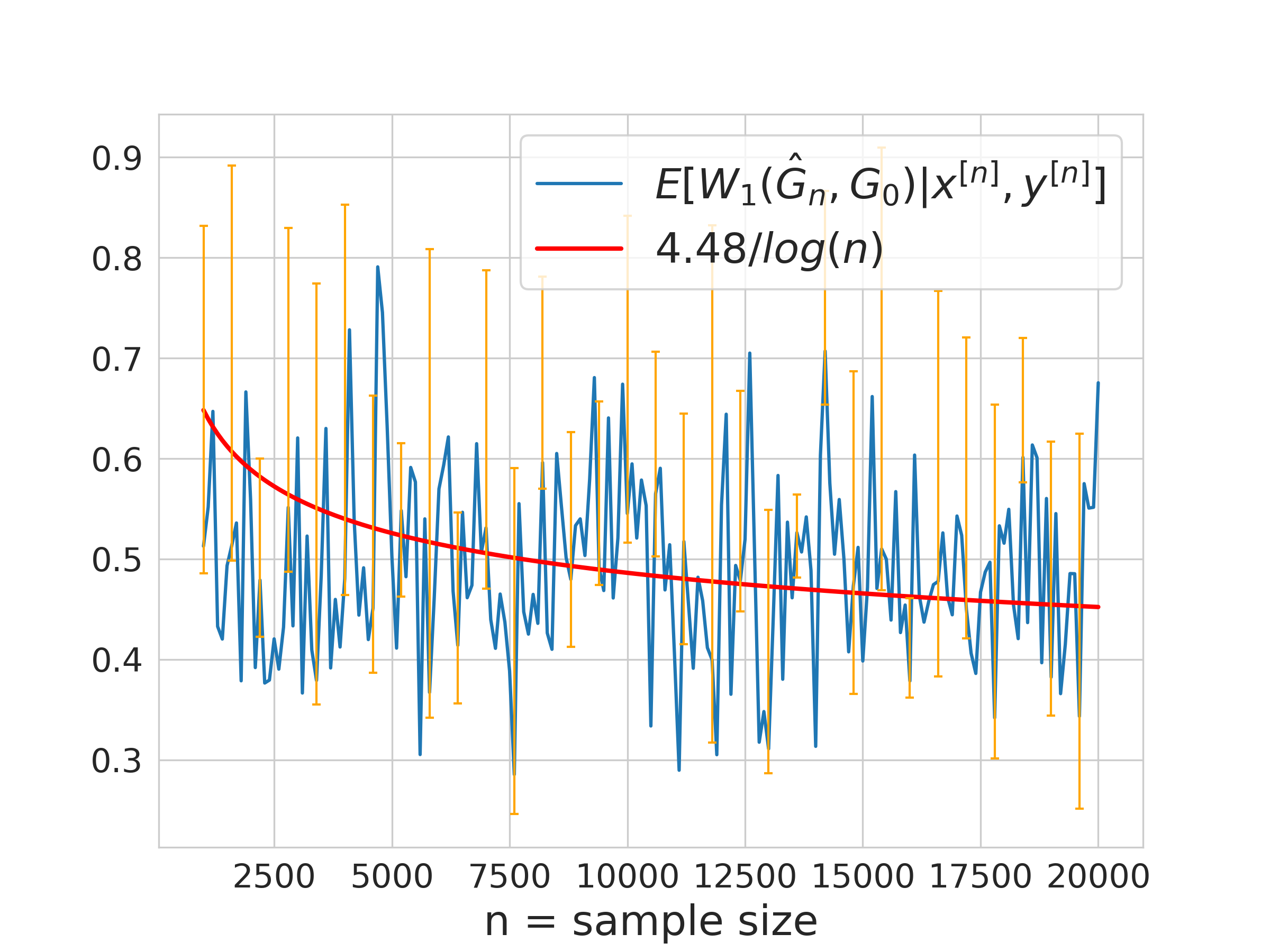}}
     \caption{\centering Convergence rate in exact-fitted case where the model is non-strongly-identifiable.} \label{fig:neg}
    \end{figure}

\paragraph{Analysis of crash data}

To validate the applicability of the proposed theoretical result in Section \ref{subsec: strong identifiability} illustrated via the simulation study above, we use the dataset collected in 1995 at urban 4-legged signalized intersections in Toronto, Canada. The same data has been explored and fitted by a mixture of negative binomial regression models by~\cite{PARK2009683}, where they showed the good quality of the dataset as well as the best-fitted model for it. This crash data set contains $868$ intersections, which have a total of $10,030$ reported crashes. In their paper, the authors explicated the heterogeneity in the dataset which came from the existence of several different sub-populations (i.e., the data collected from the different business environments, contains a mix of fixed and actuated traffic signals, and so on). 
Accordingly, 
the mean functional form has been used for each component as below:
\begin{align}
    \mu_{j,i} = h(\boldsymbol{F}_i, \theta_j) =  \theta_{j,0}F_{1i}^{\theta_{j,1}}F_{2i}^{\theta_{j,2}}, \text{ for } j=1,2 \text{ and } i=1,...,868,
\end{align}
where $\mu_{j,i}$ is the $j$th component's estimated number of crashes for intersection $i$; $F_{1i}$ counts the entering flows in vehicles/day from the major approaches at intersection $i$; $F_{2i}$ the entering flows in vehicles/day from the minor approaches at intersection $i$; and $\theta_j = (\log(\theta_{j,0}), \theta_{j,1}, \theta_{j,2})'$ the estimated regression for component $j$.
%
According to~\cite{PARK2009683}, the best model for describing the dataset is a two-mixture of negative binomial regression where $\phi_1 = 9.3692$, $\phi_2 =8.2437$, $p_1 = 0.43, p_2 = 0.57, \theta_1 = (-10.9407, 0.8588, 0.5056)'$, $\theta_2 = (-9.7842, 0.3987, 0.8703)'$. 

It can be seen that the two values of $\phi_1$ and $\phi_2$ nearly satisfy the second condition of the pathological case mentioned in Section \ref{subsec: strong identifiability} (i.e., $\phi_1 \approx \phi_2+1$). If the first condition holds (i.e.,  $\frac{\mu_{1}}{\phi_1}=\frac{\mu_{2}}{\phi_2}$), then we would be in a singular situation. To verify this, we calculate $\left(\dfrac{\mu_{1, i}}{\phi_1}-\dfrac{\mu_{2,i}}{\phi_2}\right)$ for all samples $(\boldsymbol{F}_i)_{i=1}^{868}$. The histogram of this difference can be seen in Fig.~\ref{fig:NB_real}(a).
By the Anderson-Darling test, we see that this distribution is significantly different from the degenerate distribution at $0$, with the calculated p-value for this test being $1.28\times10^{-13}$. 
Hence, we are quite far from the pathological situations of non-strong identifiability. In theory, the method should still enjoy the $n^{-1/2}$ convergence rate if the model is well-specified and exact-fitted. We further subsample this data and calculate the error of the estimator from the subsampled dataset to that of the whole dataset. For each sample size, we replicate the experiment 8 times and report the average error (in blue) and the interquartile error bar (in orange) in Fig.~\ref{fig:NB_real}(b). We can see that the error is approximate of the order $n^{-1/2}$. 

Finally, we conduct another subsampling experiment to focus on the data corresponding to $\left|\frac{\mu_{1,i}}{\phi_1}-\frac{\mu_{2,i}}{\phi_2}\right|\le0.3$. This data subset (with sample size $502$) represents a data population that is closer to the pathological cases of non-strong identifiability than that of the previous experiment. Note that the difference from the degenerate distribution at $0$ is still significant, so the $n^{-1/2}$ convergence rate is still achieved in theory. In particular, the black line in  Fig.~\ref{fig:NB_real}(b) represents the average errors in this case after an 8-time running of the experiment. A noteworthy observation is that the closer the data population is to a pathological situation, the slower the actual convergence to the true parameters will be. This result provides an interesting demonstration of the population theory given in the previous section.
\begin{figure}[h!]
      \centering
      \subcaptionbox*{\scriptsize \centering(a) Histogram of difference $\mu_1/\phi_1 - \mu_2/\phi_2$ \par}{\includegraphics[width = 0.51\textwidth]{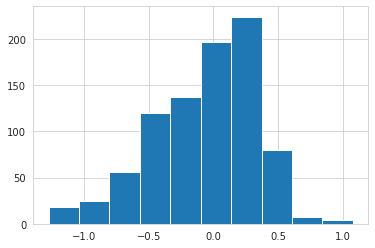}}
      \subcaptionbox*{\scriptsize(b) The blue line corresponds to the original data while the black line to subsamples chosen near a pathological situation.  \par}{\includegraphics[width = 0.48\textwidth]{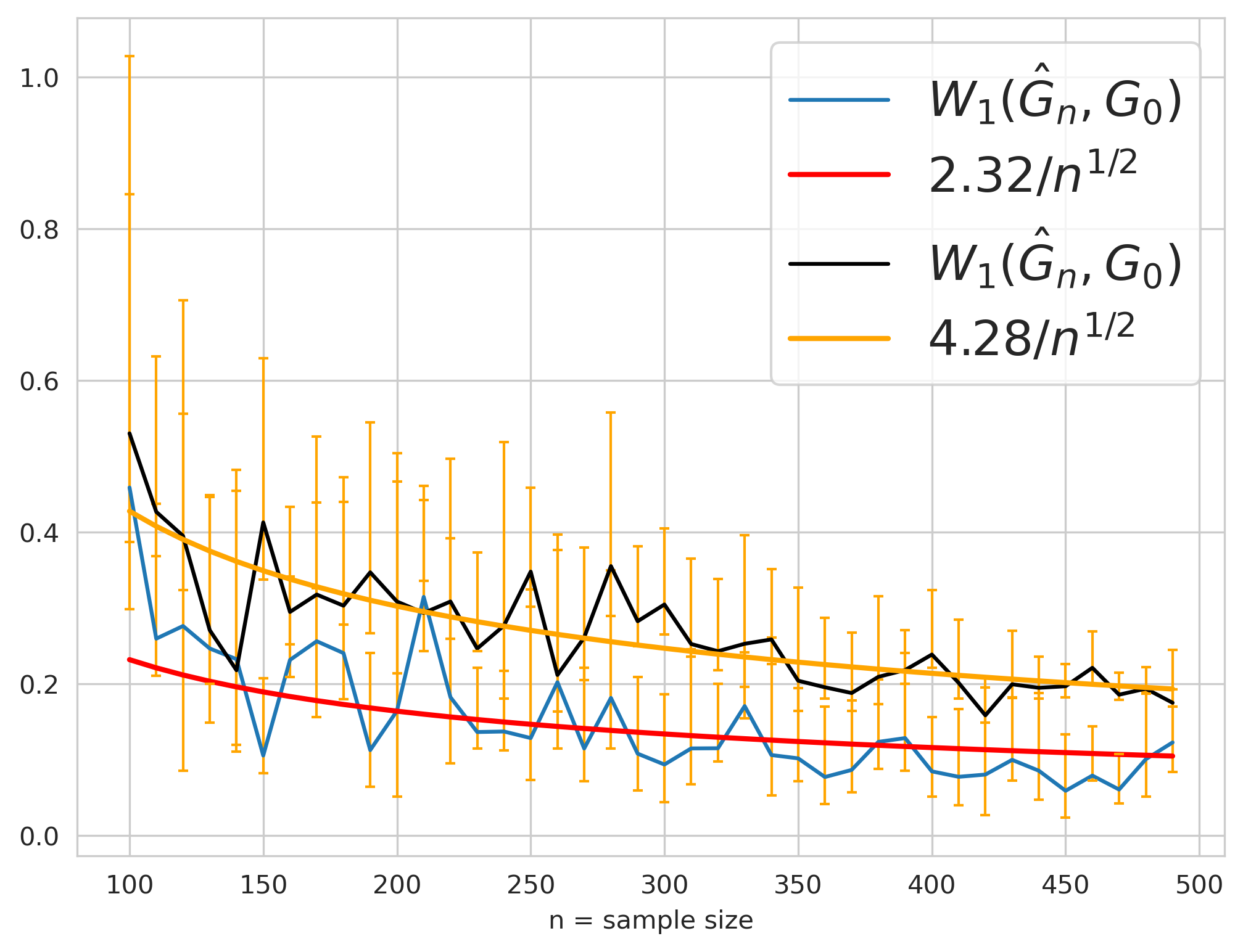}}
     \caption{\centering The impact of Crash data being near pathological cases of non-strong identifiability.} \label{fig:NB_real}
\end{figure}

\section{Conclusion}\label{sec:conclusion}

We developed a strong identifiability theory for general finite mixture of regression models and derived rates of convergence for density estimation and parameter estimation in both Bayesian and MLE frameworks.
This theory was shown to be applicable to a wide range of models employed in practice. 
It also invites interesting new questions.
First, in our models mixture weights $p_j$'s do not vary with covarate $x$. 
It would be interesting to extend the theory to the situation of co-varying weights.
Second, while our theory is applicable to the case of unknown but finite number of mixture components, it remains challenging to extend such a theory for infinite conditional mixtures motivated from Bayesian nonparametrics \cite{nguyen2013convergence,Hjort-etal-10}.
Finally, as demonstrated with the negative binomial mixtures, although the singularity situation (i.e., strong identifiability is violated) is rare, being in the vicinity of a singular model can be a far more common scenario. We would like to investigate more precisely the impact on parameter estimates when the true model is in the vicinity of a singular model, and to provide suitable statistical methods to overcome the inefficiency of inference in such situations.

\section*{Acknowledgments}
This work is supported in part by the NSF Grant DMS-2015361 and a research gift from Wells Fargo.


\bibliographystyle{plain} 
\bibliography{GLMMixture}       
\newpage
\appendix
\begin{center}
{\bf \large Supplement to "Strong identifiability and parameter learning
in regression with heterogeneous response"}
\end{center}
In the supplementary material, we collect the proofs and additional information deferred from the main text. Section~\ref{sec:proof} includes the proofs for all main theoretical results. Section~\ref{sec:remain-results} includes the proofs for all remaining theoretical results. Section~\ref{sec:conv-rate-m-estimator-theory} and Section~\ref{sec:posterior-contraction-rate-theory} provide a general theory of M-estimators convergence rates and posterior contraction rates in the regression setting, respectively. 
The EM algorithms for mixtures of regression are described in Section~\ref{sec:EM}. 


\section{Proofs of main results}\label{sec:proof}
\subsection{Identifiablity conditions and inverse bounds}\label{subsec:ident-proofs}
\begin{proof}[Proof of Theorem~\ref{thm:identifiable-equivalent}]
We will divide the proof into first order and second order identifiability cases. 
\paragraph{The case $r=1$:} For some $k \in \mathbb{N}$, suppose that there exist $k$ distinct elements
$(\theta_{11},\theta_{21}), \ldots, (\theta_{1k},\theta_{2k})$ 
$\in \Theta_1\times \Theta_2$, and $\alpha_j\in \Rbb, \beta_j\in \Rbb^{d_1}, \gamma_j\in \Rbb^{d_2}$ as $j = 1,\dots, K$ such that for almost all $x, y$ (w.r.t. $\PX \times \nu$)
\begin{eqnarray}
\sum_{j=1}^{k} \alpha_j f_j(y|x) + \beta_j^{\top} \dfrac{\partial}{\partial \theta_1} f_j(y|x)+  \gamma_j^{\top} \dfrac{\partial}{\partial \theta_2} f_j(y|x) = 0, \nonumber
\end{eqnarray}
then we want to show that $\alpha_j=0, \beta_j=0\in \Rbb^{d_1}, \gamma_j = 0 \in \Rbb^{d_2}$ for $j = 1, \dots, k$. Indeed, by the chain rule, the left-hand side of the above expression is equal to
{\fontsize{10}{10}\selectfont 
\begin{align*}
    \sum_{j=1}^{k} \alpha_j f_j(y|x) + \beta_j^{\top} \dfrac{\partial h_1(x, \theta_{1j})}{\theta_{1j}} \dfrac{\partial}{\partial \mu} f(y|h_1(x, \theta_{1j}), h_2(x, \theta_{2j})) \\
    + \gamma_j^{\top} \dfrac{\partial h_2(x, \theta_{2j})}{\partial \theta_{2j}} \dfrac{\partial}{\partial \phi} f(y|h_1(x, \theta_{1j}), h_2(x, \theta_{2j})).
\end{align*}}
Because of the identifiability of $(h_1, h_2)$ (condition (A4.)), there exists a set $A$ with $\Pbb_X(A) > 0$ such that $(h_1(x, \theta_{1j}), h_2(x, \theta_{2j}))_{j=1}^{k}$ are distinct. Therefore, by the first order identifiability of $f$, we have 
\begin{align*}
    \beta_j^{\top} \dfrac{\partial h_1(x, \theta_1)}{\partial \theta_1} = 0,\quad \gamma_j^{\top} \dfrac{\partial h_2(x, \theta_2)}{\partial \theta_2} = 0,
\end{align*}
for all $x\in A$ (possibly except a $\PX$ zero-measure set). Hence, by condition (A5.), we have $\beta_j = 0, \gamma_j = 0$ for all $j = 1,\dots, k$. This concludes the first-order identifiability of the family of conditional densities $f(\cdot|h_1, h_2)$.  
\paragraph{The case $r=2$:} For any $k, s \geq 1$, given $k$ distinct elements
$(\theta_{11},\theta_{21}), \dots$, $ (\theta_{1k},\theta_{2k}) \in \Theta_1\times \Theta_2$, if there exist $\alpha_j\in \Rbb, \beta_j\in \Rbb^{d_1}, \gamma_j\in \Rbb^{d_2}$, and $\rho_{jt} \in \Rbb^{d_1}, \nu_{jt}\in \Rbb^{d_2}$ as $t = 1, \dots, s_j$ and $j = 1,\dots, k$ such that for almost all $x, y$ (w.r.t. $\PX \times \nu$)
\begin{eqnarray}
\sum_{j=1}^{k} \alpha_j f_j(y|x) + \beta_j^{\top} \dfrac{\partial}{\partial \theta_1} f_j(y|x) + \gamma_j^{\top} \dfrac{\partial}{\partial \theta_2} f_j(y|x) + \sum_{t=1}^{s_j} \left(\rho_{jt}^{\top} \dfrac{\partial }{\partial \theta_1^2} f_j(y|x) \rho_{jt}\right) \nonumber\\
\vspace{-3cm} + \sum_{t=1}^{s_j} \left(\nu_{jt}^{\top} \dfrac{\partial }{\partial \theta_2^2} f_j(y|x) \nu_{jt}\right) + \sum_{t=1}^{s_j} \left(\rho_{jt}^{\top} \dfrac{\partial }{\partial \theta_1 \partial \theta_2} f_j(y|x) \nu_{jt}\right)  = 0\nonumber,
\end{eqnarray}
then we want to show that $\alpha_j=0, \beta_j = \rho_{jt} = 0\in \Rbb^{d_1}, \gamma_j = \nu_{jt} = 0 \in \Rbb^{d_2}$ for $j = 1, \dots, k, t = 1, \dots, s$. Indeed, again, by the chain rule, the expression above is equivalent to
\begin{align}
\sum_{j=1}^{k} \alpha_j f_j(y|x) + \left(\beta_j^{\top} \dfrac{\partial h_1(x, \theta_1)}{\partial \theta_1} + \sum_{t=1}^{s_j} \rho_{jt}^{\top} \dfrac{\partial^2 h_1(x, \theta_{1j})}{\partial \theta_1^2} \rho_{jt} \right) \dfrac{\partial }{\partial \mu} f_j(y|x)  \nonumber \\ + \sum_{t=1}^{s_j} \left(\rho_{jt}^{\top} \dfrac{\partial h_1(x, \theta_{1j})}{\partial \theta_1} \right)^2 \dfrac{\partial }{\partial \mu^2} f_j(y|x) \nonumber \\ + \left(\gamma_j^{\top} \dfrac{\partial h_2(x, \theta_2)}{\partial \theta_2} + \sum_{t=1}^{s_j} \nu_{jt}^{\top} \dfrac{\partial^2 h_2(x, \theta_{2j})}{\partial \theta_2^2} \nu_{jt} \right) \dfrac{\partial }{\partial \phi} f_j(y|x)\nonumber\\
\vspace{-2cm}   + \sum_{t=1}^{s_j} \left(\nu_{jt}^{\top} \dfrac{\partial h_2(x, \theta_{2j})}{\partial \theta_2} \right)^2 \dfrac{\partial^2 }{\partial \phi^2} f_j(y|x) \nonumber \\ 
+ 2\sum_{t=1}^{s_j} \left(\rho_{jt}^{\top} \dfrac{\partial^2 }{\partial \theta_1 \partial \theta_2} f_j(y|x) \nu_{jt}\right) \dfrac{\partial^2 }{\partial \mu\partial \phi} f_j(y|x) = 0.
\end{align}
Because of the identifiability of $(h_1, h_2)$ (condition (A4.)), there exists a set $A$ with $\Pbb_X(A) > 0$ such that $(h_1(x, \theta_{1j}), h_2(x, \theta_{2j}))_{j=1}^{k}$ are distinct. By the second-order identifiability of $f$, then we have 
\begin{align*}
    \alpha_j = 0, \nonumber \\
    \beta_j^{\top} \dfrac{\partial h_1(x, \theta_1)}{\theta_1} + \sum_{t=1}^{s_j} \rho_{jt}^{\top} \dfrac{\partial^2 h_1(x, \theta_{1j})}{\partial \theta_1^2} \rho_{jt}= 0,
    \quad 
    \sum_{t=1}^{s_j} \left(\rho_{jt}^{\top} \dfrac{\partial h_1(x, \theta_{1j})}{\partial \theta_1} \right)^2 = 0,\\
    \gamma_j^{\top} \dfrac{\partial h_2(x, \theta_2)}{\theta_2} + \sum_{t=1}^{s_j} \nu_{jt}^{\top} \dfrac{\partial^2 h_2(x, \theta_{2j})}{\partial \theta_2^2} \nu_{jt}= 0,
    \quad
    \sum_{t=1}^{s_j} \left(\nu_{jt}^{\top} \dfrac{\partial h_2(x, \theta_{2j})}{\partial \theta_2} \right)^2 = 0,\\
    \sum_{t=1}^{s_j} \left(\rho_{jt}^{\top} \dfrac{\partial^2 }{\partial \theta_1 \partial \theta_2} f_j(y|x) \nu_{jt}\right) = 0,
\end{align*}
for all $x\in A$, possibly except a $\PX$ zero-measure set. Hence, by condition (A5.), from the third and fifth equation above, we have $\alpha_j = 0, \rho_{jt} = 0, \nu_{jt} = 0$ for all $t=1, \dots, s_j, j = 1,\dots, k$. These together with the second and fourth equation further imply that $\beta_j =0, \gamma_j = 0$ for all $j = 1,\dots, k$. We arrive at the second-order identifiability of family of conditional densities $f(y|x)$ as desired.
\end{proof}

\begin{proof} [Proof of Theorem~\ref{thm:inverse-bounds}]

In order to prove part (a) of the theorem, we divide it into two regimes, local and global, and establish two following claims: for any $\epsilon' > 0$, 
\begin{equation}
	\inf \limits_{G \in \Ecal_{k_{0}} ( \Theta): W_{1}( G,   G_{0}) > \epsilon'} \dfrac{\Ebb_X d_{TV}(f_{ G}(\cdot|X), f_{  G_{0}}(\cdot|X) )}{W_{1}( G,   G_{0})} > 0, \label{eq:claim_global_exact}
\end{equation}

\begin{equation}
    \lim \limits_{\epsilon \to 0} \inf \limits_{G \in \Ecal_{k_{0}}( \Theta)}{\left\{\dfrac{\Ebb_X d_{TV}(f_{ G}(\cdot|X), f_{  G_{0}}(\cdot|X) )}{W_{1}( G, G_{0})}: \ W_{1}( G, G_{0}) \leq \epsilon\right\}} > 0. \label{eq:claim_local_exact}
\end{equation}
\paragraph{Proof of claim~\eqref{eq:claim_global_exact}:} Suppose that this is not true, that is, there exists a sequence $G_n \in \Ecal_k(\Theta)$ such that $W_1(G_n, G_0) > \epsilon'$ for all $n$ and as $n\rightarrow \infty$,
\begin{equation*}
    \dfrac{\Ebb_X d_{TV}(f_{G_n}(\cdot|X), f_{G_{0}}(\cdot|X) )}{W_1(G_n, G_0)} \to 0.
\end{equation*}
This implies $\Ebb_X d_{TV}(f_{G_n}(\cdot|X), f_{G_{0}}(\cdot|X) )\to 0$. Since $\Delta^{k_0-1}$, $\Theta_1$ and $\Theta_2$ are compact, there exists a subsequence of $(G_n)_n$ (which is assumed to be $(G_n)_n$ itself without loss of generality) that converges weakly to some $G' = \sum_{j=1}^{k'} p_j' \delta_{(\theta_{1j}', \theta_{2j}')} \in \Ocal_{k_0}(\Theta_1 \times \Theta_2)$, where $(\theta_{11}', \theta_{21}'), \dots, (\theta_{1k'}', \theta_{2k'}')$ are distinct. Hence, $W_1(G', G_0) > \epsilon'$, and $\Ebb_X d_{TV}(f_{G'}(\cdot|X), f_{G_0}(\cdot|X)) = 0$. Because $d_{TV}(f_{G'}(\cdot|x), f_{G_0}(\cdot|x))\geq 0$ for all $x$, this implies
\begin{equation*}
    f_{G'}(y|x) = f_{G_0}(y|x)\quad \text{a.s. in }x, y,
\end{equation*}
which means
\begin{equation*}
    \sum_{j=1}^{k'} p_j' f(y|h_1(x, \theta_{1j}'), h_2(x, \theta_{2j}')) = \sum_{j=1}^{k_0} p_j^0 f(y|h_1(x, \theta_{2j}^0), h_2(x, \theta_{2j}^0)) \quad \text{a.s. in }x, y.
\end{equation*}
By the zero-order identifiability of $f$, we conclude that
\begin{equation}\label{eq:global-ident-contradiction}
    \sum_{j=1}^{k'} p_j' \delta_{(h_1(x, \theta_{1j}'), h_2(x, \theta_{2j}'))} = \sum_{j=1}^{k_0} p_j^0 \delta_{(h_1(x, \theta_{1j}^0), h_2(x, \theta_{2j}^0))} \quad \text{a.s. in }x.
\end{equation}
If family $(h_1, h_2)$ is identifiable, then we argue that the set $\{(\theta_{11}', \theta_{21}'), \dots, (\theta_{1k'}', \theta_{1k'}')\}$ must equal $\{(\theta_{11}^0, \theta_{21}^0), \dots, (\theta_{1k_0}^0, \theta_{2k_0}^{0})\}$. Otherwise, we can assume (without loss of generality) that $(\theta_{11}', \theta_{21}') \not \in \{(\theta_{11}^0, \theta_{21}^0), \dots, (\theta_{1k_0}^0, \theta_{2k_0}^0)\}$, which means there exists a set $A$ with $\PX(A) > 0$ such that 
$(h_1(x, \theta_{11}'),$ $ h_2(x, \theta_{21}'))$ is distinct from any pair among $(h_1(x, \theta_{11}^0), h_2(x, \theta_{21}^0)) \dots, (h_1(x, \theta_{1k_0}^0), h_2(x, \theta_{2k_0}^0))$ for all $x\in A$. But this contradicts~\eqref{eq:global-ident-contradiction}, because the left-hand side put a positive weight to the atom $(h(x, \theta_{11}'), h(x, \theta_{21}'))$, which the right-hand side does not have. Hence, we have the set $\{(\theta_{11}', \theta_{21}'), \dots, (\theta_{1k_0}', \theta_{1k_0}')\}$ equals $\{(\theta_{11}^0, \theta_{21}^0), \dots, (\theta_{1k_0}^0, \theta_{2k_0}^{0})\}$. Now, without loss of generality, we can assign $(\theta_{11}', \theta_{21}') = (\theta_{11}^0, \theta_{21}^0), \dots, (\theta_{1k_0}', \theta_{2k_0}') = (\theta_{1k_0}^0, \theta_{2k_0}^0)$. Because $(\theta_{11}^0, \theta_{21}^0), \dots, (\theta_{1k_0}^0, \theta_{2k_0}^0)$ are distinct, there exists $x'$ such that $$(h_1(x', \theta_{11}^0), h_2(x', \theta_{21}^0)), \dots, (h_1(x', \theta_{1k_0}^0), h_2(x', \theta_{2k_0}^0))$$ are distinct, which together with Eq. \eqref{eq:global-ident-contradiction} entail that $p_i' = p_i^0$ for all $i\in \{1, \dots, k_0\}$. Thus, $G' = G_0$, while $W_1(G', G_0) > \epsilon'$, a contradiction. 
Hence, claim~\eqref{eq:claim_global_exact} is proved.

\paragraph{Proof of claim~\eqref{eq:claim_local_exact}:} Suppose this does not hold. Then there exists a sequence $G_n \in \Ecal_{k_0}(\Theta)$ such that 
\begin{equation}\label{eq:contradiction_exact}
    W_1(G_n, G_0) \to 0,\quad \dfrac{\Ebb_X d_{TV}(f_{G_n}(\cdot|X), f_{G_{0}}(\cdot|X) )}{W_1(G_n, G_0)} \to 0.
\end{equation}
We can relabel the atoms and weights of $G_{n}$ such that it admits the following form:
\begin{align}
	G_{n} = \sum_{j = 1}^{k_{0}} p_{j}^{n} \delta_{(\theta_{1j}^{n}, \theta_{2j}^{n})}, \label{eq:relabel_measure}
\end{align}
where $p_{j}^{n} \to p_{j}^{0}$, $\theta_{j}^{n} \to \theta_{j}^{0}$ and $\theta_{2j}^{n} \to \theta_{2j}^{0}$ for all $i \in [k_{0}]$. To ease the ensuing presentation, we denote $\Delta p_{j}^{n} : = p_{j}^{n} - p_{j}^{0}$, $\Delta \theta_{1i}^{n} : = \theta_{1j}^{n} - \theta_{1j}^{0}$ and $\Delta \theta_{2j}^{n} : = \theta_{2j}^{n} - \theta_{2j}^{0}$ for $i \in [k_{0}]$. Then, using the coupling between $G_n$ and $G_0$ such that it put mass $\min\{p_i^n, p_i^0\}$ on $\delta_{((\theta_{1j}^n, \theta_{2j}^n), (\theta_{1j}^0, \theta_{2j}^0))}$, we can verify that 
\begin{align}
	W_{1}( G_{n}, G_{0}) \preccurlyeq \sum_{i = 1}^{k_{0}} \abss{ \Delta p_{j}^{n}} + p_{j}^{n} (\norm{\Delta \theta_{1j}^{n}} + \norm{\Delta \theta_{2j}^{n}}) =: D_1(G_n, G_0). \label{eq:wasserstein_1_equivalence}
\end{align} 
The remainder of the proof is composed of three steps.
\paragraph{Step 1} (Taylor expansion) To ease the notation, we write for short
\begin{align*}
    f_j^0(y|x) = f(y| h_1(x,\theta_{1j}^{0}), h_2(x,\theta_{2j}^{0})), 
\end{align*}
and 
\begin{align*}
    f_j^n(y|x) = f(y| h_1(x,\theta_{1j}^{n}), h_2(x,\theta_{2j}^{n})),
\end{align*}
for all $j = 1,\dots, k_0$. Because $f(y|h_1(x,\theta_1), h_2(x, \theta_2))$ is differentiable with respect to $\theta$ for all $x,y$, by applying Taylor expansion up to the first order and the chain rule, we find that for all $j = 1,\dots, k_0$,
\begin{align*}
	f_{j}^{n}(y|x) - f_{j}^{0}(y|x) & = (\Delta \theta_{1j}^{n})^{ \top} \frac{\partial{}}{\partial{ \theta_1}} f_j^0(y|x) + (\Delta \theta_{2j}^{n})^{ \top} \frac{\partial{}}{\partial{ \theta_2}} f_j^{0}(y|x) + R_{j}(x, y),
\end{align*}
where $R_{j}(x)$ is Taylor remainder such that $R_{j}(x, y) = o ( \norm{\Delta \theta_{1j}^{n}}+ \norm{\Delta \theta_{2j}^{n}})$ for $i \in [k_{0}]$. Combine the above expression for $j = 1,\dots, k_0$, we have
\begin{align}
	f_{G_{n}}(y|x) - f_{G_{0}}(y|x) & =  \sum_{j = 1}^{k_{0}} \parenth{ \Delta p_{j}^{n} } f_{j}^{0}(y|x) + p_{j}^{n} (\Delta \theta_{1j}^{n})^{ \top} \frac{\partial{}}{\partial{ \theta_1}} f_{j}^{0}(y|x) \nonumber \\
    & + p_{j}^{n} (\Delta \theta_{2j}^{n})^{ \top}\frac{\partial{}}{\partial{ \theta_2}} f_{j}^{0}(y|x) + R(x, y), \label{eq:Taylor_exact}
\end{align}
where $R(x, y) = \sum_{i = 1}^{n} p_{j}^{n} R_{i}(x,y) = o \parenth{ \sum_{i = 1}^{k_{0}} p_{j}^{n} \left(\norm{ \Delta \theta_{1j}^{n}} + \norm{ \Delta \theta_{2j}^{n}} \right)}$. From Eq.~\eqref{eq:wasserstein_1_equivalence}, we have $R(x, y)/ {D}_{1}( G_{n},  G_{0}) \to 0$ as $n \to \infty$ for all $x,y$. 

\paragraph{Step 2} (Extracting non-vanishing coefficients) From Eq.~\eqref{eq:contradiction_exact} and \eqref{eq:wasserstein_1_equivalence}, we have that
\begin{equation}
    \dfrac{\Ebb_X d_{TV}(f_{G_n}(\cdot|X), f_{G_{0}}(\cdot|X) )}{D_1(G_n, G_0)} \to 0 \quad (n\to \infty). 
\end{equation}
We can write 
\begin{align}
	\dfrac{f_{G_{n}}(y|x) - f_{G_{0}}(y|x)}{D_1(G_n, G_0)} & = \sum_{j = 1}^{k_{0}} \alpha_{j}^{n} f_{j}^{0}(y|x) + (\beta_{j}^{n})^{ \top} \frac{\partial{}}{\partial{ \theta_1}} f_{j}^{0}(y|x) +  (\gamma_i^{n})^{\top} \frac{\partial{}}{\partial{ \theta_2}} f_{j}^{0}(y|x) \nonumber \\
    & + \dfrac{R(x, y)}{{D_1(G_n, G_0)}},
\end{align}
where $\alpha_i^{n} = \dfrac{\parenth{ \Delta p_{j}^{n}}}{D_1(G_n, G_0)}\in \mathbb{R}$, $\beta_i^{n} =\dfrac{p_{j}^{n} (\Delta \theta_{1j}^{n})}{{D_1(G_n, G_0)}}\in \mathbb{R}^{d_1}$ and $\gamma_i^{n} =\dfrac{p_{j}^{n} (\Delta \theta_{2j}^{n})}{{D_1(G_n, G_0)}}\in \mathbb{R}^{d_2}$. From the definition of $D_1(G_n, G_0)$, we have 
\begin{equation*}
    \sum_{i=1}^{k_0} |\alpha_i^n| + \sum_{i=1}^{k_0} \|\beta_i^n\| + \sum_{i=1}^{k_0} \|\gamma_i^n\| = 1, 
\end{equation*}
therefore $(\alpha_i^n)$ is in $[-1,1]$, $(\beta_i^n)$ is in $[-1,1]^{d_1}$ and $(\gamma_i^n)$ is in $[-1,1]^{d_2}$, by compactness of those sets, there exist subsequences of $(\alpha_i^n)$, $\beta_i^n$ and $\gamma_i^n$ (without loss of generality, we assume it is the whole sequence itself) such that $\alpha_i^n \to \alpha_i\in [-1,1], \beta_i^n \to \beta_i\in [-1,1]^d$ as $n\to \infty$ for all $i=1,\dots, k_0$. As $\sum_{i=1}^{k_0} |\alpha_i| + \sum_{i=1}^{k_0} \|\beta_i\| + \sum_{i=1}^{k_0} \|\gamma_i\|  = 1$, at least one of them is not zero.
\paragraph{Step 3} (Deriving contradiction via Fatou's lemma) By Fatou's lemma, we have
\begin{align*}
    & \liminf_{n\to \infty} \dfrac{2\Ebb_X d_{TV}(f_{G_n}(\cdot|X), f_{G_{0}}(\cdot|X) )}{D_1(G_n, G_0)} \\
    & = \liminf_{n\to \infty} \int_{\Xcal} d\PX(x) \int_{\Ycal} d\nu(y)\left| \dfrac{f_{G_{n}}(y|x) - f_{G_{0}}(y|x)}{D_1(G_n, G_0)} \right|\\
    & \geq \int_{\Xcal} d\PX(x) \int_{\Ycal} d\nu(y) \left(\liminf_{n\to \infty} \left| \dfrac{f_{G_{n}}(y|x) - f_{G_{0}}(y|x)}{D_1(G_n, G_0)}\right|\right)\\
    & \geq \int_{\Xcal} d\PX(x) \int_{\Ycal} d\nu(y) \left| \liminf_{n\to \infty} \dfrac{f_{G_{n}}(y|x) - f_{G_{0}}(y|x)}{D_1(G_n, G_0)}\right|\\
    & = \int_{\Xcal} d\PX(x) \int_{\Ycal} d\nu(y) \left|\sum_{j = 1}^{k_{0}} \alpha_{j} f_{j}^{0}(y|x) + (\beta_{j})^{ \top} \frac{\partial{}}{\partial{ \theta_1}} f_{j}^{0}(y|x) +  (\gamma_i)^{\top} \frac{\partial{}}{\partial{ \theta_2}} f_{j}^{0}(y|x) \right|.
\end{align*}
Since $\lim_{n\to \infty} \dfrac{\Ebb_X d_{TV}(f_{G_n}(\cdot|X), f_{G_{0}}(\cdot|X) )}{D_1(G_n, G_0)} = 0$, we have
\begin{align}\label{eq:contradiction_eq_exact}
    \sum_{j = 1}^{k_{0}} \alpha_{j} f_{j}^{0}(y|x) + (\beta_{j})^{ \top} \frac{\partial{}}{\partial{ \theta_1}} f_{j}^{0}(y|x)  +  (\gamma_i)^{\top} \frac{\partial{}}{\partial{ \theta_2}} f_{j}^{0}(y|x) = 0, \,\, \text{a.s. in } x, y,
\end{align}
where at least one of $\alpha_i, \beta_i, \gamma_i$ are not 0. But by the identifiability of family of conditional densities $f$ (condition (A1.)), we have $\alpha_1=\dots = \alpha_{k_0}=0$, $\beta_1=\dots = \beta_{k_0}=0$ and $\gamma_1=\dots = \gamma_{k_0}=0$. Hence, we arrive at a contradiction and conclude claim~\eqref{eq:claim_local_exact}.

For part (b) of the theorem, in a similar spirit we can achieve the conclusion by proving the following claims:
\begin{equation}
	\inf \limits_{G \in \Ocal_{K} ( \Theta): W_{2}(G, G_{0}) > \epsilon'} \dfrac{\Ebb_{X} d_{TV}(f_{G}(\cdot|X), f_{  G_{0}}(\cdot|X))}{W_{2}^2( G,   G_{0})} > 0, \label{eq:claim_global_over}
\end{equation}
for any $\epsilon' > 0$, and
\begin{equation}
    \lim \limits_{\epsilon \to 0} \inf \limits_{G \in \Ocal_{K}( \Theta)}{\left\{\dfrac{\Ebb_{X} d_{TV}(f_{G}(\cdot|X), f_{  G_{0}}(\cdot|X))}{W_{2}^2( G, G_{0})}: \ W_{2}( G, G_{0}) \leq \epsilon\right\}} > 0. \label{eq:claim_local_over}
\end{equation}
The proof of  claim~\eqref{eq:claim_global_over} is similar to that of claim~\eqref{eq:claim_global_exact} and is omitted. Now we proceed to prove claim~\eqref{eq:claim_local_over}. Suppose this does not hold. So there exists a sequence $G_n \in \Ocal_{K}(\Theta)$ such that 
\begin{equation}\label{eq:contradiction_over}
    W_2(G_n, G_0) \to 0,\quad \dfrac{\Ebb_X d_{TV}(p_{G_n}(\cdot|X), p_{G_{0}}(\cdot|X) )}{W_2^2(G_n, G_0)} \to 0.
\end{equation}
We can assume that all $G_n$ have the same number of atoms (by extracting a subsequence if needed) and relabel the atoms and weights of $G_{n}$ such that it admits the following form:
\begin{align}
	G_{n} = \sum_{j = 1}^{k_{0}+l}\sum_{t=1}^{s_j} p_{jt}^{n} \delta_{(\theta_{1jt}^{n}, \theta_{2jt}^{n})}, \label{eq:relabel_measure_over}
\end{align}
where $\sum_{t=1}^{s_j} p_{jt}^{n} \to p_{j}^{0}$, $\theta_{1jt}^{n} \to \theta_{1j}^{0}$ and $\theta_{2jt}^{n} \to \theta_{2j}^{0}$ for all $j \in [k_{0}+l]$,  $p_j^0=0$ for all $j > k_0$, and $(\theta_{11}^0, \theta_{21}^0), \dots, (\theta^0_{1(k_0+l)}, \theta^0_{2(k_0+l)})$ are distinct. For all $j, t$, we denote $\Delta \theta_{1jt}^{n} : = \theta_{1jt}^{n} - \theta_{1j}^{0}$, $\Delta \theta_{2jt}^{n} : = \theta_{2jt}^{n} - \theta_{2j}^{0}$, and $\Delta p_{j}^{n} : = \sum_{t=1}^{s_j}p_{jt}^{n} - p_{j}^{0}$. We have
\begin{align}
	W_{2}^{2}(G_{n}, G_{0}) \preccurlyeq \sum_{j = 1}^{k_{0} + l}\left( \abss{ \Delta p_{j}^{n}} + \sum_{t=1}^{s_j} p_{jt}^{n} \left(\norm{\Delta \theta_{1jt}^{n}}^2 + \norm{\Delta \theta_{2jt}^{n}}^2\right)\right) =: D_2(G_n, G_0) \label{eq:wasserstein_2_equivalence_over}
\end{align} 
As in part (a) the remainder of the proof is divided into three steps.
\paragraph{Step 1} (Taylor expansion) To ease the notation, we write for short
\begin{align*}
    f_j^0(y|x) = f(y| h_1(x,\theta_{1j}^{0}), h_2(x,\theta_{2j}^{0})),\quad  f_{jt}^n(y|x) = f(y| h_1(x,\theta_{1jt}^{n}), h_2(x,\theta_{2jt}^{n})),
\end{align*}
for all $t = 1,\dots, s_j, j = 1,\dots, k_0+l$. Because $f(y|h_1(x, \theta_1), h_2(x, \theta_2))$ is differentiable up to the second order with respect to $\theta_1, \theta_2$ for all $x,y$, by applying Taylor expansion up to the second order and the chain rule, we find that
\begin{align*}
	f_{jt}^{n}(y|x) - f_j^0(y|x) & = (\Delta \theta_{1jt}^{n})^{ \top} \frac{\partial{}}{\partial{ \theta_1}} f_{j}^{0}(y|x) + (\Delta \theta_{2jt}^{n})^{ \top}\frac{\partial{}}{\partial{ \theta_2}} f_{j}^{0}(y|x) \\
    & \hspace{-.7cm} + \dfrac{1}{2} (\Delta \theta_{1jt}^{n})^{ \top} \frac{\partial^2}{\partial  \theta_1^2} f_{j}^{0}(y|x)  (\Delta \theta_{1jt}^{n}) + \dfrac{1}{2} (\Delta \theta_{2jt}^{n})^{ \top} \frac{\partial^2}{\partial  \theta_2^2} f_{j}^{0}(y|x) (\Delta \theta_{2jt}^{n}) \\
	& \hspace{-.7cm} + (\Delta \theta_{1jt}^{n})^{\top} \frac{\partial^2}{\partial  \theta_1 \partial \theta_2} f_{j}^{0}(y|x) (\Delta \theta_{2jt}^{n})  + R_{i}(x, y)
\end{align*}
where $R_{i}(x, y)$ is Taylor remainder such that $R_{ij}(x, y) = o \left( \norm{ \Delta \theta_{1jt}^{n}}^2\right)$ for $i \in [k_{0}+l]$. Therefore,
\begin{align}
	f_{G_{n}}(y|x) - f_{G_{0}}(y|x) & = \sum_{j = 1}^{k_{0}+l} \parenth{ \Delta p_{j}^{n} } f_{j}^{0}(y|x) + \sum_{j = 1}^{k_{0}+l}\sum_{t=1}^{s_j} p_{jt}^n (f_{jt}^{n}(y|x) - f_{j}^{0}(y|x)) \nonumber \\
	& = \sum_{j = 1}^{k_{0}+l} \parenth{ \Delta p_{j}^{n} } f_{j}^{0}(y|x) + \sum_{j=1}^{k_0+l} \left(\sum_{t=1}^{s_j} p_{ij}^{n} (\Delta \theta_{1jt}^{n})^{ \top} \right) \frac{\partial{}}{\partial{ \theta_1}} f_{j}^{0}(y|x) \nonumber \\
	& + 
	\sum_{j=1}^{k_0+l} \left(\sum_{t=1}^{s_j} p_{ij}^{n} (\Delta \theta_{2jt}^{n})^{ \top} \right) \frac{\partial{}}{\partial{ \theta_2}} f_{j}^{0}(y|x)
	\nonumber\\
	& + \sum_{j=1}^{k_0+l}\Big(\sum_{t=1}^{s_j} \dfrac{p_{ij}^{n}}{2} (\Delta \theta_{1jt}^{n})^{\top} \dfrac{\partial^2}{\partial \theta_1^2} f_{j}^{0}(y|x) (\Delta \theta_{1jt}^{n}) \Big)
	\nonumber\\
	& + \sum_{j=1}^{k_0+l}\Big(\sum_{t=1}^{s_j} \dfrac{p_{ij}^{n}}{2} (\Delta \theta_{2jt}^{n})^{\top} \dfrac{\partial^2}{\partial \theta_2^2} f_{j}^{0}(y|x) (\Delta \theta_{2jt}^{n}) \Big)
	\nonumber\\
	& + \sum_{j=1}^{k_0+l}\Big(\sum_{t=1}^{s_j} \dfrac{p_{ij}^{n}}{2} (\Delta \theta_{1jt}^{n})^{\top} \dfrac{\partial^2}{\partial \theta_1 \partial \theta_2} f_{j}^{0}(y|x) (\Delta \theta_{2jt}^{n}) \Big) + R(x,y),
\end{align}
where $R(x, y) = \sum_{j,t} p_{jt}^{n} R_{jt}(x,y) = o \parenth{ \sum_{j,t} p_{jt}^{n} \parenth{\norm{ \Delta \theta_{1jt}^{n}}^2 + \norm{ \Delta \theta_{2jt}^{n}}^2 }}$. From the expression in Eq.~\eqref{eq:wasserstein_2_equivalence_over}, we have $R(x, y)/ {D}_{2}( G_{n},  G_{0}) \to 0$ as $n \to \infty$ for all $x,y$. 

\paragraph{Step 2} (Extracting non-vanishing coefficients) From Eq.~\eqref{eq:contradiction_over} and \eqref{eq:wasserstein_2_equivalence_over}, we have that
\begin{equation}
    \dfrac{\Ebb_X d_{TV}(f_{G_n}(\cdot|X), f_{G_{0}}(\cdot|X) )}{D_2(G_n, G_0)} \to 0 \quad (n\to \infty). 
\end{equation}
We can write 
\begin{align}
	\dfrac{f_{G_{n}}(y|x) - f_{G_{0}}(y|x)}{D_2(G_n, G_0)}
	& = \sum_{i = 1}^{k_{0}+l} a_j^{n} f_{j}^{0}(y|x) + \sum_{j=1}^{k_0+l} b_j^n \frac{\partial{}}{\partial{ \theta_1}} f_{j}^{0}(y|x) + 
	\sum_{j=1}^{k_0+l} c_j^n \frac{\partial{}}{\partial{ \theta_2}} f_{j}^{0}(y|x)
	\nonumber\\
	& \hspace{-2cm}+ \sum_{j=1}^{k_0+l}\Big(\sum_{t=1}^{s_j} (r_{jt}^{n})^{\top} \dfrac{\partial^2}{\partial \theta_1^2} f_{j}^{0}(y|x) (r_{jt}^{n}) \Big)
	+ \sum_{j=1}^{k_0+l}\Big(\sum_{t=1}^{s_j} (v_{jt}^{n})^{\top} \dfrac{\partial^2}{\partial \theta_2^2} f_{j}^{0}(y|x) (v_{jt}^{n}) \Big)
	\nonumber\\
	& \hspace{-2cm}+ 2\sum_{j=1}^{k_0+l}\Big(\sum_{t=1}^{s_j} (r_{jt}^{n})^{\top} \dfrac{\partial^2}{\partial \theta_1 \partial \theta_2} f_{j}^{0}(y|x) (v_{jt}^{n}) \Big) + R(x,y), \label{eq:Taylor_over}
\end{align}
where 
\begin{equation*}
    a_j^{n} = \dfrac{\parenth{\Delta p_{j}^{n}}}{D_2(G_n, G_0)}, \, \,
    b_{j}^{n} = \dfrac{\sum_{t=1}^{s_j} p_{jt}^{n} (\Delta \theta_{1jt}^{n})}{ D_2(G_n, G_0)}, \, \,
    c_{j}^{n} = \dfrac{\sum_{t=1}^{s_j} p_{jt}^{n} (\Delta \theta_{2jt}^{n})}{ D_2(G_n, G_0)},
\end{equation*}
and
\begin{equation*}
    r_{jt}^{n} =  \dfrac{\sqrt{p_{jt}^{n}}(\Delta \theta_{1jt}^{n})}{\sqrt{2 D_2(G_n, G_0)}}, \, \,
    v_{jt}^{n} =  \dfrac{\sqrt{p_{jt}^{n}}(\Delta \theta_{2jt}^{n})}{\sqrt{2 D_2(G_n, G_0)}},
\end{equation*}
for all $j\in [k_0 + l]$. 
From the definition of $D_2(G_n, G_0)$, we have 
\begin{equation*}
    \sum_{j=1}^{k_0+l} |a_j^n| + 2\sum_{j=1}^{k_0+l} \sum_{t=1}^{s_i} \norm{r_{jt}^{n}}^2 + 2\sum_{j=1}^{k_0+l} \sum_{t=1}^{s_i} \norm{v_{jt}^{n}}^2 = 1, 
\end{equation*}
so that $M_n := \max_{j, t}\{|a_j^{n}|, \|{b_j^n}\|, \|{c_j^n}\|, \|{r_{jt}^{n}}\|^2, \|{v_{jt}^{n}}\|^2\}$ is always bounded below by $\dfrac{1}{5\Kup}$ for all $n$, and does not converge to 0. Denote 
\begin{align*}
    \alpha_j^{n} = a_{j}^{n} / M_n,
    \,\, 
    \beta_{j}^{n} = b_{j}^{n} / M_n, \,\,
    \gamma_{j}^{n} = c_{j}^{n} / M_n, \,\,
    \rho_{jt}^{n} = r_{jt}^{n} / \sqrt{M_n}, \,\,
    \nu_{jt}^{n} = v_{jt}^{n}/\sqrt{M_n}.
\end{align*}
for all $t =1, \dots, s_j, j=1,\dots, k_0+l$. By compactness and subsequence argument, we can have that $\alpha_j^n \to \alpha_j\in [-1,1], \beta_{j}^n \to \beta_{j}\in [-1,1]^{d_1}$ and $\gamma_{j, t}^{n} \to \gamma_{j}\in [-1,1]^{d_2}$, and $\rho_{jt}^{n} \to \rho_{jt}\in [-1,1]^{d_1}, \nu_{jt}^{n} \to \nu_{jt} \in \Rbb^{d_2}$ as $n\to \infty$ for all $t, j$, and at least one of those limits is not zero. 
\paragraph{Step 3} (Deriving contradiction via Fatou's lemma) By Fatou's lemma, we have
\begin{align*}
    & \liminf_{n\to \infty} \dfrac{2}{M_n}\dfrac{\Ebb_X d_{TV}(f_{G_n}(\cdot|X), f_{G_{0}}(\cdot|X) )}{D_2(G_n, G_0)} \\
    & = \liminf_{n\to \infty} \dfrac{2}{M_n}\int_{\Xcal} d\PX(x) \int_{\Ycal} d\nu(y)\left| \dfrac{f_{G_{n}}(y|x) - f_{G_{0}}(y|x)}{D_2(G_n, G_0)} \right|\\
     &  \geq \int_{\Xcal} d\PX(x) \int_{\Ycal} d\nu(y) \left(\liminf_{n\to \infty} \dfrac{1}{M_n} \left| \dfrac{f_{G_{n}}(y|x) - f_{G_{0}}(y|x)}{D_2(G_n, G_0)}\right|\right)\\
     &  \geq \int_{\Xcal} d\PX(x) \int_{\Ycal} d\nu(y) \left| \liminf_{n\to \infty}\dfrac{1}{M_n} \dfrac{f_{G_{n}}(y|x) - f_{G_{0}}(y|x)}{D_2(G_n, G_0)}\right|\\
     &  = \int_{\Xcal} d\PX(x) \int_{\Ycal} d\nu(y) \bigg|\sum_{j = 1}^{k_{0}+l} \alpha_j f_{j}^{0}(y|x) + (\beta_j)^{\top} \dfrac{\partial}{\partial \theta_1} f_{j}^{0}(y|x) + (\gamma_j)^{\top} \dfrac{\partial}{\partial \theta_2} f_{j}^{0}(y|x)\\
     &  + \sum_{t=1}^{s_j} (\rho_{jt})^{\top} \dfrac{\partial}{\partial \theta_1^2} f_{j}^{0}(y|x) (\rho_{jt}) + \sum_{t=1}^{s_j} (\nu_{jt})^{\top} \dfrac{\partial}{\partial \theta_2^2} f_{j}^{0}(y|x) (\nu_{jt}) \\
     & + 2\sum_{t=1}^{s_j} (\rho_{jt})^{\top} \dfrac{\partial}{\partial \theta_1\partial \theta_2} f_{j}^{0}(y|x) (\nu_{jt}) \bigg|.
\end{align*}
Since $\lim_{n\to \infty} \dfrac{1}{M_n}\dfrac{\Ebb_X d_{TV}(f_{G_n}(\cdot|X), f_{G_{0}}(\cdot|X) )}{D_2(G_n, G_0)} = 0$, the integrand in the right hand side of the above display is 0 for almost all $x, y$. By the second order identifiability of $f(y|x)$, all the coefficients are 0, which contradicts with the fact derived in the end of Step 2. We arrive at the conclusion of claim~\eqref{eq:claim_local_over}.

\end{proof}

\begin{proof}[Proof of Proposition~\ref{prop:complete-ident}] We want to prove that for $h_1$ and $h_2$ being completely identifiable, then for any $ k \geq 1$ and distinct pairs $(\theta_{11}, \theta_{21}), \dots, (\theta_{1k}, \theta_{2k})$ we have 
$$(h_1(x, \theta_{11}), h_2(x, \theta_{21})), \dots, (h_1(x, \theta_{1k}), h_2(x, \theta_{2k}))$$
are distinct almost surely. 
For any $i \neq j$, because $(\theta_{1i}, \theta_{2i})\neq (\theta_{1j}, \theta_{2j})$, we have either $\theta_{1i}\neq \theta_{1j}$ or $\theta_{2i}\neq \theta_{2j}$. By the complete identifiability of $h_1$ and $h_2$, we have either
\begin{equation*}
    \Pbb_X(\{x: h_1(x, \theta_{1i}) = h_1(x, \theta_{1j})\}) = 0, 
\end{equation*}
or 
\begin{equation*}
    \Pbb_X(\{x: h_2(x, \theta_{2i}) = h_2(x, \theta_{2j})\}) = 0.
\end{equation*}
Hence,
{\fontsize{10}{10}\selectfont 
\begin{align*}
    & \Pbb_X(\{x: (h_1(x, \theta_{1i}), h_2(x, \theta_{2i})) = (h_1(x, \theta_{1j}), h_2(x, \theta_{2j}))\}) \\
    & =  \Pbb_X(\{x: h_1(x, \theta_{1i}) = h_1(x, \theta_{1j}), h_2(x, \theta_{2i}) = h_2(x, \theta_{2j}) \}) \\
    & \leq  \min\{\Pbb_X(\{x: h_1(x, \theta_{1i}) = h_1(x, \theta_{1j})\}), \Pbb_X(\{x: h_2(x, \theta_{2i}) = h_2(x, \theta_{2j})\})\} \\
    & = 0. 
\end{align*}}
Now consider the set 
$$A = \cup_{1\leq i < j\leq k} \{x : (h_1(x, \theta_{1i}), h_2(x, \theta_{2i})) = (h_1(x, \theta_{1j}), h_2(x, \theta_{2j}))\},$$
we have 
\begin{equation*}
    \mu(A) \leq \sum_{1\leq i < j\leq k} \mu(\{x: (h_1(x, \theta_{1i}), h_2(x, \theta_{2i})) = (h_1(x, \theta_{1j}), h_2(x, \theta_{2j}))\}) = 0.
\end{equation*}
Therefore, $(h_1(x, \theta_{11}), h_2(x, \theta_{21})), \dots, (h_1(x, \theta_{1k}), h_2(x, \theta_{2k}))$ are distinct on $A^c$, where $\Pbb_{X}(A^c) = 1$.
\end{proof}

\begin{proof}[Proof of Proposition~\ref{lem:inverse-bounds-prediction-error}]
(a) This comes directly from the fact that if $h_1$ and $h_2$ are Lipschitz. For any $G_0 = \sum_{j=1}^{k_0} p_i^0 \delta_{(\theta_{1j}^0, \theta_{2j}^0)}\in \Ecal_{k_0}(\Theta), G= \sum_{i=1}^{\Kup} p_i\delta_{(\theta_{1i}, \theta_{2i})} \in \Ecal_{\Kup}(\Theta)$, we have 
$$|h_1(x, \theta_{1j}^0) - h_1(x, \theta_{1i})| \leq c_1 \norm{\theta_{1j}^0 - \theta_{1i}},$$
and 
$$|h_2(x, \theta_{2j}^0) - h_2(x, \theta_{2i})| \leq c_2 \norm{\theta_{2j}^0 - \theta_{2i}},$$
for any $j=1, \dots, k_0, i=1,\dots, \Kup$ and all $x\in \Xcal$, where $c_1$ and $c_2$ are constants which only depend on $h_1$ and $h_2$. Hence, for any coupling $(q_{ij})_{i,j=1}^{\Kup, k_0}$ of $(p_i)_{i=1}^{\Kup}$ and $(p_j^0)_{j=1}^{k_0}$, we have
\begin{align*}
    &\sum_{i, j} q_{ij} (\norm{\theta_{1i} - \theta_{1j}^0} + \norm{\theta_{2i} - \theta_{2j}^0}) 
    \\ 
    &\geq \overline{C}_1 \sum_{i, j} q_{ij} (|h(x, \theta_{1i}) - h(x, \theta_{1j}^0)| + |h(x, \theta_{1i}) - h(x, \theta_{2j}^0)|)
\end{align*}
for $\overline{C}_1= 1 / \max\{c_1, c_2\}$ and for all $x\in \Xcal$. Taking infimum with respect to the LHS, we have
$$W_r(G, G_0)\geq \overline{C}_1  W_r\left(\sum_{j=1}^{k_0} {p}^{0}_j \delta_{(h_1(x, {\theta}^{0}_{1j}), h_2(x, {\theta}^{0}_{2j}))},  \sum_{j=1}^{K} p_j \delta_{(h_1(x, {\theta}_{1j}), h_2(x, {\theta}_{2j}))}\right),$$
Taking the expectation with respect to $\Pbb_{X}$ we obtain
\begin{equation*}
    W_r(G, G_0) \geq \overline{C}_1\Ebb_X W_r\left(\sum_{j=1}^{k_0} {p}^{0}_j \delta_{(h_1(X, {\theta}^{0}_{1j}), h_2(X, {\theta}^{0}_{2j}))},  \sum_{j=1}^{K} p_j \delta_{(h_1(X, {\theta}_{1j}), h_2(X, {\theta}_{2j}))}\right).
\end{equation*}

(b) For any coupling $(q_{ij})_{i,j=1}^{K, k_0}$ of $(p_i)_{i=1}^{K}$ and $(p_j^0)_{j=1}^{k_0}$ we have
\begin{align*}
    \Ebb_X \left|\sum_{i=1}^{K} p_i h_1(X, \theta_{1i}) - \sum_{j=1}^{k_0} p_j^0 h_1(X, \theta_{1j}^0) \right|& \leq \sum_{i, j=1}^{K, k_0} q_{ij} | h_1(X, \theta_{1i}) -  h_1(X, \theta_{1j}^0)|\\
    & 
     \leq \sum_{i, j=1}^{K, k_0} q_{ij} c_1 \norm{\theta_{1i} - \theta_{1j}^0}.
\end{align*}
Taking infimum of all feasible $(q_{ij})_{i, j}$, this implies
$$\Ebb_X \left|\sum_{i=1}^{K} p_i h_1(X, \theta_{1i}) - \sum_{j=1}^{k_0} p_j^0 h_1(X, \theta_{1j}^0) \right|\leq c_1 W_1(G, G_0).$$
Doing similarly for $h_2$, we have the conclusion. 
\end{proof}

\subsection{Convergence rates for conditional density estimation and parameter estimation}\label{subsec:convergence-rate-proofs}
Firstly, we combine the inverse bounds (Theorem~\ref{thm:inverse-bounds}) with the convergence theory for density estimation to derive convergence rates for parameter estimation that arise in regression mixture models as presented in Theorem~\ref{thm:convergence_parameter_estimation}.
\begin{proof}[Proof of Theorem~\ref{thm:convergence_parameter_estimation}] Recall that with the assumptions in this theorem, we have
$$\Ebb_{X} d_{TV}(f_{G}, f_{G_0}) \geq C_1 W_1(G, G_0)\quad \forall G \in \Ecal_{k_0}(\Theta),$$
and for any $K> k_0$,
$$\Ebb_{X} d_{TV}(f_{G}, f_{G_0}) \geq C_2 W_2^2(G, G_0)\quad \forall G \in \Ocal_{K}(\Theta),$$
for $C_1, C_2 > 0$ only depend on $\Theta, G_0, f, h_1, h_2$, and $K$. Besides, 
$$\sqrt{2}\overline{d}_{H}(f_{G}, f_{G_0})\geq \Ebb_{X} d_{TV}(f_{G}, f_{G_0}) \quad \forall G, G_0.$$
Combining these inequalities with the concentration inequality given in Theorem~\ref{thm:density_estimation_rate} to have 
$$\Pbb_{G_0} (W_1(\widehat{G}_n, G_0) > C\delta) \leq \Pbb_{G_0} (\overline{d}_{H}(f_{G}, f_{G_0}) > \sqrt{2}C C_1\delta) \leq c \exp(-n\delta^2 / c^2),$$
for the exact-fitted setting, since $\hat{G}_n\in \Ecal_{k_0}(\Theta)$. 
In the over-fitted setting, a similar argument yields
$$\Pbb_{G_0} (W_2^2(\widehat{G}_n, G_0) > C\delta) \leq \Pbb_{G_0} (\overline{d}_{H}(f_{G}, f_{G_0}) >\sqrt{2} C C_2\delta) \leq c \exp(-n\delta^2 / c^2).$$
\end{proof}


Next, we proceed to prove Theorem \ref{thm:simple_density_estimation_rate} which is concerned with the convergence rates of conditional density estimation.

\begin{proof}[Proof of Theorem~\ref{thm:simple_density_estimation_rate}] 
    The proof is a generalization of proof of Theorem 3.1 in  \cite{ghosal2001entropies}. First we prove that if for any fixed $k$ and for all $\epsilon\in (0,1/2)$, these claims hold
    \begin{align}
        \log N(\epsilon, \Fcal_{k}(\Theta), \norm{\cdot}_{\infty}) &  \preccurlyeq \log(1/\epsilon)\label{claim:bounded_covering_inf_norm} \\
        H_B(\epsilon, \Fcal_{k}(\Theta), \overline{d}_H) & \preccurlyeq \log(1/\epsilon), \label{claim:bounded_bracketing_hellinger} 
    \end{align}
    then by applying Theorem~\ref{thm:density_estimation_rate}, we can arrive at our conclusion. Indeed, since $$\norm{\left(\dfrac{f+f_0}{2}\right)^{1/2} - \left(\dfrac{g+f_0}{2}\right)^{1/2}}_2 \leq d_H(f, g)$$ 
    for all densities $f, g, f_0$, we have 
    \begin{equation*}
        H_B(u, \overline{\Fcal}_k^{1/2}(\Theta, u), \norm{\cdot}_{2}) \leq H_B(u, \Pcal_k(\Omega), \overline{d}_H),
    \end{equation*}
    for all $u>0$. Thus, we can bound the bracketing entropy integral as follows
    \begin{align*}
     \mathcal{J}(\delta, \overline{\Fcal}_k^{1/2}(\Theta, \delta)) & \leq \int_{\delta^2/2^{13}}^{\delta} H_B^{1/2}(u, \Fcal_k(\Omega), \overline{d}_H) du \vee \delta \\
        & \preccurlyeq \int_{\delta^2/2^{13}}^{\delta} \log(1/u) du \vee \delta\\
        & \leq \delta \log(2^{13}/\delta^2) \vee \delta\\
        &\preccurlyeq \delta \log(1/\delta).
    \end{align*}
    Hence, if we choose $\Psi(\delta) = \delta \log (1/\delta)$, then $\Psi(\delta)\succcurlyeq \mathcal{J}(\delta, \overline{\Pcal}_k^{1/2}(\Theta, \delta))$, $\Psi(\delta) / \delta = \log (1/\delta) (1/\delta)$ is a non-increasing function, and for $\delta_n = O((\log(n) / n)^{1/2})$, we have
    \begin{equation*}
        \sqrt{n} \delta_n^2 \succcurlyeq \log(n) / \sqrt{n} \succcurlyeq \Psi(\delta_n).
    \end{equation*}
    Therefore, the result of Theorem~\ref{thm:density_estimation_rate} says that there exist constant $C$ and $c$ such that
    \begin{equation*}
        \Pbb \left(\overline{d}_H(f_{\widehat{G}_n}, f_{G_0} ) > C\sqrt{\dfrac{\log(n)}{n}}\right) \preccurlyeq \exp(-c \log(n)),
    \end{equation*}
    which is the conclusion. It remains to verify~\eqref{claim:bounded_covering_inf_norm} and~\eqref{claim:bounded_bracketing_hellinger}. 

\paragraph{Proof of claim~\eqref{claim:bounded_covering_inf_norm}} Since $\Theta_1$ and $\Theta_2$ are compact, for all $\epsilon>0$, there exists a $\epsilon$-net $B_1$ of $(\Theta_1, \norm{\cdot})$ and $B_2$ of $(\Theta_2, \norm{\cdot})$ with the cardinality $|B_1|\leq \left(\dfrac{\diam(\Theta_1)}{\epsilon}\right)^{d_1}$ and $|B_2|\leq \left(\dfrac{\diam(\Theta_2)}{\epsilon}\right)^{d_2}$. We also know that there exists a $\epsilon$-net $A$ for $(\Delta^{k-1}, \norm{\cdot}_{\infty})$ such that $|A| \leq \left(\dfrac{5}{\epsilon}\right)^k$ (\cite{ghosal2001entropies}). We consider the following subset of $\Fcal_{k}(\Theta)$
\begin{equation*}
    C = \{p_G: G = \sum_{i=1}^{k} p_i \delta_{(\theta_{1j}, \theta_{2j})}, (p_i)_{i=1}^{k} \in A, \theta_{1j} \in B_1, \theta_{2j} \in B_2\forall \, i\}.
\end{equation*}
We can see that 
\begin{equation*}
    |C| = |A|\times |B_1|^k \times |B_2|^k \leq \left(\dfrac{5}{\epsilon}\right)^{k} \left(\dfrac{\diam(\Theta_1)}{\epsilon}\right)^{d_1k} \left(\dfrac{\diam(\Theta_2)}{\epsilon}\right)^{d_2k}.
\end{equation*}
For any $G = \sum_{i=1}^{k} p_i \delta_{\theta_{1j}} \in \Ocal_{k}(\Theta)$, there exist $(\tilde{p}_i)_{i=1}^{k} \in A$ and $\tilde{\theta}_i \in B$ such that $|p_i - \tilde{p}_i|\leq \epsilon$ and $\norm{\theta_{1j} - \tilde{\theta}_i}\leq \epsilon$ for all $i$. Let $\tilde{G} = \sum_{i=1}^{k} \tilde{p}_i \delta_{\tilde{\theta}_i}$ and $G' = \sum_{i=1}^{k} \tilde{p}_i \delta_{\theta_{1j}}$, by triangle inequality, we have
{\fontsize{10}{10}\selectfont 
\begin{align*}
    & \norm{f_{G}(y|x) - f_{\tilde{G}}(y|x)}_{\infty} \leq \norm{f_{G}(y|x) - f_{G'}(y|x)}_{\infty} + \norm{f_{G'}(y|x) - f_{\tilde{G}}(y|x)}_{\infty}\\
    & \leq \sum_{j=1}^{k} |p_j - \tilde{p}_j|\norm{f(y|h_1(x, \theta_{1j}), h_2(x, \theta_{2j}))}_{\infty} \\
    &  + \sum_{j=1}^{k} \tilde{p}_j \norm{(f(y|h_1(x, \theta_{1j}),h_2(x, \theta_{2j})) - f(y|h_1(x, \tilde{\theta}_{1j}),h_2(x, \tilde{\theta}_{2j}))}_{\infty}\\
    & \preccurlyeq \sum_{j=1}^{k} |p_j - \tilde{p}_j|\norm{f(y|h_1(x, \theta_{1j}), h_2(x, \theta_{2j})) }_{\infty}\\
    & + \sum_{i=1}^{k} \tilde{p}_j \left(\norm{\theta_{1j} - \tilde{\theta}_{1j}} + \norm{\theta_{2j} - \tilde{\theta}_{2j}} \right)\\
    & \preccurlyeq \epsilon,
\end{align*}}
where we apply the assumptions that $\norm{f(y|\mu, \phi)}_{\infty}$ is bounded uniformly in $(\mu, \phi)\in H$, and the uniform Lipschitz of $f$ and $h_1, h_2$. Hence $C$ forms a $\epsilon$-net of $\Fcal_k(\Theta)$. This implies that
\begin{equation*}
    N(\epsilon, \Fcal_k(\Theta), \norm{\cdot}_{\infty})\preccurlyeq \left(\dfrac{5}{\epsilon}\right)^{k} \left(\dfrac{\diam(\Theta_1)}{\epsilon}\right)^{d_1k} \left(\dfrac{\diam(\Theta_2)}{\epsilon}\right)^{d_2k}.
\end{equation*}
Taking the logarithm of both sides, we arrive at the conclusion of claim~\eqref{claim:bounded_covering_inf_norm}.
\paragraph{Proof of claim~\eqref{claim:bounded_bracketing_hellinger}} We first construct an $\epsilon$-bracketing for $\Fcal_{k}(\Theta)$ under $\ell_1$ norm. Let $\eta$ be a small number that we can choose later, and $f_1, \dots, f_M$ is a $\eta$-net for $\Fcal_k(\Theta)$ under $\norm{\cdot}_{\infty}$, for $M \preccurlyeq \log(1/\epsilon)$. Denote by $C_1$ an upper bound for $\norm{f(y|\mu, \phi)}_{\infty}$ for all $(\mu, \phi)\in H$. From our assumptions, we can construct an envelope for $\Fcal_{k}(\Theta)$ as follows
\begin{equation*}
    H(x, y) = \begin{cases}
    d_1\exp(-d_2 |y|^{d_3}) , & \forall y > \overline{c} \text{ or } y<\underline{c}\\
    C_1  , & \forall y\in [\underline{c}, \overline{c}],
    \end{cases}
\end{equation*}
where we can assume that $\overline{c} > 0$ and $\underline{c} < 0$. Then, we can create the brackets $[f_i^{L}(x, y), f_i^{U}(x, y)]_{i=1}^{M}$ by 
\begin{equation*}
    f_i^{L}(x, y) :=\max\{f_i(y|x) -\eta, 0\}, \quad f_i^{U}(x, y) :=\max\{f_i(y|x) + \eta, H(x, y)\}.
\end{equation*}
Because for all $f\in \Fcal_k(\Theta)$, we have a $f_i$ such that $\norm{f-f_i}_{\infty}\leq \eta$, it can be seen that $f( y|x) \in [f_i^{L}(x, y), f_i^{U}(x, y)]$ for all $x, y$. Therefore, $\Fcal_k(\Theta)\subset \cup_{i=1}^{M}[f_i^{L}, f_i^{U}]$. Besides, for any $\overline{C} > \overline{c}$ and $\underline{C} < \underline{c}$, we have
\begin{align}\label{eq:l1_gap_bracket}
    & \int (f_i^U - f_i^{L}) d\Pbb(x) d\nu(y) \leq \int_{x}\int_{y=\underline{C}}^{y=\overline{C}} (f_i^U - f_i^{L}) d\Pbb(x) d\nu(y) \nonumber \\
    & + \int_{x}\int_{\{y<\underline{C}\} \cup \{y >\overline{C}\}}(f_i^U - f_i^{L}) d\Pbb(x) d\nu(y)\nonumber\\
    &\leq \eta (\overline{C} - \underline{C}) + \int_{\{y<\underline{C}\} \cup \{y >\overline{C}\}} d_1 \exp(-d_2 |y|^{d_3})d\nu(y)\nonumber\\
    &\leq \eta (\overline{C} - \underline{C}) + \int_{\{u<d_2|\underline{C}
    |^{d_3}\} \cup \{u > d_2\overline{C}^{d_3}\}} d_1 \exp(-|u|) |u|^{1/d_3-1}d\nu(y)\nonumber\\
    & \preccurlyeq \eta (\overline{C} - \underline{C}) + \overline{C}\exp(-d_2 \overline{C}^{d_3}) - \underline{C}\exp(d_2 \underline{C}^{d_3}),
\end{align}
where we use the change of variable formula $u = d_2 |y|^{d_3}$, and the fact that when $\nu$ is the Lebesgue measure:
\begin{align*}
    \int_{u \geq z} \exp(-u)\left(u\right)^{1/d_3-1} du & = z^{1/d_3} e^{-z} \int_{0}^{\infty} (1+s)^{1/d_3 - 1} e^{-zs} \\
    &\leq z^{1/d_3} e^{-z} \dfrac{1}{z - 1/d_3 + 1} < z^{1/d_3} e^{-z},
\end{align*}
for all $z \geq 0$. Notice that if $f$ is a probability mass function (i.e., $\nu$ is discrete), we can change the integral to sum, and the result still holds because
\begin{equation*}
    \sum_{y=\overline{C}+1}^{\infty} \exp(-d_2 |y|^{d_3})\leq \int_{y = \overline{C}}^{\infty} \exp(-d_2 |y|^{d_3}) dy. 
\end{equation*}
Now, let $\overline{C} = \overline{c} (\log(1/\eta))^{1/d_3}, \underline{C} = \underline{c} (\log(1/\eta))^{1/d_3}$, we have
\begin{equation*}
    \norm{f_i^{U} - f_i^{L}}_1\preccurlyeq \eta^{d_4} \left(\log\left(\dfrac{1}{\eta}\right) \right)^{1/d_3},
\end{equation*}
where $d_4 = \max\{1, d_2\overline{c}^{d_3}, d_2|\underline{c}|^{d_3}\}$. Hence, there exists a positive constant $c$ which does not depend on $\eta$ such that
\begin{equation*}
    H_B(c\eta^{d_4} (\log(1/\eta))^{1/d_3}, \Fcal_k(\Theta), \norm{\cdot}_1) \preccurlyeq \log(1/\eta). 
\end{equation*}
Let $\epsilon = c\eta^{d_4} (\log(1/\eta))^{1/d_3}$, we have $\log(1/\epsilon) \asymp \log(1/\eta)$. Combining with inequality $\norm{\cdot}_1\leq h^2$ yields
\begin{equation*}
    H_B(\epsilon, \mathcal{F}_k(\Theta), h)\leq H_B(\epsilon^2, \mathcal{F}_k(\Theta), \norm{\cdot}_1)\preccurlyeq \log(1/\epsilon^2) \preccurlyeq \log(1/\epsilon).
\end{equation*}
Thus, we have proved claim~\eqref{claim:bounded_bracketing_hellinger}.
\end{proof}

Finally, we obtain upper bounds on the tail probability for some popular family of distributions in order to verify that they satisfy all conditions of Theorem~\ref{thm:density_estimation_rate}.

\begin{proof}[Proof of Proposition~\ref{prop:satisfied_densities}] Since the parameter space $\Lambda$ is compact, we can assume it is a subset of some $[\underline{\lambda}, \overline{\lambda}]$, where $\overline{\lambda} >0$ and $\underline{\lambda} < 0$. If the family of distribution is discrete, then obviously its probability mass function is bounded uniformly by 1. 

\noindent
(a) For the family of normal distribution $\{f(y|\mu, \sigma^2) : \mu \in [\overline{\lambda}, \underline{\lambda}], \sigma^2\}$, we have that
\begin{equation}
    f(y|\mu, \sigma^2) \leq \dfrac{1}{\sqrt{2\pi \sigma^2}} \exp(-y/8\sigma^2), 
\end{equation}
for all $y > 2 \overline{\lambda}$ or $y < 2 \underline{\lambda}$.

\noindent
(b) For the family of Binomial distribution $\textrm{Bin}(N, q)$, we can see that it is discrete and domain of $q$ is  bounded. Therefore the conclusion is immediate.

\noindent
(c) For the family of Poisson distribution $f(y|\lambda)$, we have that $f(y|\lambda) = 0 \forall \, y < 0$ and 
\begin{equation*}
    f(y|\lambda) = \dfrac{e^{-\lambda}\lambda^y}{y!} \leq \exp(-y),
\end{equation*}
for all $y\geq 2 (\overline{\lambda} e)^2$ due to the inequality $y! \geq \left(\dfrac{y}{2}\right)^{y/2}$.

\noindent
(d) For the family of negative binomial distribution $f(y|\mu, \phi)$, we also have $f(y|\mu, \phi) = 0 \forall \, y < 0$, and 
\begin{equation*}
    f(y|\mu, \phi) \preccurlyeq y^{[\phi]+1} \left(\dfrac{\mu}{\mu+\theta}\right)^{y}\leq y^{[\phi]+1} \left(\dfrac{\overline{\mu}}{\overline{\mu}+\theta}\right)^{y}\leq \left(\dfrac{\overline{\mu}}{\overline{\mu}+\theta}\right)^{y/2}.
\end{equation*}
for all $y$ large enough compared to $\overline{\mu}$ and $\phi$.
\end{proof}

\begin{proof}[Proof of Theorem~\ref{thm:simple_convergence_parameter_estimation}] Similar to the proof of Theorem~\ref{thm:convergence_parameter_estimation}, with $\delta = \sqrt{\log (n)/ n}$ (and using Theorem~\ref{thm:simple_density_estimation_rate} instead of Theorem~\ref{thm:density_estimation_rate}).
\end{proof}

\subsection{Posterior contraction theorems}\label{subsec:contraction-rate-proofs}
\begin{proof}[Proof of Theorem~\ref{thm:posterior-contraction-rate-regression}]
    It suffices to verify conditions (i) and (ii) of Theorem~\ref{thm:basic-contraction-rate} in Appendix~\ref{sec:posterior-contraction-rate-theory} in order to arrive at the conclusion, with $\Fcal = \Fcal_n = \{f_G : G\in \Ocal_K\}$ and $\epsilon_n = (\log(n)/n)^{1/2}$.
    
    \paragraph{Checking condition (i):} We need to show that the prior distribution puts enough mass around the true (conditional) density function $f_{G_0}$, i.e., to obtain a lower bound for $\Pi(B_2(f_{G_0}, \epsilon_n))$. First, consider the ball $\{G\in\Ocal_{K}(\Theta): W_1(G, G_0)\leq C\epsilon_n^2 \}$ for a constant $C$ to be chosen later. By Lemma~\ref{lem:basic-bounds}, we have $\Ebb_{X} d_{H}^2(f_{G_0}, f_{G})\leq C_1 C\epsilon_n^2$, where $C_1$ depends on $\Theta$. Because $C C_1 \epsilon_n^2 \leq \epsilon_0$ for all sufficiently large $n$, we have $\Ebb_{\Pbb_X \times f_{G_0}} (f_{G_0} / f_{G})\leq M$. By Theorem 5 in~\cite{wong1995probability}, we have
    \begin{align*}
        \Ebb_{\Pbb_X} K(f_{G_0}, f_{G}) &  \preccurlyeq \epsilon_n^2 \log(\sqrt{M} / \sqrt{CC_1}\epsilon_n)\\
        \Ebb_{\Pbb_X} K_2(f_{G_0}, f_{G}) &  \preccurlyeq \epsilon_n^2 \log(\sqrt{M} / \sqrt{CC_1}\epsilon_n)^2.
    \end{align*}
    Hence, for $\overline{M} = \log(\sqrt{M} / \sqrt{CC_1})$, we have
    \begin{equation*}
        \Pi(B_2(f_{G_0}, \overline{M}\epsilon_n)) \geq \Pi(W_1(G, G_0) \leq C\epsilon_n).
    \end{equation*}
    However, for all $G = \sum_{i=1}^{k_0} p_i \delta_{(\theta_{1i}, \theta_{2i})}$ such that $\|\theta_{1i} - \theta_{1i}^0\|\leq \epsilon_n / (4k_0), \|\theta_{2i} - \theta_{2i}^0\|\leq \epsilon_n / (4k_0), |p_i - p_i^0|\leq \epsilon_n / (4k_0 \diam(\Theta_1)\times \diam(\Theta_2))$, we have
    \begin{align*}
        W_1(G_0, G) &\leq \sum_{i=1}^{k_0} (p_i^0 \wedge p_i) (\norm{\theta_{1i} - \theta_{1i}^0}+\norm{\theta_{2i} - \theta_{2i}^0}) + |p_i - p_i^0| (\diam(\Theta_1) \diam(\Theta_2))\\
        & \leq \epsilon_n.
    \end{align*}
    Due to assumption (B1.), the prior measure of this set is asymptotically greater than $\epsilon_n$ Hence
    \begin{align*}
        \Pi(W_1(G, G_0) \leq C\epsilon_n) \succcurlyeq \epsilon_n \succcurlyeq e^{-c n \epsilon_n^2},
    \end{align*}
    as $\epsilon_n = (\log(n)/n)^{1/2}$.
    
    \paragraph{Checking condition (ii):}We need to provide an upper bound for the entropy number $\log N(\Fcal, \overline{d}_H, \epsilon_n)$. By Lemma~\ref{lem:basic-bounds}, 
    \begin{equation*}
        \overline{d}^2_H(f_{G}, f_{G_0})\leq \Ebb_{X} d_{TV}(f_{G}, f_{G_0}) \preccurlyeq W_1(G, G_0)
    \end{equation*}
    We use the same strategy as in the proof of Theorem~\ref{thm:simple_density_estimation_rate}. Since $\Theta_1$ and $\Theta_2$ are compact, for all $\epsilon>0$, there exists an $\epsilon$-net $B_1$ of $(\Theta_1, \norm{\cdot})$ and $B_2$ of $(\Theta_2, \norm{\cdot})$ with the cardinality $|B_1|\leq \left(\dfrac{\diam(\Theta_1)}{\epsilon}\right)^{d_1}$ and $|B_2|\leq \left(\dfrac{\diam(\Theta_2)}{\epsilon}\right)^{d_2}$. Moreover, there exists an $\epsilon$-net $A$ for $(\Delta^{k-1}, \norm{\cdot}_{\infty})$ such that $|A| \leq \left(\dfrac{5}{\epsilon}\right)^k$. We consider the following subset of $\Fcal$
\begin{equation*}
    C = \{G: G = \sum_{i=1}^{k} p_j \delta_{(\theta_{1j}, \theta_{2j})}, (p_j)_{j=1}^{k} \in A, \theta_{1j} \in B_1, \theta_{2j} \in B_2\forall \, j\}.
\end{equation*}
Note that
\begin{equation*}
    |C| = |A|\times |B_1|^k \times |B_2|^k \leq \left(\dfrac{5}{\epsilon}\right)^{k} \left(\dfrac{\diam(\Theta_1)}{\epsilon}\right)^{d_1k} \left(\dfrac{\diam(\Theta_2)}{\epsilon}\right)^{d_2k}.
\end{equation*}
For any $G = \sum_{i=1}^{k} p_i \delta_{(\theta_{1j}, \theta_{2j})} \in \Ocal_{k}(\Theta)$, there exist $(\tilde{p}_j)_{j=1}^{k} \in A$ and $\tilde{\theta}_j \in B$ such that $|p_j - \tilde{p}_j|\leq \epsilon_n$ and $\norm{\theta_{1j} - \tilde{\theta}_{2j}}\leq \epsilon_n$, $\norm{\theta_{2j} - \tilde{\theta}_{2j}}\leq \epsilon_n$ for all $j$. Let $\tilde{G} = \sum_{j=1}^{k} \tilde{p}_i \delta_{(\tilde{\theta}_{1j}, \tilde{\theta}_{2j})}$ and $G' = \sum_{i=1}^{k} \tilde{p}_i \delta_{(\theta_{1j}, \theta_{2j})}$, by the triangle inequality, we have
\begin{align*}
    W_1(G, \tilde{G}) & \leq W_1(G, G') + W_1(G', \tilde{G})\\
    & \leq \sum_{j=1}^{k} |p_j - \tilde{p}_j| 2(\diam(\Theta_1) + \diam(\Theta_2))   \\
    & + \sum_{j=1}^{k} \tilde{p}_j (\norm{\theta_{1j} - \tilde{\theta}_{1j}}+ \norm{\theta_{2j} - \tilde{\theta}_{2j}})
     \preccurlyeq \epsilon_n,
\end{align*}
This implies that the covering number
\begin{equation*}
    N(\epsilon_n, \Fcal, \overline{d}_H)\preccurlyeq \left(\dfrac{5}{\epsilon_n}\right)^{k} \left(\dfrac{\diam(\Theta_1)}{\epsilon_n}\right)^{d_1k} \left(\dfrac{\diam(\Theta_2)}{\epsilon_n}\right)^{d_2k}.
\end{equation*}
Taking logarithm of both sides, we obtain $\log N(\epsilon_n, \Fcal, \overline{d}_H) \preccurlyeq \log(1/\epsilon_n)\leq n\epsilon_n^2$. 
Now, we are ready to apply Theorem~\ref{thm:basic-contraction-rate} to conclude the proof.
\end{proof}


\begin{proof}[Proof of Theorem~\ref{thm:posterior-contraction-rate-parameter}]
The proof of this theorem is similar to that of Theorem~\ref{thm:convergence_parameter_estimation}. It is a direct consequence of Theorem~\ref{thm:posterior-contraction-rate-regression}, where we proved that the posterior contraction rate of $\overline{d}_H(f_{G}, f_{G_0})$ is $(\log(n)/n)^{1/2}$, and the inverse bounds (Theorem~\ref{thm:inverse-bounds}), where we showed that $\overline{d}_H(f_{G}, f_{G_0}) \succcurlyeq W_1(G, G_0)$ in the exact-fitted case and $\overline{d}_H(f_{G}, f_{G_0}) \succcurlyeq W_2^2(G, G_0)$ in the over-fitted case.
\end{proof}

Now we are to establish the consistency of the number of parameters and the posterior contraction rate of the latent mixing measure in a Bayesian estimation setting, where the regression mixture model is endowed with a "mixture of finite mixture" prior. The proof makes a crucial usage of Doob's consistency theorem (\cite{ghosal2017fundamentals} Theorem 6.9, or \cite{miller2018detailed} Theorem 2.2).

\begin{proof}[Proof of Theorem~\ref{thm:MFM}] For each latent mixing measure $G$, we write $k(G)$ as its number of (distinct) support points. Recall that we have a prior $\Pi$ on $\mathcal{G} = \cup_{k=1}^{\infty} \mathcal{E}_{k}$, which is a subset of the complete and separable Wasserstein space endowed with metric $W_1$. By assumption, $G$ (and hence $k(G)$) is identifiable.  By Doob's consistency theorem~\cite{doob1949application} (or \cite{ghosal2017fundamentals} Theorem 6.9), there exists $\mathcal{G}_0 \subset \overline{\mathcal{G}}$ such that $\Pi(\mathcal{G}_0) = 1$ and for any $G_0 \in \mathcal{G}_0 \cap \Ecal_{k_0}$, i.e. those $G_0 \in \Gcal_0$ that that have $k_0$ supporting atoms, we have
$$\Pbb(k(G) = k_0 |x^{[n]}, y^{[n]}) = \Ebb[1(k(G) = k_0) |x^{[n]}, y^{[n]}] \to 1(k(G_0) = k_0) = 1,$$ 
almost surely in $\otimes_{i=1}^{\infty}\Pbb_{G_0}$. For the mixture of finite mixtures prior, $K$ represents the (random) number of components. Moreover, by assumption, given $K=k$, the prior distributions on $p=(p_{j})_{j=1}^{k}$ and $(\theta_{j})_{j=1}^{k}$ are absolutely continuous, and set $G = \sum_{j=1}^{k} p_j \delta_{\theta_j}$. Thus, under the induced prior $\Pi$ on the mixing measure, we have $k(G) = K$ for $\Pi$-almost all $G$. This entails that
 there exists $\mathcal{G}'_0 \subset \overline{\mathcal{G}}$ such that $\Pi(\mathcal{G}'_0) = 1$ and for any $G_0 \in \mathcal{G}'_0$ we have
$$\Pbb(k(G) = K |x^{[n]}, y^{[n]}) = 1 \quad \forall n \geq 1 \quad \text{a.s }\otimes_{i=1}^{\infty}\Pbb_{G_0}.$$ 
Now, for any $G_0 \in \Gcal_0 \cap \Gcal'_0$, by the calculus of probabilities
\begin{align*}
\Pbb(K = k_0 |x^{[n]}, y^{[n]}) & \geq \Pbb(K = k_0, k(G) = k_0 |x^{[n]}, y^{[n]}) \\
& = \Pbb(K = k(G), k(G) = k_0 |x^{[n]}, y^{[n]}) \\
& \geq \Pbb(k(G) = k_0 |x^{[n]}, y^{[n]}) - \Pbb(k(G) \neq K |x^{[n]}, y^{[n]})\\
& = \Pbb(k(G) = k_0 |x^{[n]}, y^{[n]}).
\end{align*}
Thus, $\Pbb(K = k_0 |x^{[n]}, y^{[n]}) \to 1 \text{ a.s }\otimes_{i=1}^{\infty}\Pbb_{G_0}$, provided that $G_0 \in \Gcal_0 \cap \Gcal'_{0} \cap \Ecal_{k_0}$. Then, with $\epsilon_n = \sqrt{\log(n)/n}$, we can bound:
\begin{align*}
    \Pi(G: W_1(G, G_0)\succcurlyeq \epsilon_n|x^{[n]}, y^{[n]}) & = \sum_{k=1}^{\infty} \Pi(G \in \Ecal_{k}(\Theta): W_1(G, G_0)\succcurlyeq \epsilon_n|x^{[n]}, y^{[n]})\\
    & \leq \Pi(K \neq k_0 | x^{[n]}, y^{[n]})\\
    & + \Pi(G \in \Ecal_{k_0}(\Theta): W_1(G, G_0)\succcurlyeq \epsilon_n|x^{[n]}, y^{[n]}).
\end{align*}
The first term goes to 0 $\Pbb_{G_0}$-almost surely, thanks to the argument above. For the second term, we apply the first part of Theorem~\ref{thm:posterior-contraction-rate-parameter} to conclude that it tends to 0 in $\Pbb_{G_0}$-probability.

\end{proof}

\section{Proofs of remaining main results}\label{sec:remain-results}

\subsection{Basic inequalities}\label{subsec:basic-inequality-proof}
\begin{proof}[Proof of Lemma~\ref{lem:basic-bounds}] 
Let $G = \sum_{i=1}^{K}p_i \delta_{(\theta_{1i}, \theta_{2i})}$ and recall that $G_0 = \sum_{j=1}^{k_0}p_i^0 \delta_{(\theta^0_{1j}, \theta^0_{2j})}$. To ease the presentation, denote $f_i(y|x) = f(y|h_1(x, \theta_{1i}), h_2(x, \theta_{2i}))$ and $f_j^0(y|x) = f(y|h_1(x, \theta_{1j}^0), h_2(x, \theta_{2j}^0))$ for $i= 1,\dots, K, j=1, \dots, k_0$. We have
\begin{align*}
    \Ebb_X d_{TV}(f_G(\cdot|X), f_{G_0}(\cdot|X)) & = \int_{\Xcal} d\Pbb_X \int_{\Ycal} d\nu(y) \left|\sum_{i=1}^{K} p_i f_i(y|x) - \sum_{j=1}^{k_0} p_j^0 f_j^0(y|x) \right|  \\
    & = \int_{\Xcal} d\Pbb_X \int_{\Ycal} d\nu(y) \left|\sum_{i,j=1}^{K, k_0}  q_{ij} (f_i(y|x) - f_j^0(y|x))\right|\\
    &\leq \sum_{i,j=1}^{K, k_0}  q_{ij} \int_{\Xcal} d\Pbb_X \int_{\Ycal} d\nu(y) \left|  f_i(y|x) - f_j^0(y|x) \right|,
\end{align*}
for any coupling $(q_{ij})_{i,j=1}^{K, k_0}$ of $(p_i)_{i=1}^{K}$ and $(p_j^0)_{j=1}^{k_0}$. But because of the uniform Lipschitz assumption of $f$ and $h_1, h_2$, we have
$$|f_i(y|x) - f_j^0(y|x)| \leq c (|h_1(x, \theta_{1i}) - h_1(x, \theta_{1j}^0)| + |h_2(x, \theta_{2i}) - h_2(x, \theta_{2j}^0)|),$$
and then
$$|f_i(y|x) - f_j^0(y|x)| \leq c c_1 \norm{\theta_{1i} - \theta_{1j}^0} + c c_2 \norm{\theta_{2i} - \theta_{2j}^0}\quad \forall x, y.$$ 
Therefore,
$$\Ebb_X d_{TV}(f_G(\cdot|X), f_{G_0}(\cdot|X)) \leq c \max\{c_1, c_2\} \sum_{i, j=1}^{K, k_0} q_{ij} (\norm{\theta_{1i} - \theta_{1j}^0} + \norm{\theta_{2i} - \theta_{2j}^0}),$$
for all $x, y$. Taking infimum of all feasible $(q_{ij})_{i, j}$ to obtain
$$\Ebb_X d_{TV}(f_G(\cdot|X), f_{G_0}(\cdot|X)) \preccurlyeq W_1(G, G_0).$$
\end{proof}

\begin{remark}
    By inspecting the proof above, we see that the results still hold if we change the uniform Lipschitz condition of $h_1$ and $h_2$ to the integrability of the Lipschitz constants, i.e.  there exist $c_1(x), c_2(x)$ for all $x\in \mathcal{X}$ such that
    $$h_1(x, \theta_1) - h_1(x, \theta_1') \leq c_1(x) \norm{\theta_1- \theta_1'},\quad h_2(x, \theta_2) - h_2(x, \theta_2') \leq c_2(x) \norm{\theta_2 - \theta_2'},$$
    for all $\theta_1, \theta_2, \theta_1',\theta_2'$, and $\Ebb_X c_1(X) < \infty, \Ebb_X c_2(X) < \infty$. This condition is weaker than the uniformly Lipschitz condition in $x$. 
\end{remark}

\subsection{Identifiability results}\label{subsec:identifiability-example-proofs}
\begin{proof}[Proof of Proposition~\ref{prop:identifiable-f}]
(a), (b): Can be found in \cite{chen1995optimal,Ho-Nguyen-EJS-16}.

(c) First, we will establish the first order identifiability condition when $2K\leq N+1$. Suppose that $q_1, q_2,\dots q_K \in [0,1]$ are distinct and there exist $\alpha_1, \dots, \alpha_K$, $\beta_1, \dots, \beta_K$ such that
    \begin{equation}\label{eq:first-order-contradiction}
        \alpha_1 \Bin (y | q_1) + \dots + \alpha_K \Bin (y | q_K) + \beta_1 \dfrac{\partial}{\partial  q}\Bin (y | q_1) + \dots + \beta_K \dfrac{\partial}{\partial  q}\Bin(y | q_K) = 0,
    \end{equation}
    for all $y=0,1,\dots, N$. Direct calculation gives 
    \begin{equation}
        \sum_{i=1}^{K} q_i^{y}(1-q_i)^{N-y} \alpha_{i} + \sum_{i=1}^{K} \dfrac{\partial}{\partial  q} q_i^{y}(1-q_i)^{N-y} \beta_{i} = 0, \quad \forall y=0,\dots, N.
    \end{equation}
Because this is a system of linear equations of $(\alpha_i, \beta_i)_{i=1}^{K}$, it suffices to show that the following $(N+1) \times 2K$ matrix has independent columns
\begin{equation*}
    \begin{pmatrix}
        (1-q_1)^{N} & \cdots & (1-q_K)^{N} & \frac{\partial}{\partial q} (1-q_1)^{N}& \cdots & \frac{\partial}{\partial q}(1-q_K)^{N} \\
        \vdots & \ddots & \vdots & \vdots & \ddots & \vdots \\
        q_1^{N} & \cdots & q_K^{N} & \frac{\partial}{\partial q} q_1^{N}& \cdots & \frac{\partial}{\partial q}q_K^{N}
    \end{pmatrix}.
\end{equation*}
Multiplying this matrix with the following upper triangular matrix
\begin{equation*}
    \begin{pmatrix}
        1 & \binom{N}{1} & \binom{N}{2} & \cdots &  \binom{N}{N}\\
        0 & 1 & \binom{N-1}{1} & \cdots & \binom{N-1}{N-1} \\
        0 & 0 & 1 & \cdots & \binom{N-2}{N-2} \\
        \vdots & \vdots & \vdots & \ddots & \vdots \\
        0 & 0 & 0 & \cdots & 1
    \end{pmatrix},
\end{equation*}
we only need to prove the following $(N+1) \times 2K$ matrix 
\begin{equation*}
    \begin{pmatrix}
        1 & \cdots & 1 & 0& \cdots & 0 \\
        q_1 & \cdots & q_K & 1& \cdots & 1 \\
        q_1^2 & \cdots & q_2^2 & 2 q_1& \cdots & 2q_2 \\
        \vdots & \ddots & \vdots & \vdots & \ddots & \vdots \\
        q_1^{N} & \cdots & q_K^{N} & N q_1^{N-1}& \cdots & Nq_K^{N-1}
    \end{pmatrix}
\end{equation*}
has independent columns. Because $2K \leq N+1$, it suffices to prove that $\det(D_1) \neq 0$, for
\begin{equation}\label{eq:matrix-vandermonde-d1}
    D_1 =  \begin{pmatrix}
        1 & \cdots & 1 & 0& \cdots & 0 \\
        q_1 & \cdots & q_K & 1& \cdots & 1 \\
        q_1^2 & \cdots & q_2^2 & 2 q_1& \cdots & 2q_2 \\
        \vdots & \ddots & \vdots & \vdots & \ddots & \vdots \\
        q_1^{2K-1} & \cdots & q_K^{2K-1}  & (2K-1) q_1^{2K-2}& \cdots & (2K-1)q_K^{2K-2}
    \end{pmatrix}.
\end{equation}
In the following, we will prove that  $\det (D_1) = \prod_{1\leq i < j \leq K}(q_i-q_j)^4$, so that it is different from 0 if $q_1, \dots, q_N$ are distinct as in our assumption. We borrow an idea in calculating the determinant of the Vandermonde matrix. Note that $\det (D_1)$ is a polynomial of $q_1, q_2, \dots, q_K$, with the degree of each $q_i$ no more than $4K-4$. Let us treat $q_1 = x$ as a variable, while $q_2, \dots, q_K$ as constants, and prove that 
{\fontsize{8}{10}\selectfont 
\begin{equation*}
    f_1(x) = \det \begin{pmatrix}
        1 & 1 & \cdots & 1 & 0 & 0 & \cdots & 0 \\
        x & q_2 & \cdots & q_K & 1& 1 & \cdots & 1 \\
        x^2 & q_2^2 & \cdots & q_K^2 & 2x & 2q_2 & \cdots & 2q_K \\
        \vdots & \vdots & \ddots & \vdots & \vdots & \vdots & \ddots & \vdots \\
        x^{2K-1} & q_2^{2K-1} & \cdots & q_K^{2K-1}  & (2K-1) x^{2K-2}& (2K-1) q_2^{2K-2}&  \cdots & (2K-1)q_K^{2K-2}
    \end{pmatrix}
\end{equation*}}
is a polynomial having degree $4(K-1)$ of $x$ and can be factorized as $\prod_{i=2}^{K} (x-q_i)^4$. It suffices for us to show $f(x), f'(x), f''(x), f^{(3)}(x)$ all attains $q_2$ as solutions, and similar for other $q_i$'s. It can be seen that $f_1(q_2)$ is a determinant of a matrix with identical first two columns, therefore $f_1(q_2)=0$. For the derivative of $f_1$, we use the derivative rule for product $(fg)' = f' g + g'f$ to have that $f_1'(x)$ equals\\
{\fontsize{9}{10}\selectfont 
$\det \begin{pmatrix}
        0 & 1 & \cdots & 1 & 0 & 0 & \cdots & 0 \\
        1 & q_2 & \cdots & q_K & 1& 1 & \cdots & 1 \\
        2x & q_2^2 & \cdots & q_K^2 & 2x & 2q_2 & \cdots & 2q_K \\
        \vdots & \vdots & \ddots & \vdots & \vdots & \vdots & \ddots & \vdots \\
        (2K-1)x^{2K-2} & q_2^{2K-1} & \cdots & q_K^{2K-1}  & (2K-1) x^{2K-2}& (2K-1) q_2^{2K-2}&  \cdots & (2K-1)q_K^{2K-2}
        \end{pmatrix}$\\
    $+$\\
    $ \det \begin{pmatrix}
        1 & 1 & \cdots & 1 & 0 & 0 & \cdots & 0 \\
        x & q_2 & \cdots & q_K & 0& 1 & \cdots & 1 \\
        x^2 & q_2^2 & \cdots & q_K^2 & 2 & 2q_2 & \cdots & 2q_K \\
        \vdots & \vdots & \ddots & \vdots & \vdots & \vdots & \ddots & \vdots \\
        x^{2K-1} & q_2^{2K-1} & \cdots & q_K^{2K-1}  & (2K-1)(2K-2) x^{2K-3}& (2K-1) q_2^{2K-2}&  \cdots & (2K-1)q_K^{2K-2}
        \end{pmatrix}$}
As the first matrix has identical first and $(K+1)$-th columns, its determinant equals 0. Hence, $f_1'(x)$ equals {\fontsize{9}{10}\selectfont 
\begin{eqnarray*}
\det \begin{pmatrix}
        1 & 1 & \cdots & 1 & 0 & 0 & \cdots & 0 \\
        x & q_2 & \cdots & q_K & 0& 1 & \cdots & 1 \\
        x^2 & q_2^2 & \cdots & q_K^2 & 2 & 2q_2 & \cdots & 2q_K \\
        \vdots & \vdots & \ddots & \vdots & \vdots & \vdots & \ddots & \vdots \\
        x^{2K-1} & q_2^{2K-1} & \cdots & q_K^{2K-1}  & (2K-1)(2K-2) x^{2K-3}& (2K-1) q_2^{2K-2}&  \cdots & (2K-1)q_K^{2K-2}
        \end{pmatrix}
\end{eqnarray*}}
Now consider $f_1'(q_2)$. It is the determinant of a matrix that has identical first two columns, so $f_1'(q_2)=0$. Continuing to apply the derivative rule for products of functions, we have that $f_1''(x)$ equals
{\fontsize{9}{10}\selectfont 
\begin{align*}
    & \det \begin{pmatrix}
        0 & 1 & \cdots & 1 & 0 & 0 & \cdots & 0 \\
        1 & q_2 & \cdots & q_K & 0& 1 & \cdots & 1 \\
        2x & q_2^2 & \cdots & q_K^2 & 2 & 2q_2 & \cdots & 2q_K \\
        \vdots & \vdots & \ddots & \vdots & \vdots & \vdots & \ddots & \vdots \\
        (2K-1)x^{2K-2} & q_2^{2K-1} & \cdots & q_K^{2K-1}  & \prod_{i=1}^{2}(2K-i) x^{2K-3}& (2K-1) q_2^{2K-2}&  \cdots & (2K-1)q_K^{2K-2}
        \end{pmatrix}\\
    & + \det \begin{pmatrix}
        1 & 1 & \cdots & 1 & 0 & 0 & \cdots & 0 \\
        x & q_2 & \cdots & q_K & 0& 1 & \cdots & 1 \\
        x^2 & q_2^2 & \cdots & q_K^2 & 0 & 2q_2 & \cdots & 2q_K \\
        \vdots & \vdots & \ddots & \vdots & \vdots & \vdots & \ddots & \vdots \\
        x^{2K-1} & q_2^{2K-1} & \cdots & q_K^{2K-1}  & \prod_{i=1}^{3}(2K-i) x^{2K-4}& (2K-1) q_2^{2K-2}&  \cdots & (2K-1)q_K^{2K-2}
        \end{pmatrix}
\end{align*}}
Substitute $x = q_2$ in the formula above, the first matrix has identical first and $(K+2)$-th column, and the second matrix has identical first two columns. Hence, $f_1''(q_2)=0$. Continue applying derivative one more time, we have $f_1^{(3)}(x)$ equals
{\fontsize{9}{10}\selectfont 
\begin{align*}
    & \det \begin{pmatrix}
        0 & 1 & \cdots & 1 & 0 & 0 & \cdots & 0 \\
        0 & q_2 & \cdots & q_K & 0& 1 & \cdots & 1 \\
        2 & q_2^2 & \cdots & q_K^2 & 2 & 2q_2 & \cdots & 2q_K \\
        \vdots & \vdots & \ddots & \vdots & \vdots & \vdots & \ddots & \vdots \\
        \prod_{i=1}^{2}(2K-i) x^{2K-3} & q_2^{2K-1} & \cdots & q_K^{2K-1}  & \prod_{i=1}^{2}(2K-i) x^{2K-3}& (2K-1) q_2^{2K-2}&  \cdots & (2K-1)q_K^{2K-2}
        \end{pmatrix}\\
        & +\det \begin{pmatrix}
        0 & 1 & \cdots & 1 & 0 & 0 & \cdots & 0 \\
        1 & q_2 & \cdots & q_K & 0& 1 & \cdots & 1 \\
        2x & q_2^2 & \cdots & q_K^2 & 2 & 2q_2 & \cdots & 2q_K \\
        \vdots & \vdots & \ddots & \vdots & \vdots & \vdots & \ddots & \vdots \\
        (2K-1)x^{2K-2} & q_2^{2K-1} & \cdots & q_K^{2K-1}  & \prod_{i=1}^{3}(2K-i) x^{2K-4}& (2K-1) q_2^{2K-2}&  \cdots & (2K-1)q_K^{2K-2}
        \end{pmatrix}\\
    & + \det \begin{pmatrix}
        0 & 1 & \cdots & 1 & 0 & 0 & \cdots & 0 \\
        1 & q_2 & \cdots & q_K & 0& 1 & \cdots & 1 \\
        2x & q_2^2 & \cdots & q_K^2 & 0 & 2q_2 & \cdots & 2q_K \\
        \vdots & \vdots & \ddots & \vdots & \vdots & \vdots & \ddots & \vdots \\
        (2K-1)x^{2K-2} & q_2^{2K-1} & \cdots & q_K^{2K-1}  & \prod_{i=1}^{3}(2K-i) x^{2K-4}& (2K-1) q_2^{2K-2}&  \cdots & (2K-1)q_K^{2K-2}
        \end{pmatrix}\\
        & + \det \begin{pmatrix}
        1 & 1 & \cdots & 1 & 0 & 0 & \cdots & 0 \\
        x & q_2 & \cdots & q_K & 0& 1 & \cdots & 1 \\
        x^2 & q_2^2 & \cdots & q_K^2 & 0 & 2q_2 & \cdots & 2q_K \\
        \vdots & \vdots & \ddots & \vdots & \vdots & \vdots & \ddots & \vdots \\
        x^{2K-1} & q_2^{2K-1} & \cdots & q_K^{2K-1}  & \prod_{i=1}^{4}(2K-i) x^{2K-5}& (2K-1) q_2^{2K-2}&  \cdots & (2K-1)q_K^{2K-2}
        \end{pmatrix}.
\end{align*}
}
The first matrix has two identical columns, so its determinant is 0. Meanwhile, when we substitute $x=q_2$ into the other three matrices, each also has identical columns. Hence, $f^{(3)}(q_2) = 0$. We obtain that $f_1(x) \propto \prod_{i=2}^{K}(x-q_i)^{4}$. By treating $q_2, \dots, q_K$ as variables respectively and applying the same argument, we have 
\begin{equation*}
    \det(D_1) = \prod_{1\leq i < j\neq K} (q_i - q_j)^4\neq 0,
\end{equation*}
whenever $q_1, q_2, \dots, q_{K}$ are distinct.

The proof for establishing the second order identifiability when $3K\leq N+1$ is similar, where the determinant of the derived $3K\times 3K$ matrix is $\prod_{1\leq i < j\neq K} (q_i - q_j)^6\neq 0$.

(d) For the family of negative binomial distributions, the density is given as
$$\NB(y|\mu,\phi)= \dfrac{\Gamma(\phi + y)}{\Gamma(\phi) y!} \left(\dfrac{\mu}{\phi + \mu}\right)^{y} \left(\dfrac{\phi}{\phi + \mu}\right)^{\phi}.$$ 
Suppose that $\mu_1,...,\mu_K$ are distinct, and there exist $\alpha_1,...,\alpha_K, \beta_1,..., \beta_K, \gamma_1,...,\gamma_K$ such that for every $y\in \mathbb{N}$
\begin{align}\label{eq: second order NegBin}
    \sum_{i=1}^K\alpha_i \NB(y | \mu_i,\phi)+\sum_{i=1}^{K} \beta_i \dfrac{\partial}{\partial  \mu} \NB(y | \mu_i,\phi) +\sum_{i=1}^K \gamma_i \dfrac{\partial^2}{\partial  \mu^2} \NB(y | \mu_i,\phi) = 0.
\end{align}
We will show that $\alpha_1=\dots=\alpha_K=\beta_1=\dots=\beta_K=\gamma_1=\dots\gamma_K=0$.
Indeed, Eq. \eqref{eq: second order NegBin} is simplified as below
\begin{align}
    \sum_{i=1}^K \alpha_i\left(\dfrac{\mu_i}{\phi+\mu_i}\right)^y\left(\dfrac{\phi}{\phi+\mu_i}\right)^\phi+\sum_{i=1}^K \beta_i\left(\dfrac{\mu_i}{\phi+\mu_i}\right)^{y-1}\left(\dfrac{\phi}{\phi+\mu_i}\right)^{\phi+1}\dfrac{y-\mu_i}{\phi+\mu_i} \nonumber\\
   \label{eq:simplfy second order NegBin} +\sum_{i=1}^K\gamma_i \left(\dfrac{\mu_i}{\phi+\mu_i}\right)^{y-2}\left(\dfrac{\phi}{\phi+\mu_i}\right)^{\phi+1}\dfrac{1}{(\phi+\mu_i)^3}[\phi(y-\mu_i)^2-y(2\mu_i+\phi)+\mu_i^2]\nonumber\\
   =0,
\end{align}
for all $y\in \Nbb$. Without loss of generality, assume $\mu_1$ is the largest value in the set of $\{\mu_1,...,\mu_K\}$. This implies that $\dfrac{\mu_1}{\phi+\mu_1}$ is also the largest value in the set of $\left\{\dfrac{\mu_1}{\phi+\mu_1},...,\dfrac{\mu_K}{\phi+\mu_K}\right\}$. Dividing both sides of Eq. \eqref{eq:simplfy second order NegBin} by $\left(\dfrac{\mu_1}{\phi+\mu_1}\right)^{y-2}[\phi(y-\mu_1)^2-y(2\mu_1+\phi)+\mu_1^2]=\left(\dfrac{\mu_1}{\phi+\mu_1}\right)^{y-2}A_1(y)$, we obtain
\begin{align}
   &\sum_{i=1}^K \alpha_i\left(\dfrac{\mu_i(\phi+\mu_1)}{\mu_1(\phi+\mu_i)}\right)^{y-2}\left(\dfrac{\mu_i}{\phi+\mu_i}\right)^2\left(\dfrac{\phi}{\phi+\mu_i}\right)^\phi\dfrac{1}{A_1(y)} \nonumber\\
   +&\sum_{i=1}^K \beta_i\left(\dfrac{\mu_i(\phi+\mu_1)}{\mu_1(\phi+\mu_i)}\right)^{y-2}\left(\dfrac{\mu_i}{\phi+\mu_i}\right) \left(\dfrac{\phi}{\phi+\mu_i}\right)^{\phi+1}\dfrac{y-\mu_i}{(\phi+\mu_i)A_1(y)} \nonumber \\
   \label{eq:second order NegBin3} +&\sum_{i=2}^K\gamma_i \left(\dfrac{\mu_i(\phi+\mu_1)}{\mu_1(\phi+\mu_i)}\right)^{y-2}\left(\dfrac{\phi}{\phi+\mu_i}\right)^{\phi+1}\dfrac{A_i(y)}{(\phi+\mu_i)^3A_1(y)}\nonumber \\
   & +\gamma_1\left(\dfrac{\phi}{\phi+\mu_1}\right)^{\phi+1}\dfrac{1}{(\phi+\mu_1)^3}=0, \forall y \in \mathbb{N}.  
\end{align}
Let $y \rightarrow \infty$ in \eqref{eq:second order NegBin3}, we get $\gamma_1=0$. After dropping $\gamma_1$ in \eqref{eq:simplfy second order NegBin}, the remaining terms of the equation is divided by $\left(\dfrac{\mu_1}{\phi+\mu_1}\right)^{y-1}(y-\mu_1)$, we have
\begin{align}
    &\sum_{i=1}^K \alpha_i\left(\dfrac{\mu_i(\phi+\mu_1)}{\mu_1(\phi+\mu_i)}\right)^{y-1}\left(\dfrac{\mu_i}{\phi+\mu_i}\right)\left(\dfrac{\phi}{\phi+\mu_i}\right)^\phi\dfrac{1}{y-\mu_1}\nonumber\\
   +&\sum_{i=2}^K \beta_i\left(\dfrac{\mu_i(\phi+\mu_1)}{\mu_1(\phi+\mu_i)}\right)^{y-1} \left(\dfrac{\phi}{\phi+\mu_i}\right)^{\phi+1}\left(\dfrac{y-\mu_i}{y-\mu_1}\right) \dfrac{1}{\phi+\mu_i} \nonumber \\
   \label{eq:second order NegBin4} +&\sum_{i=2}^K\gamma_i \left(\dfrac{\mu_i(\phi+\mu_1)}{\mu_1(\phi+\mu_i)}\right)^{y-2}\left(\dfrac{\phi}{\phi+\mu_i}\right)^{\phi+1}\dfrac{1}{(\phi+\mu_i)^3}\dfrac{A_i(y)}{y-\mu_1} \nonumber \\
   & + \beta_1\left(\dfrac{\phi}{\phi+\mu_1}\right)^{\phi+1}\dfrac{1}{\phi+\mu_1}=0, \forall y \in \mathbb{N}.  
\end{align}
Taking the limit $y\rightarrow\infty$ both sides of Eq. \eqref{eq:second order NegBin4}, we get $\beta_1=0$. Continuing this procedure, we set $\beta_1=0$ and $\gamma_1=0$ in Eq. \eqref{eq: second order NegBin}, then divide $\left(\dfrac{\mu_1}{\phi+\mu_1}\right)^{y}$ on both sides of the remaining equation. The final result leads to the following:
\begin{align}\label{eq:second order NegBin5} 
    &\sum_{i=2}^K \alpha_i\left(\dfrac{\mu_i(\phi+\mu_1)}{\mu_1(\phi+\mu_i)}\right)^{y}\left(\dfrac{\phi}{\phi+\mu_i}\right)^\phi  \nonumber \\
   & +\sum_{i=2}^K \beta_i\left(\dfrac{\mu_i(\phi+\mu_1)}{\mu_1(\phi+\mu_i)}\right)^{y}\left(\dfrac{\mu_i}{\phi+\mu_i}\right)^{-1} \left(\dfrac{\phi}{\phi+\mu_i}\right)^{\phi+1}\dfrac{y-\mu_i}{\phi+\mu_i} \nonumber \\
   & + \sum_{i=2}^K\gamma_i\left(\dfrac{\mu_i(\phi+\mu_1)}{\mu_1(\phi+\mu_i)}\right)^{y} \left(\dfrac{\mu_i}{\phi+\mu_i}\right)^{-2}\left(\dfrac{\phi}{\phi+\mu_i}\right)^{\phi+1} \dfrac{A_i(y)}{(\phi+\mu_i)^3} \nonumber \\
   & +\alpha_1\left(\dfrac{\phi}{\phi+\mu_1}\right)^\phi=0, \forall y \in \mathbb{N}.  
\end{align}
It is clear to see that $\alpha_1=0$ when $y$ approaches $\infty$ in Eq. \eqref{eq:second order NegBin5}.
We have shown that $\alpha_1,\beta_1,\gamma_1=0$. Inductively, we obtain that $\alpha_i,\beta_i,\gamma_i=0$ for $i=2,...,K$.
\end{proof}

\begin{proof}[Proof of Proposition~\ref{prop:family complete_ident}]

Since $\Pbb_X$ is absolutely continuous with respect to the Lebesgue measure on $\Rbb^p$, it is sufficient to prove the result in this proposition with respect to the Lebesgue measure.

    (a) We will prove this part by applying an inductive argument with respect to $p$ (dimension of covariate $x$).
    Suppose $p=1$. For $\theta \neq \theta'$, the equation $h(x,\theta)=h(x,\theta')$ is a non-trivial polynomial equation, it only has a finite number of solutions. Thus the set of solution has Lebesgue measure zero, so we have $h(x,\theta) \neq h(x,\theta')$ a.s.
    
    Assume now that the proposition is valid up to the parameter space dimension $p-1$.  Now we prove that it is correct for $p$. Using a similar argument as above, it suffices to show that the set of solutions for any non-trivial polynomial has zero measure. Indeed, consider any such polynomial of degree $d$ of variable $x \in \Rbb^p$, we can write $x=(X_{p-1},x_p)$, where $X_{p-1} \in \Rbb^{p-1}$ and $x_p \in \Rbb$. The polynomial then can be written as :
    \begin{align}
        \sum_{j=0}^{d} p_j(X_{p-1})x_p^i =0\label{eq:proposition 3},
    \end{align}
    where $(p_j(X_{p-1}))_{j=0}^d$ are polynomial of $X_{p-1}\in \Rbb^{p-1}$, at least one of which is non-trivial.
    
    Now, partition the set $Z$ of the solutions for this polynomial into two measurable sets $Z=A \cup B$, where 
    \begin{align*}
        A &= \{(X_{p-1}, x_p): p_j(X_{p-1})=0 \mbox{ } \forall j = 1, ..., d\}\\
        B &= \{(X_{p-1}, x_p):\mbox{at least one  } p_j(X_{p-1})\neq 0, \mbox{ and } x_p \mbox{ satisfies \eqref{eq:proposition 3}.} \}
    \end{align*}
    The Lebesgue measure of set $A$ is $0$ using the induction hypothesis. While for any $x= (X_{p-1}, x_p)$ in set $B$, for each such $X_{p-1}$, there exist only a finite number of $x_p \in \Rbb$ to satisfy \eqref{eq:proposition 3}, which has zero Lebesgue measure in $\Rbb$. Therefore, we can use Fubini's theorem to deduce that the measure of $B$ is also zero. Thus, $Z$ has measure zero. 
    We have established that $h(x,\theta)$ is completely identifiable for any polynomial $h(x,\theta)$.
    
    Turning to the verification of Assumption (A5.), since $\dfrac{\partial}{\partial \theta} h(x,\theta)$ is again a non-trivial polynomial of $x$, (A5.) is also satisfied using the same argument above.
    
    (b) Similar to part (a), we only need to prove that a non-trivial (not all coefficients are 0) trigonometric polynomial of $x$:
    \begin{equation}
        a_0 + \sum_{n=1}^{d} b_n \cos(n x) + \sum_{n=1}^{d} \sin(n x) = 0
    \end{equation}
    has a countable number of solutions.  
    Write $\cos(nx) = \dfrac{1}{2}(e^{inx} + e^{-inx}), \sin(nx) = \dfrac{1}{2i}(e^{inx} - e^{-inx})$, where $i$ is the imaginary unit, we can rewrite a non-trivial trigonometric polynomial above as
    \begin{equation}
        a_0 + \sum_{n=1}^{d} \tilde{b}_n e^{inx} + \sum_{n=1}^{d} \tilde{c}_n e^{-inx} = 0,
    \end{equation}
    where $\tilde{b}_n$ and $\tilde{c}_n$ are computed from $b_n, c_n$, and the tuple $(a_0, \tilde{b}_n, \tilde{c}_n)$ is non-trivial. Set $y = e^{ix}$, this becomes a polynomial in $y \in \mathbb{C}$, which has a finite number of solutions, by the fundamental theorem of algebra. Combining this with the fact that $e^{ix} =y$ only has a countable solution in $x$, we arrive at the conclusion. The condition spelled out in Assumption (A5.) also holds because the derivative of a trigonometric polynomial is still of the same form.

    (c) Similar to above, we express a non-trivial mixture of polynomials and trigonometric polynomials in the form:
    \begin{equation}
        \sum_{n=0}^{d} a_n x^{n} + \sum_{n=1}^{d} \tilde{b}_n e^{inx} + \sum_{n=1}^{d} \tilde{c}_n e^{-inx} = 0,
    \end{equation}
    which is a holomorphic function in $\mathbb{C}$. This function is known to have an isolated set of solutions, which has zero measure \cite{stein2010complex}. Thus, these functions are completely identifiable. Conditions in Assumption (A5.) also follow because the derivative of a function of this type is still of the same form.
    
    (d) Given $h(x,\theta) = g(p(x,\theta))$, where $g$ is diffeomorphic.
    
    To verify the complete identifiability condition, it can be seen that for $\theta \neq \theta'$:
    $h(x,\theta) = h(x,\theta') \Leftrightarrow p(x,\theta) = p(x,\theta')$,
    so that the complete identifiability of $h$ can be deduced from what of $p$.
    
    To verify Assumption (A5.), note that
    $\beta^{\top}\dfrac{\partial h(x,\theta)}{\partial \theta} = \beta^{\top} g'(p(x,\theta)) \dfrac{\partial p(x,\theta)}{\partial \theta}$.
    Since $g'(p(x,\theta))\neq 0$ (as $g$ is a diffeomorphism), the two equations below are equivalent.
    $$\beta^{\top}\dfrac{\partial h(x,\theta)}{\partial \theta} = 0 \Longleftrightarrow \beta^{\top}\dfrac{\partial p(x,\theta)}{\partial \theta} = 0. $$
    Hence, $h$ satisfies assumption (A5.) if $p$ does.

\end{proof}

The following result illustrates the discussion in Section~\ref{sec:experiments} by showing that a mixture of binomial regression model may be strongly identifiable even though the (unconditional) mixture of binomial distributions is not identifiable in even in the classical sense.

\begin{proposition}\label{prop:mixture-logistic-regression}
    Suppose that the link function $h(x, \theta) = \sigma(\theta x):= 1/(1+e^{\theta x})$ for $\theta, x\in \Rbb$, and the density kernel $f(y) = \textrm{Bin}(y|1, q)$, where $q=h(x, \theta)$. Moreover, the support of $\Pbb_X$ contains an open set in $\Rbb$.  Then, the mixture of two binomial regression components associated with the mixing measure $G = p_1 \delta_{\theta_1} + p_2 \delta_{\theta_2}$,
    where $\theta_1 + \theta_2 \neq 0$, is strongly identifiable in the first order.
\end{proposition}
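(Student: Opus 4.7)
The plan is to convert the first-order identifiability equation into an explicit identity in $x$, extend it meromorphically to $\mathbb{C}$, and then read off the vanishing of all coefficients from the principal parts of the Laurent expansions at the poles of the sigmoid. Since $y\in\{0,1\}$ and $\partial_\theta\sigma(\theta x)=x\,\sigma(\theta x)(1-\sigma(\theta x))$, writing the identifiability equation from Definition~\ref{def:strong_identifiability_hierarchy} separately for $y=1$ and $y=0$ and summing them gives $\alpha_1+\alpha_2=0$. Setting $s_j(x):=\sigma(\theta_j x)$ and subtracting yields
\begin{equation}
\alpha_1\bigl(s_1(x)-s_2(x)\bigr)+\beta_1\,x\,s_1(x)\bigl(1-s_1(x)\bigr)+\beta_2\,x\,s_2(x)\bigl(1-s_2(x)\bigr)=0,\label{eq:plan}
\end{equation}
valid on an open subset of $\mathrm{supp}\,\Pbb_X$. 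Since $\sigma$ is real-analytic, \eqref{eq:plan} extends to all of $\mathbb{R}$ and then to a meromorphic identity on $\mathbb{C}$; here $\sigma(z)=1/(1+e^{-z})$ has simple poles of residue $1$ at $z=(2k+1)\pi i$ ($k\in\Zbb$), and $\sigma'=\sigma(1-\sigma)$ has double poles at the same points.

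Assume first that $\theta_1\neq 0$ (since $\theta_1\neq\theta_2$ ensures at least one is nonzero, the other case is symmetric and easier). Consider the candidate pole $x_0=\pi i/\theta_1$. At $x_0$, $s_1$ contributes a simple pole with residue $1/\theta_1$, while $x\,s_1(1-s_1)$ contributes a double pole with principal part $-x_0/\theta_1^2\cdot(x-x_0)^{-2}-\theta_1^{-2}(x-x_0)^{-1}$. In the generic sub-case where $\theta_2/\theta_1$ is not an odd integer, the terms $s_2$ and $x\,s_2(1-s_2)$ are holomorphic at $x_0$, so matching the coefficients of $(x-x_0)^{-2}$ and $(x-x_0)^{-1}$ in \eqref{eq:plan} to zero forces $\beta_1=0$ and then $\alpha_1=0$. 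A symmetric analysis at $x_1=\pi i/\theta_2$ then yields $\beta_2=0$.

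The main obstacle is the remaining sub-case $\theta_2/\theta_1=2k+1$ for some $k\in\Zbb$, in which $x_0$ is simultaneously a pole of $s_1$ and $s_2$ and the argument above stalls. This is precisely where the hypothesis $\theta_1+\theta_2\neq 0$ enters: combined with $\theta_1\neq\theta_2$, it excludes $k\in\{0,-1\}$, and for every other odd integer $2k+1$ the reciprocal $1/(2k+1)$ fails to be an odd integer, so $x_1=\pi i/\theta_2$ is a pole of $s_2$ alone. Running the pole analysis at $x_1$ then gives $\beta_2=0$ and $\alpha_1=0$; substituting back into \eqref{eq:plan} reduces it to $\beta_1\,x\,s_1(1-s_1)\equiv 0$, forcing $\beta_1=0$ as well. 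Together with $\alpha_2=-\alpha_1=0$, this is the desired conclusion.

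The role of the assumption $\theta_1+\theta_2\neq 0$ can be seen directly: if $\theta_2=-\theta_1$ then $s_2=1-s_1$ and $s_2(1-s_2)=s_1(1-s_1)$, so \eqref{eq:plan} collapses to $\alpha_1(2s_1-1)+(\beta_1+\beta_2)\,x\,s_1(1-s_1)=0$, which only constrains $\alpha_1=0$ and $\beta_1+\beta_2=0$ and leaves a genuine null direction, violating strong identifiability. This explains why the assumption is not merely a convenience but is intrinsic to the statement.
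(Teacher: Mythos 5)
Your proposal is correct, and it reaches the conclusion by a genuinely different route from the paper. The paper derives the same two facts you do ($a_1+a_2=0$ plus the single scalar identity in $x$), invokes the identity theorem to extend that identity from the open subset of $\mathrm{supp}\,\Pbb_X$ to all of $\Rbb$, and then argues by \emph{real} asymptotics: after ordering the parameters it divides by the dominant term ($x\sigma_1(x)$, then $\sigma_1(x)$) and sends $x\to\pm\infty$, peeling off $b_1$, $a_1$, $a_2$, $b_2$ in turn; the case $\theta_1<0<\theta_2$ is reduced to the one-sided case via the symmetry $\sigma(\theta x)(1-\sigma(\theta x))=\sigma(-\theta x)(1-\sigma(-\theta x))$. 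You instead push the identity into $\mathbb{C}$ and match principal parts at the poles $x=\pi i/\theta_j$ of the extended sigmoid, splitting into the generic sub-case ($\theta_2/\theta_1$ not an odd integer) and the coincident-pole sub-case, where the hypotheses $\theta_1\neq\theta_2$ and $\theta_1+\theta_2\neq 0$ exclude exactly the ratios $\pm 1$ and leave a pole of $s_2$ alone to work with. The computations (residue $1$ of the sigmoid at its simple poles, the double-pole principal part of $x\,s_j(1-s_j)$, and the exclusion of $k\in\{0,-1\}$) all check out, and your reduction to Eq.~\eqref{eq:plan} via summing and differencing the $y=0,1$ equations matches the paper's. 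What your version buys: it pinpoints the exact failure locus $\theta_1=\pm\theta_2$ intrinsically (your closing observation that $\theta_2=-\theta_1$ yields the null direction $\beta_1=-\beta_2$ is a nice complement the paper only gestures at), and it localizes each component at its own pole rather than relying on which component decays slowest at infinity — which is precisely the obstruction the paper's remark cites for extending the $x\to\pm\infty$ argument beyond two components. What the paper's version buys is elementarity: it never needs the complex poles, only one-sided real limits. Two cosmetic points, neither a gap: the paper's convention is $\sigma(u)=1/(1+e^{u})$ rather than $1/(1+e^{-u})$, which only flips signs that get absorbed into the coefficients; and in your generic sub-case, once $\alpha_1=\beta_1=0$ the identity collapses to $\beta_2\,x\,s_2(1-s_2)\equiv 0$, so $\beta_2=0$ follows directly without a second pole analysis.
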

\begin{proof}
Consider $\theta_1 \neq \pm \theta_2$. Suppose that for some $a_1, a_2, b_1, b_2 \in \Rbb$ we have
\begin{align*}
    & a_1 \textrm{Bin}(y|1, h(x, \theta_1)) + a_2 \textrm{Bin}(y|1, h(x, \theta_2)) \\ 
    & + b_1 \dfrac{\partial}{\partial \theta} \textrm{Bin}(y|1, h(x, \theta_1)) + b_2 \dfrac{\partial}{\partial \theta} \textrm{Bin}(y|1, h(x, \theta_2)) = 0,
\end{align*}
for all $y = 0, 1,\text{ and } x\in \textrm{supp}(\Pbb_X)$. Then we will show that $a_1 = a_2 = b_1 = b_2 = 0$. 
Denote $\sigma_i(x) = h(x, \theta_i)$, we have $\textrm{Bin}(1|1, h(x, \theta_i)) = \sigma_i = 1- \textrm{Bin}(0|1, h(x, \theta_i))$. Besides, $\dfrac{\partial}{\partial \theta} \textrm{Bin}(1|1, h(x, \theta_i)) = x \sigma_i(x)(1-\sigma_i(x)) = -\dfrac{\partial}{\partial \theta} \textrm{Bin}(0|1, h(x, \theta_i))$, so that
\begin{equation}
    a_1 + a_2 = 0,
\end{equation}
and 
\begin{equation}\label{eq:ident-Binom}
    a_1 \sigma_1(x) + a_2 \sigma_2(x) + b_1 x \sigma_1(x) (1-\sigma_1(x)) + b_2 x \sigma_2(x) (1-\sigma_2(x)) = 0, \, \forall \, x\in \textrm{supp}(\Pbb_X). 
\end{equation}
Because Eq.~\eqref{eq:ident-Binom} satisfies for all $x$ in an open set, and it is an analytic function of $x$, it satisfies for all $x\in \Rbb$ (identity theorem) \cite{stein2010complex}. Without the loss of generality, we assume $\theta_1 < \theta_2$. If $0\leq \theta_1 < \theta_2$, then   
by dividing both sides of Eq.~\eqref{eq:ident-Binom} by $\sigma_1(x) x$, one obtains
\begin{equation*}
    \dfrac{a_1}{x} + a_2 \dfrac{1+\exp(\theta_1 x)}{1+\exp(\theta_2 x)} \dfrac{1}{x} + b_1  (1-\sigma_1(x)) + b_2  (1-\sigma_2(x)) \dfrac{1+\exp(\theta_1 x)}{1+\exp(\theta_2 x)} = 0, \quad \forall \, x\in \Rbb.
\end{equation*}
Let $x\to \infty$, we have $b_1 (1-\sigma_1(x)) \to b_1/2$ or $b_1$ (depending on whether $\theta_1 = 0$ or $\theta_1 > 0$) and all other terms go to 0. Hence $b_1 = 0$. Next, dividing both sides of  Eq.~\eqref{eq:ident-Binom} by $\sigma_1(x)$, one obtains
\begin{equation*}
    a_1 + a_2 \dfrac{1+\exp(\theta_1 x)}{1+\exp(\theta_2 x)} + b_2  (1-\sigma_2(x)) \dfrac{(1+\exp(\theta_1 x))x}{1+\exp(\theta_2 x)} = 0, \quad \forall \, x\in \Rbb.
\end{equation*}
Let $x\to \infty$, we have $a_1 = 0$. Therefore,
\begin{equation*}
    a_2  + b_2 \dfrac{\exp(\theta_2 x)}{1+\exp(\theta_2 x)} x = 0 \quad \forall \, x\in \Rbb,
\end{equation*}
which implies $a_2 = b_2 = 0$.  In the other case where $\theta_1 < 0 < \theta_2$, we let $x \to \infty$ in Eq.~\eqref{eq:ident-Binom} and notice that $\theta_1(x) \to 1, \theta_2(x)\to 0$ then $a_1 = 0$. Similarly let $x\to -\infty$, we have $a_2 = 0$. Then Eq.~\eqref{eq:ident-Binom} becomes
\begin{equation*}
    b_1 h(x, \theta_1) (1-h(x, \theta_1)) + b_2 h(x, \theta_2) (1-h(x, \theta_2)) = 0.
\end{equation*}
But notice that $h(x, \theta_1) (1-h(x, \theta_1)) = h(x, -\theta_1) (1-h(x, -\theta_1))$, so by letting $\theta'_1 = - \theta_1$, we are back to the case $\theta_1', \theta_2 > 0$. Similar to the case $\theta_1, \theta_2 < 0$, we can transform $\theta_1\mapsto -\theta_1, \theta_2 \mapsto \theta_2'$ to go back to the first case $\theta_1, \theta_2 > 0$ (because Eq.~\eqref{eq:ident-Binom} satisfies for all $x\in \Rbb$). Hence, in all cases we have $a_1 = a_2 = b_1 = b_2 = 0$. Hence, strong identifiability in the first order is established.
\end{proof}
\begin{remark}
    \begin{enumerate}
        \item The fact that mixture of Binomial distributions is not identifiable in general can be seen from a simple example: $0.5 \textrm{Bin}(y|1, q_1) + 0.5 \textrm{Bin}(y|1, q_2) = 0.5 \textrm{Bin}(y|1, q_1 + \epsilon) + 0.5 \textrm{Bin}(y|1, q_2-\epsilon)$ for all valid $\epsilon>0$. That is also the reason why one cannot include the intercept parameter in the definition of $h$ in the proposition above.
        \item The proof technique of this proposition is to perform analytic continuation so that the identifiability equation satisfies for all $x\in \Rbb$ then we can examine the limits $x\to \pm \infty$. Extending this proof technique mixture of more components (more than 2) is generally more challenging because several components can have the same limit as $x\to \pm \infty$. We once more highlight the usefulness of Theorem~\ref{thm:identifiable-equivalent} and Theorem~\ref{thm:inverse-bounds} for providing the guarantee for a large class of identifiable mixture densities.
    \end{enumerate}
\end{remark}

\subsection{Minimax bound for mean-dispersion negative binomial regression mixtures}\label{subsec:Minimax-proof}

\begin{proof}[Proof of Theorem~\ref{thm:minimax-NB}]

\textbf{Step 1.} We will prove that for any $k\geq 2$ there exist $G_0\in \Ecal_{k}(\Theta)$ and a sequence $G_n\in \Ecal_{k}(\Theta)$ such that:
\begin{equation}\label{eq:minimax-NB-bound}
    W_r(G_n, G_0)\to 0, \quad \sup_{x} d_H(f_{G_n}(\cdot|x), f_{G_0}(\cdot|x)) = O(W_r^{2r}(G_n, G_0)).
\end{equation}
Intuitively, we want to choose $G_0$ to be in the pathological case described in equation~\eqref{eq:weak-ident-NB}. In particular, choose $G_0 = \sum_{j=1}^{k} p_j^0 \delta_{\beta_{j}^0, \phi_j^0}$ where $\beta_{j}^{0} = (\beta_{jt}^{0})_{t=0}^{p} \in \Rbb^{p+1}$ such that $\phi_{2}^0 = \phi_{1}^0 + 1, \beta_{20}^{0} = \beta_{10}^{0} + \log\left(\dfrac{\phi_2^0}{\phi_1^0}\right), \beta_{2i}^{0} = \beta_{1i}^0$ for all $i = 1,\dots, p$. Let $\mu_{1}^0 \equiv \mu_1^0(x) = \exp((\beta_{1}^0)^{\top} x), \mu_{2}^0 \equiv \mu_2^0(x) = \exp((\beta_{2}^0)^{\top} x)$, we have $\dfrac{\mu_1^0}{\phi_1^0} = \dfrac{\mu_2^0}{\phi_2^0}$. A combination of chain rule with equation~\eqref{eq:weak-ident-NB} yields:
\begin{align}\label{eq:weakly-ident-NB-proof}
\begin{split}
    \dfrac{\partial }{\partial \beta_{10}^0} \NB(y|\exp((\beta_1^0)^{\top} x), \phi_1^0) & = \dfrac{d \mu_{1}^0 }{d \beta_{10}^0} \dfrac{\partial }{\partial \mu_{1}^0} \NB(y|\mu_1^0, \phi_1^0)\bigg|_{\mu_1^0 = \exp((\beta_1^0)^{\top} x)} \\ 
    & \hspace{-3cm} = \mu_1^0\left( \dfrac{\phi_1^0}{\mu_1^0} \NB(y|\exp((\beta_2^0)^{\top} x), \phi_2^0) - \dfrac{\phi_1^0}{\mu_1^0} \NB(y|\exp((\beta_1^0)^{\top} x), \phi_1^0) \right)\\
    & \hspace{-3cm} = \phi_1^0 \NB(y|\exp((\beta_2^0)^{\top} x), \phi_2^0) - \phi_1^0 \NB(y|\exp((\beta_1^0)^{\top} x), \phi_1^0).
\end{split}
\end{align}
for all $x=[1, \bar{x}] \in \Rbb^{p+1}, y\in \Rbb$. Now, choose a sequence $G_n = \sum_{j=1}^{k} p_j^{n} \delta_{(\beta_{1j}^{n}, \phi_j^0)}$ such that $p_1^{n} = p_1^0 + \dfrac{p_1^0\phi_1^0}{n}, p_2^{n} = p_2^0 - \dfrac{p_1^0\phi_1^0}{n}, p_j^n = p_j^0$ for all $j\geq 3$; $\beta_{10}^{n} = \beta_{10}^{0} + \dfrac{1}{n}$, $\beta_{ji}^{n} = \beta_{ji}^0$ for all $(j, i)\neq (1, 0)$; and $\phi_{j}^{n} = \phi_{j}^{0}$ for all $j$. It can be checked that 
$$W_r^r(G, G_0) \asymp \dfrac{1}{n} + (p_1^0 - \phi_1^0/n)(\beta_{10}^{n} - \beta_{10}^{0})^{r}\asymp \dfrac{1}{n} =: \epsilon_n.$$ 
Meanwhile, using Taylor's expansion up to second order with integral remainder, we have
\begin{align*}
    f_{G_n}(y|x) - f_{G_0}(y|x) & = (p_1^{n} - p_1^0) \NB(y|\exp((\beta_1^0)^{\top} x), \phi_1^0) \\
    & \hspace{-1cm} + (p_2^{n} - p_2^0) \NB(y|\exp((\beta_2^0)^{\top} x), \phi_2^0)  \\
    & \hspace{-1cm} + p_1^0 (\NB(y|\exp((\beta_1^n)^{\top} x, \phi_1^0)) - \NB(y|\exp((\beta_1^0)^{\top} x), \phi_1^0)))\\
    & \hspace{-1cm} = \dfrac{1}{n}p_1^0\phi_1^0 \NB(y|\exp((\beta_1^0)^{\top} x), \phi_1^0) -  \dfrac{1}{n} p_1^0\phi_1^0 \NB(y|\exp((\beta_2^0)^{\top} x), \phi_2^0)\\
    & \hspace{-1cm} +  p_1^0  (\beta_{10}^{n} - \beta_{10}^{0}) \dfrac{\partial}{\partial (\beta_{10}^0)}\NB(y|\exp((\beta_1^0)^{\top} x), \phi_1^0) \\
    & \hspace{-1cm} +  p_1^0  \dfrac{(\beta_{10}^{n} - \beta_{10}^{0})^{2}}{2} \int_0^{1} dt (1-t) \dfrac{\partial^2}{(\partial \beta_{10}^0)^{2}}\NB(y|\mu_1^0 \exp(t\epsilon_n), \phi_1^0)\\
    & \hspace{-1cm} = p_1^0  \dfrac{\epsilon_n^{2}}{2} \int_0^{1} dt (1-t) \dfrac{\partial^2}{(\partial \beta_{10}^0)^{2}}\NB(y|\mu_1^0 \exp(t\epsilon_n), \phi_1^0),
\end{align*}
where the zero and first-order terms are canceled out due to the equation~\eqref{eq:weakly-ident-NB-proof}. Therefore, 
\begin{align*}
d_H^2(f_{G_n}(\cdot|x), f_{G_0}(\cdot|x)) & = \sum_{y=0}^{\infty} (f_{G_n}^{1/2}(y|x) - f_{G_0}^{1/2}(y|x))^2\\
& = \sum_{y=0}^{\infty} \dfrac{(f_{G_n}(y|x) - f_{G_0}(y|x))^2}{(f^{1/2}_{G_n}(y|x) + f^{1/2}_{G_0}(y|x))^2}\\
& \leq \sum_{y=0}^{\infty} \dfrac{(f_{G_n}(y|x) - f_{G_0}(y|x))^2}{(f^{1/2}_{G_0}(y|x))^2}\\
& \leq \sum_{y=0}^{\infty} \dfrac{(f_{G_n}(y|x) - f_{G_0}(y|x))^2}{p_1^0 \NB(y | \mu_1^0, \phi_1^0)}\\
& \leq p_1^0 \dfrac{\epsilon_n^4}{2} \sum_{y=0}^{\infty} \int_0^{1} dt \dfrac{\left((1-t) \frac{\partial^2}{(\partial \beta^0_{10})^2} \NB(y | \mu_1^0\exp(t\epsilon_n), \phi_1^0)\right)^2}{\NB(y | \mu_1^0, \phi_1^0)}\\
& = p_1^0 \dfrac{\epsilon_n^4}{2} \int_0^{1} dt (1-t)^2 \sum_{y=0}^{\infty}  \dfrac{\left(\frac{\partial^2}{(\partial \beta^0_{10})^2} \NB(y | \mu_1^0\exp(t\epsilon_n), \phi_1^0)\right)^2}{\NB(y | \mu_1^0, \phi_1^0)}\\
& \preccurlyeq \epsilon_n^4,
\end{align*}
uniformly in $x$, where the first two inequalities are due to the fact that $\NB$ is non-negative, the third inequality is because of Holder's inequality, the equality after that is because of Fubini theorem, and the last comparison is an application of Lemma~\ref{lem:aux-smoothness-NB} with $\epsilon$ chosen to be $t\epsilon_n$. 
Hence, 
$$\sup_{x} d_H(f_{G_n}(\cdot|x), f_{G_0}(\cdot|x)) = O(W_r^{2r}(G_n, G_0)), $$
as $G_n \xrightarrow{W_r} G_0$.

\textbf{Step 2} After having the limit above, the rest of this proof follows a standard proof technique for minimax lower bound (e.g., see Theorem 4.4. in \cite{Ho-Nguyen-EJS-16}). Indeed, for any sufficient small $\epsilon > 0$, there exist $G_0, G_0'\in \Ecal_{k_0}$ such that $W_r(G_0, G_0') = 2\epsilon$ and $\sup_{x}d_H(f_{G_0}, f_{G_0'}) \leq C\epsilon^{2r}$. Applying Lemma 1 in \cite{yu1997assouad}, we have the following inequality for any sequence of estimator $\hat{G}_n$ in $\Ecal_{k_0}$:
$$\sup_{G\in \{G_0, G_0'\}} \Ebb_{\Pbb_{G}} W_r(\hat{G}_n, G) \geq \epsilon (1 - \Ebb_X^n d_{TV}(f_{G_0}^n, f_{G_0'}^n)).$$
where $\Ebb_{X}^n := \Ebb_{\Pbb_X^n}$ and $f_G^n := \prod_{i=1}^{n}f_{G}(y_i|X_i)$. Besides, we have

\begin{align*}
    d_{TV}(f_{G_0}^{n}, f_{G_0'}^{n}) & \leq d_H(f_{G_0}^{n}, f_{G_0'}^{n}) \\
    & = \sqrt{1- (1-d_H^2(f_{G_0}, f_{G_0'}))^n}\\
    & \leq \sqrt{1 - (1-C^2\epsilon^{4r})^n}
\end{align*}
Selecting $\epsilon = (1/(C^2n))^{4r}$, we have $(1-C^2\epsilon^{4r})^n\to e^{-1}$ so that
$$\sup_{G\in \{G_0, G_0'\}} \Ebb_{\Pbb_{G}} W_r(\hat{G}_n, G) \succcurlyeq \epsilon \asymp 1/n^{4r} .$$
Hence,
$$\inf_{\hat{G}_n\in \Ecal_{k_0}}\sup_{G_0\in \Ecal_{k_0}} \Ebb_{\Pbb_{G}} W_r(\hat{G}_n, G) \succcurlyeq 1/n^{4r}.$$
\end{proof}

\begin{lemma}\label{lem:aux-smoothness-NB}
    Let $\mu = \exp(\beta^{\top} x)$ for $\beta = (\beta_{j})_{j=0}^{p}$ and $x = (1, \overline{x})\in \Rbb^{p+1}$, both range in compact subspaces of $\Rbb^{p+1}$ and $\phi > 0$. There exists $\epsilon_0 > 0$ such that
    $$\sup_x \sup_{\epsilon\in [0, \epsilon_0]} \sum_{y=0}^{\infty} \dfrac{\left(\frac{\partial^2}{(\partial \beta_{0})^2} \NB(y | e^{\epsilon} \mu, \phi)\right)^2}{\NB(y | \mu, \phi)} < \infty.$$
\end{lemma}
\begin{proof}
    Because both $x$ and $\beta$ range in compact sets, we have $\beta^{\top}x$ is bounded away from $\pm \infty$. Therefore, for sufficiently small $\epsilon_0$, we have $q_{\epsilon} := \dfrac{\exp(\beta^{\top}x + \epsilon)}{\exp(\beta^{\top}x + \epsilon) + \phi}$ is bounded away from 0 and 1 for all $\epsilon\in [0, \epsilon_0]$ and $x, \beta 
    $. Denote $c = \inf_{x, \beta, \epsilon} q_{\epsilon} > 0$ and $C = \sup_{x, \beta, \epsilon} q_{\epsilon} < 1$. Direct calculation gives:
    $$\dfrac{\partial}{\partial \beta_0} \NB(y | e^{\epsilon} \mu, \phi) = [q_{\epsilon} (\phi + y) - y] \NB(y | e^{\epsilon} \mu, \phi),$$
    \begin{align*}\dfrac{\partial^2}{(\partial \beta_0)^{2}} \NB(y | e^{\epsilon} \mu, \phi) & = \underbrace{[q_{\epsilon} (1 - q_{\epsilon}) (\phi + y) + (q_{\epsilon} (\phi + y) - y)^2]}_{P_{\epsilon}(y)} \NB(y | e^{\epsilon} \mu, \phi) 
    \end{align*}
    where $P_{\epsilon}(y)$ is a polynomial of the fourth order of $y$, and
    $$\dfrac{\NB(y | e^{\epsilon} \mu, \phi)}{\NB(y | \mu, \phi)} = \left(\dfrac{q_{\epsilon}}{q_0}\right)^{y} \left(\dfrac{1-q_{\epsilon}}{1-q_0}\right)^{\phi}.$$
    Hence,
    \begin{align*}
    \sum_{y=0}^{\infty} \dfrac{\left(\frac{\partial^2}{(\partial \beta_{0})^2} \NB(y | e^{\epsilon} \mu, \phi)\right)^2}{\NB(y | \mu, \phi)} & = \sum_{y} (P_{\epsilon}(y))^2 \left(\dfrac{q_{\epsilon}}{q_0}\right)^{2y} \left(\dfrac{1-q_{\epsilon}}{1-q_0}\right)^{2\phi} \NB(y | \mu, \phi)\\
    & = \left(\dfrac{1-q_{\epsilon}}{1-q_0}\right)^{2\phi} \Ebb_{Y\sim \NB(\mu, \phi)} (P_{\epsilon}(Y))^2 \left(\dfrac{q_{\epsilon}}{q_{0}}\right)^{2Y}.
    \end{align*}
    The first term is easily bounded by the comment on the range of $q_{\epsilon}$ in the beginning. To uniformly bound the expectation in the expression above, we will bound the expectation of $(P_{\epsilon}(Y))^{4}$ and $(q_{\epsilon} / q_0)^{4Y}$ separately, and then an application of Cauchy-Schwarz inequality yields the result. Because $P_{\epsilon}(Y)$ is a polynomial of $Y$ with bounded coefficient, we have $\Ebb (P_{\epsilon}(Y))^{4} < \infty$ uniformly in $x$. For the second term, recall that the moment-generating function of $\Ebb e^{Yt}$ exists and equals $(\frac{q_0}{1-(1 - q_0)e^{t}})^{\phi}$ for all $t < \log(1 / (1-q_0))$. Given an arbitrary $\delta > 0$, we can choose $\epsilon_0$ sufficient small so that $4\log(q_{\epsilon} / q_0) < 1+\delta < \log(1/(1-q_0))$ uniformly in $x$. So that $\Ebb (q_{\epsilon}/q_0)^{4Y} \leq \Ebb e^{(1+\delta)Y} = (\frac{q_0}{1-(1 - q_0)e^{1+\delta}})^{\phi}$, which is also uniformly bounded in $x$. These claims together conclude the lemma.
\end{proof}

\subsection{Strong identifiability for negative binomial regression mixtures}\label{subsec:mean-disperson-NB}
Theorem~\ref{thm:minimax-NB} and its proof indicate that the family of negative binomial distributions does not enjoy first order identifiability in general. However, we shall show that the set of parameter values where first order identitifiability fails to hold has Lebesgue measure zero. In particular, the following holds.
\begin{proposition}\label{prop:strong-ident-NB-1}
    Given $k$ distinct pairs $(\mu_1, \phi_1), \dots, (\mu_k, \phi_k) \in \mathbb{R}_{+}\times \mathbb{R}_{+}$ such that there does not exist two indices $i\neq j$ satisfying 
\begin{equation*}
    \begin{cases}
        \dfrac{\mu_i}{\phi_i} = \dfrac{\mu_j}{\phi_j} \\
        \phi_i = \phi_j + 1,
    \end{cases}
\end{equation*}
then the mixture of negative binomials $(\NB(\mu_i, \phi_i))_{i=1}^{k}$ is strongly identifiable in the first order.
\end{proposition}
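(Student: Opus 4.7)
The plan is an asymptotic peeling argument as $y \to \infty$ through the positive integers, in the spirit of the proof of Proposition~\ref{prop:identifiable-f}(d) but refined so as to treat $\partial_\mu \NB$ and $\partial_\phi \NB$ simultaneously. Suppose we have an identity
$$\sum_{i=1}^{k}\bigl[\alpha_i \NB(y|\mu_i,\phi_i)+\beta_i\,\partial_\mu\NB(y|\mu_i,\phi_i)+\gamma_i\,\partial_\phi\NB(y|\mu_i,\phi_i)\bigr]=0$$
holding for every $y\in\Nbb$, and we want to conclude $\alpha_i=\beta_i=\gamma_i=0$ for all $i$.

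First I would derive sharp large-$y$ expansions. Setting $r(\mu,\phi)=\mu/(\mu+\phi)$ and $A(\mu,\phi)=\Gamma(\phi)^{-1}(\phi/(\mu+\phi))^{\phi}>0$, and using $\Gamma(\phi+y)/y!=y^{\phi-1}(1+O(1/y))$ together with $\partial_{\mu}\log\NB=\phi(y-\mu)/[\mu(\mu+\phi)]$ and $\partial_{\phi}\log\NB=\psi(\phi+y)-\psi(\phi)+\log(\phi/(\mu+\phi))+1-(\phi+y)/(\mu+\phi)$ (together with $\psi(\phi+y)=\log y+O(1/y)$), one obtains
\begin{align*}
\NB(y|\mu,\phi)&=A\,r^{y}y^{\phi-1}+O(r^{y}y^{\phi-2}),\\
\partial_{\mu}\NB(y|\mu,\phi)&=\tfrac{A\phi}{\mu(\mu+\phi)}\,r^{y}y^{\phi}+O(r^{y}y^{\phi-1}),\\
\partial_{\phi}\NB(y|\mu,\phi)&=-\tfrac{A}{\mu+\phi}\,r^{y}y^{\phi}+A\,(\log y)\,r^{y}y^{\phi-1}+O(r^{y}y^{\phi-1}).
\end{align*}
The $\log y$ factor in the last expansion is the essential new feature: it cannot be produced by any $\NB$ or $\partial_\mu\NB$ term.

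Then I would partition $\{1,\ldots,k\}$ into groups $I_1,\ldots,I_m$ according to the common value of $r_i$, ordered so that $r_{I_1}>r_{I_2}>\cdots$. Within any group the shared value of $r$ is equivalent to a shared $\mu_i/\phi_i$, so the distinctness of the pairs $(\mu_i,\phi_i)$ forces the $\phi_i$'s within the group to be distinct. Dividing the identity by $r_{I_1}^{y}$ and letting $y\to\infty$ along integers annihilates the contributions of $I_2,I_3,\ldots$ exponentially, and forces the polynomial-log asymptotic expansion supported on the orders $\{y^{\phi_i},\,y^{\phi_i-1}\log y,\,y^{\phi_i-1}\}_{i\in I_1}$ to be $o(1)$. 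Because these three families of orders are linearly independent as $y\to\infty$ whenever no two coincide, I would peel off coefficients in decreasing order of magnitude: the largest order $y^{\phi^{*}}$ (where $\phi^{*}=\max_{i\in I_1}\phi_i$, attained by a unique index $i^{*}$) gives $\beta_{i^{*}}\phi_{i^{*}}/\mu_{i^{*}}=\gamma_{i^{*}}$; the companion $\log y$ term at order $y^{\phi^{*}-1}$ then forces $\gamma_{i^{*}}=0$, and hence $\beta_{i^{*}}=0$; the pure $y^{\phi^{*}-1}$ term gives $\alpha_{i^{*}}=0$. Iterating this through $I_1$ in decreasing order of $\phi_i$, then moving to $I_2$, $I_3$, and so on, recovers all vanishing coefficients.

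The main obstacle is verifying that the orders $\{y^{\phi_i},\,y^{\phi_i-1}\log y,\,y^{\phi_i-1}\}_{i\in I_\ell}$ do not collide within any group $I_\ell$, so that the peeling is well-defined. Across indices in a single group, the only possible collision is $y^{\phi_i}=y^{\phi_j-1}$, which requires $\phi_j=\phi_i+1$ together with $r_i=r_j$ (equivalently $\mu_i/\phi_i=\mu_j/\phi_j$); but this is precisely the pathological configuration of Eq.~\eqref{eq: Non-strong Nega}, forbidden by hypothesis. All other potential coincidences are ruled out either by the distinctness of the $\phi_i$'s within a group or by the $\log y$ factor separating $\partial_\phi \NB$ from the others. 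Under the stated non-pathology condition, the asymptotic expansions are therefore unambiguous term-by-term, and the peeling scheme yields $\alpha_i=\beta_i=\gamma_i=0$ for every $i$, establishing first-order strong identifiability.
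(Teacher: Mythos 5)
Your proposal is correct and follows essentially the same route as the paper's proof: an asymptotic peeling as $y\to\infty$ that first separates components by the common geometric rate $\mu/(\mu+\phi)$, then by the polynomial orders in $y$ governed by $\phi$ (the paper's ratios $P_y(\phi)/P_y(\phi')\asymp y^{\phi-\phi'}$ are your Gamma-ratio expansion), and uses the $\log y$ growth of $\psi(\phi+y)$ (the paper's $f_y(\phi)$) to isolate the $\partial_\phi$ coefficient, with the hypothesis excluding precisely the order collision $\phi_j=\phi_i+1$ at equal ratios. The only cosmetic difference is that the paper reparametrizes to $(q,\phi)$ and proceeds by explicit case-by-case divisions rather than a single ordered expansion.
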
 
\begin{proof}
We need to prove that if there exist $(a_i, b_i, c_i)_{i=1}^{k}$ such that 
\begin{equation}\label{mean-dispersion-NB}
    \sum_{i=1}^{k} a_i \NB(y|\mu_i, \phi_i) + b_i\dfrac{\partial }{\partial \mu} \NB(y|\mu_i, \phi_i) + c_i\dfrac{\partial }{\partial \phi}  \NB(y|\mu_i, \phi_i) = 0,
\end{equation}
for all $y\in \Nbb$, then $a_i = b_i = c_i = 0 \; \forall \, y = 1,\dots, k.$ To simplify the presentation, we will write the negative binomial in terms of probability-dispersion parameters, i.e., set $q = \mu / (\mu+\phi)$ (and $q_i = \mu_i / (\mu_i + \phi_i)$ for all $i$), then the negative binomial mass function becomes
\begin{equation*}
    f(y|q, \phi) = \dfrac{\Gamma(\phi + y)}{\Gamma(\phi) y!} q^{y} (1-q)^{\phi}, \quad \forall y\in \Nbb. 
\end{equation*}
Under this presentation, we have
\begin{equation*}
    \dfrac{\partial}{\partial \mu} \NB(y|\mu, \phi) = \dfrac{\partial q}{\partial \mu} \times \dfrac{\partial f(y|q, \phi)}{\partial q} = \dfrac{\phi}{(\mu+\phi)^2} \dfrac{\partial f(y|q, \phi)}{\partial q},
\end{equation*}
and
\begin{equation*}
    \dfrac{\partial}{\partial \phi} \NB(y|\mu, \phi) = \dfrac{\partial q}{\partial \phi} \times \dfrac{\partial f(y|q, \phi)}{\partial q} + \dfrac{\partial f(y|q, \phi)}{\partial \phi} = -\dfrac{\mu}{(\mu+\phi)^2} \dfrac{\partial f(y|q, \phi)}{\partial q} + \dfrac{\partial f(y|q, \phi)}{\partial \phi},
\end{equation*}
therefore, we can write Eq.~\eqref{mean-dispersion-NB} as 
\begin{equation}\label{prop-dispersion-NB}
    \sum_{i=1}^{k} \alpha_i \NB(y|q_i, \phi_i) + \beta_i\dfrac{\partial }{\partial q} \NB(y|q_i, \phi_i) + \gamma_i\dfrac{\partial }{\partial \phi}  \NB(y|q_i, \phi_i) = 0,
\end{equation}
where $\alpha_i = a_i, \beta_i = \dfrac{\phi_i}{(\mu_i + \phi_i)^2}b_i -\dfrac{\mu_i}{(\mu_i + \phi_i)^2} c_i$, and $\gamma_i = c_i$. If we can prove that $\alpha_i = \beta_i = \gamma_i = 0$, it immediately follows that $a_i=b_i=c_i=0$ for all $i=1,\dots, k$, and we get the identifiability result. We recall that pairs $(q_1, \phi_1), \dots, (q_k, \phi_k)$ are distinct (implied from the assumption $(\mu_1, \phi_1), \dots, (\mu_k, \phi_k)$ are distinct) and there does not exist indices $i\neq j$ such that
\begin{equation*}
    \begin{cases}
        q_i = q_j \\
        \phi_i = \phi_j + 1.
    \end{cases}
\end{equation*}
We can simplify Eq.~\eqref{prop-dispersion-NB} as
\begin{equation}\label{prop-dispersion-NB-simplified}
     \sum_{i=1}^{k} \left[\alpha_i + \beta_i \left(\dfrac{y}{q_i} - \dfrac{\phi_i}{1-q_i} \right) + \gamma_i\left(f_{y}(\phi_i) + \log(1-q_i) \right) \right] P_{y}(\theta_i) q_i^{y} (1-q_i)^{\phi_i} = 0,
\end{equation}
where
\begin{equation*}
    P_y(\phi) = \dfrac{\Gamma(\phi+y)}{\Gamma(\phi)} = \phi(\phi+1)\dots (\phi + y -1), \quad f_y(\phi) = \sum_{i=0}^{y-1}\dfrac{1}{\phi + i}.
\end{equation*}
The function $P_{y}(\phi)$ has the following properties:
\begin{enumerate}
    \item By Stirling's formula, $P_y(\phi) \asymp \dfrac{1}{\Gamma(\phi)}\sqrt{2\pi(\phi+y-1)} \left(\dfrac{\phi+y-1}{e}\right)^{\phi+y-1}$ as $y\to \infty$; 
    \item $P_{y}(\phi)$ is an increasing polynomial of $\phi$, and $\dfrac{P_y(\phi)}{P_y(\phi') \log(\phi')} \to \infty$ as $y\to \infty$ if $\phi > \phi'$;
    \item For all $\phi, \phi' \in \Rbb_{+}$, $0\leq q < q' \leq 1$, and polynomial $p(y)$ we have $p(y)\dfrac{P_y(\phi)}{P_y(\phi')} \left(\dfrac{q}{q'}\right)^{y}\to 0$ as $y\to \infty$;
    \item $f_y(\phi) \asymp \log(y)$ as $y\to \infty$.
\end{enumerate}
The second and third properties are consequences of the first one. Now, consider the subset of $(q_{1i})_{i=1}^{k_1}$ of $(q_i)_{i=1}^{k}$ which consists of all maximal elements, i.e. $q_{11} = q_{12} = \dots q_{1k_1} = \max_{1\leq i \leq k} q_{i}$. Dividing both sides of Eq.~\eqref{prop-dispersion-NB-simplified} by $q_{11}^{y}$ and letting $y\to \infty$, from the third property above, we obtain:

\begin{align}\label{prop-dispersion-NB-max-q}
     \sum_{i=1}^{k_1} \left[\left(\alpha_{1i} - \beta_{1i} \dfrac{\phi_{1i}}{1-q_{11}} + \gamma_{1i} \log(1-q_{11}) \right) +  \gamma_{1i}f_{y}(\phi_{1i}) + \beta_{1i} \dfrac{y}{q_{11}}  \right] \nonumber \\
     \times P_{y}(\phi_{1i}) (1-q_{11})^{\phi_{1i}} \to 0
\end{align}
as $y\to \infty$. Without loss of generality, assume $\phi_{11} > \phi_{12} > \dots > \phi_{1k_1}> 0$. Because $\phi_{11} \neq \phi_{12} + 1$, consider two cases: $\phi_{11} > \phi_{12}+1$ and $\phi_{11} < \phi_{12}+1$. For the first case, we notice that
\begin{equation*}
    \dfrac{P_y(\phi_{11})}{y P_y(\phi_{1i})} = \dfrac{P_y(\phi_{11})}{ P_y(\phi_{1i} + 1)}\dfrac{\phi_{1i}+y}{y} \to \infty, 
\end{equation*}
as $y\to \infty$. Hence, 
by dividing both sides of Eq.~\eqref{prop-dispersion-NB-max-q} by $P_{y}(\phi_{11})$, we have 
\begin{equation}\label{eq:NB-limit-11}
    \left(\alpha_{11} - \beta_{11} \dfrac{\phi_{1i}}{1-q_{11}} + \gamma_{11} \log(1-q_{11}) \right) + \gamma_{11}f_{y}(\phi_{11}) + \beta_{11} \dfrac{y}{q_{11}} \to 0 \quad (y\to \infty),
\end{equation}
which implies that $\beta_{11} = 0$, followed by $\gamma_{11} = 0$ and $\alpha_{11} = 0$. For the second case where $\phi_{11} < \phi_{12} + 1$. We have $P_y(\phi_{12}) / P_{y}(\phi_{11})\to 0$ and 
$P_y(\phi_{11}) / (y P_{y}(\phi_{12}))\to 0$. 
By dividing both sides of Eq.~\eqref{prop-dispersion-NB-max-q} by $y P_{y}(\phi_{11})$ and let $y\to \infty$, we have $\beta_{11} = 0$. Dividing both sides of Eq.~\eqref{prop-dispersion-NB-max-q} by $yP_{y}(\phi_{11})$, we have $\beta_{12} = 0$. Finally, dividing both sides of Eq.~\eqref{prop-dispersion-NB-max-q} by $P_{y}(\phi_{11})$ and letting $y\to \infty$, we also obtain that the limit~\eqref{eq:NB-limit-11} holds. Hence, in all cases we obtain $\alpha_{11} = \beta_{11} = \gamma_{11} = 0$. Continuing this argument, we have $\alpha_{1i} = \beta_{1i} = \gamma_{1i} = 0$ for all $i=1,\dots, k_1$, then $\alpha_{i} = \beta_{i} = \gamma_{i} = 0$ for all $i=1,\dots, k$.
\end{proof}

\begin{proposition}\label{prop:strong-ident-NB-2}
    Given $k$ distinct pairs $(\mu_1, \phi_1), \dots, (\mu_k, \phi_k) \in \mathbb{R}\times \mathbb{R}_{+}$ such that there does not exist two indices $i\neq j$ satisfying 
\begin{equation*}
    \begin{cases}
        \dfrac{\mu_i}{\phi_i} = \dfrac{\mu_j}{\phi_j} \\
        |\phi_i - \phi_j| \in \{1, 2\},
    \end{cases}
\end{equation*}
then the mixture of negative binomials $(\NB(\mu_i, \phi_i))_{i=1}^{k}$ is strongly identifiable in the second order.
\end{proposition}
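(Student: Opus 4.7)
The plan is to extend the $(q,\phi)$ re-parameterization argument used in the proof of Proposition~\ref{prop:strong-ident-NB-1} through to the second order. Setting $q=\mu/(\mu+\phi)$, the density factors as $f(y|q,\phi)=P_y(\phi)\, q^y(1-q)^\phi/y!$, and the hypothesis $\mu_i/\phi_i=\mu_j/\phi_j$ becomes simply $q_i=q_j$. Because the map $(\mu,\phi)\mapsto(q,\phi)$ is a diffeomorphism on $\Rbb_+\times\Rbb_+$, it induces, at each atom, an invertible linear bijection between the six-dimensional vector of mean--dispersion coefficients $(a_i,b_i,c_i,d_i,e_i,g_i)$ and the corresponding $(\alpha_i,\beta_i,\gamma_i,\delta_i,\epsilon_i,\zeta_i)$ in $(q,\phi)$ coordinates, so second-order strong identifiability in either parameter system is equivalent. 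Working in $(q,\phi)$, direct computation gives $\partial_q f/f = A_i(y):=y/q_i-\phi_i/(1-q_i)$, $\partial_\phi f/f = B_i(y):=f_y(\phi_i)+\log(1-q_i)$, and quadratic expressions in $A_i,B_i$ plus $O(1)$ corrections arising from $-y/q_i^2$, $f_y'(\phi_i)$, and $-1/(1-q_i)$. Multiplying the identifiability equation by $y!$ and factoring reduces it to
\begin{equation*}
\sum_{i=1}^{k} C_i(y)\, P_y(\phi_i)\, q_i^y\, (1-q_i)^{\phi_i}=0,\quad \forall y\in\Nbb,
\end{equation*}
where $C_i(y)$ has leading asymptotics, in decreasing order as $y\to\infty$, of $\delta_i y^2/q_i^2$, $\zeta_i y\log y/q_i$, $\beta_i y/q_i$, $\epsilon_i\log^2 y$, $\gamma_i\log y$, and $\alpha_i$.

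Next, I would isolate the atoms of maximal $q$-value. Dividing by $q_{\max}^y$ and letting $y\to\infty$ kills every atom except the subfamily $\{(q^*,\phi_{11}),\dots,(q^*,\phi_{1m})\}$, arranged so that $\phi_{11}>\cdots>\phi_{1m}$; by hypothesis, no two $\phi_{1i}$'s in this group differ by $1$ or $2$. Using the asymptotic $P_y(\phi)/P_y(\phi')\sim y^{\phi-\phi'}$, I would peel off the six coefficients attached to the top atom $(q^*,\phi_{11})$ in the order $\delta_{11},\zeta_{11},\beta_{11},\epsilon_{11},\gamma_{11},\alpha_{11}$, by successively dividing the displayed equation by $y^2 P_y(\phi_{11})$, $y\log y\cdot P_y(\phi_{11})$, $y\cdot P_y(\phi_{11})$, $\log^2 y\cdot P_y(\phi_{11})$, $\log y\cdot P_y(\phi_{11})$, and $P_y(\phi_{11})$, each time taking $y\to\infty$. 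At each scaling, the residuals from the $i\geq 2$ components have the form $O(y^{a_0-a}(\log y)^{b_0-b}\, y^{\phi_{1i}-\phi_{11}})$, which vanish cleanly as soon as $\phi_{11}-\phi_{1i}$ avoids the critical boundary values $\{1,2\}$. When a direct divide-and-limit fails for a particular $i$, the fix---mirroring the two-case split in the proof of Proposition~\ref{prop:strong-ident-NB-1}---is first to scale by $P_y(\phi_{1i})$ (possibly with an extra factor of $y$ or $\log y$) to eliminate the offending term, then return to the $P_y(\phi_{11})$ scaling.

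Once the six coefficients at $(q^*,\phi_{11})$ are shown to vanish, the same asymptotic cascade applies to $(q^*,\phi_{12})$, and inductively to every atom in the max-$q$ group. Removing this entire group and repeating the argument with the next-largest $q$-value, one eventually obtains $\alpha_i=\beta_i=\gamma_i=\delta_i=\epsilon_i=\zeta_i=0$ for all $i=1,\dots,k$. Pulling back along the invertible Jacobian of the reparameterization gives the vanishing of the original mean--dispersion coefficients, which is second-order strong identifiability.

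The main obstacle will be the case analysis. The first-order proof has a single critical difference $\phi_{11}-\phi_{1i}=1$, arising from the recurrence $\partial_\mu \NB(y|\mu,\phi)=(\phi/\mu)[\NB(y|\mu(\phi+1)/\phi,\phi+1)-\NB(y|\mu,\phi)]$; differentiating once more introduces shifts by $\phi+2$ and therefore a second critical difference $\phi_{11}-\phi_{1i}=2$. The exclusion hypothesis $|\phi_i-\phi_j|\notin\{1,2\}$ is exactly what guarantees that, for each of the six coefficients, some scaling of the form $y^a(\log y)^b\, P_y(\phi_{1j})$ successfully isolates it. Producing an exhaustive sub-case list for each coefficient and verifying that a valid scaling always exists under the hypothesis---without inadvertently creating a new near-boundary cancellation among the lower-order coefficients after $\delta_{11}$ and $\zeta_{11}$ have been killed---is the bookkeeping-heavy substance of the argument.
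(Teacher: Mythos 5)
Your plan is correct and follows essentially the same route as the paper's proof: reparameterize to $(q,\phi)$ so the offending condition becomes $q_i=q_j$, factor the identity into $\sum_i C_i(y)P_y(\phi_i)q_i^y(1-q_i)^{\phi_i}=0$, isolate the atoms of maximal $q$, and peel off the six coefficients per atom via the asymptotic hierarchy $y^2 \succ y\log y \succ y \succ \log^2 y \succ \log y \succ 1$ combined with $P_y(\phi)/P_y(\phi')\sim y^{\phi-\phi'}$, with the hypothesis $|\phi_i-\phi_j|\notin\{1,2\}$ ruling out exact ties between, e.g., $y^2P_y(\phi_{1i})$ and $P_y(\phi_{11})$. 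The paper executes the interleaved case split you flag as "bookkeeping" via a three-way division on whether $\phi_{11}-\phi_{12}$ lies in $(0,1)$, $(1,2)$, or $(2,\infty)$, but your description of how the scalings must alternate between components when a gap is small matches what that case analysis actually does.
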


\begin{proof}
Using the same transformation $q_i = \mu_i/(\mu_i+\phi_i)$ as in the proof of Proposition~\ref{prop:strong-ident-NB-1}, we only need to prove that if there exist $(a_i, b_i, c_i, d_i, e_i, f_i)_{i=1}^{k}$ such that for all $y\in \Nbb$:
\begin{align}\label{prop-dispersion-NB2}
    \sum_{i=1}^{k} a_i \NB(y|q_i, \phi_i) + b_i\dfrac{\partial }{\partial q} \NB(y|q_i, \phi_i) + c_i\dfrac{\partial }{\partial \phi}  \NB(y|q_i, \phi_i) \nonumber\\
    d_i\dfrac{\partial^2 }{\partial q^2} \NB(y|q_i, \phi_i) + e_i\dfrac{\partial^2 }{\partial q \partial \phi} \NB(y|q_i, \phi_i) + f_i\dfrac{\partial^2 }{\partial \phi^2} \NB(y|q_i, \phi_i) = 0,
\end{align}
then $a_i = b_i = c_i = d_i = e_i = f_i = 0$ for all $i = 1,\dots, k$. We recall that pairs $(q_1, \phi_1), \dots, (q_k, \phi_k)$ are distinct (implied from the assumption $(\mu_1, \phi_1), \dots, (\mu_k, \phi_k)$ are distinct) and there does not exist indices $i\neq j$ such that
\begin{equation*}
    \begin{cases}
        q_i = q_j \\
        |\phi_i- \phi_j| \in \{1, 2\}.
    \end{cases}
\end{equation*}
Taking all the derivatives and rewrite Eq.~\eqref{prop-dispersion-NB2} as
\begin{align}\label{prop-dispersion-NB2-simplified}
     \sum_{i=1}^{k} \bigg[\dfrac{d_i}{q_i^2} y^2 + \left(2d_i \dfrac{\phi_i}{q_i(1-q_i)} - \dfrac{d_i}{q_i^2} + \dfrac{b_i}{q_i}\right)y + \dfrac{e_i}{q_i} f_y(\phi_i) y + f_i f_y^2(\phi_i) \nonumber \\ 
     + \left(2f_i \log(1-q_i) + c_i + e_i \dfrac{\phi_i}{1-q_i} \right) f_y(\theta_i) + C_i(y) \bigg] P_{y}(\phi_i) q_i^{y} (1-q_i)^{\phi_i} = 0,
\end{align}
where
\begin{equation*}
    P_y(\phi) = \dfrac{\Gamma(\phi+y)}{\Gamma(\phi)} = \phi(\phi+1)\dots (\phi + y -1), \quad f_y(\phi) = \sum_{i=0}^{y-1}\dfrac{1}{\phi + i}, 
\end{equation*}
and 
\begin{align*}
    C_i(y) & = a_i + b_i\dfrac{\phi_i}{1-q_i} + c_i \log(1-q_i) + d_i\dfrac{\phi_i^2+\phi_i}{(1-q_i)^2} \\ 
    & + e_i\left( \dfrac{\phi_i}{1-q_i} \log(q_i) + \dfrac{1}{1-q_i}\right) + f_i(\log(1-q_i)^2 + f_y'(\phi_i)).
\end{align*}
Recall some facts as follows:
\begin{enumerate}
    \item By Stirling's formula, $P_y(\phi) \asymp \dfrac{1}{\Gamma(\phi)}\sqrt{2\pi(\phi+y-1)} \left(\dfrac{\phi+y-1}{e}\right)^{\phi+y-1}$ as $y\to \infty$; 
    \item $P_{y}(\phi)$ is an increasing polynomial of $\phi$, and $\dfrac{P_y(\phi)}{P_y(\phi')\log(\phi')} \to \infty$ as $y\to \infty$ if $\phi > \phi'$;
    \item For all $\phi, \phi' \in \Rbb_{+}$, $0\leq q < q' \leq 1$, and polynomial $p(y)$ we have $p(y)\dfrac{P_y(\phi)}{P_y(\phi')} \left(\dfrac{q}{q'}\right)^{y}\to 0$ as $y\to \infty$;
    \item $f_y(\phi) \asymp \log(y)$ as $y\to \infty$;
    \item $f_y'(\phi_i) = -\sum_{j=0}^{y} \dfrac{1}{(\phi_i + j)^2} \in [-\pi^2/6 - 1/\phi_i^2, 0]$ for all $\phi_i>0, y\in \Nbb$.
\end{enumerate}
Now, consider the subset of $(q_{1i})_{i=1}^{k_1}$ of $(q_i)_{i=1}^{k}$ which consists of all maximal elements, i.e. $q_{11} = q_{12} = \dots q_{1k_1} = \max_{1\leq i \leq k} q_{i}$, then by dividing both sides of Eq.~\eqref{prop-dispersion-NB2-simplified} by $q_{11}^{y}$ and let $y\to \infty$, from the third property above, we obtain:
\begin{align}\label{prop-dispersion-NB2-max-q}
     \sum_{i=1}^{k_1} \bigg[\dfrac{d_{1i}}{q_{1}^2} y^2 + \left(2d_{1i} \dfrac{\phi_{1i}}{q_1(1-q_1)} - \dfrac{d_{1i}}{q_1^2} + \dfrac{b_{1i}}{q_1}\right)y + \dfrac{e_{1i}}{q_1} f_y(\phi_{1i}) y + f_{1i} f_y^2(\phi_{1i}) \nonumber \\ 
     + \left(2f_{1i} \log(1-q_1) + c_{1i} + e_{1i} \dfrac{\phi_{1i}}{1-q_1} \right) f_y(\phi_{1i}) + C_i(y) \bigg] P_{y}(\phi_{1i}) (1-q_1)^{\phi_{1i}} = 0,
\end{align}
as $y\to \infty$. Without loss of generality, assume $\phi_{11} > \phi_{12} > \dots > \phi_{1k_1}> 0$. Because $|\phi_{11}- \phi_{12}| \neq 1, 2$, there are three cases: 
$$\begin{cases}
    \phi_{11} > \phi_{12} + 2,\\
    \phi_{12} + 2 > \phi_{11} > \phi_{12} + 1,\\
    \phi_{12} + 1 > \phi_{11} > \phi_{12}.
\end{cases}$$
For the first case, note that
\begin{equation*}
    \dfrac{P_y(\phi_{11})}{y^2 P_y(\phi_{1i})} = \dfrac{P_y(\phi_{11})}{P_y(\phi_{1i} + 2)}\dfrac{(\phi_{1i}+y+1)(\phi_{1i}+y)}{y^2} \to \infty, 
\end{equation*}
as $y\to \infty$. Hence, 
by dividing both sides of Eq.~\eqref{prop-dispersion-NB2-max-q} by $P_{y}(\phi_{11})$, we have 
\begin{align}\label{eq:NB2-limit-11}
    \dfrac{d_{11}}{q_{1}^2} y^2 + \left(2d_{11} \dfrac{\phi_{1i}}{q_1(1-q_1)} - \dfrac{d_{11}}{q_1^2} + \dfrac{b_{11}}{q_1}\right)y + \dfrac{e_{11}}{q_1} f_y(\phi_{11}) y + f_{11} f_y^2(\phi_{11})\nonumber\\
    +\left(2f_{11} \log(1-q_1) + c_{11} + e_{11} \dfrac{\phi_{11}}{1-q_1} \right) f_y(\phi_{11}) + C_{11}(y) \to 0 \quad (y\to \infty),
\end{align}
which, by considering the order of $y$, implies that $d_{11} = b_{11} = e_{11} = f_{11} = c_{11} = a_{11} = 0$, respectively. For the second case, where $\phi_{12} + 1\phi_{11} < \phi_{12} + 2$, we have 
$P_y(\phi_{11}) / (y^2 P_{y}(\phi_{12}))\to 0$ and $P_y(\phi_{12}) / (y P_{y}(\phi_{11}))\to 0$. 
By dividing both sides of Eq.~\eqref{prop-dispersion-NB2-max-q} by $ y^2P_{y}(\phi_{11})$ and let $y\to \infty$, we have $b_{11} = 0$. Then dividing two sides by $y f_{y}(\phi_{11}) P_{y}(\phi_{11})$ and $y P_{y}(\phi_{11})$, we have $d_{11} = e_{11} = 0$. Continuing in the same way with $y^2 P_{y}(\phi_{11})$, we have $b_{12} =0$. Now dividing both sides of Eq.~\eqref{prop-dispersion-NB2-max-q} by $P_{y}(\phi_{11})$ and letting $y\to \infty$, we obtain that the limit~\eqref{eq:NB-limit-11} holds, which once again entails that $ e_{11} = f_{11} = c_{11} = a_{11} = 0$. In the final case, dividing both sides of Eq.~\eqref{prop-dispersion-NB2-max-q} by $y^2P_{y}(\phi_{11}), y^2 P_{y}(\phi_{12}), y f_{y}(\phi_{11}) P_{y}(\phi_{11}), y P_{y}(\phi_{11}), y f_{y}(\phi_{12}) P_{y}(\phi_{12})$, and $y P_{y}(\phi_{12})$, respectively, we arrive at the same conclusion.
Hence, in all cases, we have $d_{11} = b_{11} = e_{11} = f_{11} = c_{11} = a_{11} = 0$. Applying repreatedly this argument, we have $a_{1i} = b_{1i} = c_{1i} = d_{1i} = e_{1i} = f_{1i} = 0$ for all $i=1,\dots, k_1$, then $a_{i} = b_{i} = c_{i} = d_{i} = e_{i} = f_{1i} = 0$ for all $i=1,\dots, k$.
\end{proof}

\paragraph{Implication in negative binomial regression mixtures.} From the argument above, we can see that the family of 
binomial regression mixture model is strongly identifiable in the first order if we adjust the assumption (A4.) as follows:
\begin{itemize}
\item[(A4'.)] For every set of $k+1$ distinct elements $(\theta_{11}, \theta_{21}),..., (\theta_{1(k+1)}, \theta_{2(k+1)})\in \Theta_1 \times \Theta_2$, there exists a subset $A \subset \mathcal{X}$, $\mathbb{P}_X(A)>0$ such that $$\left(h_1(x,\theta_{11}), h_2(x,\theta_{21})\right),..., \left(h_1(x,\theta_{1(k+1)}), h_2(x,\theta_{2(k+1)})\right)$$ 
are distinct and $|h_2(x,\theta_{2i}) - h_2(x,\theta_{2j})|\neq 1 \,(\forall\, i, j)$ for every $x \in A$.
\end{itemize}
Similarly, the second-order strong identifiability condition is satisfied if we adjust assumption (A4.) as:
\begin{itemize}
\item[(A4''.)] For every set of $k+1$ distinct elements $(\theta_{11}, \theta_{21}),..., (\theta_{1(k+1)}, \theta_{2(k+1)})\in \Theta_1 \times \Theta_2$, there exists a subset $A \subset \mathcal{X}$, $\mathbb{P}_X(A)>0$ such that $$\left(h_1(x,\theta_{11}), h_2(x,\theta_{21})\right),..., \left(h_1(x,\theta_{1(k+1)}), h_2(x,\theta_{2(k+1)})\right)$$  
are distinct and $|h_2(x,\theta_{2i}) - h_2(x,\theta_{2j})|\not \in \{1, 2\} \,(\forall\, i, j)$ for every $x \in A$.
\end{itemize}



\section{Convergence rates for conditional densities via MLE}\label{sec:conv-rate-m-estimator-theory}
We present in this section a proof of Theorem \ref{thm:density_estimation_rate}, which provides general convergence rates of conditional densities estimation.
The proof technique follows a general framework of M-estimation theory \cite{Vandegeer,gine2021mathematical}, with a suitable adaptation for handling conditional density functions. Assume that we have $n$ i.i.d. observations $(x_1, y_1), \dots, (x_n, y_n)$, where $x_i \overset{i.i.d.}{\sim} \Pbb_X$ and $y_i | x_i \sim f_{0}(y|x), i = 1,\dots, n$, for $f_0 \in \Fcal$ being some family of conditional densities of $y$ given $x$ (commonly dominated by $\nu$). Assume that there exists
\begin{equation*}
    \widehat{f}_n \in \argmax_{f \in \Fcal} \sum_{i=1}^{n}\log f(y_i|x_i),
\end{equation*}
Set
\begin{equation*}
    \overline{\Fcal} = \{((f + f_0)/2) : f \in \Fcal \}, \quad 
    \overline{\Fcal}^{1/2} = \{ \overline{f}^{1/2} : \overline{f} \in \overline{\Fcal} \},
\end{equation*}
and denote the (expected) Hellinger ball centered around $f_{0}$ by
\begin{equation*}
    \overline{\Fcal}^{1/2}(\delta) = \{\overline{f}^{1/2}\in \overline{\Fcal}^{1/2}(\Theta) : \overline{d}_{H}(\overline{f}, f_{0})\leq \delta\}.
\end{equation*}
The size of this set is characterized by the bracket entropy integral
\begin{equation}
    \mathcal{J}(\delta) := \int_{\delta^2/2^{13}}^{\delta} H_B^{1/2}(u, \overline{\Fcal}^{1/2}(\delta), L_2(\Pbb_X\times \nu)) du \vee \delta, 
\end{equation}
where $H_B(u, \Fcal, L_2(\Pbb_X\times \nu)) = \log N_B(u, \Fcal, L_2(\Pbb_X\times \nu))$, and $N_B(u, \Fcal, L_2(\Pbb_X\times \nu))$ is the minimal number of pairs $(f_{j}^{L}, f_{j}^{U})_{j}$ such that for every $f\in \Fcal$, there exists $j$ to have $f_{j}^{L}\leq f\leq f_{j}^{U}$ and $\norm{ f_{j}^{U}-f_{j}^{L}}_{L^2}\leq \epsilon$. 
Define $d\Pbb_{0}(x, y) = d\Pbb_X(x) \times f_{0}(y|x) d\nu(y)$ be the true joint distribution of $(x, y)$. Denote by $\Pbb_n$ the empirical distribution of $(x, y)$, i.e.,  $\Pbb_n = \dfrac{1}{n} \sum_{i=1}^{n} \delta_{(x_i, y_i)}$ and $g_{f} = \dfrac{1}{2}\log \dfrac{f + f_{0}}{2 f_{0}} 1(f_{0} > 0)$. For a probability distribution $\Pbb$ and a function $g$, sometimes we write $\Pbb g$ for $\int g d\Pbb$. We start with a basic inequality that links the quality of the conditional density estimate to the associated empirical process:

\begin{lemma}\label{lem:basic-ineq} With the notations defined as above, we have
    \begin{equation}
        \dfrac{1}{2}\overline{d}_{H}^2 \left(\dfrac{\widehat{f}_n + f_{0}}{2}, f_{0}\right) \leq  (\Pbb_n - \Pbb_{0}) g_{\widehat{f}_n} .
    \end{equation}
\end{lemma}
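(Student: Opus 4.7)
The plan is to prove this via the classical two-step MLE basic inequality argument, adapted to the conditional density setting where expectations are taken under the joint law $d\Pbb_0(x,y) = d\Pbb_X(x) f_0(y|x) d\nu(y)$. The strategy is to separately show (i) $\Pbb_n g_{\widehat{f}_n} \geq 0$ (which uses the defining property of the MLE together with concavity of $\log$), and (ii) $-\Pbb_0 g_{\widehat{f}_n} \geq \overline{d}_H^2\!\left(\tfrac{\widehat{f}_n+f_0}{2}, f_0\right)$ (which uses the elementary inequality $\tfrac{1}{2}\log u \leq \sqrt{u}-1$). Adding the two yields the bound with constant $1$ in front, which in particular implies the asserted bound with constant $\tfrac{1}{2}$.

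For step (i), start from the MLE inequality $\sum_{i=1}^n \log \widehat{f}_n(y_i|x_i) \geq \sum_{i=1}^n \log f_0(y_i|x_i)$, so that $\sum_i \log(\widehat{f}_n/f_0)(y_i|x_i) \geq 0$ on the (a.s.) set where $f_0(y_i|x_i)>0$. Then use concavity of $\log$ applied to the average of $\widehat{f}_n/f_0$ and $1$:
\begin{equation*}
\log\frac{\widehat{f}_n+f_0}{2f_0} = \log\frac{(\widehat{f}_n/f_0)+1}{2} \;\geq\; \tfrac{1}{2}\log(\widehat{f}_n/f_0),
\end{equation*}
so $g_{\widehat{f}_n}(x,y) = \tfrac{1}{2}\log\tfrac{\widehat{f}_n+f_0}{2f_0}\,\mathbf{1}(f_0>0) \geq \tfrac{1}{4}\log(\widehat{f}_n/f_0)\,\mathbf{1}(f_0>0)$, and averaging gives $\Pbb_n g_{\widehat{f}_n} \geq 0$.

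For step (ii), use $\tfrac{1}{2}\log u \leq \sqrt{u}-1$ for all $u>0$ pointwise with $u = (\widehat{f}_n+f_0)/(2f_0)$, and integrate against $d\Pbb_0$:
\begin{equation*}
\Pbb_0 g_{\widehat{f}_n} \;\leq\; \int \!\!\int \Big(\sqrt{\tfrac{\widehat{f}_n(y|x)+f_0(y|x)}{2 f_0(y|x)}}-1\Big) f_0(y|x)\, d\nu(y)\,d\Pbb_X(x) \;=\; \rho\!\left(\tfrac{\widehat{f}_n+f_0}{2},\, f_0\right) - 1,
\end{equation*}
where $\rho$ denotes the (expected) Hellinger affinity $\int\!\!\int \sqrt{\bar f\, f_0}\, d\nu\, d\Pbb_X$. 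Since $\overline{d}_H^2(\bar f, f_0) = 1 - \rho(\bar f, f_0)$ when both densities integrate to $1$ in $y$ for $\Pbb_X$-a.e. $x$, this gives $\Pbb_0 g_{\widehat{f}_n} \leq -\overline{d}_H^2(\tfrac{\widehat{f}_n+f_0}{2}, f_0)$. Combining (i) and (ii) yields $(\Pbb_n - \Pbb_0)g_{\widehat{f}_n} \geq \overline{d}_H^2 \geq \tfrac{1}{2}\overline{d}_H^2$.

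The proof is short and largely mechanical; the only subtle point is handling the indicator $\mathbf{1}(f_0>0)$ correctly. Under $\Pbb_0$ the set $\{f_0=0\}$ has measure zero, so it can be ignored in step (ii); under $\Pbb_n$ it happens with probability one that no sample lies in $\{f_0=0\}$, so the MLE comparison in step (i) is well-defined. The ``main obstacle,'' if any, is just ensuring that the convexity argument in step (i) is invoked with the correct orientation (i.e., averaging $\widehat{f}_n/f_0$ with $1$ rather than with $0$, so that $\log$ of the average makes sense and dominates $\tfrac{1}{2}\log(\widehat{f}_n/f_0)$).
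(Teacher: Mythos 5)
Your proof is correct and follows essentially the same route as the paper's: the empirical part $\Pbb_n g_{\widehat{f}_n}\geq 0$ via the MLE property combined with concavity of $\log$ (i.e.\ $g_{\widehat f_n}\geq\tfrac14\log(\widehat f_n/f_0)\mathbf 1(f_0>0)$), and the population part $-\Pbb_0 g_{\widehat f_n}\geq \overline{d}_H^2(\tfrac{\widehat f_n+f_0}{2},f_0)$, which the paper obtains by citing the Hellinger--KL inequality $d_H^2\leq K$ while you derive it inline from the pointwise bound $\tfrac12\log u\leq\sqrt u-1$ (the standard proof of that very inequality). The only difference of substance is that your version yields the sharper constant $1$ in place of $\tfrac12$, which of course implies the stated bound.
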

\begin{proof}
    Due to the concavity of logarithm,
    \begin{equation*}
         \dfrac{1}{2}\log \dfrac{f + f_{0}}{2 f_{0}} 1_{(f_{0} > 0)} \geq  \dfrac{1}{2}\log \dfrac{f}{f_{0}} 1_{(f_{0} > 0)}.
    \end{equation*}
  Combining the above with the fact that $\widehat{f}_n$ is the maximum conditional likelihood estimate to obtain
    \begin{align*}
        0 & \leq \int_{f_{0} > 0} \dfrac{1}{4} \log \dfrac{\widehat{f}_n}{f_{0}} \Pbb_n \leq \int_{f_{0}>0} \dfrac{1}{2} \log \dfrac{\widehat{f}_n + f_{0}}{2f_{0}} d\Pbb_n \\
        & = \int_{f_{0} > 0} g_{\widehat{f}_n} d(\Pbb_n - \Pbb_{0}) + \int_{f_{0} > 0} \dfrac{1}{2} \log \dfrac{\widehat{f}_n + f_{0}}{2 f_{0}} d\Pbb_{0}. 
    \end{align*}
  Equivalently,
    \begin{equation*}
        \int_{f_{0} > 0} \dfrac{1}{2} \log \dfrac{2 f_{0}}{\widehat{f}_n + f_{0}} d\Pbb_{0}\leq \int_{f_{0} > 0} g_{\widehat{f}_n} d(\Pbb_n - \Pbb_{0}).
    \end{equation*}
By the inequality $d_H^2(\frac{1}{2} (\widehat{f}_n(\cdot|x) + f_{0}(\cdot|x))), f_{0}(\cdot|x)) \leq K( f_{0}(\cdot|x) \| \frac{1}{2}(\widehat{f}_n(\cdot|x) + f_{0}(\cdot|x))$ for almost all $x$, we can take the expectation with respect to $\Pbb_X$ to arrive at
    \begin{equation*}
        \dfrac{1}{2}\overline{d}^2_{H} \left(\dfrac{\widehat{f}_n + f_{0}}{2}, f_{0}\right) \leq \int_{f_{0} > 0} \dfrac{1}{2} \log \dfrac{2 f_{0}}{\widehat{f}_n + f_{0}} d\Pbb_{0} \leq \int g_{\widehat{f}_n} d(\Pbb_n - \Pbb_{0}).
    \end{equation*}
\end{proof}
For each $f\in \Fcal$, define the squared "Bernstein norm":
\begin{equation}
    \rho^2(f) := 2 \Pbb_0(e^{|f(X)|} - |f(X)| - 1).
\end{equation}
Let $H_{B1}(\epsilon, \Fcal, \Pbb_X\times \nu)$ be the bracketing number with respect to Bernstein norm of $\Fcal$ (cf. Definition 3.5.20. in \cite{gine2021mathematical}).
We shall make use of the concentration behavior of empirical processes associated with the class $\Fcal$ by the following theorem, which is essentially Theorem 3.5.21. in \cite{gine2021mathematical} adapted to our setting.

\begin{theorem}\label{thm:bound-tail-uniform}
    Let $\Fcal$ be a class of measurable functions such that $\rho(f) \leq R$ for all $f\in \Fcal$. Given $C_1 < \infty$, for all $C$ sufficiently large and $C_0$ satisfying
    \begin{equation}
        C_0^2 \geq C^2(C+1),
    \end{equation}
    and for $n\in \mathbb{N}$ and $t>0$ satisfying
    \begin{equation}
        C_0\left(R \vee \int_{t/(2^6\sqrt{n})}^{R} \sqrt{H_{B1}(\epsilon, \Fcal, \Pbb_X\times \nu)} \right) \leq t \leq \sqrt{n}((8R) \wedge (C_1 R^2 / K)),
    \end{equation}
    we have
    \begin{equation}
        \Pbb_0 (\sqrt{n} \sup_{f\in \Fcal} |(\Pbb_n - \Pbb_0)f|\geq t) \leq C \exp\left(-\dfrac{t^2}{C^2(C_1+1)R^2}\right).
    \end{equation}
\end{theorem}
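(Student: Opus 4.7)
The plan is to treat this as a direct application of the classical chaining argument with Bernstein-norm brackets, since the statement is explicitly an adaptation of Theorem 3.5.21 in \cite{gine2021mathematical}. The Bernstein norm $\rho$ is tailored for this purpose: if $\rho(f) \leq R$, then $\Pbb_0\, e^{\lambda(f-\Pbb_0 f)} \leq \exp(\lambda^2 R^2/(2(1-\lambda R)))$ for $\lambda \in [0,1/R)$, which yields the single-function Bernstein inequality $\Pbb_0(|(\Pbb_n - \Pbb_0)f| \geq t/\sqrt{n}) \leq 2\exp(-t^2/(2R^2 + 2Rt/\sqrt{n}))$. The point of passing to the chaining argument is to upgrade this pointwise bound to a uniform one at almost no loss in the exponent, provided the class is not too large in bracketing entropy.

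First I would set up a dyadic bracketing chain: fix $R_j = R \cdot 2^{-j}$ for $j = 0, 1, \ldots, J$, with $J$ the largest integer satisfying $R_J \geq t/(2^6\sqrt n)$, and pick minimal Bernstein-bracketing covers $\mathcal{B}_j$ of $\Fcal$ at scale $R_j$, of cardinality $N_j = N_{B1}(R_j, \Fcal, \Pbb_X \times \nu)$. For each $f \in \Fcal$ let $\pi_j(f)$ be a measurable choice of representative from the bracket of $\mathcal{B}_j$ containing $f$. Decompose
\begin{equation*}
(\Pbb_n - \Pbb_0)f = (\Pbb_n - \Pbb_0)\pi_0(f) + \sum_{j=1}^{J}(\Pbb_n - \Pbb_0)(\pi_j(f) - \pi_{j-1}(f)) + (\Pbb_n - \Pbb_0)(f - \pi_J(f)).
\end{equation*}
Each increment $\pi_j(f) - \pi_{j-1}(f)$ has Bernstein norm $\lesssim R_{j-1}$, and at level $j$ there are at most $N_{j-1}\, N_j \leq N_j^2$ such distinct increments; the residual $f - \pi_J(f)$ is dominated in Bernstein norm by $R_J = O(t/\sqrt n)$ and can be handled by a deterministic bracket-width bound.

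The quantitative heart of the argument is the standard allocation step: assign a portion $t_j$ of the total budget $t$ to level $j$, apply Bernstein plus a union bound over the $N_j^2$ increments, and optimize. Setting $t_j \asymp R_{j-1}\sqrt{H_{B1}(R_j)} + \Delta_j$ with a small tail $\Delta_j$, the entropy-integral hypothesis $C_0 \int_{t/(2^6\sqrt n)}^{R}\sqrt{H_{B1}(\epsilon)}\, d\epsilon \leq t$ guarantees $\sum_j t_j \leq t/2$, while summing the Bernstein tails across $j$ produces an exponent $-\sum_j t_j^2/(C R_{j-1}^2 + C R_{j-1} t_j/\sqrt n)$. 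The upper hypothesis $t \leq \sqrt n\,(8R \wedge C_1 R^2 / K)$ keeps us in the variance-dominated regime where $R_{j-1} t_j/\sqrt n \lesssim R_{j-1}^2$, collapsing the mixed sub-Gaussian/sub-exponential tail to the single sub-Gaussian form $\exp(-t^2/(C^2(C_1+1)R^2))$ advertised in the statement. The main obstacle will be the constant bookkeeping: choosing $C_0$ and $C$ so that $C_0^2 \geq C^2(C+1)$ absorbs both the union-bound overhead $\log(N_j^2) = 2 H_{B1}(R_j)$ and the $O(\log n)$ chaining depth $J$, while simultaneously matching the Bernstein tail coefficient with the entropy weights so that the final exponent is genuinely proportional to $t^2/R^2$ rather than a weaker hybrid form. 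Once this calibration is carried out exactly as in the proof of Theorem 3.5.21 of \cite{gine2021mathematical}, the conclusion follows.
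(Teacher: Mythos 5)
The paper does not actually prove this theorem: it states it as ``essentially Theorem 3.5.21 in \cite{gine2021mathematical} adapted to our setting'' and uses it as a black box, so there is no in-paper argument to match yours against. Your proposal is a sketch of the internal mechanics of that cited result, and at the level of architecture it is the right one --- Bernstein's inequality from the bound $\rho(f)\leq R$, dyadic bracketing nets down to scale $t/(2^{6}\sqrt{n})$, a telescoping chain with at most $N_{j-1}N_j$ links per level, budget allocation $t_j \asymp R_{j-1}\sqrt{H_{B1}(R_j)}$ controlled by the entropy-integral hypothesis, and the upper restriction $t\leq \sqrt{n}(8R\wedge C_1R^2/K)$ to stay in the variance-dominated regime so the tail is genuinely sub-Gaussian in $t/R$. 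Since you ultimately defer the calibration to the proof of Theorem 3.5.21 itself, your route and the paper's are in the end the same citation, with yours adding an expository outline.

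The one substantive under-specification is the treatment of the chaining links and the final residual. With Bernstein-norm brackets the increments $\pi_j(f)-\pi_{j-1}(f)$ and the remainder $f-\pi_J(f)$ are not uniformly bounded, and a naive union bound over links does not close: the standard proof (Ossiander-type chaining as in Giné--Nickl or van der Vaart--Wellner Lemma 3.4.2) introduces an adaptive truncation, splitting each function according to indicator events where the bracket widths $\Delta_j f$ exceed thresholds $a_j$, and handles the truncated pieces by a deterministic bound on $\Pbb_0(\Delta_j f\, \mathbf{1}\{\Delta_j f> a_j\})$ obtained from the exponential moment implicit in $\rho$. Your sentence that the residual ``can be handled by a deterministic bracket-width bound'' gestures at this but does not supply it, and the same device is needed at the intermediate levels, not only at level $J$. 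This is where the constants $C_0^2\geq C^2(C+1)$ are actually consumed, so if you were to write the proof out rather than cite it, that truncation step is the part that must be added for the argument to be complete.
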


Now we are ready to prove Theorem~\ref{thm:density_estimation_rate}.
\begin{proof}[Proof of Theorem~\ref{thm:density_estimation_rate}] 
We have 
\begin{align*}
    & \Pbb_0(\overline{d}_H(\widehat{f}_n, f_{0})\geq \delta)\\
    & \leq \Pbb\bigg(\sqrt{n} (\Pbb_n - \Pbb_{0})(g_{\widehat{f}_n}) - \sqrt{n} \overline{d}_H^2\left(\dfrac{\widehat{f}_n + f_0}{2}, f_0 \right)\geq 0, \overline{d}_H^2\left(\dfrac{\widehat{f}_n + f_0}{2}, f_0 \right) \geq \delta^2 / C \bigg)\\
    & \leq \Pbb_0\left(\sup_{f:\overline{d}_H^2(\overline{f}, f_{0})\geq \delta^2/C} [\sqrt{n} (\Pbb_{n} - \Pbb_{0})(g_{f}) - \sqrt{n} \overline{d}_H^2 (\overline{f}, f_{0})] \geq 0 \right)\\
    & \leq \sum_{s=0}^{S} \Pbb_0\left(\sup_{f: 2^{s}\delta^2/C\leq \overline{d}_H^2(\overline{f}, f_{0})\leq 2^{s+1}\delta^2/C} |\sqrt{n} (\Pbb_{n} - \Pbb_{0})(g_{f})| \geq \sqrt{n}2^{s} \delta^2/C \right)\\
    &\leq \sum_{s=0}^{S} \Pbb_0\left(\sup_{f: \overline{d}_H^2(\overline{f}, f_{0})\leq 2^{s+1}\delta^2/C} |\sqrt{n} (\Pbb_{n} - \Pbb_{0})(g_{f})| \geq \sqrt{n}2^{s} \delta^2/C \right),
\end{align*}
where $S$ is a smallest number such that $2^S \delta^2 /C > 1$, as $\overline{d}_H(\overline{f}, f_0) \leq 1$. Now we will bound the tail probability of the empirical process 
\begin{equation*}
    \Pbb_0\left(\sup_{f: \overline{d}_H^2(\overline{f}, f_{0})\leq 2^{s+1}\delta^2/C} |\sqrt{n} (\Pbb_{n} - \Pbb_{0})(g_{f})| \geq \sqrt{n}2^{s} \delta^2/C \right)
\end{equation*}
by using Theorem~\ref{thm:bound-tail-uniform}. Indeed, since $p(x) = \dfrac{(e^{|x|} - |x| - 1)}{(e^{x}-1)^2}$ is a decreasing function, and $g_{f}\geq -(\log 2)/ 2$ for all $f$, 
\begin{equation*}
    \exp(|g_f|) - |g_f| - 1 \leq p(-(\log 2)/ 2) (\exp(g_f) - 1)^2 \leq  \left(\sqrt{\dfrac{f_0+f}{2f_0}} - 1 \right)^2.
\end{equation*}
Taking expectation with respect to $\Pbb_0$ both sides to obtain
\begin{equation*}
    \rho^2(g_f) \leq  2\overline{d}_H^2(\overline{f}, f_0)\leq  2^{s+2} \delta/C.
\end{equation*}
Applying Theorem~\ref{thm:bound-tail-uniform} with $R = 2^{s/2+1}\delta/C^{1/2}, t = \sqrt{n}2^{s}\delta^2/C$, we obtain
\begin{equation*}
    \Pbb_0\left(\sup_{f: \overline{d}_H^2(\overline{f}, f_{0})\leq 2^{s+1}\delta^2/C} |\sqrt{n} (\Pbb_{n} - \Pbb_{0})(g_{f})| \geq \sqrt{n}2^{s} \delta^2/C \right) \leq C' \exp\left(-\dfrac{2^{2s}n \delta^2}{C'} \right).
\end{equation*}
Hence,
\begin{equation*}
    \Pbb_0(\overline{d}_H(\widehat{f}_n, f_{0})\geq \delta) \leq \sum_{s=0}^{S} C' \exp\left(-\dfrac{2^{2s}n \delta^2}{C'} \right) \leq c \exp(-n\delta^2 / c).
\end{equation*}
\end{proof}

\section{Convergence rates of conditional densities via Bayesian estimation}\label{sec:posterior-contraction-rate-theory}

We present in this section a general theorem for the Bayesian posterior contraction behavior of conditional density functions that arise in the regression problem. 
The proof technique follows a general approach of Bayesian estimation theory \cite{ghosal2017fundamentals}, with a suitable adaptation for handling conditional density functions. Let us recall the setup.
Given i.i.d. pairs $(x_1, y_1), (x_2, y_2), \dots, (x_n, y_n)$ in $\Xcal \times \Ycal$ from the true generating model
\begin{align*}
    y_i | x_i & \sim f_0(y|x), \\
    x_i & \sim \Pbb_X.
\end{align*}
Here, $\Pbb_X$ is some unknown distribution of covariate $X$, while $f_0$ is assumed to belong to a family of conditional probability functions 
$\{f(y|x): f\in \Fcal\}$, which are absolutely continuous with respect to a common dominating $\sigma$-finite measure $\nu$. To make inference of $f_0$ from the data using the Bayesian approach, we assume
\begin{align*}
    y_i | x_i, f & \sim f(y|x), \\
    f & \sim \Pi, 
\end{align*}
for some prior distribution $\Pi$ on the space of conditional probability functions 
$\Fcal$. The posterior distribution of $f$ is given by, for any measurable subset $B \subset \Fcal$,
\begin{equation*}
    \Pi(f\in B|(x_i, y_i)_{i=1}^{n}) = \dfrac{\int_B \prod_{i=1}^{n} f(y_i|x_i) d\Pi(f)}{\int_{\Fcal} \prod_{i=1}^{n} f(y_i|x_i) d\Pi(f)}.
\end{equation*}

As in the MLE analysis, the posterior contraction behavior of $f$ will be assessed by the expected (squared) Hellinger distance $\overline{d}_{H}(f, f_{0}) = (\Ebb_X d_H^2(f(y|X), f_{0}(y|X)))^{1/2}$. That is, we will find a sequence 
$(\epsilon_n) \to 0$ such that
\begin{equation}
    \Pi(\overline{d}_{H}(f(y|X), f_{0}(y|X)) \geq M_n \epsilon_n | x_1, \dots, x_n, y_1, \dots, y_n) \to 0, 
\end{equation}
in $\otimes_{i=1}^{n} \Pbb_{0}$-probability, as $n\to \infty$. Here, $M_n$ is an arbitrary diverging sequence.

Recall the following basic fact (cf. \cite{ghosal2017fundamentals} Lemma D.2., or \cite{gine2021mathematical} Chapter 7).

\begin{lemma}\label{lem:basic-construct-test}
    Given arbitrary probability density $p$ and $q$, there exist probability densities $\overline{p}$ and $\overline{q}$ such that for any probability density $r$ (all are commonly dominated by $\nu$)
    \begin{equation*}
        \Ebb_{y\sim r} \sqrt{\dfrac{\overline{q}(y)}{\overline{p}(y)}}\leq 1 - \dfrac{1}{6} d_H^2(p, q) + d_H^2(p, r), \quad\quad \Ebb_{y\sim r} \sqrt{\dfrac{\overline{p}(y)}{\overline{q}(y)}}\leq 1 - \dfrac{1}{6} d_H^2(p, q) + d_H^2(q, r)
    \end{equation*}
\end{lemma}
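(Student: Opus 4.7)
The plan is to prove Lemma~\ref{lem:basic-construct-test} by an explicit construction of $\overline{p}$ and $\overline{q}$ in the Le Cam--Birg\'{e} testing spirit, followed by a direct verification of the two symmetric bounds (the second being obtained from the first by swapping the roles of $p$ and $q$, so I focus on the first).

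First I would choose $\overline{p}$ and $\overline{q}$ to be the ``Hellinger-mixture'' densities
\[
    \overline{p}(y) = \frac{\sqrt{p(y)}\bigl(\sqrt{p(y)}+\sqrt{q(y)}\bigr)}{Z}, \qquad
    \overline{q}(y) = \frac{\sqrt{q(y)}\bigl(\sqrt{p(y)}+\sqrt{q(y)}\bigr)}{Z},
\]
with the common normalizing constant $Z = \int \sqrt{p}(\sqrt{p}+\sqrt{q})\,d\nu = 2 - \tfrac{1}{2}d_H^2(p,q)$. A short computation shows that $\overline{p},\overline{q}$ are valid densities, that $\sqrt{\overline{q}(y)/\overline{p}(y)} = (q(y)/p(y))^{1/4}$, and that the induced ``Hellinger affinity'' $\int \sqrt{\overline{p}\overline{q}}\,d\nu$ is an explicit decreasing function of $d_H^2(p,q)$. (If this precise construction fails to give the constant $1/6$ cleanly, a variant using the pointwise split $\overline{p}\propto p\mathbf{1}\{p\ge q\}+\sqrt{pq}\,\mathbf{1}\{p<q\}$ and symmetric $\overline{q}$ is available as a back-up and admits the same line of analysis.)

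Next I would decompose the quantity of interest as
\[
    \Ebb_{y\sim r}\sqrt{\overline{q}/\overline{p}} \;=\; \Ebb_{y\sim p}\sqrt{\overline{q}/\overline{p}} \;+\; \int (r - p)\sqrt{\overline{q}/\overline{p}}\,d\nu.
\]
For the first term I would compute $\Ebb_p \sqrt{\overline{q}/\overline{p}}$ in closed form from the construction and use $d_H^2(p,q) = 2-2\int\sqrt{pq}$ together with the elementary numerical inequality $(1-x)^{1/2} \le 1 - x/6$ for $x\in[0,1]$ (applied suitably) to bound it by $1 - \tfrac{1}{6}d_H^2(p,q)$. For the second term I would write $r-p = (\sqrt{r}-\sqrt{p})(\sqrt{r}+\sqrt{p})$ and apply Cauchy--Schwarz:
\[
    \int(\sqrt{r}-\sqrt{p})(\sqrt{r}+\sqrt{p})\sqrt{\overline{q}/\overline{p}}\,d\nu
    \le \left(\int (\sqrt{r}-\sqrt{p})^2 d\nu\right)^{1/2}\!\!\left(\int (\sqrt{r}+\sqrt{p})^2 \sqrt{\overline{q}/\overline{p}}\,d\nu\right)^{1/2},
\]
so that the first factor is exactly $d_H(p,r)$ and the second factor is bounded by an absolute constant by virtue of the particular choice of $\overline{p},\overline{q}$ (since $(\sqrt{r}+\sqrt{p})^2 \le 2(r+p)$ and $\int p\sqrt{\overline{q}/\overline{p}}$, $\int r\sqrt{\overline{q}/\overline{p}}$ are both finite). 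Completing the square $2d_H(p,r)\cdot C \le C^2 + d_H^2(p,r)$ with the constant absorbed into the $d_H^2(p,q)/6$ term (after slightly increasing the constant to $1/6$ from a sharper intermediate $c>1/6$) yields the claimed inequality.

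The main obstacle will be the second step: producing a bound of the correct shape ``$1 - \tfrac{1}{6}d_H^2(p,q) + d_H^2(p,r)$'' rather than something with a cross term like $d_H(p,q)\cdot d_H(p,r)$. The construction must be carefully tuned so that the quartic-root likelihood ratio is controlled uniformly in $r$; if the chosen $\overline{p},\overline{q}$ do not deliver the constant $1/6$ directly, the fallback is the pointwise (min/max) construction, for which the integrals reduce to explicit integrals over the events $\{p\ge q\}$ and $\{p<q\}$, enabling a term-by-term verification. The second inequality in the lemma is obtained immediately by interchanging $p\leftrightarrow q$ and observing that the construction of $(\overline{p},\overline{q})$ is symmetric.
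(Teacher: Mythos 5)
First, a point of reference: the paper does not actually prove this lemma --- it is imported verbatim from Ghosal and van der Vaart (Lemma D.2) and Gin\'e--Nickl (Chapter 7), so there is no in-paper argument to compare yours against. Judged on its own terms, your proposal does not close, and the problem is not a fixable detail but the construction itself. With $\overline{p}\propto\sqrt{p}(\sqrt{p}+\sqrt{q})$ and $\overline{q}\propto\sqrt{q}(\sqrt{p}+\sqrt{q})$ you get $\sqrt{\overline{q}/\overline{p}}=(q/p)^{1/4}$, which is unbounded wherever $p$ is small relative to $q$; since the lemma must hold for \emph{every} $r$, you can take $r$ concentrated on such a region. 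Concretely, on $[0,2]$ with Lebesgue $\nu$, let $p=(1-\epsilon)\mathbf{1}_{[0,1]}+\epsilon\mathbf{1}_{[1,2]}$ and $q=r=\mathbf{1}_{[1,2]}$: then $\Ebb_{r}\sqrt{\overline{q}/\overline{p}}=\epsilon^{-1/4}\to\infty$ while the right-hand side $1-\frac{1}{6}d_H^2(p,q)+d_H^2(p,r)$ stays below an absolute constant. So the first inequality is \emph{false} for your pair, not merely unproven; the fallback construction $\overline{p}\propto p\,\mathbf{1}\{p\ge q\}+\sqrt{pq}\,\mathbf{1}\{p<q\}$ produces the same quartic-root likelihood ratio on $\{p<q\}$ and fails on the same example. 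This is precisely where your Cauchy--Schwarz step breaks: the second factor $\int(\sqrt{r}+\sqrt{p})^{2}\sqrt{q/p}\,d\nu$ contains $\int r\sqrt{q/p}\,d\nu$, which is not bounded by an absolute constant (it can be infinite), and invoking the finiteness of $\int r\sqrt{\overline{q}/\overline{p}}\,d\nu$ is circular since that is essentially the quantity being bounded.

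Even granting a uniform bound $C$ on that factor, the final "absorption" step is invalid: completing the square turns $C\,d_H(p,r)$ into $d_H^2(p,r)+C^{2}/4$, and the leftover constant $C^{2}/4$ cannot be hidden inside $-\frac{1}{6}d_H^{2}(p,q)$, because $d_H(p,q)$ may be arbitrarily small while $C$ stays of order one (already $C\approx 2$ when $p=q$). The part of your argument that is sound is the diagonal term: $\Ebb_{p}(q/p)^{1/4}=\int p^{3/4}q^{1/4}\,d\nu\le\bigl(\int\sqrt{pq}\,d\nu\bigr)^{1/2}\le 1-\frac{1}{2}d_H^{2}(p,q)$ by Cauchy--Schwarz. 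What is genuinely needed is a pair $(\overline{p},\overline{q})$ whose likelihood ratio $\overline{q}/\overline{p}$ is \emph{uniformly bounded}, so that the exact identity $\int\phi\,r\,d\nu=\int\phi\,(\sqrt{r}-\sqrt{p})^{2}\,d\nu+2\int\phi\sqrt{rp}\,d\nu-\int\phi\,p\,d\nu$ with $\phi=\sqrt{\overline{q}/\overline{p}}$ delivers the additive $d_H^{2}(p,r)$ term with no cross term in $d_H(p,r)$. That is the content of the Le Cam--Birg\'e construction recorded as Lemma D.2 of Ghosal and van der Vaart; since the present paper simply cites that result, the cleanest course is to do the same rather than re-derive it.
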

From now, for every conditional density $f(y|x)$, denote by $\Pbb_{f}$ the joint distribution of $x, y$, i.e., $d\Pbb_{f}(x, y) = f(y|x)d\nu(y) \times d\Pbb_X(x)$.
Using Lemma~\ref{lem:basic-construct-test} one arrives at the following result on the existence of tests for conditional density functions:

\begin{lemma}\label{lem:basic-testing}
    For any two conditional density functions $f_0, f_1$ such that $\overline{d}^2_{H}(f_0, f_1) = \epsilon^2$, there exists a test $\Psi_n$ based on $x_1, \dots, x_n, y_1, \dots, y_n$ such that
    \begin{equation}\label{eq:basic-testing}
        \Pbb_{f_0}^n \Psi_n \leq e^{-n\epsilon^2/6}, \quad\quad \sup_{f\in B(f_1, \epsilon/4)} \Pbb_{f}^n (1-\Psi_n) \leq e^{-n\epsilon^2/12},
    \end{equation}
    where $B(f, \epsilon) := \{g\in \Fcal: \overline{d}_{H}(f, g) \leq \epsilon\}$ for all $\epsilon \geq 0$ and $f\in \Fcal$.
\end{lemma}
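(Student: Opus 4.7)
The plan is to construct the test $\Psi_n$ as a likelihood-ratio test based on the modified conditional densities produced by Lemma~\ref{lem:basic-construct-test}, applied slice-by-slice in the covariate. Concretely, for each $x \in \Xcal$, I apply Lemma~\ref{lem:basic-construct-test} to the pair $p = f_0(\cdot|x)$, $q = f_1(\cdot|x)$ to obtain the auxiliary densities $\overline{f}_0(\cdot|x)$ and $\overline{f}_1(\cdot|x)$. I then set
\begin{equation*}
    T_n := \frac{1}{2}\sum_{i=1}^n \log\frac{\overline{f}_1(y_i|x_i)}{\overline{f}_0(y_i|x_i)}, \qquad \Psi_n := \mathbbm{1}\{T_n \geq 0\}.
\end{equation*}

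For the Type I error, I bound $\Pbb_{f_0}^n\Psi_n = \Pbb_{f_0}^n(T_n \ge 0)$ by Markov's inequality applied to $e^{T_n}$. Because $(x_i,y_i)$ are i.i.d.\ under $\Pbb_{f_0}$, the expectation factorizes, and for each factor I first condition on $x_i$ and use the first inequality of Lemma~\ref{lem:basic-construct-test} with $r = f_0(\cdot|x_i)$ to get $\Ebb\bigl[\sqrt{\overline{f}_1(y_i|x_i)/\overline{f}_0(y_i|x_i)}\,\bigl|\,x_i\bigr] \leq 1 - \tfrac{1}{6}d_H^2(f_0(\cdot|x_i), f_1(\cdot|x_i))$ (the $d_H^2(p,r)$ term vanishes). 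Taking the outer expectation in $x_i$ then yields $1 - \tfrac{1}{6}\overline{d}_H^2(f_0, f_1) = 1 - \tfrac{\epsilon^2}{6} \leq e^{-\epsilon^2/6}$, and tensorizing over $i=1,\dots,n$ gives the claimed bound $e^{-n\epsilon^2/6}$.

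For the Type II error, fix $f \in B(f_1,\epsilon/4)$, so $\overline{d}_H^2(f_1,f) \leq \epsilon^2/16$. I bound $\Pbb_f^n(1-\Psi_n) = \Pbb_f^n(-T_n > 0) \leq \Ebb_f^n[e^{-T_n}]$ analogously, using the second inequality of Lemma~\ref{lem:basic-construct-test} with $r = f(\cdot|x_i)$; after conditioning on $x_i$ and integrating, each factor is at most $1 - \tfrac{1}{6}d_H^2(f_0(\cdot|x_i), f_1(\cdot|x_i)) + d_H^2(f_1(\cdot|x_i), f(\cdot|x_i))$, whose $\Pbb_X$-average is $\leq 1 - \tfrac{\epsilon^2}{6} + \tfrac{\epsilon^2}{16} = 1 - \tfrac{5\epsilon^2}{48} \leq e^{-5\epsilon^2/48}$. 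Tensorizing gives $\Pbb_f^n(1-\Psi_n) \leq e^{-5n\epsilon^2/48} \leq e^{-n\epsilon^2/12}$, uniformly in $f \in B(f_1,\epsilon/4)$, as required.

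The only subtlety I anticipate is the measurability of the slice-wise constructed $\overline{f}_0(\cdot|x), \overline{f}_1(\cdot|x)$ as jointly measurable functions of $(x,y)$; this is handled by inspecting the explicit construction behind Lemma~\ref{lem:basic-construct-test}, which produces $\overline{p},\overline{q}$ as fixed measurable transformations of $p$ and $q$ (essentially a normalized $\sqrt{pq}$-type mixture), so measurability in $x$ is inherited from that of $f_0,f_1$. Once that routine point is dispatched, the argument is entirely a conditional-then-marginal application of the Hellinger bounds in Lemma~\ref{lem:basic-construct-test} followed by the moment generating function / Markov bound, mirroring the classical unconditional case in~\cite{ghosal2017fundamentals,gine2021mathematical} with $d_H$ replaced by $\overline{d}_H$.
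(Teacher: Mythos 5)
Your proposal is correct and follows essentially the same route as the paper's proof: the test $\Psi_n = \mathbbm{1}\{T_n \ge 0\}$ is exactly the paper's indicator $\mathbbm{1}\bigl(\prod_i \overline{f}_1(y_i|x_i)/\overline{f}_0(y_i|x_i) \ge 1\bigr)$, and bounding via $\Ebb[e^{\pm T_n}]$ is the same Markov/square-root-likelihood-ratio argument, with the conditional-then-marginal use of Lemma~\ref{lem:basic-construct-test} matching the paper step for step. Your explicit remark on joint measurability of the slice-wise construction is a detail the paper leaves implicit, but it does not change the argument.
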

\begin{proof}
For any $x\in \Xcal$, consider probability density functions $f_0(\cdot | x)$ and $f_1(\cdot | x)$. By Lemma~\ref{lem:basic-construct-test}, there exist density functions $\overline{f}_0(\cdot; x)$ and $\overline{f}_1(\cdot; x)$ such that for all probability density functions $f(\cdot | x)$
\begin{equation*}
    \Ebb_{y\sim f_0(\cdot|x)} \sqrt{\dfrac{\overline{f}_1(y|x)}{\overline{f}_0(y|x)}}\leq 1 - \dfrac{1}{6} d_H^2(f_0(\cdot|x), f_1(\cdot|x)), 
\end{equation*}
and 
\begin{equation*}
     \Ebb_{y\sim f(\cdot | x)} \sqrt{\dfrac{\overline{f}_0(y|x)}{\overline{f}_1(y|x)}}\leq 1 - \dfrac{1}{6} d_H^2(f_0(\cdot|x), f_1(\cdot|x))+ d_H^2(f(\cdot|x), f_1(\cdot|x)).
\end{equation*}
Define test function $\Psi_n(x_1, \dots, x_n, y_1, \dots, y_n) = 1\left(\prod_{i=1}^{n} \dfrac{\overline{f}_1(y_i; x_i)}{\overline{f}_0(y_i; x_i)}\geq 1\right)$. 
To verify~\eqref{eq:basic-testing}, by Markov's inequality,
\begin{align*}
    \Pbb_{f_0}^n \Psi_n & \leq \Pbb_{f_0}^n \prod_{i=1}^{n} \sqrt{\dfrac{\overline{f}_1(y_i|x_i)}{\overline{f}_0(y_i|x_i)}}\\
    & = \prod_{i=1}^{n} \Ebb_X \Ebb_{Y\sim f_0(\cdot|X)} \sqrt{\dfrac{\overline{f}_1(Y|X)}{\overline{f}_0(Y|X)}}\\
    & \leq \left(1 - \dfrac{1}{6} \Ebb_X d_H^2(f_0(\cdot|X), f_1(\cdot|X))\right)^n \\
    & = (1-\epsilon^2/6)^n \\
    & \leq  e^{-n\epsilon^2/6},
\end{align*}
and for every $f\in B(f_1, \epsilon/4)$,
\begin{align*}
    \Pbb_{f}^n \Psi_n & \leq \Pbb_{f}^n \prod_{i=1}^{n} \sqrt{\dfrac{\overline{f}_0(y_i|x_i)}{\overline{f}_1(y_i|x_i)}}\\
    & = \prod_{i=1}^{n} \Ebb_X \Ebb_{Y\sim f_0(\cdot|X)} \sqrt{\dfrac{\overline{f}_0(Y|X)}{\overline{f}_1(Y|X)}}\\
    & \leq \left(1 - \dfrac{1}{6} \Ebb_X d_H^2(f_0(\cdot|X), f_1(\cdot|X))+ \Ebb_X d_H^2(f(\cdot|X), f_1(\cdot|X))\right)^n\\ & \leq \left(1-\dfrac{1}{12}\epsilon^2\right)^n \\
    & \leq e^{-n\epsilon^2/12}.
\end{align*}
\end{proof}

The above lemma shows the existence of tests to distinguish between $f_0$ and a small ball around any $f_1\neq f_0$. 
Next, we establish the existence of tests for $f_0$ against all $f\in \Fcal$ being a bounded distance away from $f_0$. Recall that $N(\Fcal, d, \epsilon)$ denotes the covering number of $\Fcal$ by $d$-balls with radius $\epsilon$.
\begin{lemma}\label{lem:Bayes-testing-complement}
    For every natural number $M$ large enough, there exists a test $\Psi_n$ such that
    \begin{equation}\label{eq:Bayes-testing-complement}
        \Pbb_{f_0}^n \Psi_n \leq N(\Fcal, d_H, \epsilon) \dfrac{e^{-nM^2\epsilon^2/12}}{1-e^{-nM^2\epsilon^2/12}}, \quad\quad \sup_{f\in \Fcal: \overline{d}_{H}(f, f_0)>M\epsilon} \Pbb_{f}^n (1-\Psi_n) \leq e^{-nM^2\epsilon^2/12}.
    \end{equation}
\end{lemma}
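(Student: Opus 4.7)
The plan is a classical shell-covering test-construction argument in the spirit of Le Cam and Ghosal--van der Vaart, building the test $\Psi_n$ as a maximum of the pairwise tests furnished by Lemma~\ref{lem:basic-testing}. First, I would decompose the alternative region into concentric annuli. For each integer $j\geq 1$, set $S_j=\{f\in\Fcal: jM\epsilon<\overline{d}_H(f,f_0)\leq (j+1)M\epsilon\}$, so that $\bigcup_{j\geq 1}S_j$ is exactly the set on which Type~II control is demanded. For each $j$, I would cover $S_j$ by $\overline{d}_H$-balls of radius $jM\epsilon/4$; provided $M$ is taken large enough (e.g.\ $M\geq 4$, which is the ``$M$ large enough'' of the statement), the minimal number of balls needed is at most $N(\Fcal,\overline{d}_H,M\epsilon/4)\leq N(\Fcal,\overline{d}_H,\epsilon)$. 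Denote the centers by $f_{j,k}$ for $k=1,\dots,N_j$.

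Next, for each pair $(f_0,f_{j,k})$ the $\overline{d}_H$-distance is at least $jM\epsilon$, so Lemma~\ref{lem:basic-testing} yields a test $\Psi_{n,j,k}$ with $\Pbb_{f_0}^n\Psi_{n,j,k}\leq e^{-n(jM\epsilon)^2/6}$ and $\sup_{f\in B(f_{j,k},jM\epsilon/4)}\Pbb_f^n(1-\Psi_{n,j,k})\leq e^{-n(jM\epsilon)^2/12}$; using the weaker $/12$ exponent for both errors keeps the subsequent bookkeeping uniform. I would then define $\Psi_n=\max_{j,k}\Psi_{n,j,k}$, which is measurable as a countable supremum of measurable tests.

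The Type~I bound follows by a union bound together with $j^2\geq j$, turning the Gaussian-looking series into a geometric one:
$$\Pbb_{f_0}^n\Psi_n\;\leq\;\sum_{j=1}^\infty N_j\,e^{-n j^2 M^2\epsilon^2/12}\;\leq\;N(\Fcal,\overline{d}_H,\epsilon)\sum_{j=1}^\infty e^{-n j M^2\epsilon^2/12}\;=\;N(\Fcal,\overline{d}_H,\epsilon)\,\frac{e^{-nM^2\epsilon^2/12}}{1-e^{-nM^2\epsilon^2/12}}.$$
For the Type~II bound, any $f$ with $\overline{d}_H(f,f_0)>M\epsilon$ lies in some $S_j$ and is within $jM\epsilon/4$ of some center $f_{j,k}$, hence $\Pbb_f^n(1-\Psi_n)\leq \Pbb_f^n(1-\Psi_{n,j,k})\leq e^{-n(jM\epsilon)^2/12}\leq e^{-nM^2\epsilon^2/12}$, which matches the claim.

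I do not expect any serious obstacle: the argument is structurally identical to the shell-slicing trick used in Bayesian nonparametric contraction theorems, and the only substantive inputs are the pairwise separation from Lemma~\ref{lem:basic-testing} and a geometric-series bound. The minor points that deserve a line or two of care in the full write-up are (i) choosing $M$ large enough that $M\epsilon/4\geq\epsilon$ so that the covering number at radius $M\epsilon/4$ is dominated by $N(\Fcal,\overline{d}_H,\epsilon)$; (ii) confirming the measurability of the countable maximum; and (iii) using $j^2\geq j$ to convert the Gaussian-in-$j$ tail sum into an honest geometric series that telescopes into the quoted ratio $e^{-nM^2\epsilon^2/12}/(1-e^{-nM^2\epsilon^2/12})$.
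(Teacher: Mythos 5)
Your proposal is correct and follows essentially the same route as the paper's proof: slice the alternative $\{\overline{d}_H(f,f_0)>M\epsilon\}$ into shells, cover each shell by balls of radius proportional to its distance from $f_0$, invoke Lemma~\ref{lem:basic-testing} for each center, take the maximum of the resulting tests, and sum a geometric series for the Type~I error. The only (immaterial) difference is the shell parametrization — you use annuli $(jM\epsilon,(j+1)M\epsilon]$ for $j\geq 1$ where the paper uses $(j\epsilon,2j\epsilon)$ for $j\geq M$ — and your $j^2\geq j$ reduction gives the quoted ratio slightly more cleanly.
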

\begin{proof}
    For every $j\in \mathbb{N}$ such that $j\geq M$, consider a minimal covering of the set $\Fcal_{j} := \{f\in \Fcal : j\epsilon < \overline{d}_{H}(f, f_0) < 2j\epsilon \}$ by balls $(F_{j,l})_{l}$ of radius $j\epsilon/4$. Because $\Fcal_j \subset \Fcal$ and $j\epsilon/4 \geq \epsilon$, the number of such balls is no more than $N(\Fcal, d_H, \epsilon)$. Moreover, by Lemma~\ref{lem:basic-testing} for each $(F_{j,l})_{l}$ there exists a test $\phi_{j, l}$ satisfying
    \begin{equation}\label{eq:basic-testing-global}
        \Pbb_{f_0}^n \phi_{j, l} \leq e^{-nj^2\epsilon^2/6}, \quad\quad \sup_{f\in F_{j, l}} \Pbb_{f}^n (1-\phi_{j, l}) \leq e^{-nj^2\epsilon^2/12}.
    \end{equation}
    Let $\Psi_n := \max_{j \geq M;l} \phi_{j,l}$. Then
    \begin{equation*}
        \Pbb_{f_0}^{n}\Psi_n \leq N(\Fcal, d_H, \epsilon) \sum_{j\geq M}e^{-nj^2\epsilon^2/6}\leq N(\Fcal, d_H, \epsilon)\dfrac{e^{-nM^2\epsilon^2/12}}{1-e^{-nM^2\epsilon^2/12}},
    \end{equation*}
    and 
    \begin{equation*}
        \sup_{f\in \Fcal: \overline{d}_{H}(f, f_0)>M\epsilon} \Pbb_{f}^n (1-\Psi_n)
        \leq \sup_{j, l}\sup_{f\in F_{j, l}} \Pbb_{f}^n (1-\phi_{j, l})
        \leq e^{-nM^2\epsilon^2/12}.
    \end{equation*}
\end{proof}

Now for every $\epsilon > 0$, define a ball with radius $\epsilon$ around $f_0$ as 
\begin{equation}
B_2(f_0, \epsilon) := \{f\in \Fcal : \Pbb_{f_0} \log(f_0(Y|X) / f(Y|X))\leq \epsilon^2, \Pbb_{f_0} (\log(f_0(Y|X) / f(Y|X)))^2\leq \epsilon^2\}.
\end{equation}
The following theorem establishes the posterior contraction convergence rate for conditional density functions under the expected squared Hellinger distance.
\begin{theorem}\label{thm:basic-contraction-rate}
    Assume that there exist sequences $\overline{\epsilon}_n, \epsilon_n$, such that $\overline{\epsilon}_n\leq \epsilon_n$, and $\sqrt{n} \overline{\epsilon}_n\to \infty$, a sequence of measurable set $\Fcal_n\subset \Fcal$, and a constant $C$ such that
    \begin{enumerate}[label=(\roman*)]
        \item $\Pi(B_2(f_0, \overline{\epsilon}_n))\geq e^{-C n \overline{\epsilon}_n^2}$;
        \item $\log N(\epsilon_n, \Fcal_n, \overline{d}_{H})\leq n\epsilon_n^2$;
        \item $\Pi(\Fcal_n^c)\leq  e^{-(C+4)n\overline{\epsilon}_n^2}$.
    \end{enumerate}
    Then, for every sequence $M_n \rightarrow \infty$, there holds
    \begin{equation}
        \Pi(f: \overline{d}_{H}(f, f_0) > M_n\epsilon_n | x_1, \dots, x_n, y_1, \dots, y_n) \to 0 
    \end{equation}
    in $\Pbb_{f_0}^{n}$-probability, as $n\to \infty$.
\end{theorem}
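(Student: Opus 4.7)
The plan is to follow the classical Ghosal--Ghosh--van der Vaart posterior contraction scheme, adapted to the conditional-density setting via the tests constructed in Lemma~\ref{lem:Bayes-testing-complement}. Writing the quantity of interest as $N_n/D_n$ where
\begin{equation*}
    N_n = \int_{\{f:\overline{d}_H(f,f_0)>M_n\epsilon_n\}} \prod_{i=1}^{n} \frac{f(y_i|x_i)}{f_0(y_i|x_i)}\,d\Pi(f), \quad D_n = \int_{\Fcal}\prod_{i=1}^{n} \frac{f(y_i|x_i)}{f_0(y_i|x_i)}\,d\Pi(f),
\end{equation*}
the argument reduces to three pieces: a high-probability lower bound on $D_n$, construction of a global test, and a combination step.

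First I would lower-bound $D_n$ using the prior-mass condition (i). For each $f\in B_2(f_0,\overline{\epsilon}_n)$, the i.i.d.\ variables $\log(f_0(Y_i|X_i)/f(Y_i|X_i))$ have mean and second moment (hence variance) bounded by $\overline{\epsilon}_n^2$ under $\Pbb_{f_0}$. Restricting the integral defining $D_n$ to $B_2(f_0,\overline{\epsilon}_n)$, applying Jensen's inequality to exchange $\log$ with the normalized prior, and invoking Chebyshev's inequality yields
\begin{equation*}
    \Pbb_{f_0}^n\!\left(D_n \le e^{-(C+2)n\overline{\epsilon}_n^2}\,\Pi(B_2(f_0,\overline{\epsilon}_n))\right) \le \frac{1}{(C+1)^2 n\overline{\epsilon}_n^2}\to 0,
\end{equation*}
which, combined with (i), gives $D_n \ge e^{-(2C+2)n\overline{\epsilon}_n^2}$ on an event $\Omega_n$ of probability tending to one.

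Next I apply Lemma~\ref{lem:Bayes-testing-complement} to $\Fcal_n$ with radius $\epsilon_n$ and separation $M_n\epsilon_n$. Combined with the entropy bound (ii),
\begin{equation*}
    \Pbb_{f_0}^n\Psi_n \le \frac{e^{n\epsilon_n^2-nM_n^2\epsilon_n^2/12}}{1-e^{-nM_n^2\epsilon_n^2/12}} \to 0, \qquad \sup_{\substack{f\in\Fcal_n,\\ \overline{d}_H(f,f_0)>M_n\epsilon_n}} \Pbb_f^n(1-\Psi_n)\le e^{-nM_n^2\epsilon_n^2/12}.
\end{equation*}
Using the bound $\Pi(\cdot\mid x^{[n]},y^{[n]}) \le \Psi_n + N_n(1-\Psi_n)/D_n$ and splitting the expectation $\Ebb_{f_0}^n[N_n(1-\Psi_n)]$ via Fubini into contributions from $\Fcal_n$ and $\Fcal_n^c$, the sieve-mass bound (iii) gives
\begin{equation*}
    \Ebb_{f_0}^n[N_n(1-\Psi_n)] \le e^{-nM_n^2\epsilon_n^2/12}+\Pi(\Fcal_n^c) \le e^{-nM_n^2\epsilon_n^2/12}+e^{-(C+4)n\overline{\epsilon}_n^2}.
\end{equation*}

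Putting these pieces together, on $\Omega_n$ we obtain
\begin{equation*}
    \Pi\bigl(f:\overline{d}_H(f,f_0)>M_n\epsilon_n \bigm| x^{[n]},y^{[n]}\bigr) \le \Psi_n + e^{(2C+2)n\overline{\epsilon}_n^2}\bigl[e^{-nM_n^2\epsilon_n^2/12}+e^{-(C+4)n\overline{\epsilon}_n^2}\bigr].
\end{equation*}
The second bracketed summand contributes $e^{-2n\overline{\epsilon}_n^2}\to 0$ because $n\overline{\epsilon}_n^2\to\infty$; the first contributes a factor $\exp\!\bigl(n\overline{\epsilon}_n^2\bigl[(2C+2)-M_n^2\epsilon_n^2/(12\overline{\epsilon}_n^2)\bigr]\bigr)$, which also vanishes because $M_n\to\infty$ and $\overline{\epsilon}_n\le\epsilon_n$. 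The main technical delicacy is ensuring that the tests produced by Lemma~\ref{lem:Bayes-testing-complement}---constructed from covers in the expected Hellinger distance $\overline{d}_H$ rather than a pointwise-in-$x$ distance---correctly control the joint law $\Pbb_f^n$ appearing in the numerator. This is already built into the statements of Lemma~\ref{lem:basic-testing} and Lemma~\ref{lem:Bayes-testing-complement} (the Hellinger affinity between joint densities factors into an expectation over $\Pbb_X$ of Hellinger affinities between conditional densities), so the standard GGV evidence/testing decomposition carries through with no additional subtlety beyond this bookkeeping.
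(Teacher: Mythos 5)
Your proposal follows exactly the same route as the paper: the evidence/testing decomposition $\Pi(\cdot\mid x^{[n]},y^{[n]})\le \Psi_n + N_n(1-\Psi_n)/D_n$, the Chebyshev--Jensen lower bound on the denominator over $B_2(f_0,\overline{\epsilon}_n)$ (this is precisely the paper's Lemma~\ref{lem:evidence-lower-bound}), the global test from Lemma~\ref{lem:Bayes-testing-complement} combined with the entropy bound (ii), and the Fubini split of the numerator into $\Fcal_n$ and $\Fcal_n^c$ handled by (iii). Your closing remark about why $\overline{d}_H$-covers suffice to control the joint laws $\Pbb_f^n$ is also the right observation and is indeed already absorbed into the two testing lemmas.

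One bookkeeping slip needs fixing: you take the evidence lower bound in the form $D_n\ge e^{-(C+2)n\overline{\epsilon}_n^2}\Pi(B_2(f_0,\overline{\epsilon}_n))\ge e^{-(2C+2)n\overline{\epsilon}_n^2}$, but then the sieve term becomes $e^{(2C+2)n\overline{\epsilon}_n^2}\cdot e^{-(C+4)n\overline{\epsilon}_n^2}=e^{(C-2)n\overline{\epsilon}_n^2}$, which does not vanish for $C>2$; it is not the $e^{-2n\overline{\epsilon}_n^2}$ you claim. The paper avoids this by applying the Chebyshev bound with the smaller margin, namely $\Pbb_{f_0}^n\bigl(D_n\le e^{-2n\overline{\epsilon}_n^2}\Pi(B_2(f_0,\overline{\epsilon}_n))\bigr)\le 1/(n\overline{\epsilon}_n^2)$, so that combined with (i) one gets $D_n\ge e^{-(C+2)n\overline{\epsilon}_n^2}$ on the good event, the multiplier is $e^{(C+2)n\overline{\epsilon}_n^2}$, and the sieve contribution is $e^{-2n\overline{\epsilon}_n^2}\to 0$ as intended. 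With that one constant corrected, your argument matches the paper's proof.
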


\begin{proof}
Write $x^{[n]}, y^{[n]} = \{x_1, \dots, x_n, y_1, \dots, y_n\}$ for short. By Lemma~\ref{lem:Bayes-testing-complement}, there exists a test $\Psi_n$ such that
\begin{equation*}
    \Pbb_0^n \Psi_n \leq e^{n\epsilon_n^2} \dfrac{e^{-nM_n^2\epsilon_n^2/12}}{1-e^{-nM_n^2\epsilon_n^2/12}}, \quad \quad \sup_{f\in \Fcal_n : \overline{d}_{H}(f, f_0)\geq M_n \epsilon_n} \Pbb_f^n (1-\Psi_n) \leq  e^{-nM_n^2\epsilon_n^2/12}.
\end{equation*}
As $M_n \to \infty$ and $n\epsilon_n^2\to \infty$, both probabilities above go to 0. By Bayes' rule, 
\begin{equation}
    \Pi(f: \overline{d}_{H}(f, f_0) > M_n\epsilon_n | x^{[n]}, y^{[n]}) = \dfrac{\int_{\overline{d}_{H}(f, f_0) > M_n\epsilon_n} \prod_{i=1}^{n} (f/f_0)(y_i|x_i) d\Pi(f)}{\int_{\Fcal} \prod_{i=1}^{n} (f/f_0)(y_i|x_i) d\Pi(f)}.
\end{equation}
Let $B_n := B_2(f_0, \epsilon_n)$ and $A_n = \{(x^{[n]}, y^{[n]}) : \int_{B_n} \prod_{i=1}^{n} (f/f_0)(y_i|x_i) d\Pi(f) \geq e^{-(C+2)n\overline{\epsilon}_n^2} \}$. Because a probability is always less than or equal to 1, we have
\begin{align*}
    & \Pi(f: \overline{d}_{H}(f, f_0) > M_n\epsilon_n | x^{[n]}, y^{[n]}) \\
    & \leq 1_{(A_n)^c} + 1_{A_n} \dfrac{\int_{\overline{d}_{H}(f, f_0) > M_n\epsilon_n} \prod_{i=1}^{n} (f/f_0) d\Pi(f)}{\int_{\Fcal} \prod_{i=1}^{n} (f/f_0)(y_i|x_i) d\Pi(f)}\\
    & \leq \Psi_n + 1_{(A_n)^c} + 1_{A_n} \dfrac{\int_{\overline{d}_{H}(f, f_0) > M_n\epsilon_n} \prod_{i=1}^{n} (f/f_0) d\Pi(f) (1-\Psi_n)}{\int_{\Fcal} \prod_{i=1}^{n} (f/f_0)(y_i|x_i) d\Pi(f)}\\
    & \leq \Psi_n + 1_{(A_n)^c} + e^{(C+2)n\overline{\epsilon}_n^2} \int_{\overline{d}_{H}(f, f_0) > M_n\epsilon_n} \prod_{i=1}^{n} (f/f_0) d\Pi(f) (1-\Psi_n).
\end{align*}
By the construction of the test $\Psi_n$, we have $\Pbb_0^n \Psi_n \to 0$. Besides, assumption (i) and Lemma~\ref{lem:evidence-lower-bound} imply that
\begin{align*}
    \Pbb_0^{n}(A_n^{c}) & =  \Pbb_0^{n} \left(\int_{B_n} \prod_{i=1}^{n} \frac{f}{f_0}(y_i|x_i) d\Pi(f)\leq e^{-(C+2)n\overline{\epsilon}_n^2}  \right) \\
    & \leq \Pbb_0^{n} \left(\int_{B_n} \prod_{i=1}^{n} \frac{f}{f_0}(y_i|x_i) d\Pi(f)\leq e^{-2n\overline{\epsilon}_n^2} \Pi(B_n) \right)\\
    &\leq \dfrac{1}{n\overline{\epsilon}_n^2}\to 0.
\end{align*}
For the last term, by Fubini's theorem,
{\fontsize{10}{10}\selectfont 
\begin{align*}
    & \Pbb_0^n  e^{(C+2)n\overline{\epsilon}_n^2} \int_{\overline{d}_{H}(f, f_0) > M_n\epsilon_n} \prod_{i=1}^{n} (f/f_0) d\Pi(f) (1-\Psi_n) \\
    & = e^{(C+2)n\overline{\epsilon}_n^2} \int_{\overline{d}_{H}(f, f_0) > M_n\epsilon_n} \Pbb_0^n \prod_{i=1}^{n} (f/f_0)  (1-\Psi_n) d\Pi(f)\\
    & = e^{(C+2)n\overline{\epsilon}_n^2} \int_{\overline{d}_{H}(f, f_0) > M_n\epsilon_n} \Pbb_f^n   (1-\Psi_n) d\Pi(f) \\
    & \leq e^{(C+2)n\overline{\epsilon}_n^2} \bigg( \int_{f\in \Fcal_n: \overline{d}_{H}(f, f_0) > M_n\epsilon_n} \Pbb_f^n   (1-\Psi_n) d\Pi(f)\\
    & \quad + \int_{f\in \Fcal_n^c} \Pbb_f^n (1-\Psi_n) d\Pi(f) \bigg)\\
    & \leq e^{(C+2)n\overline{\epsilon}_n^2} (e^{-nM_n^2\epsilon_n^2/12} + \Pi(\Fcal_n^c)),
\end{align*}}
which tends to 0, thanks to the construction of the test and assumption (iii). 
\end{proof}

The above proof made use of the following lemma, which is taken from \cite{ghosal2017fundamentals} (and adapted for conditional densities). We include its proof for completeness.
\begin{lemma}\label{lem:evidence-lower-bound}
For every $\epsilon > 0$, let $B = B_2(f_0, \epsilon)$. For all $c>0$, we have
\begin{equation}\label{eq:lem-evidence-lower-bound}
    \Pbb_0^{n}\left(\int_{B} \prod_{i=1}^{n}\dfrac{f}{f_0}(y_i|x_i) d\Pi(f) \leq \exp(-(c+1)n\epsilon^2) \Pi(B) \right) \leq \dfrac{1}{c^2n\epsilon^2}.
\end{equation}
\end{lemma}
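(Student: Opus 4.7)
My plan is to reduce the probability bound~\eqref{eq:lem-evidence-lower-bound} to a simple application of Chebyshev's inequality after taking logarithms and applying Jensen. Write $\Lambda_n(f) := \prod_{i=1}^n (f/f_0)(y_i|x_i)$ and factor the integral as $\int_B \Lambda_n(f) d\Pi(f) = \Pi(B) \int \Lambda_n(f) d\overline{\Pi}(f)$, where $\overline{\Pi} = \Pi(\cdot \cap B)/\Pi(B)$ is the restriction of the prior to $B$. Because $\log$ is concave and $\overline{\Pi}$ is a probability measure, Jensen's inequality yields
\begin{equation*}
\int_B \Lambda_n(f)\, d\Pi(f) \;\geq\; \Pi(B)\exp\!\left(\int \log\Lambda_n(f)\, d\overline{\Pi}(f)\right) \;=\; \Pi(B)\exp(Z),
\end{equation*}
where $Z := \sum_{i=1}^n W_i$ and $W_i := \int \log\frac{f(y_i|x_i)}{f_0(y_i|x_i)}\, d\overline{\Pi}(f)$. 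It therefore suffices to show that $Z \geq -(c+1)n\epsilon^2$ with $\Pbb_0^n$-probability at least $1 - 1/(c^2 n\epsilon^2)$.

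Next I would compute the first two moments of $Z$ under $\Pbb_0^n$ using Fubini and the defining constraints of $B = B_2(f_0,\epsilon)$. For the mean, interchanging expectation and integration with respect to $\overline{\Pi}$ gives
\begin{equation*}
\Ebb_{\Pbb_0} W_i \;=\; -\int K(f_0, f)\, d\overline{\Pi}(f) \;\geq\; -\epsilon^2,
\end{equation*}
since every $f \in B$ satisfies $\Ebb_{\Pbb_0}\log(f_0/f) \leq \epsilon^2$; hence $\Ebb_{\Pbb_0^n}Z \geq -n\epsilon^2$. For the variance, note that $W_1,\dots,W_n$ are i.i.d.\ under $\Pbb_0^n$, and by Jensen applied to $x \mapsto x^2$ one has $W_i^2 \leq \int \bigl(\log(f/f_0)(y_i|x_i)\bigr)^2\, d\overline{\Pi}(f)$. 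Taking expectation and using the second $B_2$-constraint gives $\Ebb_{\Pbb_0}W_i^2 \leq \int \Ebb_{\Pbb_0}(\log(f_0/f))^2\, d\overline{\Pi}(f) \leq \epsilon^2$, so $\mathrm{Var}_{\Pbb_0^n}(Z) \leq n\epsilon^2$.

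With these two moment bounds, Chebyshev's inequality gives
\begin{equation*}
\Pbb_0^n\bigl(Z \leq -(c+1)n\epsilon^2\bigr) \;\leq\; \Pbb_0^n\bigl(|Z - \Ebb Z| \geq c n\epsilon^2\bigr) \;\leq\; \frac{n\epsilon^2}{(cn\epsilon^2)^2} \;=\; \frac{1}{c^2 n\epsilon^2},
\end{equation*}
which combined with the Jensen inequality above yields the claimed bound~\eqref{eq:lem-evidence-lower-bound}. There is no substantive obstacle here: the argument is a textbook evidence-lower-bound calculation. The only step requiring mild care is the application of Jensen's inequality inside the variance computation, which is what converts the second moment of an integral over $\overline{\Pi}$ into the uniform second-moment control provided by the definition of $B_2(f_0,\epsilon)$; this is precisely why the ball $B$ must be defined through both the KL divergence and the squared log-ratio rather than through Hellinger distance alone.
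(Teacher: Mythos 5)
Your argument is correct and is essentially the paper's own proof: the paper also normalizes the prior to $B$, applies Jensen to lower-bound $\log\int\prod_i(f/f_0)\,d\Pi$ by $\sum_i\int\log(f/f_0)\,d\Pi$, controls the mean via Fubini and the KL constraint of $B_2(f_0,\epsilon)$, bounds the second moment by Jensen applied to the square together with the squared-log-ratio constraint, and finishes with Chebyshev. The only difference is notational — you work with the i.i.d.\ sum $Z=\sum_i W_i$ where the paper writes the same quantity as the centered empirical process $\sqrt{n}\int\int\log(f/f_0)\,d\Pi\,d(\Pbb_n-\Pbb_0)$.
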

\begin{proof}
By dividing two sides of the inequality inside $\Pbb_0^n$ by $\Pi(B)$, we can (without loss of generality) assume that $\Pi(B) = 1$. By Jensen's inequality
\begin{equation*}
    \log \int \prod_{i=1}^{n} (f/f_0)(y_i|x_i) d\Pi(f) \geq \sum_{i=1}^{n} \int \log  (f/f_0)(y_i|x_i) d\Pi(f).
\end{equation*}
Hence, for $\Pbb_n$ being the empirical distribution, we have 
\begin{align*}\label{eq:lem-evidence-lower-bound-empirical}
    &\Pbb_0^{n}\left(\int \prod_{i=1}^{n}\dfrac{f}{f_0}(y_i|x_i) d\Pi(f) \leq \exp(-(c+1)n\epsilon^2) \right) \\
    &\leq \Pbb_0^{n}\left(\sum_{i=1}^{n} \int \log  (f/f_0)(y_i|x_i) d\Pi(f) \leq -(c+1)n\epsilon^2 \right) \\ 
    &\leq \Pbb_0^n\left(\sqrt{n}\int\int \log (f/f_0) d\Pi(f) d(\Pbb_n-\Pbb_0) \right. \\
    &\hspace{5cm} \left. \leq -\sqrt{n}(1+c) \epsilon^2 - \sqrt{n} \int\int \log (f/f_0) d\Pi(f) d\Pbb_0 \right)
\end{align*}
By Fubini's theorem and the definition of $B = B_2(f_0, \epsilon)$,
\begin{equation}
    - \sqrt{n} \int\int \log (f/f_0) d\Pi(f) d\Pbb_0 = \sqrt{n} \int \Pbb_0 \log(f_0/f) d\Pi(f) \leq \sqrt{n} \epsilon^2.
\end{equation}
Therefore,
\begin{align*}
     & \Pbb_0^{n}\left(\int \prod_{i=1}^{n}\dfrac{f}{f_0}(y_i|x_i) d\Pi(f) \leq \exp(-(c+1)n\epsilon^2) \right)\\
     & \leq \Pbb_0^n\left(\sqrt{n}\int\int \log (f/f_0) d\Pi(f) d(\Pbb_n-\Pbb_0) \leq \sqrt{n} c \epsilon^2 \right) \\
     & \overset{(*)}{\leq} \dfrac{\text{Var}_{\Pbb_0} (\int \log (f/f_0) d\Pi(f))}{c^2n\epsilon^4}\\
     &  \leq \dfrac{\Pbb_0 (\int \log (f/f_0) d\Pi(f))^2}{c^2n\epsilon^4}\\
     & \overset{(**)}{\leq} \dfrac{\Pbb_0 \int (\log (f/f_0))^2 d\Pi(f)}{c^2n\epsilon^4}\\
     & \leq \dfrac{1}{c^2n\epsilon^2},
\end{align*}
where we apply Chebyshev's inequality in $(*)$ and Jensen's inequality in $(**)$. Hence, inequality \eqref{eq:lem-evidence-lower-bound} is proved.
\end{proof}

\section{Computational details}\label{sec:EM}

\subsection{EM algorithms}
The finite mixture of regression models are based on the idea that the observed data come from a population which can be split into $K$ subpopulations or components. The models are then represented as the form
\begin{align}
    f_G(y_i|x_i,\boldsymbol{\psi})=\sum_{j=1}^{K}p_jf_j(y_i|h_1(x_i, \theta_{1j}),h_2(x_i,\theta_{2j})),
\end{align}
where $y_i$ is the value of the response variable in the $i$th observation; $x_i^{\top} \mbox{ }(i=1,...,n)$ denotes the transpose of $d$-dimensional vector of independent variables for the $i$th observation, $\theta_{1j} \mbox{ and } \theta_{2j} (j=1,...,K)$ denote the $d$-dimensional vectors of coefficients corresponding to the link functions $h_1(x,\theta_{1j})$ and $h_2(x,\theta_{2j})$ of the $j$th component, $p_j$ are the mixing probabilities ($0<p_j<1$, for all $j=1,...,K$ and $\sum_{j=1}^{K}p_j=1)$, $\boldsymbol{\psi}=(p_1,...,p_K,\theta_1,...,\theta_K,\eta_1,...,\eta_K)$ is the complete parameter set of the mixture model.

The parameters of the models can be efficiently estimated through the EM algorithm which is well-known as the standard tool for finding the maximum likelihood solution. The log-likelihood of the model is written as below:
\begin{align}\label{loglikelihood}
    l_0(\boldsymbol{\psi})=L(\boldsymbol{\psi}|x_1,...,x_n,y_1,...,y_n)=\sum_{i=1}^{n}\log \left(\sum_{j=1}^{K}p_jf_j(y_i|h_1(x_i, \theta_{1j}),h_2(x_i,\theta_{2j}))\right).
\end{align}
Let $Z_{ij}$ be an indicator variable which receives value of $1$ if the $i$th observation belongs to the $j$th component, and $0$ otherwise. We can write the log-likelihood function for complete data:
\begin{align}
    l(\boldsymbol{\psi})=\sum_{i=1}^{n}\sum_{j=1}^{K}Z_{ij}\log
    \left(p_jf_j(y_i|h_1(x_i, \theta_{1j}),h_2(x_i,\theta_{2j}))\right).
\end{align}

A description of a generic algorithm can be found in Algorithm \ref{EM}.

\begin{algorithm}
\caption{EM algorithm for mixture of regression models}\label{EM}
 \hspace*{\algorithmicindent} \textbf{Input:} Random initial values of $p_j$, $\theta_{1j}$, for $j=1,...,K$, and the value of $\epsilon>0$; \\
 \hspace*{\algorithmicindent} \textbf{Output:} The maximum likelihood estimation of $p_j$, $\theta_{1j}$ and $\theta_{2j}$, for $j=1,...,K$.\\
\begin{algorithmic}[1]
    \State $t=0$. Choose an initial values for  $\boldsymbol{\psi}_0=\left(p_1^{(0)},...,p_K^{(0)},\theta_1^{(0)},..., \theta_K^{(0)},\eta_1^{(0)},..., \eta_K^{(0)}\right)$. 
    \While{$l_0\left(\boldsymbol{\psi}_{t}\right)- l_0\left(\boldsymbol{\psi}_{t-1}\right)>\epsilon$} 
        \State \textbf{E-step} Calculate the posterior membership probabilities of observation $i$th belonging to the component $j$th as the following 
 $$w_{ij}^{(t-1)}=\dfrac{p_j^{(t-1)}f_j\left(y_i|h_1(x_i,\theta_{1j}^{(t-1)}),h_2(x_i,\theta_{2j}^{(t-1)})\right)}{\sum_{j=1}^{K}p_j^{(t-1)}f_j\left(y_i|h_1(x_i,\theta_{1j}^{(t-1)}),h_2(x_i,\theta_{2j}^{(t-1)})\right)}, \mbox{ } (i=1,...n,j=1,...,K),$$ 
       \State \textbf{M-step} Update $p_j^{(t)}$ and $\theta_{1j}^{(t)}$ by maximizing $Q(\boldsymbol{\psi}|\boldsymbol{\psi}^{(t-1)})=\mathbb{E}_{\mathbf {Z} \mid \mathbf {X} ,{\boldsymbol {\psi}}^{(t-1)}}[l(\boldsymbol{\psi})]$, we have that
       $p_j^{(t+1)}=\frac{1}{n}\sum_{i=1}^{n}w_{ij}, \mbox{ for } j=1,...,K$ ;
       $\theta_{1j}^{(t)}$ satisfies the equations $\dfrac{\partial Q(\boldsymbol{\psi}|\boldsymbol{\psi}^{(t-1)})}{\partial \theta_{1j}}=\mathbf{0}^{\top}$; and $\theta_{2j}^{(t)}$ satisfies the equations $\dfrac{\partial Q(\boldsymbol{\psi}|\boldsymbol{\psi}^{(t-1)})}{\partial \theta_{2j}}=\mathbf{0}^{\top}.$ See Table \ref{table for M step} for more detail in case of some popular GLM models.
       \EndWhile
\end{algorithmic}
\end{algorithm}

For the cases where the equation in M-step does not have a close-form solution (i.e., Poisson, negative binomial, Binomial cases), the EM algorithm in general (GEM) is considered. The idea is that we want to find a way to update the parameters such that the likelihood function \eqref{loglikelihood} will not decrease during the whole process. With this consideration in mind, the EM1 algorithm and gradient ascent algorithm are used. To take account of solving the equation $\dfrac{\partial Q(\boldsymbol{\psi}|\boldsymbol{\psi^{(t-1)}})}{\partial \theta_{mj}}=\mathbf{0}^{\top}$ for $m=1, 2$, we applied Newton-Raphson method (McLachlan and Krishnan, 1997, p. 29), where $Q(\boldsymbol{\psi}|\boldsymbol{\psi}^{(t-1)})=\mathbb{E}_{\mathbf {Z} \mid \mathbf {X} ,{\boldsymbol {\psi}}^{(t-1)}}[l(\boldsymbol{\psi})]$. In 1993, Rai and Matthews \cite{Rai1993} proposed an EM1 algorithm in which Newton-Raphson process is reduced to one iteration, as below
\begin{align}\label{EM1}
    \theta_{mj}^{(t)}= \theta_{mj}^{(t-1)}-\left(\dfrac{\partial^2 Q(\boldsymbol{\psi}^{(t-1)}|\boldsymbol{\psi}^{(t-1)})}{\partial \theta_{mj}\partial \theta_{mj}^{\top}}\right)^{-1}\left(\dfrac{\partial Q(\boldsymbol{\psi}^{(t-1)}|\boldsymbol{\psi}^{(t-1)})}{\partial \theta_{mj}}\right)^{\top} \mbox{ for } m=1,2.
\end{align}
By this way, EM1 algorithm speeds up GEM process in practice because it saves computational cost in the M-step.
Note that to use the EM1 algorithm, both matrices $\left(\dfrac{\partial^2 Q(\boldsymbol{\psi}^{(t-1)}|\boldsymbol{\psi}^{(t-1)})}{\partial \theta_{mj}\partial \theta_{mj}^{\top}}\right)$ and $\left(\dfrac{\partial^2 Q(\boldsymbol{\psi}^{(t)}|\boldsymbol{\psi}^{(t-1)})}{\partial \theta_{mj}\partial \theta_{mj}^{\top}}\right)$ for 
$m=1, 2$ are required to be negative definite. 

In terms of the gradient ascent approach, an update rule is given as follows
\begin{align}\label{gradient ascent}
    \theta_{mj}^{(t)}= \theta_{mj}^{(t-1)}+\nu\left(\dfrac{\partial Q(\boldsymbol{\psi}^{(t-1)}|\boldsymbol{\psi}^{(t-1)})}{\partial \theta_{mj}}\right), \mbox{ for } m = 1,2.
\end{align}
where $\nu$ is a step size (user-specific parameter). We also used one iteration of it to reduce the computational cost.
\begin{table}
    \centering
    \begin{tabular}{|p{1.3cm}|c |p{7.2cm}|}
    \hline
          & $\boldsymbol{f_j\left(y_i|h_1(x_i, \theta_{1j}),h_2(x_i, \theta_{2j})\right)}$& \textbf{Updating} $\boldsymbol{\theta_{1j}^{(t)}}$   \\
        \hline
    Normal &$\Ncal_j(y_i\mid x_i^{\top}\theta_{1j},\sigma^2)$&$\theta_{1j}^{(t)}=(X^{\top} W_j X)^{-1}(X^{\top} W_j Y)$
     \newline
     where $X$ is a matrix of size $n\times d$, $Y$ is a vector of size $n$, $
     W_j=diag(\{w_{ij}\}_{i=1}^n)$ is a matrix of size $n\times n$.\\
         \hline
    Poisson &$\Poi_j\left(y_i|\exp(x_i^{\top}\theta_{1j})\right)$& Using \eqref{EM1} to update $\theta_{1j}^{(t)}$, where
    \newline $\dfrac{\partial Q(\boldsymbol{\psi}|\boldsymbol{\psi}^{(t-1)})}{\partial \theta_{1j}}=\sum_{i=1}^n \left(-\exp\left(x_i^{\top}\theta_{1j}\right)x_i+y_ix_i\right)w_{ij}^{(t-1)}$
    \newline
    $\dfrac{\partial^2 Q(\boldsymbol{\psi}|\boldsymbol{\psi}^{(t-1)})}{\partial \theta_{1j} \partial\theta^{\top}_j}=\sum_{i=1}^n\left(-\exp\left(x_i^{\top}\theta_{1j}\right)\right)x_ix_i^{\top}w_{ij}^{(t-1)}$.\\
    \hline
    Binomial &$\Bin_j\left(y_i|N,\dfrac{1}{1+\exp\left(-x_i^{\top}\theta_{1j}\right)}\right)$& Using $\eqref{EM1}$ to update $\theta_{1j}^{(t)}$, where
    \newline $\dfrac{\partial Q(\boldsymbol{\psi}|\boldsymbol{\psi}^{(t-1)})}{\partial \theta_{1j}}$
    \newline
    $=\sum_{i=1}^{n}x_i w_{ij}^{(t-1)}\left[\dfrac{-N}{1+\exp(-x_i^{\top}\theta_{1j})}+y_i\right]$,
    \newline
    $\dfrac{\partial^2 Q(\boldsymbol{\psi}|\boldsymbol{\psi}^{(t-1)})}{\partial \theta_{1j} \partial\theta^{\top}_j}=\sum_{i=1}^n\dfrac{-N\exp\left(-x_i^{\top}\theta_{1j}\right)}{(1+\exp(-x_i^{\top}\theta_{1j}))
    ^2}x_ix_i^{\top}w_{ij}^{(t-1)}$
    \\
    \hline
    Negative\newline binomial&$\NB_j\left(y_i|\exp\left(x_i^{\top}\theta_{1j}\right),\phi\right)$& Using $\eqref{gradient ascent}$ to update $\theta_{1j}^{(t)}$, where
    \newline $\dfrac{\partial Q(\boldsymbol{\psi}|\boldsymbol{\psi}^{(t-1)})}{\partial \theta_{1j}}=\sum_{i=1}^{n}x_i w_{ij}^{(t-1)}\dfrac{y_i-\exp(x_i^{\top}\theta_{1j})}{1+\frac{\exp(x_i^{\top}\theta_{1j})}{\phi}}$. \\
    \hline
    \end{tabular}
    \caption{Updating of $\theta_{1j}^{(t)}$ of GLM models in the M-step.}\label{table for M step}
  
\end{table}

\subsection{Bayesian approach}\label{appenx: Bayesian approach}
Here we describe in details the derivation of Markov Chain Monte Carlo (MCMC) algorithm that we use in Section~\ref{sec:experiments}. In particular, given a mixture of $k$-negative binomial regression model:
$$f_G(y|x) = \sum_{j=1}^k p_j\NB(y|h(x,\theta_j), \phi_j).$$

As mentioned in Section~\ref{sec:experiments}, given the data, $\{x_i, y_i\}_{i=1}^n$, we performed the prior distributions of $p= (p_1, p_2,...,p_k)$, $\theta_j$ and $\eta_j = \phi_j^{-1} $, for $j=1,..., k$ as the following
\begin{align*}
    p &\sim \Dir(1,1,...,1)\\
    \theta_j &\sim \mathrm{MVN}(0, \Sigma) \text{ (multivariate normal distribution), for } j=1,...,k,\\
    \eta_j &\sim \mathrm{Gamma}(0.01,0.01) \text{ (a non-informative gamma distribution), for } j=1,...,k.
\end{align*}  
The full conditional posterior distributions of the model parameters are given below.
  \begin{align}
    &P(Z_i=j|y,x)=\dfrac{p_j \mathrm{NB}\left(y_i|h(x_i,\theta_j),\eta_j\right)}{\sum_{m=1}^{2} p_m \mathrm{NB}\left(y_i|h(x_i,\theta_m), \eta_m\right)},\\
    &p|(y,x,\mathbf{Z},...) \sim  \Dir(1+n_1,..,,1+n_k), \mbox{ where } n_j=\#\{i:Z_i=j\}; \mbox{ for } j=1,...,k,\\
    &\label{theta}f(\theta_j |y,x,\mathbf{Z},...) \propto \left[ \prod_{i: Z_i = j} \mathrm{NB}\left(y_i|h(x_i,\theta_j),\eta_j\right)\right] \exp\left(-\frac{1}{2}\theta_j'\Sigma^{-1}\theta_j\right),\\
    &\label{eta}g(\eta_j|y,x,\mathbf{Z},...)
    \propto \left[ \prod_{i: Z_i = j} \mathrm{NB}\left(y_i|h(x_i,\theta_j),\eta_j\right)\right] \eta_j^{0.01-1}\exp\left(-0.01 \eta_j\right),
  \end{align}  
where $$\mathrm{NB}\left(y_i|h(x_i,\theta_j),\eta_j\right) = \dfrac{\Gamma(y_i+1/\eta_j)}{\Gamma(y_i+1)\Gamma(1/\eta_j)}\left(\dfrac{\exp(x_i'\theta_j)}{\exp(x_i'\theta_j+1/\eta_j)}\right)^{y_i}
\left(\dfrac{1/\eta_j}{x_i'\theta_j+1/\eta_j}\right)^{1/\eta_j}.$$

The full posterior distribution is sampled by using Gibbs sampling algorithm (Algorithm \ref{Gibbs}). Since the posterior distributions of $\theta_j$ and $\eta_j$ ($j=1,...,k$) are known up to a normalizing constant, the Metropolis-Hasting (MH) algorithm has been used to sample the distribution. When it comes to $\theta_j$, a multivariate normal distribution is used as a proposal density.  In particular, for each $j=1,...,k$, a candidate $\theta_j^* \sim \mathrm{MVN}(\theta_j^{(t-1)},\Sigma')$ is accepted with probability
$$\min\left\{1,\dfrac{f(\theta_j^*|...)}{f(\theta_j^{(t-1)}|...)}\right\}.$$

In terms of $\eta_j$, the proposal density is from a Gamma distribution. Specifically, for each $j= 1,...,k$, a candidate $\eta_j^* \sim \mathrm{Gamma}(2,2/\eta_j^{(t-1)})$ is accepted with probability
$$\min\left\{1,\dfrac{g(\eta_j^*|...)p\left(\eta_j^{(t-1)}|\eta_j^*\right)}{g(\eta_j^{(t-1)}|...)p\left(\eta_j^*|\eta_j^{(t-1)}\right)}\right\},$$
where $f(\theta_j|y,x,\mathbf{Z}^{(t)},p{(t)},\eta^{(t)})$ and $g(\eta_j|y,x,\mathbf{Z}^{(t)},p{(t)},\theta^{(t)})$ are as in Eq. \eqref{theta}, \eqref{eta}, respectively, and $p\left(\eta_j^{(t-1)}|\eta_j^*\right)$ is the gamma density $\mathrm{Gamma} (2,2/\eta_j^{*})$.

For each different sample size $n$, we run the experiment 16 times. For each time of running, we produced 2500 MCMC samples and discarded the first 500 as a “burn-in” set. From among the remaining 2000, we computed the mean of a vector containing 2000 Wasserstein distances ($W_1$) between the MCMC results and the true mixing distribution.

\begin{algorithm}
\caption{Gibbs sampling algorithm}\label{Gibbs}
 \hspace*{\algorithmicindent} \textbf{Input:} The prior distributions of $p$, $\theta_j$ and $\eta_j$, for $j=1,...,k$; \\
 \hspace*{\algorithmicindent} The number of iterations ($T_{max}$) and burn-in steps\\ 
\hspace*{\algorithmicindent} \textbf{Output:} A Markov Chains $\{\boldsymbol{\Phi}_t\}_{t\geq 0}$ attaining posterior distribution of       ($p|(y,x,\mathbf{Z}$,...), $\eta_1 | (y,x,\mathbf{Z},...) $,..., $\eta_k | (y,x,\mathbf{Z},...) $, $\theta_j|(y,x,\mathbf{Z},...)$ (for $j = 1,...,k)$ as the stationary distribution.
\begin{algorithmic}[1]
    \State $t=0$. Draw $\boldsymbol{\Phi}_0=(p^{(0)}$, $\eta_1^{(0)},...,\eta_k^{(0)}, \theta_1^{(0)}, ..., \theta_k^{(0)}$) randomly. 
    \For {$t = 1, 2, \dots, T_{max}$}
        \State Generate $\mathbf{Z}^{(t)} \sim \mathbf{Z}|y,x, p^{(t-1)},\eta_1^{(t-1)},..., \eta_k^{(t-1)}, \theta_1^{(t-1)},..., \theta_k^{(t-1)}$ 
        \State Generate $p^{(t)}\sim p|y,x,\mathbf{Z}^{(t)},\theta_1^{(t-1)},..., \theta_k^{(t-1)}, \eta_1^{(t-1)},..., \eta_k^{(t-1)}$
        \State Generate $\eta_j^{(t)}\sim \eta_j|y, x,\mathbf{Z}^{(t)},p^{(t)}, \theta_1^{(t-1)},..., \theta_k^{(t-1)}$
        \Comment{Using Metropolis-Hasting algorithm}
        \State\label{MH} Generate $\theta_j^{(t)} \sim \theta_j|y,x,\mathbf{Z}^{(t)},p^{(t)},\eta_1^{(t)},...,\eta_k^{(t)}$ \Comment{Using Metropolis-Hasting algorithm}
        \State Set $\boldsymbol{\Phi}_t=(p^{(t)}, \eta_1^{(t)},...,\eta_k^{(t)}, \theta_1^{(t)},..., \theta_k^{(t)})$
    \EndFor
    \end{algorithmic}
\end{algorithm}

\end{document}